\newtheorem{theorem}{Theorem}[section]
\newtheorem{lemma}[theorem]{Lemma}
\newtheorem{proposition}[theorem]{Proposition}
\newtheorem{corollary}[theorem]{Corollary}
\newtheorem{prop-and-def}[theorem]{Proposition and Definition}
\theoremstyle{definition}
\newtheorem{definition}[theorem]{Definition}
\newtheorem{notation}[theorem]{Notation}
\newtheorem{remark}[theorem]{Remark}
\newtheorem{definition-and-remark}[theorem]{Definition and Remark}
\newtheorem{remark-and-notation}[theorem]{Remark and Notation}
\newtheorem{notation-and-remark}[theorem]{Notation and Remark}
\newtheorem{example}[theorem]{Example}
\newtheorem{ad-hoc}[theorem]{ }
\numberwithin{equation}{section}
\newcommand{\cA}{ {\mathcal A} }
\newcommand{\cC}{ {\mathcal C} }
\newcommand{\cD}{ {\mathcal D} }
\newcommand{\Dalg}{ \cD_{\mathrm{alg}} }
\newcommand{\cF}{ {\mathcal F} }
\newcommand{\cG}{ {\mathcal G} }
\newcommand{\cGtild}{ \widetilde{\cG} }
\newcommand{\cGtildctoc}{ \cGtild_{\mathrm{c-c}} }
\newcommand{\cGtildctom}{ \cGtild_{\mathrm{c-m}} }
\newcommand{\cJ}{ {\mathcal J} }
\newcommand{\cL}{ {\mathcal L} }
\newcommand{\cM}{ {\mathcal M} }
\newcommand{\cN}{ {\mathcal N} }
\newcommand{\cP}{ {\mathcal P} }
\newcommand{\cT}{ {\mathcal T} }
\newcommand{\cZ}{ {\mathcal Z} }
\newcommand{\bB}{ {\mathbb B} }
\newcommand{\bC}{ {\mathbb C} }
\newcommand{\bG}{ {\mathbb G} }
\newcommand{\fM}{ {\mathfrak M} }
\newcommand{\fMcA}{ \fM_{ { }_{\cA} } }
\newcommand{\fMcM}{ \fM_{ { }_{\cM} } }
\newcommand{\bN}{ {\mathbb N} }
\newcommand{\bR}{ {\mathbb R} }
\newcommand{\bX}{ {\mathbb X} }
\newcommand{\bZ}{ {\mathbb Z} }
\newcommand{\uBeta}{ \underline{\beta} }
\newcommand{\uBetat}{ \uBeta^{(t)} }
\newcommand{\uEta}{ \underline{\eta} }
\newcommand{\uGamma}{ \underline{\gamma} }
\newcommand{\uKappa}{ \underline{\kappa} }
\newcommand{\uLambda}{ \underline{\lambda} }
\newcommand{\uPhi}{ \underline{\varphi} }
\newcommand{\uPsi}{ \underline{\psi} }
\newcommand{\uRho}{ \underline{\rho} }
\newcommand{\uTheta}{ \underline{\theta} }
\newcommand{\uz}{ \underline{z} }
\newcommand{\Fbcm}{ g_{\mathrm{bc-m}} }
\newcommand{\Ffcm}{ g_{\mathrm{fc-m}} }
\newcommand{\Fmcm}{ g_{\mathrm{mc-m}} }
\newcommand{\Ffcbc}{ g_{\mathrm{fc-bc}} }
\newcommand{\Fmcbc}{ g_{\mathrm{mc-bc}} }
\newcommand{\Blocks}{ \mathrm{Blocks} }
\newcommand{\Blocksplus}{ \Blocks^{+} }
\newcommand{\EC}{\mathcal{EC} }
\newcommand{\Int}{ \mbox{Int} }
\newcommand{\Kr}{ \mathrm{Kr} }
\newcommand{\NCirr}{ NC_{\mathrm{irr}} }
\newcommand{\Sym}{ \mbox{Sym} }
\newcommand{\Id}{ \mbox{Id} }
\newcommand{\alphans}{ (\alpha_n)_{n=1}^{\infty} }
\newcommand{\lambdans}{ ( \lambda_n )_{n=1}^{\infty} }
\newcommand{\thetans}{ ( \theta_n )_{n=1}^{\infty} }
\newcommand{\ee}{\varepsilon}
\newcommand{\eeps}{\epsilon}
\newcommand{\NCint}{ NC^{(2)} }
\newcommand{\pirrc}{ \overline{\pi}^{\mathrm{irr}} }
\newcommand{\sigmairrc}{ \overline{\sigma}^{\mathrm{irr}} }
\newcommand{\Vleft}{ V_{\mathrm{left}} }
\newcommand{\Vright}{  V_{\mathrm{right}} }
\newcommand{\betat}{ \beta^{(t)} }
\newcommand{\innblocks}{ \mathrm{inner} }
\newcommand{\outblocks}{ \mathrm{outer} }
\newcommand{\chicheck}{ \check{\chi} }
\newcommand{\epsiloncheck}{ \check{\epsilon} }
\newcommand{\oneA}{ 1_{{ }_{\cA}} }
\newcommand{\oneT}{ 1_{{ }_{\cT}} }
\newcommand{\Prim}{ \mathrm{Prim} }
\newcommand{\oneSym}{ 1_{ { }_{\mathrm{Sym}} } }
\newcommand{\oneZ}{ 1_{{ }_{\cZ}} }
\newcommand{\freestar}{\boxast}
\newcommand{\dotinf}{ \stackrel{\mathrm{inf}}{\odot} }
\newcommand{\epsilonSym}{\varepsilon_{\mathrm{Sym}}}
\newcommand{\epsilonT}{\varepsilon_{\mathcal{T}}}
\newcommand{\DeltaSym}{\Delta_{\mathrm{Sym}}}
\newcommand{\DeltaT}{\Delta_{\mathcal{T}}}
\title[Multiplicative and semi-multiplicative functions on 
NC($n$)]{Multiplicative and semi-multiplicative functions on 
non-crossing partitions, and relations to cumulants}
\author[A. Celestino]{Adrian Celestino}
\address{Adrian Celestino: Department of Mathematical Sciences,
Norwegian University of Science and Technology (NTNU), 
7491 Trondheim, Norway.}
\email{adrian.celestino@ntnu.no}
\author[K. Ebrahimi-Fard]{Kurusch Ebrahimi-Fard}
\thanks{KEF: research supported by the Research Council of Norway
through project 302831 
{\it{Computational Dynamics and Stochastics on Manifolds}} (CODYSMA)
as well as by the project Pure Mathematics in Norway, funded by 
Trond Mohn Foundation and Troms{\o} Research Foundation.}
\address{Kurusch Ebrahimi-Fard: Department of Mathematical Sciences, 
Norwegian University of Science and Technology (NTNU),
7491 Trondheim, Norway.}
\email{kurusch.ebrahimi-fard@ntnu.no}
\author[A. Nica]{Alexandru Nica}
\thanks{AN: research supported by a Discovery Grant from 
	NSERC, Canada.}
\address{Alexandru Nica: Department of Pure Mathematics, 
	University of Waterloo, Ontario, Canada.}
\email{anica@uwaterloo.ca}
\author[D. Perales]{Daniel Perales}
\thanks{DP: Supported by CONACyT (Mexico) via the scholarship 714236.}
\address{Daniel Perales: Department of Pure Mathematics,
    University of Waterloo, Ontario, Canada.}
\email{dperales@uwaterloo.ca}
\author[L. Witzman]{Leon Witzman}
\address{Leon Witzman: Department of Pure Mathematics,
    University of Waterloo, Ontario, Canada.}
\email{lwitzman@uwaterloo.ca}
\begin{document}

\begin{abstract}
We consider the group $( \cG, *)$ of unitized multiplicative 
functions in the incidence algebra of non-crossing partitions,
where ``$*$'' denotes the convolution operation.  We introduce
a larger group $( \cGtild, * )$ of unitized functions from the 
same incidence algebra, which satisfy a weaker 
{\em semi-multiplicativity} condition.  The natural 
action of $\cGtild$ on sequences of multilinear functionals of 
a non-commutative probability space captures the combinatorics 
of transitions between moments and some brands of cumulants 
that are studied in the non-commutative probability literature. 
We use the framework of $\cGtild$ in order to explain why the 
multiplication of free random variables can be very nicely 
described in terms of Boolean cumulants and more generally in 
terms of $t$-Boolean cumulants, a one-parameter interpolation 
between free and Boolean cumulants arising from work of 
Bo{\.z}ejko and Wysoczanski.

It is known that the group $\cG$ can be naturally identified 
as the group of characters of the Hopf algebra Sym of symmetric
functions.  We show that $\cGtild$ can also be identified as group
of characters of a Hopf algebra $\cT$, which is an incidence Hopf
algebra in the sense of Schmitt.  Moreover, the inclusion 
$\cG \subseteq \cGtild$ turns out to be the dual of a natural 
bialgebra homomorphism from $\cT$ onto Sym.
\end{abstract}

\maketitle

\section{Introduction}

\subsection{The group \boldmath{$\cG$} of unitized 
multiplicative functions on \boldmath{$NC(n)$}'s.}

$\ $

\noindent
The idea of studying the convolution of multiplicative functions
defined on the set of all intervals of a ``coherent'' collection 
of lattices $( \cL_n )_{n=1}^{\infty}$ goes back to the 1960's 
work of Rota and collaborators, e.g.~in \cite{DoRoSt1972}.  The 
phenomenon which prompts this study is that, in a number of 
important examples: for every $\pi \leq \sigma$ in an $\cL_n$, 
the sublattice $[ \pi, \sigma ] := 
\{ \rho \in \cL_n \mid \pi \leq \rho \leq \sigma \}$ of $\cL_n$
is canonically isomorphic to a direct product,
\begin{equation}   \label{eqn:intro1}
[ \pi, \sigma ] \approx
\cL_1^{p_1} \times \cdots \times \cL_n^{p_n},
\ \mbox{ with $p_1, \ldots , p_n \geq 0$}. 
\end{equation}
A function 
$f : \sqcup_{n=1}^{\infty}  \{ ( \pi , \sigma ) \mid
\pi, \sigma \in \cL_n, \, \pi \leq \sigma \} \to \bC$
is declared to be multiplicative when there exists a sequence 
of complex numbers $\alphans$ such that, for $\pi, \sigma$ and 
non-negative integers $p_1, \ldots , p_n$ 
as in (\ref{eqn:intro1}), one has
$f( \pi , \sigma ) := \alpha_1^{p_1} \cdots \alpha_n^{p_n}$.

In the present paper we are interested in the case when 
$\cL_n$ is the lattice $NC(n)$ of non-crossing partitions 
of $\{ 1, \ldots , n \}$, endowed with the partial order by
reverse refinement.  
In the 1990's it was found by Speicher \cite{Sp1994} that, 
when considered in connection to the $NC(n)$'s, the convolution 
of multiplicative functions plays an essential role in the 
combinatorial development of free probability.
For the purposes of the present paper it is convenient to focus 
on the set $\cG$ consisting of multiplicative functions 
on the $NC(n)$'s where the sequence $\alphans$ defining the function
has $\alpha_1 = 1$.  Then $\cG$ is a group under convolution.
While this is a self-standing structure, which can be considered 
without any knowledge of what is a non-commutative probability space, 
it nevertheless turns out that the group operation of $\cG$ encapsulates 
the combinatorics of the multiplication of free random variables; for a 
detailed presentation of how this goes, we refer to Lectures 14 
and 18 of the monograph \cite{NiSp2006}.

$\ $

\subsection{The group \boldmath{$\cGtild$} of unitized 
semi-multiplicative functions on \boldmath{$NC(n)$}'s.}

$\ $

\noindent
In the case of the lattices $NC(n)$, the canonical isomorphism 
indicated in (\ref{eqn:intro1}) is obtained by combining
two kinds of lattice isomorphisms, as follows.

\vspace{6pt}

{\em First kind of isomorphism:} one observes that for
every $\pi \leq \sigma$ in some $NC(n)$, the interval 
$[ \pi, \sigma ] \subseteq NC(n)$ is canonically
isomorphic to a direct product of intervals of the 
form $[ \theta , 1_k]$, with $\theta \in NC(k)$ for 
some $1 \leq k \leq n$, and where $1_k$ is the maximal 
element of $NC(k)$, i.e.~it is the 
partition of $\{ 1, \ldots , k \}$ into a single block.

\vspace{6pt}

{\em Second kind of isomorphism:} for every $k \geq 1$ and 
$\theta \in NC(k)$ one finds $[ \theta, 1_k ]$ to be  
canonically isomorphic to a direct product
$NC(1)^{q_1} \times \cdots \times NC(k)^{q_k}$, with
$q_1, \ldots , q_k \geq 0$. 

\vspace{6pt}

These two kinds of isomorphisms will be reviewed
precisely as soon as the notation is set for them, 
cf.~Remark \ref{rem:12} below.  But we signal right now
that our main point is this:
\begin{equation}    \label{eqn:intro2}
\begin{array}{ll}
\vline &  
\mbox{It is worth studying convolution for functions}  \\
\vline &
g: \sqcup_{n=1}^{\infty} \{ ( \pi , \sigma ) \mid
\pi, \sigma \in NC(n), \, \pi \leq \sigma \} \to \bC      
\mbox{ which }                         \\
\vline & 
\mbox{are only required to be multiplicative with respect}     \\
\vline & 
\mbox{to the {\em first kind} of isomorphism mentioned above.}
\end{array}
\end{equation}
We will use the term {\em semi-multiplicative} for a function 
$g$ as in (\ref{eqn:intro2}), and we will denote
\begin{equation}   \label{eqn:intro3}
\cGtild = \Bigl\{ 
g: \sqcup_{n=1}^{\infty} \{ ( \pi , \sigma ) \mid
\pi \leq \sigma \mbox{ in } NC(n) \} \to \bC    
\begin{array}{ll}
\vline & \mbox{$g$ is semi-multiplicative and }  \\
\vline & g( \pi , \pi ) = 1, 
\ \forall \, \pi \in \sqcup_{n=1}^{\infty} NC(n)
\end{array}   \Bigr\} .
\end{equation}
It turns out that $\cGtild$ is a group under convolution.  
This group and some of its subgroups (in particular the 
subgroup $\cG$ from Section 1.1) are the main players 
in the considerations of the present paper.  The benefits that 
come from studying $\cGtild$ are presented in the next subsections
of this Introduction.

$\ $

\subsection{Relations of \boldmath{$\cGtild$} with moments 
and with some brands of cumulants.}

$\ $

\noindent
Consider now the framework of a non-commutative probability 
space $( \cA , \varphi )$, where $\cA$ is a unital associative 
algebra over $\bC$ and $\varphi \colon \cA \to \bC$
is a linear functional such that the algebra unit is mapped
to one ($\varphi(1_\cA) =1$), and look at
\[
\fMcA := \{ \uPsi \mid 
\uPsi = ( \psi_n : \cA^n \to \bC )_{n=1}^{\infty},
\mbox{ where $\psi_n$ is an $n$-linear functional} \}.
\]
In $\fMcA$ we have a special element 
$\uPhi = ( \varphi_n )_{n=1}^{\infty}$ called 
{\em family of moment functionals} of $( \cA , \varphi )$,
where $\varphi_n : \cA^n \to \bC$ is defined by putting
$\varphi_n (a_1, a_2, \ldots , a_n) :=
\varphi (a_1 a_2 \cdots a_n)$ for all
$n \geq 1 \mbox{ and } a_1, \ldots , a_n \in \cA$.
Then in $\fMcA$ there also are several families of 
{\em cumulant functionals} which relate to $\uPhi$ via 
summation formulas over non-crossing partitions, and 
receive constant attention in the research literature 
on non-commutative probability: free cumulants, Boolean 
cumulants, monotone cumulants (see e.g.~\cite{ArHaLeVa2015}).
In this paper we also devote some attention to a continuous 
interpolation between Boolean and free cumulants, which 
we refer to as $t$-Boolean cumulants, and are arising from 
the work of Bo\.zejko and Wysoczanski \cite{BoWy2001} -- the 
case $t=0$ gives Boolean cumulants and the case $t=1$ gives 
free cumulants.

The group $\cGtild$ has a natural action on $\fMcA$,
which is discussed in detail in Section 6 of the paper.
This action captures the transitions between moment
functionals and the brands of cumulants mentioned above,
and as a consequence it also captures the formulas for 
direct transitions between two such brands of cumulants.  
We mention that the study of direct transitions between
different brands of cumulants goes back to the work of
Lehner \cite{Le2002}, and was thoroughly pursued in 
\cite{ArHaLeVa2015}.  The benefit of using the group 
$\cGtild$ is that it offers an efficient framework for
streamlining calculations related to various moment-cumulant
and inter-cumulant formulas.

For full disclosure, we reiterate here a fact implicitly
present in the above discussion, namely that this paper only
addresses brands of cumulants which live within the world of
non-crossing partitions.  It is an interesting direction of
future research to clarify how some of the considerations of 
the paper can be adjusted to the setting of full lattices of 
partitions of sets $\{ 1, \ldots , n \}$ (where crossings are
allowed).  The examination of this direction has been started
in Chapters 5-7 of the thesis \cite{Pe2022}, and promises to
extend the results of the present paper to a setting
which will also include the ``classical'' cumulants 
commonly used in the probability literature.

Returning to the group $\cGtild$, our next point is that
it is possible to identify precisely some notions 
of what it means for a function $h \in \cGtild$ to be of 
{\em cumulant-to-moment type}, and what it means for a
$g \in \cGtild$ to be of {\em cumulant-to-cumulant} type.
This is done in Section 7 of the paper.  Denoting
\[
\cGtildctom = \{ h \in \cGtild \mid h
\mbox{ is of cumulant-to-moment type} \} \ \mbox{ and}
\]
\[
\cGtildctoc = \{ g \in \cGtild \mid g
\mbox{ is of cumulant-to-cumulant type} \},
\]
we prove in Section 8 that $\cGtildctoc$ is a subgroup of 
$( \cGtild , * )$, while $\cGtildctom$ is a right coset 
of $\cGtildctoc$.  The latter statement means
that we have
\begin{equation}   \label{eqn:intro4}
\cGtildctom = \cGtildctoc * h
:= \{ g * h \mid g \in \cGtildctoc \},
\end{equation}
for no matter what $h \in \cGtildctom$ we choose to 
fix.  An easy choice is to fix the $h$ which is 
identically equal to $1$; this is indeed a function in
$\cGtildctom$, and encodes the transition from free 
cumulants to moment functionals.  
However, as pointed out in Section 8.2 of the paper, 
it seems to be more advantageous (both for writing 
proofs and for finding applications) if in (\ref{eqn:intro4})
we use a different choice for $h$, and pick the function
which encodes the transition from Boolean cumulants to moments.

$\ $

\subsection{The 1-parameter subgroup 
\boldmath{$\{ u_q \mid q \in \bR \}$} of 
\boldmath{$\cGtildctoc$}.}

$\ $

\noindent
The method we use for proving (\ref{eqn:intro4}) draws attention 
to the subgroup of $\cGtild$ generated by the function which
encodes transition between free cumulants and Boolean cumulants.
In the notation system used throughout the paper, the latter
function is denoted as $\Ffcbc$.  The subgroup
$\{ \Ffcbc^p \mid p \in  \bZ \} \subseteq \cGtildctoc$ can be
naturally incorporated into a continuous 1-parameter subgroup 
of $\cGtildctoc$, which we denote as $\{ u_q \mid q \in \bR \}$ 
(thus $u_q = \Ffcbc^q$ for $q \in \bZ$).
Working with the $u_q$'s nicely streamlines the various
formulas involving $t$-Boolean cumulants, and in particular 
gives an easy way (cf.~Corollary \ref{cor:85} below) to 
write the transition formula between $s$-Boolean cumulants 
and $t$-Boolean cumulants for distinct values $s,t \in \bR$.

In Section 10 we prove that every $u_q$
belongs to the normalizer of the subgroup $\cG \subseteq \cGtild$
from Section 1.1:
\begin{equation}   \label{eqn:intro5}
\Bigl( q \in \bR, \ f \in \cG \Bigr) \ \Rightarrow
\ u_q^{-1} * f * u_q \in \cG.
\end{equation}
This is a non-trivial fact, as the $u_q$'s are coming
from $\cGtildctoc$, and there is no obvious direct
connection between $\cGtildctoc$ and $\cG$ -- it is, in
any case, easy to check that the intersection 
$\cG \cap \cGtildctoc$ only contains the unit $e$ of 
$\cGtild$, while the intersection of $\cG$ with the coset 
$\cGtildctom$ only contains the function $h$ which is 
constantly equal to $1$.
 
$\ $

\subsection{Multiplication of free random variables,
in terms of \boldmath{$t$}-Boolean cumulants.}

$\ $

\noindent
The result obtained in (\ref{eqn:intro5}) can be used in order
to give a neat explanation of the intriguing fact that 
the multiplication of freely independent random variables 
is nicely described in terms of Boolean cumulants
(who aren't a priori meant to be related to free probability).

We find it convenient to place the discussion in the more general
framework of $t$-Boolean cumulants.  So let us consider a 
non-commutative probability space $( \cA , \varphi )$, let $x,y$ 
be two freely independent elements of $\cA$, and let $t$ be a 
parameter with values in $\bR$.  What happens is that the formula 
describing the $t$-Boolean cumulants of the product $xy$ in terms 
of the separate $t$-Boolean cumulants of $x$ and of $y$ is 
{\em one and the same}, no matter what value of $t$ we are using.
More precisely: denoting the family of $t$-Boolean cumulants as 
$\uBeta^{(t)} = ( \betat_n : \cA^n \to \bC )_{n=1}^{\infty} \in \fMcA$,
the formula for $t$-Boolean cumulants of $xy$ says that:
\begin{equation}  \label{eqn:intro6}
\betat_n (xy, \ldots , xy) = \sum_{\pi \in NC(n)} 
\betat_{\pi} (x, \ldots , x) \cdot
\betat_{\Kr ( \pi ) } (y, \ldots , y),
\ \ \forall \, n \geq 1.
\end{equation}
Equation (\ref{eqn:intro6}) contains some notation that has
to be clarified (such as what is the multilinear functional 
$\betat_{\pi} : \cA^n \to \bC$ associated to a partition 
$\pi \in NC(n)$, and the fact that every $\pi \in NC(n)$ has a 
complement $\Kr ( \pi ) \in NC(n)$).  All the necessary notation 
will be reviewed in the body of the paper; the reason for giving 
the formula (\ref{eqn:intro6}) at this point is so that we can 
explain our way of proving it.

Our approach can be summarized as follows.  For every $t \in \bR$,
consider the statement:
\[
\mbox{(Statement $t$)} 
\hspace{0.8cm}  \left\{ \begin{array}{c}
\mbox{ The formula (\ref{eqn:intro6}) holds true for this $t$}  \\
\mbox{ and for any freely independent elements $x,y$ in }   \\
\mbox{ some non-commutative probability space $( \cA , \varphi )$}
\end{array}  \right\} .
\hspace{0.8cm}  \mbox{$\ $}
\]
The action by conjugation of the $u_q$'s on multiplicative 
functions allows us to prove the following fact:

\vspace{6pt}

{\em Fact.}  If there exists a $t_o \in \bR$ for which 
(Statement $t_o$) is true, then it follows 

\hspace{1cm} that 
(Statement $t$) is true for all $t \in \bR$.

\vspace{6pt}

But it has been known since the 1990's that 
(Statement $t_o$) is true for $t_o = 1$ -- this is the very basic
description of multiplication of free random variables in terms
of free cumulants, cf.~\cite[Theorem 14.4]{NiSp2006}.  The above 
``Fact'' then assures us that (Statement $t$) is indeed true for 
all $t$; in particular, at $t=0$ we retrieve the result (first 
found in \cite{BeNi2008} via a direct combinatorial analysis) 
about how multiplication of free random variables is described
in terms of Boolean cumulants.

$\ $

\subsection{Hopf algebra aspects.}

$\ $

\noindent
A significant fact about the group $\cG$ from Section 1.1, 
observed in \cite{MaNi2010},
is that it can be naturally identified as the group of characters of 
the Hopf algebra Sym of symmetric functions.  When combined with the 
log map for characters of Sym, this identification retrieves the 
celebrated $S$-transform of Voiculescu \cite{Vo1987}, which is the 
most efficient tool for computing distributions of products of free 
random variables.

In analogy to that, we present the construction of a Hopf 
algebra $\cT$, done in such a way that the character group 
$\bX ( \cT )$ is naturally isomorphic to $\cGtild$.  $\cT$ can 
be identified as an incidence Hopf algebra, 
cf.~\cite{Schm1994, Ein2010},
and is also closely related to one of the Hopf 
algebras studied in the recent paper \cite{EFFoKoPa2019}. 
Moreover, we find that the inclusion of groups 
$\cG \subseteq \cGtild$ is precisely (in view of the 
canonical isomorphisms $\cG \approx \bX ( \mathrm{Sym} )$ and 
$\cGtild \approx \bX ( \cT )$) the dual 
$\Psi^{*} : \bX ( \mbox{Sym} ) \to \bX ( \cT )$ of a natural 
bialgebra homomorphism $\Psi : \cT \to \mathrm{Sym}$ provided 
by the Kreweras complementation map.

A promising feature of $\cT$ is that its antipode map
can, in principle, serve as a universal tool for inversion in 
formulas that relate moments to cumulants, or relate different 
brands of cumulants living in the $NC(n)$ framework.  In 
Section 13 of the paper we examine the antipode of $\cT$ and in
particular we identify (Theorem \ref{thm:1210}) a cancellation-free 
formula for how the antipode works, described in terms of a suitable 
notion of ``efficient chains'' in the lattices $NC(n)$.

$\ $

\subsection{Organising of the paper.}

$\ $

\noindent
Following to the present Introduction, the sections of the paper 
can be roughly divided into four parts. 

\vspace{6pt}

$\bullet$
In the first part, Sections 2-5, we establish some basic relevant facts 
concerning the group $( \cGtild , * )$.  More precisely: after setting 
some background and notation in Section 2, we introduce $\cGtild$ in
Section 3. Then in Section 4 (Theorem \ref{thm:33}) we prove that 
$\cGtild$ is indeed a group under convolution.  The review of the 
smaller group $\cG$ and some discussion around the inclusion 
$\cG \subseteq \cGtild$ appears in Section 5.

\vspace{6pt}

$\bullet$
In the second part, Sections 6-8, we demonstrate the relevance of 
$\cGtild$ to the study of non-commutative cumulants.  This comes into 
the picture via a natural action which a function $g \in \cGtild$ 
has on sequences of multilinear functionals on a non-commutative 
probability space.  This action is presented in Section 6.  Then in
Section 7 we look at specific examples of cumulants and, based on them, 
we identify what it means for $g \in \cGtild$ to encode transitions 
of ``moment-to-cumulant'' type or of ``cumulant-to-cumulant'' type.  

In Section 8 we prove (Propositions \ref{prop:72} and \ref{prop:74})
that, as announced in the above subsection 1.3: the set $\cGtildctoc$ 
of functions of cumulant-to-cumulant type is a subgroup 
of $\cGtild$, and the set $\cGtildctom$ of functions of
cumulant-to-moment type is a right coset of $\cGtildctoc$.
The method of proof of Proposition \ref{prop:74} points to the 
importance of the function $\Fbcm \in \cGtild$ which encodes the 
transition from Boolean cumulants to moments; as an application, 
we show (Section 8.3) how this leads to the known ``rule of thumb''
that Boolean cumulants are the easiest cumulants to relate
to, when we start from a moment-cumulant formula given for some 
other brand of cumulants.

\vspace{6pt}

$\bullet$
In the third part, Sections 9-11, we present the results 
announced in the above subsections 1.4 and 1.5.  Section 9 
discusses the 1-parameter subgroup 
$\{ u_q \mid q \in \bR \} \subseteq \cGtildctoc$ and its
applications to $t$-Boolean cumulants. 
In Section 10 we prove (Theorem \ref{thm:91}) that the 
$u_q$'s normalize $\cG$, and in Section 11 we flesh out 
the plan outlined in Section 1.5 for how to derive the 
description of the multiplication of free random variables
in terms of $t$-Boolean cumulants.

\vspace{6pt} 

$\bullet$
Finally the fourth part, Sections 12-14, discusses 
Hopf algebra aspects of the study of $\cGtild$.  In 
Section 12 we provide a detailed explicit description of
the Hopf algebra $\cT$ and we put into evidence the canonical
isomorphism $\bX ( \cT ) \approx \cGtild$.  Section 13 is 
devoted to studying the antipode of $\cT$, and in Section 14
we present (Theorem \ref{thm:135}) the bialgebra homomorphism 
$\Psi : \cT \to \mathrm{Sym}$ which dualizes the inclusion 
of $\cG$ into $\cGtild$.

$\ $

\section{Background and notation}

\subsection{Some \boldmath{$NC(n)$} terminology.} 

$\ $

\noindent
We will assume the reader is familiar with the lattices of 
non-crossing partitions $NC(n)$, and we will follow standard 
notation commonly used in connection to them, as presented for 
instance in Lectures 9 and 10 of \cite{NiSp2006}.  Here is a 
quick review of some notational highlights.

\vspace{6pt}

\begin{notation}  \label{def:11}
Let $n$ be a positive integer.

\noindent
$1^o$  The number of blocks of a partition $\pi \in NC(n)$ is 
denoted as $| \pi |$.  One can meaningfully define what it means
for two blocks of $\pi$ to be nested inside each other; consequently,
one gets a notion of {\em outer block} (a block $V \in \pi$ which 
is not nested into anything else) versus {\em inner block} (a block 
which is not outer).  We will use the notation
$\innblocks ( \pi )$ and $\outblocks ( \pi )$ for the numbers 
of inner respectively outer blocks of $\pi$.  We thus 
have $| \pi | = \innblocks ( \pi ) + \outblocks ( \pi )$, with 
$\innblocks ( \pi ) \geq 0$ and $\outblocks ( \pi ) \geq 1$.
Note that
\begin{equation}    \label{eqn:11a}
\bigl\{ \pi \in NC(n) \mid \innblocks ( \pi ) = 0 \bigr\} 
\ =: \ \Int (n)
\end{equation}
is the set of all interval partitions
of $\{ 1, \ldots , n \}$; these are the partitions $\pi \in NC(n)$ where 
every block $V \in \pi$ is an interval of $\{ 1, \ldots , n \}$. 

\vspace{6pt}

\noindent
$2^o$  The main partial order we consider on $NC(n)$ is the one  
given by reverse refinement: for $\pi , \sigma \in NC(n)$ we write 
``$\pi \leq \sigma$'' to mean that every block of $\sigma$ is a union 
of blocks of $\pi$.  We will also make occasional use of two other 
partial orders on $NC(n)$, denoted $\ll$ and $\sqsubseteq$, which are reviewed 
in the next subsection.

\vspace{6pt}

\noindent
$3^o$  We denote by $0_n \in NC(n)$ the partition with $n$ blocks of
cardinality $1$, and by $1_n \in NC(n)$ the partition consisting of a single 
block.  These are the minimal respectively maximal element of the partially ordered set (poset) $(NC(n), \leq )$.

\vspace{6pt}

\noindent
$4^o$  Every $\pi \in NC(n)$ has a {\em Kreweras complement} 
$\Kr ( \pi ) \in NC(n)$, and the map 

\noindent
$\Kr : NC(n) \to NC(n)$ 
so defined is an anti-automorphism of the poset $( NC(n) , \leq )$. 
For the description of how $\Kr ( \pi )$ is constructed and for some 
of its basic properties, see e.g.~pages 147-148 in Lecture 9
of \cite{NiSp2006}.  Occasionally it is useful to consider the 
more general notion of 
{\em relative Kreweras complement of $\pi$ in $\sigma$}, defined 
for any $\pi \leq \sigma$ in $NC(n)$, and where the ``usual'' Kreweras 
complement corresponds to the special case $\sigma = 1_n$; see the 
discussion on pages 288-291 in Lecture 18 of \cite{NiSp2006}.
\end{notation}

\vspace{6pt}

Since throughout the paper we will work extensively with 
restrictions of non-crossing partitions, we take a moment 
to state clearly what is our notation for how this works.

\vspace{6pt}

\begin{notation}   \label{def:12}
{\em (Relabeled-restrictions of partitions.)}  
Let $n \geq 1$, let $\pi$ be an element in $NC(n)$, and 
consider a set 
$W = \{ p_1, \ldots , p_m \} \subseteq \{ 1, \ldots , n \}$
where $1 \leq m \leq n$ and $p_1 < \cdots < p_m$.  We use the 
notation ``$\pi_{ { }_W}$'' for the partition of 
$\{ 1, \ldots , m \}$ described as follows: 
for $i, j \in \{ 1, \ldots , m \}$ we have
\[ 
\left(  \begin{array}{c}
\mbox{ $i$ and $j$ belong to } \\
\mbox{ the same block of $\pi_{ { }_W }$ }
\end{array}  \right) \ \Leftrightarrow \
\left(  \begin{array}{c}
\mbox{ $p_i$ and $p_j$ belong to } \\
\mbox{ the same block of $\pi$ }
\end{array}  \right) .
\]
It is immediate that the hypothesis of $\pi$ being 
non-crossing implies
\footnote{It would be possible to introduce and use here 
the notion of ``non-crossing partition of $W$''.  After 
weighing the pros and cons of doing so, we decided to 
rather let $\pi_{ { }_W }$ be a partition
in $NC(m)$, for $m = |W|$.}
that $\pi_{ { }_W } \in NC(m)$.
\end{notation}

\vspace{6pt}

\begin{remark}   \label{rem:12}
We now have the notation set to state precisely what are
the two kinds of lattice isomorphisms indicated in Section 1.2
of the Introduction.

\vspace{6pt}

{\em First kind of isomorphism:} for every $n \geq 1$ and 
$\pi \leq \sigma$ in $NC(n)$ one has
\begin{equation}   \label{eqn:12b}
[ \pi , \sigma ] \approx \prod_{W \in \sigma} 
[ \pi_{ { }_W }, 1_{ { }_{|W|} } ],
\end{equation}
where the relabeled-restriction $\pi_{ { }_W } \in NC( |W| )$
is as above, and $1_{ { }_{|W|} } \in NC( |W| )$ is the partition
with a single block.

\vspace{6pt}

{\em Second kind of isomorphism:} for every $k \geq 1$ and 
$\theta \in NC(k)$ one has
\begin{equation}   \label{eqn:12c}
[ \theta , 1_k ] \approx [ 0_k , \Kr ( \theta ) ]
\approx \prod_{U \in \Kr ( \theta )} 
[ 0_{ { }_{|U|} }, 1_{ { }_{|U|} } ]
= \prod_{U \in \Kr ( \theta )} NC ( |U| ).  
\end{equation}

\vspace{6pt}

It is immediate how these two kinds of isomorphisms work 
together to yield the fact that for any $n \geq 1$ and 
$\pi \leq \sigma$ in $NC(n)$ one has a canonical isomorphism
\begin{equation}  \label{eqn:12d}
[ \pi , \sigma ] \approx NC(1)^{p_1} \times NC(2)^{p_2} 
\times \cdots \times NC(n)^{p_n}
\mbox{ for some $p_1, \ldots , p_n \geq 0$.}
\end{equation}
For a detailed discussion of all this we refer to 
\cite[pages 148-153 in Lecture 9]{NiSp2006}. It may be 
re-assuring to know that, more than being canonical, the 
exponents $p_2, \ldots , p_n$ in (\ref{eqn:12d}) are in fact uniquely
determined -- cf.~\cite[Proposition 9.38]{NiSp2006}.  (The exponent
$p_1$ in (\ref{eqn:12d}) is not uniquely determined, since 
$| NC(1) | = 1$.)
\end{remark}

\vspace{6pt}

\begin{notation-and-remark}    \label{def:13}
{\em (Concatenation and irreducibility).}
% for non-crossing partitions.)}

\noindent
$1^o$ Given $n_1, n_2 \geq 1$ and 
$\pi_1 \in NC(n_1)$, $\pi_2 \in NC(n_2)$, we denote by 
$\pi_1 \diamond \pi_2$ the non-crossing partition in $NC( n_1 + n_2 )$ 
which is obtained by placing $\pi_1$ on the points
$1, \ldots , n_1$ and $\pi_2$ on the points 
$n_1 + 1, \ldots , n_1 + n_2$.  

\vspace{6pt}

\noindent
$2^o$ A non-crossing partition $\pi \in NC(n)$ is said to be 
{\em irreducible} when it cannot be written in the form 
$\pi = \pi_1 \diamond \pi_2$ with $\pi_1 \in NC(n_1)$ and 
$\pi_2 \in NC(n_2)$ for some $n_1, n_2 \geq 1$ with 
$n_1 + n_2 = n$.  This condition is easily seen to be 
equivalent to the fact that the numbers $1$ and $n$ belong 
to the same block of $\pi$, i.e., $\outblocks ( \pi )=1$.  

\vspace{6pt}

\noindent
$3^o$ Every $\pi \in \sqcup_{n=1}^{\infty} NC(n)$ can be
written as a concatenation of irreducible partitions.  This 
is best understood by referring to the outer blocks of $\pi$.  
Indeed, it is straightforward to check that these outer 
blocks can be listed as $W_1, \ldots , W_k$, with 
\[
\min (W_1) = 1, \min (W_2) = 1 + \max (W_1), \ldots , 
\min (W_k) = 1 + \max (W_{k-1}), \max (W_k) = n.
\]
For every $1 \leq i \leq k$, consider the interval 
$J_i := \{ m \in \bN \mid 
\min (W_i) \leq m \leq \max (W_i) \}$, 
which is a union of blocks of $\pi$,
and consider the restricted partition 
$\pi_{ { }_{J_i} } \in NC( |J_i| )$.
The concatenation 
$\pi_{{ }_{J_1}} \diamond \cdots \diamond \pi_{{ }_{J_k}}$ 
will then give back the $\pi$ we started with, and every 
$\pi_{{ }_{J_i}}$ is an irreducible partition in 
$NC( |J_i| )$.

\vspace{6pt}

$4^o$ We mention that, in the setting of part $3^o$, the interval 
partition $\theta := \{ J_1, \ldots , J_k \}$ is called the 
{\em interval cover} of $\pi$.  It is easily checked that this 
$\theta$ is the smallest upper bound for $\pi$ in $\Int (n)$, in 
the sense that one has
\[
\Bigl( \tau \in \Int (n) \mbox{ and } \tau \geq \pi \Bigr)
\ \Rightarrow \ \tau \geq \theta .
\]
\end{notation-and-remark}

$\ $

\subsection{The partial orders 
\boldmath{$\ll$} and \boldmath{$\sqsubseteq$} on 
\boldmath{$NC(n)$}.}

$\ $

\noindent
We will make use of two partial order relations on $NC(n)$ 
which are coarser than reverse refinement, and are defined 
as follows.

\vspace{6pt}

\begin{notation}   \label{def:14}
Let $n \in \bN$ and $\pi , \sigma \in NC(n)$.

\vspace{6pt}

$1^o$ We will write ``$\pi \ll \sigma$'' to mean that 
$\pi \leq \sigma$ in the reverse refinement order and
that, in addition, the following happens:
\begin{equation}   \label{eqn:14a}
\left\{   \begin{array}{l}
\mbox{ For every block $W \in \sigma$ there exists a block} \\
\mbox{ $V \in \pi$ such that $\min (W), \max (W) \in V$.}
\end{array}  \right.
\end{equation}
 
\vspace{6pt}

$2^o$ We will write ``$\pi \sqsubseteq \sigma$'' to mean 
that $\pi \leq \sigma$ in the reverse refinement order 
and that, in addition, the following happens:
\begin{equation}   \label{eqn:14b}
\left\{   \begin{array}{l}
\mbox{ Suppose $W \in \sigma$ and $i_1 < i_2 < i_3$ 
       are elements of $W$.}                          \\
\mbox{ Suppose moreover that $i_1$ and $i_3$ belong to
       the same block $V \in \pi$.}                    \\
\mbox{ Then it follows that $i_2 \in V$ as well.}
\end{array}  \right.
\end{equation}
\end{notation}

\vspace{6pt}

\begin{remark}   \label{rem:15}
Both partial orders $\ll$ and $\sqsubseteq$ have been
considered before: $\ll$ was introduced in \cite{BeNi2008}, in 
connection to the study of the so-called Boolean
Bercovici-Pata bijection, while $\sqsubseteq$ was introduced 
and studied in \cite{JV2015}. 

We mention that the recent paper \cite{BiJV2019} generalizes
$\ll$ and $\sqsubseteq$ to the setting of Coxeter groups 
and puts into evidence the fact that these two partial orders
are, in a certain sense, dual to each other.  A special case of 
this duality, which we use in Section 10 of the paper, 
is reviewed in Remark \ref{rem:17} below. 
\end{remark}

\vspace{6pt}

\begin{remark}   \label{rem:16}
$1^o$ Let $n \in \bN$ and $\pi \in NC(n)$, and let us 
record what happens when in Notation \ref{def:14} we put
$\sigma = 1_n$.  We note that:

\noindent
-- ``$\pi \ll 1_n$'' means that 
$1$ and $n$ are in the same block of $\pi$, i.e. that 
$\pi$ is irreducible.

\noindent
-- ``$\pi \sqsubseteq 1_n$'' means precisely that 
$\pi$ is an interval partition.

\vspace{6pt}

$2^o$ More generally, let $n \in \bN$ and let 
$\pi, \sigma \in NC(n)$ be such that $\pi \leq \sigma$.  
One can construe the latter inequality as saying that 
$\pi$ is obtained out of $\sigma$ by taking, one by one, 
the blocks of $\sigma$, and by performing a non-crossing 
partition of each of these blocks.  From this perspective:
the relation $\pi \ll \sigma$ amounts to the fact that 
$\pi$ is obtained  by performing an {\em irreducible} 
partition of every block of $\sigma$, while the relation 
$\pi \sqsubseteq \sigma$ amounts to the fact that $\pi$ 
is obtained by performing an {\em interval} partition of 
every block of $\sigma$.
\end{remark}

\vspace{6pt}

\begin{remark}   \label{rem:17}
We record here two facts about the order relations
$\ll$ and $\sqsubseteq$ that will be used later on in 
the paper.

\vspace{6pt}

$1^o$ The Kreweras complementation map 
$\Kr$ on $NC(n)$ provides us with a bijection
\begin{equation}  \label{eqn:17a}
\bigl\{ \pi \in NC(n) \mid \pi 
\mbox{ is irreducible} \bigr\}
\ni \tau \mapsto \Kr ( \tau ) \in
\bigl\{ \sigma \in NC(n) \mid
\{ n \} \mbox{ is a block of } \sigma \bigr\},
\end{equation}
which is a {\em poset anti-isomorphism} when the set on 
the left-hand side of (\ref{eqn:17a}) is endowed with the partial 
order $\ll$, while the set on the right-hand side is 
endowed with $\sqsubseteq$.
For reference, see e.g. \cite[Lemma 2.10]{FeMaNiSz2019}. 

\vspace{6pt}

$2^o$ For an irreducible partition $\pi \in NC(n)$, the 
upper ideal $\{ \sigma \in NC(n) \mid \sigma \gg \pi \}$ 
has cardinality $2^{| \pi | - 1}$.  And more precisely:
for $\pi \in NC(n)$ and every $1 \leq k \leq | \pi |$, one has that
\begin{equation}   \label{eqn:17b}
\vline \  \{ \sigma \in NC(n) \mid \sigma \gg \pi, \, 
| \sigma | = k \} \ \vline \ =  \left(  
\begin{array}{c} | \pi | - 1 \\ k-1 \end{array} \right) .
\end{equation}
For reference, see e.g. \cite[Proposition 2.13]{BeNi2008}. 
\end{remark}

$\ $

\section{Definition of $\cGtild$}

\subsection{Framework of the incidence algebra 
on non-crossing partitions.}

\begin{definition}  \label{def:22}
 We denote 
\begin{equation}   \label{eqn:22a}
\NCint := \sqcup_{n=1}^{\infty} 
\{ ( \pi , \sigma ) \mid \pi , \sigma \in NC(n), \ \pi \leq \sigma \}.
\end{equation}
The set of functions from $\NCint$ to $\bC$ goes under the name of
{\em incidence algebra of non-crossing partitions}.
This set of functions carries a natural associative operation
of {\em convolution}, denoted as ``$*$'', where for any
$f, g : \NCint \to \bC$ and any $\pi \leq \sigma$ in an $NC(n)$ 
one puts
\begin{equation}   \label{eqn:22b}
f * g \, ( \pi, \sigma ) = \sum_{ \begin{array}{c} 
{\scriptstyle \rho \in NC(n),} \\
{\scriptstyle \pi \leq \rho \leq \sigma}
\end{array} } \ f ( \pi, \rho ) \cdot g ( \rho , \sigma ).
\end{equation}
\end{definition}

\vspace{6pt}

In the next remark we collect a few relevant facts concerning the 
above mentioned convolution operation.  The reader is referred to 
\cite[Lecture 10]{NiSp2006} (cf.~pages 155-158 there) 
for a more detailed presentation.  The general framework 
of incidence algebras comes from work of Rota and collaborators, 
e.g.~in \cite{DoRoSt1972}; a detailed presentation of this
appears in Chapter 3 of \cite{St1997}.

\vspace{6pt}

\begin{remark}    \label{rem:23}
It is easy to verify that the convolution operation 
``$*$'' defined by (\ref{eqn:22b}) is associative 
and unital, where the unit is the function 
$e : \NCint \to \bC$ given by 
\begin{equation}   \label{eqn:23a}
e ( \pi, \sigma) = \left\{   \begin{array}{ll}
1, &  \mbox{ if $\pi = \sigma$,}   \\
0, &  \mbox{ otherwise.}
\end{array}   \right.
\end{equation}
For a function $f: \NCint \to \bC$ one has 
(see e.g.~\cite[Proposition 10.4]{NiSp2006}) that
\begin{equation}   \label{eqn:23b}
\left(  \begin{array}{c}  
\mbox{$f$ is invertible} \\
\mbox{with respect to ``$*$''} 
\end{array}  \right) \ \Leftrightarrow \ 
\Bigl( f( \pi, \pi ) \neq 0, 
\ \ \forall \, \pi \in \sqcup_{n=1}^{\infty} NC(n) \Bigr).
\end{equation}
Moreover, if $f$ is invertible with respect to ``$*$'', then
upon writing explicitly what it means to have 
$f * f^{-1} ( \pi , \pi ) = e( \pi , \pi ) = 1$, one
immediately sees that the inverse $f^{-1}$ satisfies
\begin{equation}   \label{eqn:23c}
f^{-1} ( \pi, \pi ) = \frac{1}{f( \pi , \pi )},
\ \ \forall \, n \geq 1 \mbox{ and } \pi \in NC(n).
\end{equation}

A reader who is matrix-inclined may choose to take the point 
of view that a function $f : \NCint \to \bC$ is just an 
upper triangular matrix with rows and columns indexed by 
$\sqcup_{n=1}^{\infty} NC(n)$, and where the values 
$f( \pi, \sigma )$ appear as certain entries of the matrix.
Then the operation ``$*$'' amounts to matrix multiplication, 
and the formulas (\ref{eqn:23a}), (\ref{eqn:23b}), 
(\ref{eqn:23c}) have obvious meanings in that language as well.
\end{remark}

\vspace{6pt}

\begin{notation-and-remark}   \label{def:24}
{\em (Unitized functions on $\NCint$.)}
We denote
\begin{equation}  \label{eqn:24a}
\cF := \{ f : \NCint \to \bC \mid f( \pi , \pi ) = 1, 
\ \ \forall \, \pi \in \sqcup_{n=1}^{\infty} NC(n) \} .
\end{equation}
The observations made in (\ref{eqn:23b}), (\ref{eqn:23c}) 
show that every $f \in \cF$ is invertible under convolution, 
where the inverse $f^{-1}$ still belongs to $\cF$.
It is also immediate that if $f,g \in \cF$ then 
$f*g \in \cF$, since for every 
$\pi \in \sqcup_{n=1}^{\infty} NC(n)$ the formula defining
$f * g \, ( \pi , \pi )$ boils down to just
$f*g \, ( \pi, \pi) 
= f( \pi, \pi ) \cdot g ( \pi, \pi ) = 1$.
Thus $( \cF , * )$ is a group.
\end{notation-and-remark}

$\ $

\subsection{Unitized semi-multiplicative functions 
on \boldmath{$\NCint$}.}

$\ $

\noindent
We now proceed, as promised, to looking at functions 
on $\NCint$ which are (only) required to be multiplicative 
with respect to the first kind of isomorphism indicated in 
Section 1.2. 

\vspace{6pt}

\begin{definition}   \label{def:26}
We will denote by $\cGtild$ the set of functions
$g : \NCint \to \bC$ which have $g( \pi , \pi ) = 1$ 
for all $\pi \in \sqcup_{n=1}^{\infty} NC(n)$ and 
satisfy the following condition: 
\begin{equation}   \label{eqn:26a}
\left\{   \begin{array}{l}
\mbox{For every $n \geq 1$ and $\pi \leq \sigma$ in $NC(n)$ 
      one has the factorization}                            \\
                                                           \\
\mbox{ $\ $ \hspace{2cm}  $\ $} 
       g( \pi , \sigma) = \prod_{W \in \sigma} 
                  g( \pi_{ { }_W } , 1_{ { }_{|W|} } )
\end{array}  \right.
\end{equation}
(where $\pi_{ { }_W }$ and $1_{ { }_{|W|} }$ are the same as in 
Equation (\ref{eqn:12b}) of Remark \ref{rem:12}).
We will refer to the condition (\ref{eqn:26a}) by calling it 
{\em semi-multiplicativity}, in contrast with the stronger 
{\em multiplicativity} condition from the work of Speicher
\cite{Sp1994}, which also considers the second kind of 
isomorphism reviewed in Remark \ref{rem:12}.
\end{definition}

\vspace{6pt}

From (\ref{eqn:26a}) it is obvious that a function
$g \in \cGtild$ is completely determined when we know 
the values $g( \pi, 1_n )$ for all $n \geq 1$ and 
$\pi \in NC(n)$.  It is hence clear that the map indicated 
in (\ref{eqn:27a}) below is injective.  This map turns 
out to also be surjective; it thus identifies $\cGtild$, 
as a set, with the countable direct product of copies of 
$\bC$ denoted as ``$\cZ$'' in the next proposition.

\vspace{6pt}

\begin{proposition}   \label{prop:27}
Let us denote 
$\cZ := \bigl\{ \uz \mid \uz : \sqcup_{n=1}^{\infty} 
NC(n) \setminus \{ 1_n \}  \to \bC \bigr\}$.

\vspace{6pt}

$1^o$ One has a bijection
$\cGtild \ni g \mapsto \uz \in \cZ$,
with $\uz$ obtained out of $g$ by putting 
\begin{equation}   \label{eqn:27a}
\uz ( \pi ) =  g( \pi , 1_n ) \mbox{ for every 
$n \geq 1$ and }
\pi \in NC(n) \setminus \{ 1_n \}.
\end{equation}

$2^o$ The inverse of the bijection from (\ref{eqn:27a}) is 
described as follows.  Given a $\uz \in \cZ$, 
we ``fill in'' values $\uz (1_n) = 1$ for all $n \geq 1$,
and then define $g : \NCint \to \bC$ by
\begin{equation}   \label{eqn:27c}
g( \pi , \sigma ) := \prod_{W \in \sigma} 
\uz ( \pi_{ { }_W } ), \ \ \forall \, 
( \pi, \sigma ) \in \NCint.
\end{equation}
Then $g \in \cGtild$, and is sent by the map from 
(\ref{eqn:27a}) onto the $\uz$ we started with.
\end{proposition}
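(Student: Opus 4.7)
The plan is to establish part $1^o$ by showing that the assignment $g \mapsto \uz$ is injective directly from the semi-multiplicativity condition, and then to obtain surjectivity by verifying that the explicit formula in part $2^o$ defines a valid element of $\cGtild$ that maps to a prescribed $\uz$.

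For injectivity, I would argue as follows. Suppose $g_1, g_2 \in \cGtild$ are such that $g_1( \pi, 1_n) = g_2( \pi , 1_n)$ for all $n \geq 1$ and $\pi \in NC(n) \setminus \{ 1_n \}$. Since $g_i(1_n, 1_n) = 1$ by the unitization condition, we actually have $g_1(\pi , 1_n) = g_2(\pi, 1_n)$ for \emph{all} $\pi \in NC(n)$. For general $(\pi, \sigma) \in \NCint$, the semi-multiplicativity condition (\ref{eqn:26a}) expresses $g_i(\pi , \sigma)$ as a product of values of the form $g_i( \pi_{ { }_W }, 1_{ { }_{|W|} })$, and these agree for $i = 1,2$ block by block. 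Hence $g_1 = g_2$.

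For part $2^o$, I would take an arbitrary $\uz \in \cZ$, extend it by setting $\uz(1_n) = 1$ for every $n \geq 1$, and define $g$ by (\ref{eqn:27c}). The two things that must be verified are the unitization condition $g(\pi, \pi) = 1$ and the semi-multiplicativity condition (\ref{eqn:26a}). For the former, when $\sigma = \pi$ each block $W \in \pi$ has $\pi_{ { }_W } = 1_{ { }_{|W|} }$ (the restriction of $\pi$ to one of its own blocks is a single block), so $\uz( \pi_{ { }_W }) = 1$, and the product in (\ref{eqn:27c}) collapses to $1$. For the latter, one first computes $g( \pi_{ { }_W }, 1_{ { }_{|W|} })$ from the definition (\ref{eqn:27c}): since $1_{ { }_{|W|} }$ has only one block, this equals $\uz( \pi_{ { }_W })$. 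Taking the product over $W \in \sigma$ then recovers the right-hand side of (\ref{eqn:27c}), which is $g( \pi, \sigma)$. Hence $g \in \cGtild$.

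Finally, I would close the argument by observing that if one runs the map from part $1^o$ on this newly built $g$, one gets back the original $\uz$: for every $\pi \in NC(n) \setminus \{ 1_n \}$ the formula (\ref{eqn:27c}) applied at $\sigma = 1_n$ (a single block equal to $\{ 1, \ldots , n\}$) gives $g(\pi, 1_n) = \uz( \pi )$. Combined with injectivity, this shows that the map in (\ref{eqn:27a}) is a bijection and that (\ref{eqn:27c}) provides its inverse. I do not anticipate a genuine obstacle here; the one point requiring a little care is to make sure the restriction conventions of Notation \ref{def:12} interact correctly when iterated inside the product in (\ref{eqn:27c}), but the calculation is essentially bookkeeping once one notices that the restriction of $\pi_{ { }_W }$ to a single block of itself is again a full-block partition.
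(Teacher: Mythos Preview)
Your proposal is correct and follows essentially the same approach as the paper: the paper also treats injectivity as immediate from the semi-multiplicativity condition (\ref{eqn:26a}), and then verifies surjectivity exactly as you do, by checking that the $g$ defined via (\ref{eqn:27c}) satisfies $g(\pi,\pi)=1$ and the factorization (\ref{eqn:26a}), and is sent back to the given $\uz$ when evaluated at $\sigma=1_n$.
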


\begin{proof}  
Let $\uz \in \cZ$ be given and
let $g : \NCint \to \bC$ be defined as in (\ref{eqn:27c}).
Then (\ref{eqn:27a}) is satisfied, because it is the 
special case ``$\sigma = 1_n$'' of (\ref{eqn:27c}).
Upon combining (\ref{eqn:27c}) and (\ref{eqn:27a}), we thus 
see that $g$ satisfies the factorization
condition indicated in (\ref{eqn:26a}).  We have moreover that
$g( \pi, \pi ) = \prod_{W \in \pi} z( 1_{|W|} ) = 1,
\ \ \forall \, \pi \in \sqcup_{n=1}^{\infty} NC(n)$,
so we conclude that $g \in \cGtild$.  Clearly, this $g$ 
is sent by the map (\ref{eqn:27a}) into the $\uz \in \cZ$ 
that we started with.

The argument in the preceding paragraph covers at the 
same time the surjectivity which was left to check
in part $1^o$ of the proposition, and the inverse 
description in part $2^o$.
\end{proof}

\begin{remark}   \label{rem:28}
Recall that, in parallel with $1_n \in NC(n)$, 
one uses the notation ``$0_n$'' for the partition in 
$NC(n)$ which has $n$ blocks of cardinality $1$.
We warn the reader that $0_n$ and $1_n$ do not play
symmetric roles in the study of $\cGtild$.  Indeed, 
it is immediate that if $g \in \cGtild$ corresponds 
to a $\uz \in \cZ$ in the way described in
Proposition \ref{prop:27}, then we have
\begin{equation}   \label{eqn:28a}
g( 0_n , \sigma ) 
= \prod_{W \in \sigma} g( 0_{|W|} , 1_{|W|} )  
= \prod_{W \in \sigma} \uz ( 0_{|W|} ), 
\ \ \forall \, n \geq 1 \mbox{ and } \sigma \in NC(n),
\end{equation}
quite different from the Equation (\ref{eqn:27a}) 
giving the values $g( \pi , 1_n )$.
\end{remark}

$\ $

\section{$\cGtild$ is a group under convolution}

\noindent
In this section we prove that
$\cGtild$ is a subgroup of the convolution group
$(\cF, *)$ considered in Remark \ref{def:24}.
We start by observing that the semi-multiplicativity condition 
(\ref{eqn:26a}) has an automatic upgrade to a ``local'' version,
shown in the next lemma (where the special case 
$U = \{ 1, \ldots , n \}$ retrieves the original definition of 
semi-multiplicativity).

\vspace{6pt}

\begin{lemma}   \label{lemma:31}
{\em (Local semi-multiplicativity.)}

\noindent
Let $n \geq 1$ and $\pi, \sigma \in NC(n)$ be 
such that $\pi \leq \sigma$.  Let $U$ be a non-empty subset 
of $\{ 1, \ldots , n \}$ which is a union of blocks of 
$\sigma$.  For every $g \in \cGtild$ one has:
\begin{equation}   \label{eqn:31a}
g( \pi_{{ }_U}, \sigma_{{ }_U} ) =
\prod_{ \substack{W \in \sigma , \\ W \subseteq U} } 
\ g( \pi_{{ }_W}, 1_{|W|} ),
\end{equation}
where the relabeled-restrictions ($\pi_{{ }_U}$ and such)
are in the sense of Notation \ref{def:12}.
\end{lemma}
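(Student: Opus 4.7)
The plan is to reduce the claimed ``local'' formula to the ``global'' semi-multiplicativity condition (\ref{eqn:26a}), applied after first restricting everything to $U$. The only real work is bookkeeping about how relabeled-restrictions interact with each other.

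First I would verify that the restrictions $\pi_{ { }_U}$ and $\sigma_{ { }_U}$ are actually available and satisfy $\pi_{ { }_U} \leq \sigma_{ { }_U}$ in $NC(|U|)$. Here the hypothesis that $U$ is a union of blocks of $\sigma$, combined with $\pi \leq \sigma$ (blocks of $\sigma$ are unions of blocks of $\pi$), forces $U$ to also be a union of blocks of $\pi$, and the inequality $\pi_{ { }_U} \leq \sigma_{ { }_U}$ is then immediate from the definition of the relabeled-restriction in Notation \ref{def:12}. With this in hand, applying (\ref{eqn:26a}) to the pair $( \pi_{ { }_U}, \sigma_{ { }_U} ) \in \NCint$ yields
\[
g( \pi_{ { }_U}, \sigma_{ { }_U} )
= \prod_{W' \in \sigma_{ { }_U}}
g\bigl( (\pi_{ { }_U})_{ { }_{W'}} , 1_{|W'|} \bigr).
\]

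Next I would identify the index set on the right with the set $\{ W \in \sigma : W \subseteq U \}$ appearing in the target formula. Writing $U = \{ p_1 < \cdots < p_m \}$, the blocks of $\sigma_{ { }_U}$ are precisely the images under the order-preserving bijection $p_i \mapsto i$ of those blocks of $\sigma$ that are contained in $U$; so $W \mapsto W'$ is a bijection $\{ W \in \sigma : W \subseteq U \} \to \sigma_{ { }_U}$ with $|W| = |W'|$. The one point that actually requires a check is the identification of the iterated restriction $(\pi_{ { }_U})_{ { }_{W'}}$ with $\pi_{ { }_W}$ as elements of $NC(|W|)$; but this is just the transitivity of restriction-and-relabeling: both partitions classify $i, j \in \{1, \ldots, |W|\}$ into the same block precisely when the $i$-th and $j$-th elements of $W$ (in increasing order) belong to the same block of $\pi$. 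Substituting into the display above produces exactly (\ref{eqn:31a}).

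I do not foresee a real obstacle here; the lemma is essentially a tautological strengthening of the definition, and the only care needed is in verifying the ``transitivity of relabeled-restriction'' identity $(\pi_{ { }_U})_{ { }_{W'}} = \pi_{ { }_W}$, which I would just unfold directly from Notation \ref{def:12}.
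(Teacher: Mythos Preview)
Your proposal is correct and follows essentially the same route as the paper: apply the defining semi-multiplicativity condition (\ref{eqn:26a}) to the restricted pair $(\pi_{{ }_U}, \sigma_{{ }_U})$, identify the blocks of $\sigma_{{ }_U}$ with the blocks $W \in \sigma$ contained in $U$, and check the transitivity identity $(\pi_{{ }_U})_{{ }_{W'}} = \pi_{{ }_W}$. The paper's version differs only cosmetically, writing out the blocks explicitly as $W_1, \ldots, W_k$ and their images $T_1, \ldots, T_k$ in $\{1, \ldots, |U|\}$ before invoking the same transitivity identity (which it likewise leaves as an exercise).
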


\begin{proof} We write explicitly
$U = W_1 \cup \cdots \cup W_k$ with blocks $W_1, \ldots , W_k \in \sigma$,
and for every $1 \leq i \leq k$ we write 
$W_i = W_{i,1}\cup \cdots \cup W_{i,p_i}$ with blocks
$W_{i,1}, \ldots , W_{i,p_i} \in \pi$.  It follows that the partitions
$\pi_{ { }_U }, \sigma_{ { }_U } \in NC( |U| )$ are of the form
\[
\sigma_{ { }_U } = \{ T_1, \ldots , T_k \} \ \mbox{ and } 
\ \pi_{ { }_U } = \{ T_{1,1}, \ldots , T_{1,p_1}, \ldots ,
T_{k,1}, \ldots , T_{k,p_k} \},
\]
with $T_i = T_{i,1} \cup \cdots \cup T_{i,p_i} \subseteq
\{ 1, \ldots , |U| \}$ for every $1 \leq i \leq k$. 
We leave it as an exercise to the reader to follow the 
necessary relabeled-restrictions and verify that for every 
$1 \leq i \leq k$ we have
\begin{equation}  \label{eqn:31b}
( \pi_{ { }_U } )_{ { }_{T_i} } = \pi_{ { }_{W_i} }
\mbox{ (equality of partitions in $NC(n_i)$, with
        $n_i = |W_i| = |T_i|$).}
\end{equation}

When applied to the partitions $\pi_{{ }_U} \leq \sigma_{{ }_U}$
in $NC( |U| )$, the original semi-multiplicativity condition
from (\ref{eqn:26a}) says that
\[
g( \pi_{{ }_U}, \sigma_{{ }_U} ) =
\prod_{i=1}^k g \bigl( \, 
( \pi_{ { }_U } )_{{ }_{T_i}} , 1_{|T_i|} \, \bigr).
\]
On the right-hand side of the latter equation we replace
$( \pi_{ { }_U } )_{ { }_{T_i} }$ by $\pi_{ { }_{W_i} }$ and 
$1_{|T_i|}$ by $1_{|W_i|}$, and the required formula 
(\ref{eqn:31a}) follows.
\end{proof}

\vspace{6pt}

As an application of local semi-multiplicativity, we 
get the following fact.

\vspace{6pt}

\begin{lemma}   \label{lemma:32}
Let $n \geq 1$, let $\pi , \rho, \sigma \in NC(n)$ with
$\pi \leq \rho \leq \sigma$, and let $g \in \cGtild$. One has
\begin{equation}   \label{eqn:32a}
g( \pi, \rho ) = \prod_{U \in \sigma} 
\ g( \pi_{{ }_U}, \rho_{{ }_U} ).
\end{equation}
\end{lemma}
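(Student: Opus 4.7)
The plan is to derive (\ref{eqn:32a}) directly from the semi-multiplicativity of $g$ applied to $\pi \leq \rho$, followed by a regrouping of the factors according to the blocks of $\sigma$, and then an application of Lemma \ref{lemma:31}.

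First I would apply the original semi-multiplicativity condition (\ref{eqn:26a}) to the pair $\pi \leq \rho$ in $NC(n)$, obtaining
\[
g( \pi , \rho ) \ = \ \prod_{W \in \rho} g( \pi_{ { }_W }, 1_{|W|} ).
\]
Next, since $\rho \leq \sigma$, every block $W \in \rho$ is contained in a unique block $U \in \sigma$. This lets me partition the index set $\rho$ into the disjoint groups $\{ W \in \rho \mid W \subseteq U \}$ as $U$ ranges over $\sigma$, and rewrite the product above as a double product
\[
g( \pi , \rho ) \ = \ \prod_{U \in \sigma} \ \prod_{ \substack{W \in \rho, \\ W \subseteq U} } g( \pi_{ { }_W }, 1_{|W|} ).
\]

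Finally, for each fixed $U \in \sigma$, I would observe that $U$ is a non-empty union of blocks of $\rho$, so Lemma \ref{lemma:31} applied to the pair $\pi \leq \rho$ with this choice of $U$ gives exactly
\[
g( \pi_{ { }_U }, \rho_{ { }_U } ) \ = \ \prod_{ \substack{W \in \rho, \\ W \subseteq U} } g( \pi_{ { }_W }, 1_{|W|} ).
\]
Substituting this identity into each inner product then yields (\ref{eqn:32a}).

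I do not expect a real obstacle here: the argument is essentially a two-step bookkeeping exercise, and the genuine content has already been absorbed into Lemma \ref{lemma:31}. The only point that requires any care is verifying that the grouping of blocks $W \in \rho$ by their enclosing block $U \in \sigma$ is consistent with the relabeled-restrictions appearing in Lemma \ref{lemma:31}, i.e.\ that the factors $g( \pi_{ { }_W }, 1_{|W|} )$ on the two sides really match. This is precisely what was spelled out via the identity $( \pi_{ { }_U } )_{ { }_{T_i} } = \pi_{ { }_{W_i} }$ inside the proof of Lemma \ref{lemma:31}, so it can be invoked without any further work.
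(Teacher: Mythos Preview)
Your proof is correct and takes essentially the same approach as the paper: both use semi-multiplicativity of $g$ on the pair $\pi \leq \rho$, regroup the blocks $W \in \rho$ according to the block $U \in \sigma$ containing them, and invoke Lemma \ref{lemma:31} to identify each inner product with $g(\pi_{{ }_U}, \rho_{{ }_U})$. The only cosmetic difference is that you start from $g(\pi,\rho)$ and work toward the product over $U \in \sigma$, whereas the paper starts from that product and works back to $g(\pi,\rho)$.
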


\begin{proof}  Every block $U \in \sigma$ is a union 
of blocks of $\rho$, hence Lemma \ref{lemma:31} can be 
invoked in connection to this $U$ and the partitions
$\pi \leq \rho$ to infer that
\begin{equation}  \label{eqn:32b}
g( \pi_{{ }_U}, \rho_{{ }_U} ) =
\prod_{ \substack{W \in \rho , \\ W \subseteq U} } 
\ g( \pi_{{ }_W}, 1_{|W|} ).
\end{equation}
We thus find that 
\begin{align*}
\prod_{U \in \sigma} \ g( \pi_{{ }_U}, \rho_{{ }_U} )
& = \prod_{U \in \sigma}
\Bigl(  \prod_{ \substack{W \in \rho , \\ W \subseteq U} } 
\ g( \pi_{{ }_W}, 1_{|W|} ) \Bigr)
\mbox{ (by (\ref{eqn:32b}))}                 \\
& = \prod_{W \in \rho}
\ g( \pi_{{ }_W}, 1_{|W|} ) \Bigr)       \\
& = g( \pi , \rho )
\ \ \mbox{ (by definition of semi-multiplicativity), }
\end{align*}
and the required formula (\ref{eqn:32a}) is obtained. 
\end{proof}

\vspace{6pt}

\begin{theorem}    \label{thm:33}
$\cGtild$ is a subgroup of $( \cF , * )$.
\end{theorem}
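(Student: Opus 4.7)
The inclusion $\cGtild \subseteq \cF$ is immediate from the defining condition $g(\pi,\pi) = 1$ for $g \in \cGtild$. Since $(\cF, *)$ is already a group (Notation and Remark \ref{def:24}), it suffices to show that $\cGtild$ is closed under convolution and under taking inverses. Both arguments hinge on the same mechanism: the canonical factorization of intervals $[\pi, \sigma] \approx \prod_{W \in \sigma}[\pi_{{ }_W}, 1_{ { }_{|W|} }]$ from (\ref{eqn:12b}). Concretely, giving $\rho \in NC(n)$ with $\pi \leq \rho \leq \sigma$ is the same as giving, independently for each block $W \in \sigma$, an element $\rho' \in [\pi_{{ }_W}, 1_{ { }_{|W|} }]$, which is then equal to $\rho_{{ }_W}$. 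This combinatorial decoupling is what allows sums over $[\pi,\sigma]$ to factor into products of sums over the $[\pi_{{ }_W}, 1_{ { }_{|W|} }]$.

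For closure under convolution, I would fix $g, h \in \cGtild$ and $\pi \leq \sigma$ in $NC(n)$, and in the sum
$$ (g*h)(\pi, \sigma) = \sum_{\pi \leq \rho \leq \sigma} g(\pi, \rho)\, h(\rho, \sigma) $$
rewrite $g(\pi, \rho) = \prod_{W \in \sigma} g(\pi_{{ }_W}, \rho_{{ }_W})$ using Lemma \ref{lemma:32} and $h(\rho, \sigma) = \prod_{W \in \sigma} h(\rho_{{ }_W}, 1_{ { }_{|W|} })$ using the semi-multiplicativity of $h$. The interval factorization above then lets the sum over $\rho$ be carried out block-by-block, giving
$$ (g*h)(\pi, \sigma) = \prod_{W \in \sigma} \Bigl( \sum_{\rho' \in [\pi_{{ }_W}, 1_{ { }_{|W|} }]} g(\pi_{{ }_W}, \rho')\, h(\rho', 1_{ { }_{|W|} }) \Bigr) = \prod_{W \in \sigma}(g*h)(\pi_{{ }_W}, 1_{ { }_{|W|} }), $$
which is precisely the semi-multiplicativity required of $g*h$.

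For closure under inverses, given $g \in \cGtild$ I propose the explicit candidate $\widetilde{g} : \NCint \to \bC$ defined by $\widetilde{g}(\pi, \sigma) := \prod_{W \in \sigma} g^{-1}(\pi_{{ }_W}, 1_{ { }_{|W|} })$, where $g^{-1}$ is the convolution inverse already known to exist in $\cF$. By construction $\widetilde{g}$ satisfies (\ref{eqn:26a}) and has $\widetilde{g}(\pi,\pi) = \prod_{W \in \pi} g^{-1}(1_{ { }_{|W|} }, 1_{ { }_{|W|} }) = 1$, so $\widetilde{g} \in \cGtild$. It remains to check that $g * \widetilde{g} = e$. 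Running the same block-by-block factorization as in the previous paragraph yields
$$ (g * \widetilde{g})(\pi, \sigma) = \prod_{W \in \sigma} (g * g^{-1})(\pi_{{ }_W}, 1_{ { }_{|W|} }) = \prod_{W \in \sigma} e(\pi_{{ }_W}, 1_{ { }_{|W|} }); $$
the final product equals $1$ exactly when $\pi_{{ }_W} = 1_{ { }_{|W|} }$ for every $W \in \sigma$ (equivalently, when every block of $\sigma$ is already a block of $\pi$, i.e.\ $\pi = \sigma$) and vanishes otherwise. This is precisely $e(\pi,\sigma)$, so $g^{-1} = \widetilde{g} \in \cGtild$.

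The only real content of the argument is the interval-factorization of the sum over $[\pi,\sigma]$, which is a restatement of the ``first kind of isomorphism'' from Remark \ref{rem:12}; I expect the mild bookkeeping of tracking the relabeled-restrictions $\rho_{{ }_W}$ across the two factors in $(g*h)(\pi,\sigma)$ to be the only step that requires care. Once that decoupling is in hand, both closure properties drop out of essentially the same two-line computation, and the theorem follows.
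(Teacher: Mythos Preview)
Your proof is correct, and for closure under $*$ it is essentially identical to the paper's: both invoke Lemma \ref{lemma:32} for the left factor, plain semi-multiplicativity for the right factor, and the interval bijection $[\pi,\sigma] \approx \prod_{W \in \sigma}[\pi_{{ }_W}, 1_{|W|}]$ to decouple the convolution sum block-by-block.

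For closure under inverses the two arguments diverge slightly. The paper defines the candidate $h$ by a recursion on $|\pi|$ (effectively recomputing the values $g^{-1}(\pi,1_n)$ from scratch), invokes Proposition \ref{prop:27} to place $h$ in $\cGtild$, and then uses the just-proved closure under $*$ together with uniqueness on couples $(\pi,1_n)$ to conclude $g*h=e$. You instead take the already-existing inverse $g^{-1}\in\cF$, form its semi-multiplicative extension $\widetilde g$, and rerun the same block factorization to get $g*\widetilde g=e$ directly. Your route is a bit more economical: it recycles the existence of $g^{-1}$ in $\cF$ and avoids both the recursion and the appeal to part $1^o$ as a lemma. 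The paper's route is more self-contained in that it never needs to name $g^{-1}$ before proving it lies in $\cGtild$. Either way the substance is the same interval-factorization trick.
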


\begin{proof}
$1^o$ {\em We pick two functions $g_1, g_2 \in \cGtild$, 
and we prove that $g_1 * g_2$ is in $\cGtild$ as well.}

\noindent
The function $g_1 * g_2 : \NCint \to \bC$ can be 
in any case considered as an element of the larger group 
$\cF$.  Proposition \ref{prop:27} gives us a function 
$g \in \cGtild$ such that 
\begin{equation}   \label{eqn:33a}
g( \pi, 1_n) = g_1 * g_2 \, ( \pi , 1_n )
\ \mbox{ for every 
$n \geq 1$ and $\pi \in NC(n)$.}
\end{equation}
We will prove that, for this $g$, we actually have
\begin{equation}   \label{eqn:33b}
g( \pi, \sigma ) = g_1 * g_2 \, ( \pi , \sigma )
\ \mbox{ for every 
$n \geq 1$ and $\pi \leq \sigma$ in $NC(n)$.}
\end{equation}
This will imply in particular that 
$g_1 * g_2 = g \in \cGtild$, as required.

So let us fix an $n \geq 1$ and some $\pi \leq \sigma$
in $NC(n)$, for which we will verify that (\ref{eqn:33b})
holds.  We write explicitly 
$\sigma = \{ W_1, \ldots , W_k \}$, and we calculate as
follows:
\begin{align*}
g( \pi , \sigma )
& = \prod_{i=1}^k g( \pi_{{ }_{W_i}}, 1_{|W_i|} )
\ \ \mbox{ (by semi-multiplicativity)}                \\
& = \prod_{i=1}^k g_1 * g_2 \, ( \pi_{{ }_{W_i}}, 1_{|W_i|} )
\ \ \mbox{ (by (\ref{eqn:33a}))}                        
\end{align*}
\[
= \prod_{i=1}^k  \Bigl( \sum_{ \begin{array}{c} 
{\scriptstyle \rho_i \in NC( |W_i| ), }   \\
{\scriptstyle \rho_i \geq \pi_{{ }_{W_i}} }
\end{array} } \, g_1 ( \pi_{{ }_{W_i}}, \rho_i ) \cdot
                 g_2 ( \rho_i, 1_{|W_i|} ) \Bigr)
\mbox{ (by the def. of ``$*$'')}
\]
\begin{equation}   \label{eqn:33c}
= \sum_{ \begin{array}{c}
{\scriptstyle \rho_1 \geq \pi_{{ }_{W_1}} \in NC(|W_1|), \ldots} \\ 
{\scriptstyle \ldots , \rho_k \geq \pi_{{ }_{W_k}} \in NC(|W_k|)} 
\end{array} } \Bigl( \prod_{i=1}^k g_1 ( \pi_{{ }_{W_i}}, \rho_i ) \Bigr) 
\cdot \Bigl( \prod_{i=1}^k g_2 ( \rho_i, 1_{|W_i|} ) \Bigr),
\end{equation}
where the latter equality is obtained by expanding the product from 
the preceding line.

But now, one has a natural order-preserving bijection 
\begin{equation}    \label{eqn:33d}
\left\{   \begin{array}{rcl}
\{ \rho \in NC(n) \mid \rho \leq \sigma \} & \longrightarrow
       & NC( |W_1| ) \times \cdots \times NC( |W_k| ),         \\
\rho & \mapsto & ( \rho_{{ }_{W_1}}, \ldots , \rho_{{ }_{W_k}} ),
\end{array}   \right.
\end{equation}
where the relabeled-restrictions 
$\rho_{{ }_{W_1}}, \ldots \rho_{{ }_{W_k}}$ are 
as described in Notation \ref{def:12}.  We observe that 
the bijection from (\ref{eqn:33d}) sends the set
$\{ \rho \in NC(n) \mid \pi \leq \rho \leq \sigma \}$ onto
\[
\Bigl\{ ( \rho_1, \ldots , \rho_k ) \in NC( |W_1| ) 
\times \cdots \times NC( |W_k| ) \mid
\rho_1 \geq \pi_{{ }_{W_1}}, \ldots , \rho_k \geq \pi_{{ }_{W_k}} 
\Bigr\} .
\]
Consequently, the latter bijection can be used in order to perform a 
``change of variable'' in the summation from (\ref{eqn:33c}), and 
turn it into a summation over the set
$\{ \rho \in NC(n) \mid \pi \leq \rho \leq \sigma \}$.  When we perform
this change of variable we arrive to the formula
\begin{equation}   \label{eqn:33e}
g( \pi , \sigma ) = \sum_{ \begin{array}{c}
{\scriptstyle \rho \in NC(n),}   \\
{\scriptstyle \pi \leq \rho \leq \sigma}  
\end{array} } \ \Bigl( 
\prod_{i=1}^k g_1 ( \pi_{{ }_{W_i}}, \rho_{{ }_{W_i}} ) \Bigr) 
\cdot \Bigl( 
\prod_{i=1}^k g_2 ( \rho_{{ }_{W_i}}, 1_{|W_i|} ) \Bigr).
\end{equation}

At this point we recognize the products on the right-hand side
of (\ref{eqn:33e}) as
\begin{equation}   \label{eqn:33f}
\left\{  \begin{array}{l}
\prod_{i=1}^k g_1 ( \pi_{{ }_{W_i}}, \rho_{{ }_{W_i}} )
= g_1 ( \pi , \rho ) \ \ \mbox{ (by Lemma \ref{lemma:32} 
                                    for $g_1$), and}       \\
                                                           \\
\prod_{i=1}^k g_2 ( \rho_{{ }_{W_i}}, 1_{|W_i|} ) 
= g_2 ( \rho , \sigma ) \ \ \mbox{ (by plain 
       semi-multiplicativity for $g_2$).}
\end{array}  \right.
\end{equation}
Upon substituting (\ref{eqn:33f}) into (\ref{eqn:33e}), 
we arrive to
\[
g( \pi , \sigma ) = \sum_{ \begin{array}{c}
{\scriptstyle \rho \in NC(n),}   \\
{\scriptstyle \pi \leq \rho \leq \sigma}  
\end{array} }  g_1 ( \pi , \rho ) \cdot g_2 ( \rho , \sigma )
= g_1 * g_2 \, ( \pi , \sigma ),
\mbox{ as required in (\ref{eqn:33b}).}
\]

\vspace{6pt}

$2^o$ {\em We pick a $g \in \cGtild$ and we prove
that $g^{-1}$ (inverse under convolution) is in 
$\cGtild$ as well.}

\noindent
The inverse $g^{-1}$ of $g$ can be in any case 
considered in the larger group $\cF$.  Our task 
here is to prove that $g^{-1}$ belongs in fact to $\cGtild$.

For every $n \geq 1$ we define a family of complex numbers
$\{ \uz( \pi ) \mid \pi \in NC(n) \}$ in the way described 
as follows.  We first put $\uz (1_n) =1$, then for 
$\pi \in NC(n) \setminus \{ 1_n \}$ we proceed by 
induction on the number $| \pi |$ of blocks of $\pi$ and put
\begin{equation}   \label{eqn:33g}
\uz( \pi ) := - \sum_{  \begin{array}{c}
{\scriptstyle \sigma \in NC(n),}  \\
{\scriptstyle \sigma \geq \pi, \ \sigma \neq \pi}
\end{array}  } \ g( \pi, \sigma ) \uz ( \sigma ).
\end{equation}
Note that all the values $\uz( \sigma )$ invoked on the 
right-hand side of (\ref{eqn:33g}) can indeed be used in this
inductive definition, since the conditions 
$\sigma \geq \pi, \ \sigma \neq \pi$ imply
that $| \sigma | < | \pi |$.

Proposition \ref{prop:27} gives us a function $h \in \cGtild$ 
such that $h( \pi, 1_n) = \uz( \pi )$ for every $n \geq 1$ 
and $\pi \in NC(n)$.  It is immediate that, with $h$ so defined, 
Equation (\ref{eqn:33g}) can be read as saying that
\begin{equation}   \label{eqn:33h}
g * h \, ( \pi , 1_n ) = 0,
\ \mbox{ for every 
$n \geq 1$ and $\pi \in NC(n) \setminus \{ 1_n \}$. }
\end{equation}
Now, in view of part $1^o$ of the present {proof}, 
we have that $g * h \in \cGtild$.
Equation (\ref{eqn:33h}) states that $g * h$ agrees with 
the unit $e$ of $\cGtild$ on all couples $( \pi , 1_n )$ 
with $n \geq 1$ and $\pi \in NC(n) \setminus \{ 1_n \}$.
Since an element of 
$\cGtild$ is uniquely determined by its values on such 
couples $( \pi , 1_n )$, we conclude that $g * h = e$.

Upon reading the equality $g * h = e$ in the larger group 
$\cF$, we see that $h = g^{-1}$.  Hence 
$g^{-1} = h \in \cGtild$, as we had to prove.
\end{proof}

$\ $

\section{Multiplicative vs semi-multiplicative:
the inclusion $\cG \subseteq \cGtild$}

\noindent
In this section we briefly review the situation when a 
function $g \in \cGtild$ is also required to respect the 
second kind of isomorphism reviewed in
Remark \ref{rem:12}, and is thus a 
{\em multiplicative function on non-crossing partitions}
in the sense considered by Speicher \cite{Sp1994}.
It is easily seen that in order to upgrade to this 
situation, it suffices to require $g$ to be well-behaved 
with respect to the isomorphism 
$[ \theta , 1_k ] \approx [ 0_k , \Kr ( \theta ) ]$ 
mentioned at the beginning of the line in (\ref{eqn:12c}).
We can therefore go with the following concise definition.

\vspace{6pt}

\begin{definition}    \label{def:41}
Consider the group of semi-multiplicative functions $\cGtild$ 
discussed in Sections 3 and 4.
A function $g \in \cGtild$ will be said to be {\em multiplicative}
when it has the property that
\begin{equation}   \label{eqn:41a}
g( \pi , 1_n ) = g ( 0_n, \Kr ( \pi ) ),
\ \ \forall \, n \geq 1 \mbox{ and } 
\pi \in NC(n),
\end{equation}
where $\Kr$ is the Kreweras complementation map on $NC(n)$.
We will denote 
\begin{equation}   \label{eqn:41b}
\cG := \{ g \in \cGtild \mid g 
\mbox{ satisfies the condition (\ref{eqn:41a})} \} .
\end{equation}
\end{definition}

\vspace{6pt}

\begin{remark}   \label{rem:42}
Let $g$ be a function in $\cG$ and let us denote 
\begin{equation}   \label{eqn:42a}
g( 0_n, 1_n) =: \lambda_n, \ \ n \geq 1.
\end{equation}
Upon combining (\ref{eqn:41a}) with the formula for 
$g( 0_n , \sigma )$ that had been recorded in Remark 
\ref{rem:28}, we find that for every $n \geq 1$ and 
$\pi \in NC(n)$ we have
\begin{equation}   \label{eqn:42b}
g( \pi, 1_n ) = 
\prod_{W \in \Kr ( \pi )} \lambda_{ |W| }.
\end{equation}
This generalizes to  
\begin{equation}   \label{eqn:42c}
g( \pi, \sigma ) = 
\prod_{W \in \Kr_{\sigma} ( \pi )} \lambda_{ |W| },
\ \ \forall \, n \geq 1 \mbox{ and } 
\pi \leq \sigma \mbox{ in } NC(n),
\end{equation}
where $\Kr_{\sigma} ( \pi )$ is the relative Kreweras 
complement of $\pi$ in $\sigma$.  Indeed, 
Equation (\ref{eqn:42c}) follows easily from 
(\ref{eqn:42b}) when we invoke the semi-multiplicativity 
factorization (\ref{eqn:26a}) and then take into account 
that $\Kr_{\sigma} ( \pi )$ is obtained by performing in 
parallel Kreweras complementation on all the restricted 
partitions $\pi_{ { }_W }$, with $W$ running among the 
blocks of $\sigma$.

In connection to the above, we have the following statement.
\end{remark}

\begin{proposition}   \label{prop:43}
Let $\lambdans$ be a sequence in $\bC$, with $\lambda_1 = 1$.
There exists a multiplicative function $g \in \cG$, uniquely
determined, such that $g( 0_n, 1_n) = \lambda_n$ for all 
$n \geq 1$.
\end{proposition}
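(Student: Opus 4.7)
The plan is to establish uniqueness directly from the formula (\ref{eqn:42b}) obtained in Remark \ref{rem:42}, and to establish existence by constructing an explicit $\underline{z} \in \cZ$ via Proposition \ref{prop:27} and then verifying that the resulting semi-multiplicative function happens to satisfy the extra condition (\ref{eqn:41a}).

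For uniqueness: any $g \in \cG$ is in particular an element of $\cGtild$, hence is fully determined by its values on pairs $(\pi, 1_n)$ (cf.~the bijection of Proposition \ref{prop:27}). But Equation (\ref{eqn:42b}) in Remark \ref{rem:42} already expresses every such value $g(\pi, 1_n)$ as a product $\prod_{W \in \Kr(\pi)} \lambda_{|W|}$ of the prescribed data $\lambda_n = g(0_n, 1_n)$. So at most one candidate function can exist.

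For existence: I would define $\underline{z} : \sqcup_{n=1}^\infty NC(n) \setminus \{1_n\} \to \bC$ by the formula $\underline{z}(\pi) := \prod_{W \in \Kr(\pi)} \lambda_{|W|}$, and invoke Proposition \ref{prop:27} to produce the associated $g \in \cGtild$ with $g(\pi, 1_n) = \underline{z}(\pi)$ for all $\pi \neq 1_n$. I would first check that $g(0_n, 1_n) = \lambda_n$: since $\Kr(0_n) = 1_n$ is the one-block partition, the product collapses and yields $\underline{z}(0_n) = \lambda_n$ (with the convention that $\underline{z}(1_n)$ is filled in as $1$, consistent with $\lambda_1 = 1$ when $n=1$).

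The main step to verify is the multiplicativity condition (\ref{eqn:41a}). On the one hand, by construction $g(\pi, 1_n) = \prod_{W \in \Kr(\pi)} \lambda_{|W|}$. On the other hand, applying the semi-multiplicativity factorization (\ref{eqn:26a}) to the pair $(0_n, \Kr(\pi))$ -- exactly as in Equation (\ref{eqn:28a}) of Remark \ref{rem:28} -- gives
\begin{equation*}
g(0_n, \Kr(\pi)) = \prod_{W \in \Kr(\pi)} g(0_{|W|}, 1_{|W|}) = \prod_{W \in \Kr(\pi)} \lambda_{|W|},
\end{equation*}
using the value $g(0_{|W|}, 1_{|W|}) = \lambda_{|W|}$ just established. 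The two sides therefore agree, so $g \in \cG$. No serious obstacle is expected; the verification is essentially a bookkeeping exercise matching the Kreweras complement product against the semi-multiplicative factorization at $0_n$.
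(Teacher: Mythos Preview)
Your proof is correct. The paper's own argument is essentially the same idea---it too rests on the Kreweras-complement product formula---but it organizes things slightly differently: the paper \emph{defines} $g$ directly by the full formula (\ref{eqn:42c}) for $g(\pi,\sigma)$ in terms of relative Kreweras complements, and then defers the verification that this $g$ lies in $\cG$ to pages 164--167 of \cite[Lecture 10]{NiSp2006} (that verification includes checking semi-multiplicativity from scratch). Your route is more economical within the framework already developed here: by invoking Proposition \ref{prop:27} you obtain semi-multiplicativity for free, leaving only the short verification of condition (\ref{eqn:41a}) via Remark \ref{rem:28}. Same content, cleaner packaging.
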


The uniqueness part of Proposition \ref{prop:43} is 
clearly implied by the formula (\ref{eqn:42c}).  For the 
existence part, one {\em defines} $g$ by using the 
formula (\ref{eqn:42c}), and then proves (via a 
discussion very similar to the one on pages 164-167 of 
\cite[Lecture 10]{NiSp2006}) that $g \in \cG$. 

\vspace{6pt}

\begin{remark}   \label{rem:44}
It turns out that $\cG$ is in fact a {\em subgroup} 
of $\cGtild$.  For the proof of this fact we refer to 
\cite[Theorem 18.11]{NiSp2006}.  Due to some basic symmetry  
properties enjoyed by the Kreweras complementation map it 
turns out, moreover, that $\cG$ (unlike $\cGtild$) is a 
commutative group -- see \cite[Corollary 17.10]{NiSp2006}.
\end{remark}

$\ $

\section{The action of $\cGtild$ on sequences
of multilinear functionals}

\noindent
The relevance of the group $\cGtild$ for non-commutative
probability considerations stems from a natural action that
this group has on certain sequences of multilinear functionals. 
In order to describe this action, it is convenient to introduce
the following notation.

\vspace{6pt}

\begin{notation}   \label{def:51}
Let $\cA$ be a vector space over $\bC$.  We denote
\begin{equation}   \label{eqn:51a}
\fM_{ { }_{\cA} } := \{ \uPsi \mid 
\uPsi = ( \psi_n : \cA^n \to \bC )_{n=1}^{\infty},
\mbox{ where $\psi_n$ is an $n$-linear functional} \}.
\end{equation}
\end{notation}

\vspace{6pt}

\begin{remark-and-notation}   \label{rem:52}
$1^o$ In Notation \ref{def:51} we did not need to assume 
that $\cA$ is an algebra, or that it comes endowed with an 
expectation functional $\varphi : \cA \to \bC$.  If that would 
be the case, and we would thus be dealing with a non-commutative 
probability space $( \cA , \varphi )$, then the set 
$\fM_{ { }_{\cA} }$ would get to have a special element 
$\uPhi = ( \varphi_n : \cA^n \to \bC)_{n=1}^{\infty}$ where
\begin{equation}   \label{eqn:52a}
\varphi_n (x_1, \ldots , x_n ) := \varphi (x_1 \cdots x_n), 
\ \ \forall \, x_1, \ldots , x_n \in \cA.
\end{equation}
Such a $\uPhi$ is called ``family of moment functionals'' of 
$( \cA , \varphi )$.  

\vspace{6pt}

$2^o$  Given a $\uPsi = ( \psi_n )_{n=1}^{\infty}$ as in 
(\ref{eqn:51a}), there is a standard way of enlarging $\uPsi$ by 
adding to it some multilinear functionals indexed by non-crossing 
partitions.  More precisely: for any $n \geq 1$ and $\pi \in NC(n)$, 
it is customary to denote as $\psi_{\pi} : \cA^n \to \bC$ the 
multilinear functional which acts by 
\begin{equation}   \label{eqn:52b}
\psi_{\pi} (x_1, \ldots , x_n)
= \prod_{V \in \pi} \psi_{|V|} ( \, ( x_1, \ldots , x_n )
\mid V \, ), \ \ x_1, \ldots , x_n \in \cA .
\end{equation} 
[A concrete example: if we have, say, $n=5$ and 
$\pi = \{ \, \{ 1,2,5 \}, \, \{ 3,4 \} \, \} \in NC(5)$,
then the formula defining $\psi_{\pi}$ becomes 
$\psi_{\pi} (x_1, \ldots , x_5) :=
\psi_3 (x_1, x_2, x_5) \cdot \psi_2 (x_3, x_4)$.]

\noindent
The convention for how to enlarge $\uPsi$ is useful when 
we introduce the following notation.
\end{remark-and-notation}

\vspace{6pt}

\begin{notation}   \label{def:53}  
Let $\cA$ be a vector space over $\bC$.  For every 
$\uPsi = ( \psi_n )_{n=1}^{\infty} \in \fM_{ { }_{\cA} }$ and 
$g \in \cGtild$, we denote by ``$\uPsi \cdot g$'' the element 
$\uTheta = ( \theta_n )_{n=1}^{\infty} \in \fM_{ { }_{\cA} }$ 
defined by putting
\begin{equation}   \label{eqn:53a}
\theta_n = \sum_{\pi \in NC(n)} 
g( \pi , 1_n ) \psi_{\pi}, \ \ \forall \, n \geq 1.
\end{equation}
The right-hand side of (\ref{eqn:53a}) has a linear combination
done in the vector space of $n$-linear functionals from 
$\cA^n$ to $\bC$, where the $\psi_{\pi}$ are as defined in 
Notation \ref{rem:52}.2.
\end{notation}

\vspace{6pt}

We will prove that the map introduced in 
Notation \ref{def:53} is a group action.  It is 
convenient to first record an extension 
of the formula used to define $\uPsi \cdot g$.

\vspace{6pt}

\begin{lemma}  \label{lemma:54} 
Let $\uPsi , \uTheta \in \fMcA$ and $g \in \cGtild$ be such 
that $\uTheta = \uPsi \cdot g$. Consider the extended families 
of multilinear functionals 
$\{ \psi_{\pi} \mid \pi \in \sqcup_{n=1}^{\infty} NC(n) \}$   
and $\{ \theta_{\pi} \mid \pi \in \sqcup_{n=1}^{\infty} NC(n) \}$
that are obtained out of $\uPsi$ and $\uTheta$, respectively,
in the way indicated in Notation \ref{rem:52}.2.  Then for every 
$n \geq 1$ and $\sigma \in NC(n)$ one has
\begin{equation}    \label{eqn:54a}
\theta_{\sigma} = 
\sum_{ \substack{\pi \in NC(n), \\  \pi \leq \sigma} } 
\,  g( \pi , \sigma ) \psi_{\pi} .
\end{equation}
\end{lemma}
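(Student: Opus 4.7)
The plan is to prove the formula by expanding the product definition of $\theta_{\sigma}$, applying the definition of $\uTheta = \uPsi \cdot g$ block-by-block, and then recognizing the resulting sum as a sum over $\pi \leq \sigma$ using the semi-multiplicativity of $g$.

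First, I would fix $x_1, \ldots, x_n \in \cA$ and expand
\[
\theta_{\sigma}(x_1, \ldots, x_n) = \prod_{W \in \sigma} \theta_{|W|}\bigl((x_1, \ldots, x_n)\mid W\bigr),
\]
using the convention from Notation \ref{rem:52}.2. For each block $W \in \sigma$, the definition of $\uPsi \cdot g$ (the $\sigma = 1_{|W|}$ case of (\ref{eqn:53a})) gives
\[
\theta_{|W|}\bigl((x_1, \ldots, x_n)\mid W\bigr) = \sum_{\tau_W \in NC(|W|)} g(\tau_W, 1_{|W|}) \, \psi_{\tau_W}\bigl((x_1, \ldots, x_n)\mid W\bigr).
\]
Substituting and distributing the product over the sums yields a sum indexed by tuples $(\tau_W)_{W \in \sigma}$ with $\tau_W \in NC(|W|)$.

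Next I would invoke the standard order-preserving bijection (the same one used in (\ref{eqn:33d}))
\[
\{\pi \in NC(n) : \pi \leq \sigma\} \;\longleftrightarrow\; \prod_{W \in \sigma} NC(|W|), \qquad \pi \mapsto (\pi_{{}_W})_{W \in \sigma},
\]
to rewrite the sum over tuples as a sum over $\pi \in NC(n)$ with $\pi \leq \sigma$. Under this change of variable, the product of $\psi$-factors collapses via the multiplicative definition (\ref{eqn:52b}) of $\psi_{\pi}$ into $\psi_{\pi}(x_1, \ldots, x_n)$, while the product of $g$-factors becomes $\prod_{W \in \sigma} g(\pi_{{}_W}, 1_{|W|})$, which is exactly $g(\pi, \sigma)$ by the semi-multiplicativity condition (\ref{eqn:26a}).

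Putting this together yields
\[
\theta_{\sigma}(x_1, \ldots, x_n) = \sum_{\substack{\pi \in NC(n), \\ \pi \leq \sigma}} g(\pi, \sigma) \, \psi_{\pi}(x_1, \ldots, x_n),
\]
as required. The whole proof is mostly bookkeeping; the only substantive step is the combined use of semi-multiplicativity and the block-restriction bijection, both of which have already been set up in Sections 2 and 3, so I do not anticipate a genuine obstacle. The main thing to be careful about is simply matching up the relabeled-restrictions $\pi_{{}_W}$ correctly on both the $g$-side and the $\psi$-side when applying the product-to-sum bijection.
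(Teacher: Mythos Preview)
Your proposal is correct and is essentially identical to the paper's own proof: both expand $\theta_\sigma$ block-by-block, apply the defining formula of $\uPsi\cdot g$ to each factor, distribute the product into a sum over tuples $(\tau_W)_{W\in\sigma}$, and then use the bijection $\{\pi\leq\sigma\}\leftrightarrow\prod_{W\in\sigma}NC(|W|)$ together with semi-multiplicativity of $g$ and the multiplicative definition of $\psi_\pi$ to reach the stated formula.
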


\begin{proof} Let us write explicitly 
$\sigma = \{ W_1, \ldots , W_k \}$.  Then for every 
$x_1, \ldots , x_n \in \cA$ we have
\begin{align*}
\theta_{\sigma} (x_1, \ldots , x_n)
& = \prod_{j=1}^k \theta_{|W_j|} 
    \bigl( \, (x_1, \ldots , x_n) \mid W_j \, \bigr)
    \mbox{ (by the definition of $\theta_{\sigma}$)}   \\
& = \prod_{j=1}^k \Bigl( \sum_{\pi_j \in NC( |W_j| )}
    g( \pi_j, 1_{|W_j|}) \psi_{\pi_j} 
    \bigl( \, (x_1, \ldots , x_n) \mid W_j \, \bigr) \Bigr)
    \mbox{ (by Eqn.~(\ref{eqn:53a})). }
    \end{align*}
Upon expanding the latter product of $k$ factors,
we find $\theta_{\sigma} (x_1, \ldots , x_n)$ to be equal to
\begin{equation}   \label{eqn:54b}
\sum_{ \begin{array}{c}
    {\scriptstyle \pi_1 \in NC( |W_1| ), \ldots } \\
    {\scriptstyle \ldots ,  \pi_k \in NC( |W_k| )}
    \end{array} } 
\ \Bigl( \prod_{j=1}^k g( \pi_j, 1_{ |W_j| } ) \Bigr)
\cdot \Bigl( \prod_{j=1}^k \psi_{\pi_j}
\bigl( \, (x_1, \ldots , x_n) \mid W_j \, \bigr) \Bigr).
\end{equation}
But now, one has a natural bijection
\begin{equation}    \label{eqn:54c}
\left\{   \begin{array}{rcl}
\{ \pi \in NC(n) \mid \pi \leq \sigma \} & \longrightarrow
       & NC( |W_1| ) \times \cdots \times NC( |W_k| ),         \\
\pi & \mapsto & ( \pi_{W_1}, \ldots , \pi_{W_k} ),
\end{array}   \right.
\end{equation}
where the partitions $\pi_{W_j} \in NC( |W_j| )$ are 
relabeled-restrictions of $\pi$ (cf.~Notation \ref{def:12}).
When we use this bijection in order to perform a change of 
variables in the summation from (\ref{eqn:54b}), the 
semi-multiplicativity property of $g$ assures us that the
product $\prod_{j=1}^k g( \pi_j, 1_{ |W_j| } )$ is converted
into just ``$g( \pi , \sigma )$''.  On the other hand, it is 
easily checked that the said change of variable transforms 
$\prod_{j=1}^k \psi_{\pi_j}
\bigl( \, (x_1, \ldots , x_n) \mid W_j \, \bigr)$ into 
``$\psi_{\pi} (x_1, \ldots , x_n)$''.   Hence our computation of
what is $\theta_{\sigma} (x_1, \ldots , x_n )$ has lead to 
$\sum_{\pi \leq \sigma}
g( \pi , \sigma ) \cdot \psi_{\pi} (x_1, \ldots , x_n)$,
as required.
\end{proof} 

\vspace{6pt}

\begin{proposition}   \label{prop:55}
Let $\cA$ be a vector space over $\bC$.  The formula 
(\ref{eqn:53a}) from Notation \ref{def:53} defines an 
action of the group $\cGtild$ on the set $\fM_{ { }_{\cA} }$.
That is, one has
\begin{equation}   \label{eqn:55a}
( \uPsi \cdot g_1 ) \cdot g_2 =  \uPsi \cdot (g_1 * g_2),
\ \ \forall \uPsi \in \fM_{ { }_{\cA} } \mbox{ and }
g_1, g_2 \in \cGtild.
\end{equation}
\end{proposition}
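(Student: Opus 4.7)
The plan is to unfold the action twice and then interchange the order of summation in order to reveal the convolution of $g_1$ and $g_2$. I set $\uTheta := \uPsi \cdot g_1$, write $\{ \theta_{\pi} \mid \pi \in \sqcup_{n=1}^{\infty} NC(n) \}$ for the associated partition-indexed family obtained as in Notation \ref{rem:52}.2, and set $\uEta := \uTheta \cdot g_2$. It suffices to check that $\eta_n$ equals the $n$-th component of $\uPsi \cdot (g_1 * g_2)$ for every $n \geq 1$.

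First, by the defining formula (\ref{eqn:53a}) of the action, I would write
\[ \eta_n \ = \ \sum_{\rho \in NC(n)} g_2( \rho, 1_n ) \, \theta_{\rho}. \]
The crucial tool is Lemma \ref{lemma:54}, applied to $\uTheta = \uPsi \cdot g_1$: it allows me to expand each $\theta_{\rho}$ back in terms of the original $\uPsi$ as
\[ \theta_{\rho} \ = \ \sum_{ \substack{ \pi \in NC(n), \\ \pi \leq \rho } } g_1( \pi, \rho ) \, \psi_{\pi}. \]
Substituting and then swapping the two summations (summing first over $\pi \in NC(n)$ and then over $\rho$ satisfying $\pi \leq \rho \leq 1_n$) yields
\[ \eta_n \ = \ \sum_{\pi \in NC(n)} \psi_{\pi} \Bigl( \sum_{ \pi \leq \rho \leq 1_n } g_1( \pi, \rho ) \, g_2( \rho , 1_n ) \Bigr). \]
The inner parenthesis is exactly $(g_1 * g_2)( \pi , 1_n )$ by the definition of convolution in (\ref{eqn:22b}), so the whole expression becomes $\sum_{\pi \in NC(n)} (g_1 * g_2)( \pi , 1_n ) \, \psi_{\pi}$, which is precisely the $n$-th component of $\uPsi \cdot (g_1 * g_2)$ per Notation \ref{def:53}.

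I do not expect any real obstacle here: once Lemma \ref{lemma:54} is available, the associativity of the action reduces to the associativity of convolution, and the remaining manipulation is purely formal. The essential point is Lemma \ref{lemma:54} itself, since without its extension of the action formula from ``top intervals'' $( \pi , 1_n )$ to arbitrary intervals $( \pi , \sigma )$, one could not even initiate the swap of summations; equivalently, this is precisely where the semi-multiplicativity of the functions in $\cGtild$ enters the proof.
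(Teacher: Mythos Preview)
Your proof is correct and follows essentially the same approach as the paper: expand $\eta_n$ via the action, apply Lemma \ref{lemma:54} to rewrite $\theta_{\rho}$, swap the order of summation, and recognize the inner sum as $(g_1 * g_2)(\pi,1_n)$. The only difference is notational (the paper uses $\sigma$ where you use $\rho$).
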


\begin{proof} We denote 
$\uPsi \cdot g_1 =: \uTheta = ( \theta_n )_{n=1}^{\infty}$
and $( \uPsi \cdot g_1) \cdot g_2 
=: \uEta = ( \eta_n )_{n=1}^{\infty}$.
Our goal for the proof is to verify that 
$\uEta = \uPsi \cdot (g_1 * g_2)$, i.e.~that we have 
\begin{equation}   \label{eqn:55b}
\eta_n = \sum_{\pi \in NC(n)} 
g_1 * g_2 \, ( \pi, 1_n) \, \psi_{\pi},
\ \ \forall \, n \geq 1,
\end{equation}
where 
$\{ \psi_{\pi} \mid \pi \in \sqcup_{n=1}^{\infty} NC(n) \}$
is the extension of $\uPsi$.
We thus fix an $n \geq 1$ for which we will verify that
(\ref{eqn:55b}) holds.  We write the formula given for 
$\eta_n$ by the relation $\uEta = \uTheta \cdot g_2$ and 
then we invoke Lemma \ref{lemma:54} in connection to the 
relation $\uTheta = \uPsi \cdot g_1$, to find that:
\[
\eta_n 
= \sum_{\sigma \in NC(n)}  g_2 ( \sigma, 1_n)  \theta_{\sigma}
= \sum_{\sigma \in NC(n)}  g_2 ( \sigma, 1_n)
\Bigl( 
\sum_{ \substack{\pi \in NC(n), \\  \pi \leq \sigma} } 
\ g_1 ( \pi, \sigma ) \, \psi_{\pi} \Bigr).
\]  
Changing the order of summation 
in the latter double sum then leads to:
\begin{equation}  \label{eqn:55c}
\eta_n = \sum_{\pi \in NC(n)}
\Bigl( 
\sum_{ \substack{\sigma \in NC(n), \\ \pi \leq \sigma} } 
\ g_1 ( \pi, \sigma ) g_2 ( \sigma, 1_n ) \Bigr)
\, \psi_{\pi}.
\end{equation}
The interior sum in (\ref{eqn:55c}) is equal to 
$g_1 * g_2 \, ( \pi , 1_n)$, and we have thus obtained
the required Equation (\ref{eqn:55b}).
\end{proof}

\vspace{6pt}

\begin{remark}   \label{rem:56}
Throughout this section we have considered, for the sake of 
simplicity, only multilinear functionals with values in $\bC$.
We invite the reader to take a moment to observe that the 
whole discussion could have been pursued, without any change,
in the framework where we consider multilinear functionals with
values in a unital commutative algebra $\cC$ over $\bC$.
Indeed, suppose we have fixed such a $\cC$.  Then 
Notation \ref{def:51} is adjusted by putting
\begin{equation}   \label{eqn:56a}
\fMcA^{( \cC )} := \Bigl\{  
\uPsi  \begin{array}{ll}
\vline  & \uPsi = ( \psi_n : \cA^n \to \cC )_{n=1}^{\infty},
          \mbox{ where every $\psi_n$}                       \\
\vline  & \mbox{ is a $\bC$-multilinear functional}
\end{array} \Bigr\}.  
\end{equation}

\noindent
Given $\uPsi \in \fMcA^{( \cC ) }$ and $g \in \cGtild$, we define
what is $\uPsi \cdot g \in \fMcA^{( \cC ) }$ by the very same formula
as in (\ref{eqn:53a}) of Notation \ref{def:53}.  The proof of 
Proposition \ref{prop:55} goes through without any changes, to show that
in this way we obtain a right group action of $\cGtild$ on 
$\fMcA^{ ( \cC ) }$.

In the rest of the paper we will stick everywhere to the basic case 
when $\cC = \bC$, with only one exception: Section 7.4 will have an 
occurrence of the case where $\cC$ is the Grassmann algebra 
$\bG := \{ \alpha + \ee \beta \mid \alpha , \beta \in \bC \}$,
with multiplication defined by
\[
( \alpha_1 + \ee \beta_1) \cdot ( \alpha_2 + \ee \beta_2 )
= \alpha_1 \alpha_2 + \ee ( \alpha_1 \beta_2 + \alpha_2 \beta_1),
\ \ \mbox{ for }
\alpha_1, \beta_1, \alpha_2, \beta_2 \in \bC.
\]
\end{remark}

$\ $

\section{Cumulant-to-moment type, and cumulant-to-cumulant type}

\noindent
There are several brands of cumulants which live naturally 
in the universe of non-crossing partitions, and are
commonly used in the non-commutative probability literature.  
Each such brand of cumulants has its own ``moment-cumulant'' 
summation formula, and there also exist useful summation 
formulas that connect different brands of cumulants.  The action 
of $\cGtild$ on sequences of multilinear functionals that was 
observed in Section 6 offers an efficient way to do calculations 
related to these moment-cumulant and inter-cumulant formulas.  
In connection to that, we next put into evidence: a 
{\em factorization property} which seems to always be fulfilled 
when one considers functions $g \in \cGtild$ involved in 
moment-cumulant formulas; and a {\em vanishing property} 
fulfilled by functions $g \in \cGtild$ which are involved in 
inter-cumulant formulas.  Both these 
properties are phrased in connection to the operation 
``$\diamond$'' of concatenation of non-crossing partitions,
and to the notion of irreducibility with respect to 
concatenation, as reviewed in Notation \ref{def:13}.

\vspace{6pt}

\begin{definition}   \label{def:62}
$1^o$  A function $g \in \cGtild$ will be said to be of
{\em cumulant-to-moment} type when it has the property that
\begin{equation}   \label{eqn:62a}
g( \pi_1 \diamond \pi_2, 1_{n_1 + n_2} ) 
= g( \pi_1, 1_{n_1} ) \cdot g( \pi_2 , 1_{n_2} ), 
\end{equation}
holding for all $n_1, n_2 \geq 1$ and $\pi_1 \in NC(n_1)$,
$\pi_2 \in NC(n_2)$.  We denote
\[
\cGtildctom := \{ g \in \cGtild \mid g \mbox{ is of 
cumulant-to-moment type} \}.
\]

\vspace{6pt}

\noindent
$2^o$  A function $g \in \cGtild$ will be said to be of 
{\em cumulant-to-cumulant} type 
when it satisfies
\begin{equation}   \label{eqn:62b}
g( \pi_1 \diamond \pi_2, 1_{n_1 + n_2} ) = 0, 
\ \ \forall \, n_1, n_2 \geq 1 \mbox{ and } \pi_1 \in NC(n_1),
\ \pi_2 \in NC(n_2).
\end{equation}
We denote
\[
\cGtildctoc := \{ g \in \cGtild \mid g \mbox{ is of 
cumulant-to-cumulant type} \}.
\]
\end{definition}

\vspace{6pt}

\begin{remark}   \label{rem:63}
$1^o$ A function $g \in \cGtildctoc$ is completely 
determined when we know its values $g( \pi , 1_n )$ with 
$\pi \in NC(n)$ irreducible.  Indeed, the condition on 
$g$ stated in (\ref{eqn:62b}) just says that if 
$\pi \in NC(n)$ is not irreducible, then $g ( \pi, 1_n ) = 0$. 
So we know the values $g( \pi , 1_n )$ for {\em all} $n \geq 1$
and $\pi \in NC(n)$, which determines $g$ (cf.~Proposition 
\ref{prop:27}).

\vspace{6pt}

$2^o$ Consider now a function $g \in \cGtildctom$.  An easy 
induction shows that for every $k \geq 1$, 
$n_1, \ldots , n_k \geq 1$ and 
$\pi_1 \in NC(n_1), \ldots , \pi_k \in NC(n_k)$,
one has:
\begin{equation}   \label{eqn:64a} 
g( \pi_1 \diamond \cdots \diamond \pi_k , 1_{n_1 + \cdots + n_k} ) 
= \prod_{j=1}^k g( \pi_j , 1_{n_j} ).
\end{equation}
Since every non-crossing partition can be written as a 
concatenation of irreducible ones, we conclude that our 
$g \in \cGtildctom$ can be completely reconstructed if we
know its values $g( \pi , 1_n)$ with $\pi \in NC(n)$ 
irreducible -- indeed, Equation (\ref{eqn:64a}) then tells 
us what is $g( \pi , 1_n)$ for {\em all} $n \geq 1$ and 
$\pi \in NC(n)$, and Proposition \ref{prop:27} can be applied.
\end{remark}

\vspace{6pt}

In the next section we will examine $\cGtildctom$ and 
$\cGtildctoc$ from the group structure point of view, within the 
group $( \cGtild , * )$.  For now we only want to show, by example, 
what is the rationale for the terms ``cumulant-to-moment'' and 
``cumulant-to-cumulant'' used in Definition \ref{def:62}.  This is 
an opportunity to review a few salient examples of cumulants, and to
display some of the functions in $\cGtild$ which encode transition formulas 
from these cumulants to moments, or encode transition formulas
between two different brands of cumulants.

$\ $

\subsection{Free and Boolean cumulants.}

$\ $

\noindent
Throughout this subsection we fix a non-commutative 
probability space $( \cA, \varphi )$, we look at
\[
\fM_{ { }_{\cA} } := \{ \uPsi \mid 
\uPsi = ( \psi_n : \cA^n \to \bC )_{n=1}^{\infty},
\mbox{ where $\psi_n$ is an $n$-linear functional} \},
\]
and we consider the family of moment functionals 
$\uPhi = ( \varphi_n )_{n=1}^{\infty} \in \fMcA$ which was
introduced in Notation \ref{rem:52}.1.

\vspace{6pt}

\begin{definition-and-remark}   \label{def:65}
The family of {\em free cumulant functionals} of 
$( \cA , \varphi )$ is the family
$\uKappa = ( \kappa_n )_{n=1}^{\infty} \in \fMcA$
defined via the requirement that for every
$n \geq 1$ and $x_1, \ldots , x_n \in \cA$ one has:
\begin{equation}   \label{eqn:65a}
\varphi (x_1 \cdots x_n) = 
\sum_{\pi \in NC(n)} \prod_{V \in \pi} 
\kappa_{|V|} \bigl( \, (x_1, \ldots , x_n) \mid V \, \bigr).
\end{equation}

\noindent
This requirement can be re-phrased as follows: let 
$\Ffcm : \NCint \to \bC$ be 
\footnote{In $\Ffcm$, the subscript ``fc-m'' is a 
reminder that we are doing a transition from free 
cumulants to moments.  Similar conventions will be used
for other such special functions, e.g. ``$\Fbcm$'' for
the function in $\cGtildctom$ which encodes the transition
from Boolean cumulants to moments, or ``$\Ffcbc$'' for the
function in $\cGtildctoc$ which encodes the transition from
free cumulants to Boolean cumulants.}
defined by
\begin{equation}   \label{eqn:65b}
\Ffcm ( \pi , \sigma ) = 1, \ \ \forall 
\, n \geq 1 \mbox{ and } \pi \leq \sigma 
\mbox{ in } NC(n).
\end{equation}
It is immediate that $\Ffcm \in \cGtild$ and that it
fulfills the factorization condition (\ref{eqn:62a}), hence 
$\Ffcm \in \cGtildctom$.  The ``moment-cumulant''
formula (\ref{eqn:65a}) can be read as an instance of 
the group action from Section 6, it just says that
\begin{equation}   \label{eqn:65c}
\uPhi = \uKappa \cdot \Ffcm .
\end{equation}
Indeed, (\ref{eqn:65a}) asks for the equality
of $n$-linear functionals
$\varphi_n = \sum_{\pi \in NC(n)} \kappa_{\pi}$,
holding for every $n \geq 1$,
and with $\kappa_{\pi}$'s defined as in 
Notation \ref{rem:52}.2; but the latter equality is the
same as (\ref{eqn:65c}).
\end{definition-and-remark}

\vspace{6pt}

We next repeat the same moment-cumulant formulation 
in connection to {\em Boolean cumulants}, where 
we now refer to interval partitions.

\vspace{6pt}

\begin{definition-and-remark}   \label{def:67}
The family of {\em Boolean cumulant functionals} of 
$( \cA , \varphi )$ is the family
$\uBeta = ( \beta_n )_{n=1}^{\infty} \in \fMcA$
defined via the requirement that for every
$n \geq 1$ and $x_1, \ldots , x_n \in \cA$ one has:
\begin{equation}   \label{eqn:67a}
\varphi (x_1 \cdots x_n) = 
\sum_{\pi \in \Int (n)} \prod_{J \in \pi} 
\beta_{|J|} \bigl( \, (x_1, \ldots , x_n) \mid J \, \bigr).
\end{equation}

Now, consider the function $\Fbcm \in \cGtild$ 
defined via the requirement that for every $n \geq 1$
and $\pi \in NC(n)$ we have
\begin{equation}   \label{eqn:67b}
\Fbcm ( \pi , 1_n ) = \left\{  \begin{array}{ll}
1, & \mbox{if $\pi \in \Int (n)$,}  \\
0, & \mbox{otherwise.}
\end{array}  \right.
\end{equation}
Such a function does indeed exist and is unique, as
guaranteed by Proposition \ref{prop:27}.  We see moreover 
that $\Fbcm$ is a function of cumulant-to-moment type: 
indeed, given any $n_1, n_2 \geq 1$ and 
$\pi \in NC(n_1), \, \pi_2 \in NC(n_2)$, it is immediate 
that
\[
\Fbcm ( \pi_1 \diamond \pi_2 , 1_{n_1 + n_2} ) 
= \Fbcm ( \pi_1 , 1_{n_1} ) \cdot 
\Fbcm ( \pi_2, 1_{n_2} )
= \left\{  \begin{array}{ll}
1, & \mbox{if both $\pi_1$ and $\pi_2$ are} \\
   & \mbox{ $\ $ interval partitions,}  \\
0, & \mbox{otherwise.}
\end{array}  \right.
\]

Hence $\Fbcm \in \cGtildctom$ and (exactly as we did for free
cumulants in Remark \ref{def:65}) we see that the 
moment-cumulant formula (\ref{eqn:67a}) amounts to just:
\begin{equation}  \label{eqn:67c}  
\uPhi = \uBeta \cdot \Fbcm .
\end{equation}
\end{definition-and-remark}

\vspace{6pt}

\begin{remark}    \label{rem:68}
It was convenient to introduce the function $\Fbcm$ by just 
postulating its values $\Fbcm ( \pi , 1_n)$, and then by 
invoking Proposition \ref{prop:27}.  It is not hard to  
actually write down the formula for the values taken by 
$\Fbcm$ on general couples in $\NCint$; this is found by 
using the semi-multiplicativity property, and comes out 
(immediate verification) as 
\begin{equation}   \label{eqn:68a}
\Fbcm ( \pi , \sigma ) = 
\left\{   \begin{array}{ll}
1, & \mbox{ if $\pi \sqsubseteq \sigma$,}  \\
0, & \mbox{ otherwise.}
\end{array}  \right\} , \ \ \forall \, n \geq 1
\end{equation}
where $\sqsubseteq$ is one of the partial order relations
reviewed in Section 2.2.
\end{remark}

$\ $

\subsection{An interpolation between free and Boolean:
\boldmath{$t$}-Boolean cumulants.}

$\ $

\noindent
In this subsection we continue to use the notation 
from Section 7.1, where $\uPhi \in \fMcA$ is
the family of moment functionals of the non-commutative 
probability space $( \cA, \varphi )$, and 
$\uKappa, \uBeta \in \fMcA$ are the families of free
and respectively Boolean cumulants of the same space.
Our goal for the subsection is to review a $1$-parameter 
interpolation between $\uBeta$ and $\uKappa$, arising 
from the work of Bo{\.z}ejko and Wysoczanski 
\cite{BoWy2001}, and defined in the way described as 
follows.

\vspace{6pt}

\begin{definition}   \label{def:69}
Let $t \in \bR$ be a parameter.  We will use the name 
{\em $t$-Boolean cumulant functionals} of $( \cA , \varphi )$
to refer to the sequence of multilinear functionals
$\uBetat = ( \betat_n )_{n=1}^{\infty} \in \fMcA$
defined via the requirement that for every $n \geq 1$ and 
$x_1, \ldots , x_n \in \cA$ one has:
\begin{equation}   \label{eqn:69a}
\varphi (x_1 \cdots x_n) = 
\sum_{\pi \in NC(n)}  t^{\innblocks ( \pi )}
\prod_{V \in \pi}
\betat_{|V|} \bigl( \, (x_1, \ldots , x_n) \mid V \, \bigr).
\end{equation}
Recall that $\innblocks ( \pi )$ is our notation for the 
number of inner blocks of $\pi \in NC(n)$.
\end{definition}

\vspace{6pt}

\begin{remark}    \label{rem:610}
It is clear that for $t=1$ one gets $\uBeta^{(1)} = \uKappa$.
On the other hand, for $t = 0$ one gets that
$\uBeta^{(0)} = \uBeta$, because in this case the right-hand 
side of Equation (\ref{eqn:69a}) reduces to a sum over 
$\Int (n)$ (cf.~(\ref{eqn:11a}) in the review of background).
\end{remark}

\vspace{6pt}

\begin{notation-and-remark}    \label{def:611}
For every $t \in \bR$, let $\Fbcm^{(t)}$ be the 
function in $\cGtild$ defined via the requirement that 
\begin{equation}   \label{eqn:611a}
\Fbcm^{(t)} ( \pi , 1_n ) := t^{\innblocks ( \pi )},
\ \mbox{ for all $n \geq 1$ and $\pi \in NC(n)$.}
\end{equation}
As an immediate consequence of the obvious fact that
\[
\innblocks ( \pi_1 \diamond \pi_2 ) =
\innblocks ( \pi_1 ) + \innblocks ( \pi_2 ),
\ \ \forall \, \pi_1 , \pi_2 \in \sqcup_{n=1}^{\infty} NC(n),
\]
one has that $\Fbcm^{(t)}$ is a function of 
cumulant-to-moment type.  The formula (\ref{eqn:69a}) 
used to define the $t$-Boolean cumulant functionals 
can be concisely re-written in the form
\begin{equation}    \label{eqn:611b}
\uPhi = \uBeta^{(t)} \cdot \Fbcm^{(t)}.
\end{equation}
This is a common generalization of the 
formulas (\ref{eqn:65a}) and (\ref{eqn:67a}) 
observed for free and for Boolean cumulants -- the latter 
formulas are obtained by setting the parameter to $t=1$
and to $t=0$, respectively.
\end{notation-and-remark}

\vspace{6pt}

\begin{remark}   \label{rem:612}
When using the action of the group $\cGtild$, one sees
very clearly how to combine moment-cumulant formulas for 
two different brands of cumulants in order to get a direct 
connection between the cumulants themselves. 
We illustrate how this works when we want to go from 
$s$-Boolean cumulants to $t$-Boolean cumulants for two 
distinct parameters $s,t \in \bR$.  We have 
$\uBeta^{(t)} \cdot \Fbcm^{(t)} = \uPhi
= \uBeta^{(s)} \cdot \Fbcm^{(s)}$, hence:
\begin{equation}   \label{eqn:612a}
\uBeta^{(t)} = \uPhi \cdot ( \Fbcm^{(t)} )^{-1}
= ( \uBeta^{(s)} \cdot \Fbcm^{(s)} ) \cdot ( \Fbcm^{(t)} )^{-1}
= \uBeta^{(s)} \cdot 
  \bigl( \Fbcm^{(s)} * ( \Fbcm^{(t)} )^{-1}  \bigr).
\end{equation}
In short: the transition from $s$-Boolean cumulants to 
$t$-Boolean cumulants is encoded by the function 
$\Fbcm^{(s)} * ( \Fbcm^{(t)} )^{-1} \in \cGtildctoc$.
The values of this function turn out to have a nice explicit 
description (cf.~Remark \ref{rem:82}.1 and Corollary \ref{cor:85}
below), where in particular we find that for $n \geq 1$ and 
$\pi \in NC(n)$ we have:
\begin{equation}    \label{eqn:612b}   
\Fbcm^{(s)} * ( \Fbcm^{(t)} )^{-1} \,
( \pi , 1_n ) =
\left\{   \begin{array}{ll}
(s-t)^{| \pi | - 1}, & \mbox{ if $\pi$ is irreducible,} \\
0, & \mbox{ otherwise.}
\end{array}   \right.
\end{equation}
Hence, when spelled out explicitly, the transition formula 
(\ref{eqn:612a}) says this: for every 
$n \geq 1$ and $x_1, \ldots , x_n \in \cA$ we have
\begin{equation}   \label{eqn:612c}
\betat_n ( x_1, \ldots , x_n ) 
= \sum_{  \begin{array}{c}
\pi \in NC(n),  \\  
{\scriptstyle \mathrm{irreducible} }
\end{array}  } \
(s-t)^{\innblocks ( \pi )}
\prod_{V \in \pi}
\, \beta^{(s)}_{|V|} 
\bigl( \, (x_1, \ldots , x_n) \mid V \, \bigr).
\end{equation}

In the special case when $s=1$ and $t=0$, Equation (\ref{eqn:612c})
becomes the transition formula from free cumulants to Boolean 
cumulants, which is well-known since the work of 
Lehner \cite{Le2002}.  When swapping the role of the parameters
and putting $s=0$ and $t=1$, one finds the inverse transition formula 
which writes free cumulants in terms of Boolean cumulants, and is also
well-known (cf.~\cite[Proposition 3.9]{BeNi2008}, 
\cite[Section 4]{ArHaLeVa2015}).
\end{remark}

$\ $

\subsection{Monotone cumulants.}

$\ $

\noindent
We continue to use the framework and notation of the 
subsections 7.1 and 7.2.  Another family of cumulant functionals
associated to $( \cA , \varphi )$ that gets constant attention
in the research literature on non-commutative probability is the
family of {\em monotone cumulant functionals} which were introduced 
in \cite{HaSa2011}, based on the notion of {\em monotone ordering} 
of a partition $\pi \in NC(n)$.   The latter notion is defined as 
a bijection $\ell : \pi \to \{ 1, \ldots , | \pi | \}$ (or in 
other words: a total ordering of the blocks of $\pi$) 
which has the property that 
\begin{equation}   \label{eqn:613x}
\left\{   \begin{array}{l}
\mbox{ If $V,W \in \pi$ are such that $V$ is nested inside $W$} \\
\mbox{ then it follows that $\ell (V) \geq \ell (W)$.}
\end{array}  \right.
\end{equation}
With this notion in hand, one then proceeds as follows.

\vspace{6pt}

\begin{definition}    \label{def:613}
The family 
$\uRho = ( \rho_n : \cA^n \to \bC )_{n=1}^{\infty}$
of {\em monotone cumulants} of $( \cA , \varphi )$
is defined via the requirement that for every $n \geq 1$ and 
$x_1, \ldots , x_n \in \cA$ one has
\begin{equation}   \label{eqn:613a}  
\varphi (x_1 \cdots x_n) = 
\sum_{\pi \in NC(n)} 
\frac{\mbox{\# of monotone orderings of $\pi$}}{ | \pi | !}
\cdot \prod_{V \in \pi}
\rho_{ { }_{|V|} } \bigl( \, (x_1, \ldots , x_n) \mid V \, \bigr).
\end{equation}
\end{definition}

\vspace{6pt}

\begin{notation-and-remark}    \label{def:614}
In order to re-phrase the preceding definition in terms
of the action of $\cGtild$ on $\fMcA$, we let $\Fmcm$ be 
the function in $\cGtild$ defined via the requirement that
\begin{equation}   \label{eqn:614a}
\Fmcm ( \pi, 1_n ) =
\frac{\mbox{\# of monotone orderings of $\pi$}}{ | \pi | !},
\ \ \forall \, n \geq 1 \mbox{ and } \pi \in NC(n).
\end{equation}
An elementary counting argument (presented for instance in
\cite[Proposition 3.3]{ArHaLeVa2015}) shows that $\Fmcm$ 
satisfies the factorization condition stated in
(\ref{eqn:62a}), and is therefore of cumulant-to-moment type.
The formula (\ref{eqn:613a}) from the preceding definition 
gets to be re-phrased as
\begin{equation}   \label{eqn:614b}
\uPhi = \uRho \cdot \Fmcm,
\end{equation}
in close analogy to how the definitions of 
$\uKappa, \uBeta, \uBetat$ were re-phrased in the 
preceding subsections.
\end{notation-and-remark}

$\ $

\subsection{Infinitesimal cumulants.}

$\ $

\noindent
There exists an ``infinitesimal'' extension of the notion of 
non-commutative probability space, which has been considered 
primarily for the purpose of pinning down an infinitesimal 
version of the notion of free independence for non-commutative 
random variables (cf.~\cite{BeSh2012}, and the follow-up in 
\cite{Sh2018} relating this topic to random matrix theory). 
An {\em infinitesimal non-commutative probability space} is a 
triple $( \cA , \varphi, \varphi ' )$ where $( \cA, \varphi )$
is a non-commutative probability space in the usual sense 
and one also has a second linear functional
$\varphi ' : \cA \to \bC$ such that 
$\varphi ' ( \oneA ) = 0$.  To such a space one associates:

\vspace{4pt}

$\bullet$ a sequence of free infinitesimal cumulant 
functionals
$\uKappa ' = ( \kappa_n ' : \cA^n \to \bC )_{n=1}^{\infty}$;

\vspace{4pt}

$\bullet$ a sequence of Boolean infinitesimal 
cumulant functionals
$\uBeta ' = ( \beta_n ' : \cA^n \to \bC )_{n=1}^{\infty}$;

\vspace{4pt}

$\bullet$ a sequence of monotone infinitesimal 
cumulant functionals
$\uRho ' = ( \rho_n ' : \cA^n \to \bC )_{n=1}^{\infty}$.

\vspace{4pt}

\noindent
The infinitesimal free cumulants $\uKappa '$ were introduced in
\cite{FeNi2010}, while ${{\uBeta'}}, \uRho '$ were introduced in 
\cite{Ha2011} (see also the detailed study of all these notions
appearing in the recent paper \cite{CeEFPe2019}).  

We note that $\uKappa ', \uBeta ', \uRho '$ belong to 
$\fMcA$, the set bearing the action of $\cGtild$ from 
Section 6.  The definitions of these infinitesimal cumulants can 
be described in terms of a variation of this action of $\cGtild$, 
going now on $\fMcA \times \fMcA$.
The occurrence of $\fMcA \times \fMcA$ comes from the fact that
the summations over lattices $NC(n)$ used to describe 
infinitesimal cumulants have terms which depend on 
{\em both} the linear functionals $\varphi, \varphi '$ 
considered on $\cA$.  The details are as follows.

\vspace{6pt}

\begin{notation}   \label{def:615}
Let $\cA$ be a vector space over $\bC$ and let $\fMcA$ be the 
set of sequences of multilinear functionals introduced in 
Notation \ref{def:51}.  Suppose we are given a couple
$( \uPsi^{(1)}, \uPsi^{(2)} ) \in \fMcA \times \fMcA$,
where $\uPsi^{(1)} = ( \psi^{(1)}_n )_{n=1}^{\infty}$ and
$\uPsi^{(2)}  = ( \psi^{(2)}_n )_{n=1}^{\infty}$, and suppose 
we are also given a function $g \in \cGtild$.  We then denote
\begin{equation}   \label{eqn:615a}
( \uPsi^{(1)} , \uPsi^{(2)} ) \dotinf g :=
( \uTheta^{(1)} , \uTheta^{(2)} ) \in \fMcA \times \fMcA,
\end{equation}
where $\uTheta^{(1)} = \uPsi^{(1)} \cdot g$ (exactly as in 
Notation \ref{def:53}) and 
$\uTheta^{(2)} = ( \theta_n^{(2)} )_{n=1}^{\infty}$ is defined 
by the requirement that for every $n \geq 1$ and 
$x_1, \ldots , x_n \in \cA$ we have
\begin{equation}   \label{eqn:615b}
\theta_n^{(2)} (x_1, \ldots , x_n) = 
\end{equation}
\[
\sum_{\pi \in NC(n)} g( \pi, 1_n ) \cdot  \sum_{W \in \pi} \Bigl(
\psi^{(2)}_{|W|} ( \, (x_1, \ldots , x_n) \mid W \, ) 
\cdot \prod_{ \substack{V \in \pi, \\ V \neq W} }
\, \psi^{(1)}_{|V|} ( \, (x_1, \ldots , x_n) \mid V \, ) \Bigr) .
\]
\end{notation}

\vspace{6pt}

\begin{remark}   \label{rem:616}
Let $\cA$ be as in Notation \ref{def:615}.
What hides behind (\ref{eqn:615a}) and 
(\ref{eqn:615b}) is the fact 
that we have a canonical identification:
\begin{equation}   \label{eqn:616a}
\fMcA \times \fMcA \ni ( \uPsi^{(1)}, \uPsi^{(2)} ) 
\mapsto \widetilde{\uPsi} \in \fMcA^{( \bG ) },
\end{equation}
where $\bG$ is the Grassmann algebra and $\fMcA^{ ( \bG ) }$
is as considered in Remark \ref{rem:56} at the end of Section 6.
That is: given 
$\uPsi^{(1)} = ( \psi^{(1)}_n )_{n=1}^{\infty}$ and
$\uPsi^{(2)}  = ( \psi^{(2)}_n )_{n=1}^{\infty}$ in $\fMcA$,  
we create a sequence of $\bC$-multilinear functionals 
$\widetilde{\uPsi} = ( \widetilde{\psi}_n : \cA^n \to \bG )_{n=1}^{\infty}$
by simply putting
\[
\widetilde{\psi}_n (x_1, \ldots , x_n) =
\psi^{(1)}_n (x_1, \ldots , x_n) + \ee
\psi^{(2)}_n (x_1, \ldots , x_n),
\ \ \forall \, x_1, \ldots , x_n \in \cA.
\]

As explained in Remark \ref{rem:56}, the group $\cGtild$ acts 
on the right on $\fMcA^{( \bG ) }$.  The explicit formula for 
$( \uPsi^{(1)} , \uPsi^{(2)} ) \dotinf g$ shown in the 
preceding notation is just the conversion of the 
formula for $\widetilde{\uPsi} \cdot g \in \fMcA^{( \bG )}$,
via the identification (\ref{eqn:616a}).

As a byproduct of the connection with the Grassmann algebra, 
one also gets an immediate proof of the following fact.
\end{remark}

\begin{proposition}     \label{prop:617}
Let $\cA$ be a vector space over $\bC$.
Then ``$\dotinf$'' from Notation \ref{def:615} is an
action on the right of the group $\cGtild$ on 
$\fMcA \times \fMcA$.  That is, one has
\begin{equation}
\bigl( \, ( \uPsi^{(1)}, \uPsi^{(2)} ) \dotinf g \bigr)
\dotinf h = ( \uPsi^{(1)}, \uPsi^{(2)} ) \dotinf ( g *h ),
\ \ \forall \, \uPsi^{(1)}, \uPsi^{(2)} \in \fMcA
\mbox{ and } g,h \in \cGtild,
\end{equation}
where on the right-hand side we use the convolution 
operation of $\cGtild$.  
\hfill  $\square$
\end{proposition}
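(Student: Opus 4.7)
The plan is to reduce Proposition~\ref{prop:617} to Proposition~\ref{prop:55}, more precisely to its extension over a general unital commutative $\bC$-algebra as explained in Remark~\ref{rem:56}, by transporting the action through the Grassmann-algebra identification recorded in Remark~\ref{rem:616}.

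First I would set up the bijection
\[
\Lambda : \fMcA \times \fMcA \longrightarrow \fMcA^{(\bG)}, \qquad
( \uPsi^{(1)} , \uPsi^{(2)} ) \longmapsto \widetilde{\uPsi},
\]
where $\widetilde{\psi}_n (x_1, \ldots , x_n ) := \psi^{(1)}_n (x_1, \ldots , x_n ) + \ee \, \psi^{(2)}_n (x_1, \ldots , x_n )$. This is patently bijective, its inverse being given by reading off the ``scalar'' and the $\ee$-component of each $\widetilde{\psi}_n$.

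Next I would verify the equivariance statement
\[
\Lambda \bigl( ( \uPsi^{(1)} , \uPsi^{(2)} ) \dotinf g \bigr)
\; = \; \Lambda ( \uPsi^{(1)} , \uPsi^{(2)} ) \cdot g
\]
for every $g \in \cGtild$. This is the single computational step that is not purely formal. Expanding $\widetilde{\psi}_{\pi}(x_1, \ldots , x_n) = \prod_{V \in \pi} \widetilde{\psi}_{|V|}( (x_1,\ldots,x_n) \mid V )$ inside $\bG$ and using the relation $\ee^{2} = 0$, the only surviving contributions are those in which exactly one block $W \in \pi$ contributes its $\ee \, \psi^{(2)}$ summand while all remaining blocks $V \neq W$ contribute their $\psi^{(1)}$ summand. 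Summing the resulting terms over $W \in \pi$ reproduces precisely the bracketed expression in Equation~(\ref{eqn:615b}); comparison of scalar parts then yields $\uPsi^{(1)} \cdot g$, while comparison of $\ee$-parts yields the component $\uTheta^{(2)}$ defined in Notation~\ref{def:615}. This bookkeeping, together with a careful use of $\ee^{2}=0$, is the only mildly nontrivial point of the argument and clarifies why the seemingly asymmetric formula in Notation~\ref{def:615} in fact defines a genuine group action.

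Granted equivariance, the conclusion is formal. Using Proposition~\ref{prop:55} in the form pertinent to $\fMcA^{(\bG)}$, for any $g,h \in \cGtild$ and any $( \uPsi^{(1)} , \uPsi^{(2)} ) \in \fMcA \times \fMcA$ we obtain
\[
\Lambda \bigl( \, (( \uPsi^{(1)} , \uPsi^{(2)} ) \dotinf g ) \dotinf h \, \bigr)
= \bigl( \Lambda ( \uPsi^{(1)} , \uPsi^{(2)} ) \cdot g \bigr) \cdot h
= \Lambda ( \uPsi^{(1)} , \uPsi^{(2)} ) \cdot ( g * h )
= \Lambda \bigl( \, ( \uPsi^{(1)} , \uPsi^{(2)} ) \dotinf ( g * h ) \, \bigr),
\]
and the required identity follows from the injectivity of $\Lambda$. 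The main (modest) obstacle is thus the verification of equivariance; everything else is a transparent transport of structure along the Grassmann identification.
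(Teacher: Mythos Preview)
Your proposal is correct and follows exactly the approach the paper has in mind: the paper states the proposition with no proof beyond a $\square$, having explained in Remark~\ref{rem:616} that the Grassmann identification converts $\dotinf$ into the ordinary action of $\cGtild$ on $\fMcA^{(\bG)}$, which is a group action by Remark~\ref{rem:56}. Your write-up simply spells out this transport of structure (including the $\ee^2=0$ bookkeeping behind the equivariance), so there is nothing to add.
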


\begin{remark}    \label{rem:618}
Consider now an infinitesimal non-commutative probability
space $( \cA , \varphi, \varphi ' )$, and let us spell out 
how the infinitesimal cumulants 
$\uKappa ', \uBeta ', \uRho ' \in \fMcA$
mentioned at the beginning of this subsection are described 
in terms of the action $\dotinf$ of the group $\cGtild$.
To that end, let
$\uPhi = ( \varphi_n )_{n=1}^{\infty}$ and
$\uPhi ' = ( \varphi_n ' )_{n=1}^{\infty}$ be the sequences of 
moment functionals associated to $\varphi$ and to $\varphi '$;
that is, for every $n \geq 1$ the $n$-linear 
functionals $\varphi_n, \varphi_n ' : \cA^n \to \bC$ act by
\[
\varphi_n (x_1, \ldots , x_n) = \varphi (x_1 \cdots x_n)
\mbox{ and } 
\varphi_n ' (x_1, \ldots , x_n) = \varphi ' (x_1 \cdots x_n),
\ \ \forall \, x_1, \ldots , x_n \in \cA.
\]
The infinitesimal cumulants we are interested in 
are determined by the ``moment-cumulant'' equations
\begin{equation}   \label{eqn:618a}
( \uPhi , \uPhi ' ) = ( \uKappa , \uKappa ' )  \dotinf \Ffcm 
= ( \uBeta , \uBeta ' )  \dotinf \Fbcm 
= ( \uRho , \uRho ' )  \dotinf \Fmcm ,
\end{equation}
where $\Ffcm, \Fbcm, \Fmcm \in \cGtild$ are the functions of 
cumulant-to-moment type that appeared in the preceding subsections 
(cf.~Equations (\ref{eqn:65b}), (\ref{eqn:68a}) and (\ref{eqn:614a}),
respectively).  So for instace the sequence of free infinitesimal
cumulants $\uKappa '$ is found by picking the second component in
the formula
\begin{equation}    \label{eqn:618b}
( \uKappa , \uKappa ' ) = ( \uPhi , \uPhi ' )  \dotinf \Ffcm^{-1}.
\end{equation}
We note that the $\uKappa$ appearing in 
(\ref{eqn:618a}), (\ref{eqn:618b}) is precisely the sequence of 
free cumulant functionals of $( \cA , \varphi )$, as one sees by 
picking the first component in (\ref{eqn:618b}) and by taking 
into account that on the first component of $\dotinf$ we have 
the ``usual'' action of $\cGtild$ of $\fMcA$.
\end{remark}

$\ $

\section{$\cGtildctoc$ is a subgroup, and 
$\cGtildctom$ is a right coset}

\noindent
In this section we follow up on the subsets 
$\cGtildctoc, \cGtildctom \subseteq \cGtild$
introduced in Definition \ref{def:62}.  We will 
prove that:
\begin{tabular}[t]{ll}
(i)  & $\cGtildctoc$ is a {\em subgroup} of 
$( \cGtild , * )$, and   \\ 
(ii)  & $\cGtildctom$ is a {\em right coset} of the 
subgroup $\cGtildctoc$.
\end{tabular}

\vspace{6pt} 

\noindent 
The statement (ii) means that we can write
\begin{equation}   \label{eqn:7a}
\cGtildctom = \cGtildctoc * h
= \{ g * h \mid g \in \cGtildctoc \},
\end{equation}
for no matter what $h \in \cGtildctom$ we choose to 
fix.  The easiest choice for $h$ is to pick
$h( \pi , \sigma ) = 1$ for all 
$( \pi, \sigma ) \in \NCint$, that is, let $h$ be the
special function $\Ffcm$ from Definition \ref{def:65}.
However, as we will see in Section 8.2 below, it may be 
more advantageous for proofs and applications if we
go instead with $h = \Fbcm$, picked from Definition 
\ref{def:67}.

$\ $

\subsection{Proof that \boldmath{$\cGtildctoc$} is a 
subgroup of \boldmath{$( \cGtild , * )$}.}

$\ $

\noindent
We first record a straightforward extension of
the vanishing condition postulated in Equation 
(\ref{eqn:62b}), in the definition of $\cGtildctoc$.

\vspace{6pt}

\begin{lemma}   \label{lemma:71}
Let $n \geq 1$ and let $\pi \leq \sigma$ be two partitions 
in $NC(n)$, where:
\begin{equation}   \label{eqn:71a}
\left\{   \begin{array}{ll}
\mbox{there exists a block $W_o$ of $\sigma$ such that} \\
\mbox{$\min (W_o)$ and $\max (W_o)$ belong to different 
        blocks of $\pi$.}
\end{array}  \right.
\end{equation}
Then $g( \pi , \sigma ) = 0$ for all $g \in \cGtildctoc$.
\end{lemma}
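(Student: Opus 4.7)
The plan is to reduce the lemma to the defining vanishing condition of $\cGtildctoc$ (Equation (\ref{eqn:62b})), applied to the single semi-multiplicative factor coming from the ``bad'' block $W_o$.

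First I would invoke the semi-multiplicativity of $g$ (which holds since $\cGtildctoc \subseteq \cGtild$) to write
\[
g( \pi, \sigma ) \ = \ \prod_{W \in \sigma} g \bigl( \pi_{ { }_W }, 1_{|W|} \bigr).
\]
It then suffices to show that the factor corresponding to $W = W_o$ vanishes, since any single zero factor kills the product. So the whole task reduces to proving that $g( \pi_{ { }_{W_o} }, 1_{|W_o|}) = 0$.

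Next I would examine the partition $\pi_{ { }_{W_o} } \in NC( |W_o| )$ produced by the relabeled-restriction convention of Notation \ref{def:12}. Writing $W_o = \{ p_1 < p_2 < \cdots < p_m \}$ with $m = |W_o|$, so that $p_1 = \min (W_o)$ and $p_m = \max (W_o)$, the definition of relabeled-restriction tells us that the positions $1$ and $m$ of $\pi_{ { }_{W_o} }$ belong to the same block of $\pi_{ { }_{W_o} }$ if and only if $p_1$ and $p_m$ belong to the same block of $\pi$. By hypothesis (\ref{eqn:71a}), this is not the case; hence $1$ and $m$ belong to different blocks of $\pi_{ { }_{W_o} }$. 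By the characterization of irreducibility recorded in Notation \ref{def:13}.2, this means $\pi_{ { }_{W_o} }$ is \emph{not} irreducible, and therefore (by the very definition of irreducibility) admits a non-trivial concatenation decomposition
\[
\pi_{ { }_{W_o} } \ = \ \tau_1 \diamond \tau_2,
\qquad \tau_1 \in NC(m_1), \ \tau_2 \in NC(m_2), \ m_1, m_2 \geq 1, \ m_1 + m_2 = m.
\]

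Finally, I would apply the defining vanishing property (\ref{eqn:62b}) of a cumulant-to-cumulant function directly to this decomposition, obtaining
\[
g \bigl( \pi_{ { }_{W_o} }, 1_{|W_o|} \bigr) \ = \ g ( \tau_1 \diamond \tau_2, 1_{m_1 + m_2} ) \ = \ 0,
\]
which combined with the semi-multiplicative factorization yields $g( \pi , \sigma ) = 0$, as required. The only conceptual point that needs care is the bookkeeping of the relabeled-restriction, i.e.~confirming that ``$\min (W_o)$ and $\max (W_o)$ in different blocks of $\pi$'' translates exactly into ``positions $1$ and $|W_o|$ in different blocks of $\pi_{ { }_{W_o} }$''; once this is spelled out, the rest is an immediate application of definitions, so I do not foresee a serious obstacle.
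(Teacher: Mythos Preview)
Your proof is correct and follows essentially the same approach as the paper: use semi-multiplicativity to factor $g(\pi,\sigma)$ over the blocks of $\sigma$, then observe that the factor $g(\pi_{{}_{W_o}}, 1_{|W_o|})$ vanishes because $\pi_{{}_{W_o}}$ is not irreducible. The paper's version is just more terse, skipping the explicit concatenation decomposition and relabeling bookkeeping that you spell out.
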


\begin{proof} For any $g \in \cGtildctoc$ we write the 
factorization $g( \pi , \sigma ) 
= \prod_{W \in \sigma} g( \pi_{{ }_W}, 1_{|W|} )$
provided by semi-multiplicativity, and we observe that the 
factor $g( \pi_{{ }_{W_o}}, 1_{|W_o|} )$ of this factorization 
is sure to be equal to $0$, since the partition 
$\pi_{{ }_{W_o}} \in NC( |W_o| )$ is not irreducible.
\end{proof}

\vspace{6pt}

\begin{proposition}   \label{prop:72}
$1^o$ Let $g_1, g_2$ be in $\cGtildctoc$.  Then 
$g_1 * g_2$ is in $\cGtildctoc$ as well.

\vspace{6pt}

\noindent
$2^o$ Let $g$ be in $\cGtildctoc$.  Then $g^{-1}$ 
(inverse under convolution) is in $\cGtildctoc$ as well.
\end{proposition}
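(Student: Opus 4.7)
The plan is to exploit Lemma \ref{lemma:71}, which tells us that any $g \in \cGtildctoc$ is supported on couples $( \pi, \sigma )$ with $\pi \ll \sigma$ (in the partial order from Notation \ref{def:14}). The critical observation — used in both parts — is that if $\sigma$ is irreducible and $\pi \ll \sigma$, then $\pi$ must itself be irreducible: the outer block $W_o$ of $\sigma$ containing $1$ and $n$ forces $\min (W_o) = 1$ and $\max (W_o) = n$ to lie in a common block of $\pi$, which means $\pi$ is irreducible (cf.~Notation \ref{def:13}).

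For part $1^o$, I would take $\pi \in NC(n)$ not irreducible and expand
\[
(g_1 * g_2)( \pi, 1_n ) = \sum_{\pi \leq \rho \leq 1_n} g_1( \pi, \rho ) \, g_2( \rho, 1_n ).
\]
The factor $g_2( \rho, 1_n )$ is zero unless $\rho$ is irreducible (by the hypothesis $g_2 \in \cGtildctoc$), and the factor $g_1( \pi, \rho )$ is zero unless $\pi \ll \rho$ (by Lemma \ref{lemma:71} applied to $g_1$). But the observation above shows these two conditions together would make $\pi$ irreducible, contradicting our choice of $\pi$. Hence every term vanishes and $(g_1 * g_2)( \pi , 1_n ) = 0$, as required.

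For part $2^o$, I would use the recursive description of $g^{-1}$ which appeared inside the proof of Theorem \ref{thm:33}, namely
\[
g^{-1}( \pi , 1_n ) = - \sum_{\substack{\rho \in NC(n), \\ \rho > \pi}} g( \pi , \rho ) \, g^{-1}( \rho , 1_n ),
\]
and argue by induction on $| \pi |$. The base case $| \pi | = 1$ forces $\pi = 1_n$, which is irreducible, so there is nothing to check. For the inductive step, take $\pi$ non-irreducible with $| \pi | = k \geq 2$; every $\rho > \pi$ satisfies $| \rho | < k$, so the inductive hypothesis reduces the sum to those $\rho$ which are irreducible. For each such $\rho$, the same $\ll$-argument as in part $1^o$ gives $g( \pi , \rho ) = 0$, and the sum vanishes term-by-term.

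I do not expect a significant obstacle: once Lemma \ref{lemma:71} is in hand, both statements follow from the single fact that the irreducibility of the top partition propagates down through any $\ll$-relation. The only point requiring some care is organising the induction in part $2^o$ so that the strict inequality $\rho > \pi$ yields $| \rho | < | \pi |$, which is what licenses the inductive hypothesis.
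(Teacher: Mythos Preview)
Your proposal is correct and follows essentially the same approach as the paper's proof: both parts use the convolution expansion, split according to whether the intermediate partition is irreducible, and invoke Lemma \ref{lemma:71} to kill the remaining terms; part $2^o$ is handled by the same induction on $|\pi|$ via the recursion from the proof of Theorem \ref{thm:33}. Your framing in terms of the partial order $\ll$ (observing that $g \in \cGtildctoc$ is supported on pairs with $\pi \ll \sigma$, and that $\pi \ll \sigma$ with $\sigma$ irreducible forces $\pi$ irreducible) is just a slight repackaging of the paper's explicit two-case split, not a genuinely different argument.
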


\begin{proof}  $1^o$ Let $n \geq 1$ and let $\pi \in NC(n)$ 
which is not irreducible, that is, $1$ and $n$ belong to 
distinct blocks of $\pi$.  We want to prove that 
$g_1 * g_2 \, ( \pi , 1_n) = 0$.  We have  
\[
g_1 * g_2 \, ( \pi , 1_n) 
= \sum_{\sigma \in NC(n), \, \sigma \geq \pi}
\ g_1 ( \pi , \sigma ) \cdot g_2 ( \sigma , 1_n),
\]
and we will argue that every term of the latter sum is 
equal to $0$.  Indeed, for a $\sigma \in NC(n)$ such that 
$\sigma \geq \pi$ there are two possible cases.

\vspace{6pt}

{\em Case 1: $\sigma$ is not irreducible.}  In this case 
$g_2 ( \sigma , 1_n ) = 0$, and thus
$g_1 ( \pi , \sigma ) \cdot g_2 ( \sigma , 1_n) = 0$.

\vspace{6pt}

{\em Case 2: $\sigma$ is irreducible.}  In this case the numbers $1$ 
and $n$ belong to the same block of $\sigma$, but belong to 
different blocks of $\pi$.  Lemma \ref{lemma:71} applies, and tells 
us  that $g_1 ( \pi , \sigma ) = 0$.  It thus follows that
$g_1 ( \pi , \sigma ) \cdot g_2 ( \sigma , 1_n) = 0$ in this case 
as well.

\vspace{6pt}

$2^o$ We fix an $n \geq 1$, for which we prove that:
\[
\mbox{$g^{-1} ( \pi , 1_n) = 0$ for every $\pi \in NC(n)$ 
which is not irreducible.}
\]
What we will do is to prove by induction on $m$, 
with $1 \leq m \leq n$, that: 
\begin{equation}   \label{eqn:72b}
\left\{   \begin{array}{c}
\pi \in NC(n), \mbox{ not irreducible,}  \\
\mbox{ with $| \pi | = m$ }
\end{array}   \right\}
\ \Rightarrow \ 
g^{-1} ( \pi , 1_n) = 0.
\end{equation}
The base-case
$m =1$ holds trivially, because the set of partitions indicated 
in (\ref{eqn:72b}) is empty in that case (the only partition with 
$| \pi | = 1$ is $\pi = 1_n$, which is irreducible). 
In the remaining part of the proof we discuss the 
induction step: we pick an $m_o \in \{ 2, \ldots , n \}$, we 
assume that (\ref{eqn:72b}) is true for all $m < m_o$,
and we verify that it is also true for $m_o$.

So consider a partition $\pi \in NC(n)$ which is 
not irreducible and has $| \pi | = m_o$.  We have 
$g * g^{-1} \, ( \pi , 1_n ) = e( \pi, 1_n ) = 0$, and upon
writing explicitly what is $g * g^{-1} \, ( \pi , 1_n)$ we 
find, very similar to Equation (\ref{eqn:33g}) in the proof
of Theorem \ref{thm:33}) that
\begin{equation}   \label{eqn:72c}
g^{-1} ( \pi , 1_n ) = - \sum_{\begin{array}{c}
{\scriptstyle  \sigma \in NC(n)}  \\
{\scriptstyle  \sigma \geq \pi, \, \sigma \neq \pi}
\end{array}  } \ g( \pi , \sigma ) \, g^{-1} ( \sigma , 1_n ).
\end{equation}
In order to arrive to the desired conclusion that 
$g^{-1} ( \pi , 1_n ) = 0$, we now verify that every term in 
the sum on the right-hand side of (\ref{eqn:72c}) is equal 
to $0$.  In reference to the partition $\sigma$ which indexes 
the terms of that sum, we distinguish two cases.

\vspace{6pt}

{\em Case 1: $\sigma$ is not irreducible.}  Since
$| \sigma | < | \pi | = m_o$ (as implied by the conditions 

\noindent
$\sigma \geq \pi, \, \sigma \neq \pi$), the induction hypothesis 
applies to $\sigma$, and tells us that $g^{-1} ( \sigma , 1_n ) = 0$.
Hence $g( \pi , \sigma ) \, g^{-1} ( \sigma , 1_n ) = 0$, as we 
wanted.

\vspace{6pt}

{\em Case 2: $\sigma$ is irreducible.}  In this case the 
numbers $1$ and $n$ belong to the same block of $\sigma$, 
but belong to different blocks of $\pi$.  Lemma \ref{lemma:71} 
tells us that $g ( \pi , \sigma ) = 0$, and we thus get that
$g( \pi , \sigma ) \, g^{-1} ( \sigma , 1_n ) = 0$ in this case
as well.
\end{proof}

$\ $

\subsection{Proof that \boldmath{$\cGtildctom$} is a 
right coset of \boldmath{$\cGtildctoc$}.}

$\ $

\noindent
The claim about $\cGtildctom$ that we want to prove
is as stated in Equation (\ref{eqn:7a}) at the beginning
of the section, where on the right-hand side we have to
choose a suitable ``representative'' $h$ picked from 
$\cGtildctom$.  As mentioned immediately following to 
(\ref{eqn:7a}), the coset representative we will go with 
is the function $\Fbcm$ introduced in 
Equation (\ref{eqn:67b}) of the preceding section.  In
connection to it, we first prove a lemma.

\vspace{6pt}

\begin{lemma}   \label{lemma:73}
Let $g$ be in $\cGtildctoc$. Then: $1^o$
$g * \Fbcm \in \cGtildctom$. 

\vspace{6pt}

\noindent
$2^o$ One has
\begin{equation}  \label{eqn:73a}
g * \Fbcm \, ( \pi , 1_n ) =  g( \pi , 1_n )
\ \mbox{ for every
$n \geq 1$ and irreducible $\pi \in NC(n)$.}
\end{equation}
\end{lemma}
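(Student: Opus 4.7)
The plan is to unfold the convolution and to exploit the cumulant-to-cumulant vanishing property of $g$ in order to kill the ``bad'' summands. For part $2^o$, I would expand
\[
(g * \Fbcm)(\pi, 1_n) = \sum_{\rho \geq \pi} g(\pi, \rho)\, \Fbcm(\rho, 1_n),
\]
and recall from (\ref{eqn:67b}) that $\Fbcm(\rho, 1_n) = 0$ unless $\rho \in \Int(n)$. When $\pi$ is irreducible, the points $1$ and $n$ lie in a common block of $\pi$, hence in a common block of every $\rho \geq \pi$; if $\rho$ is furthermore an interval partition, that common block is forced to be the full interval $\{1, \ldots, n\}$, giving $\rho = 1_n$. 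Only the single term $g(\pi, 1_n) \cdot 1 = g(\pi, 1_n)$ survives.

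For part $1^o$, since $\cGtild$ is a group by Theorem \ref{thm:33}, $g * \Fbcm$ automatically lies in $\cGtild$; I would only need to verify the factorization (\ref{eqn:62a}). Setting $n = n_1 + n_2$ and expanding as above,
\[
(g * \Fbcm)(\pi_1 \diamond \pi_2, 1_n) = \sum_{\substack{\rho \in \Int(n),\\ \rho \geq \pi_1 \diamond \pi_2}} g(\pi_1 \diamond \pi_2, \rho),
\]
and each summand factors by semi-multiplicativity as $\prod_{W \in \rho} g((\pi_1 \diamond \pi_2)_{{}_W}, 1_{|W|})$. The crux of the argument is: if some block $W \in \rho$ straddles the boundary between $\{1, \ldots, n_1\}$ and $\{n_1+1, \ldots, n\}$, then since $\rho \geq \pi_1 \diamond \pi_2$ the relabeled restriction $(\pi_1 \diamond \pi_2)_{{}_W}$ splits as a genuine non-trivial concatenation $(\pi_1)_{{}_{W_a}} \diamond (\pi_2)_{{}_{W_b}}$ (where $W_a := W \cap \{1,\ldots, n_1\}$ and $W_b := W \cap \{n_1+1, \ldots, n\}$ are both nonempty), and the corresponding factor vanishes by (\ref{eqn:62b}). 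Thus only $\rho$ with no straddling block contribute, which are exactly the partitions of the form $\rho = \rho_1 \diamond \rho_2$ with $\rho_i \in \Int(n_i)$ and $\rho_i \geq \pi_i$.

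For such $\rho$, another application of semi-multiplicativity yields $g(\pi_1 \diamond \pi_2, \rho_1 \diamond \rho_2) = g(\pi_1, \rho_1) \cdot g(\pi_2, \rho_2)$, so the double sum factors cleanly as the product $(g * \Fbcm)(\pi_1, 1_{n_1}) \cdot (g * \Fbcm)(\pi_2, 1_{n_2})$, establishing (\ref{eqn:62a}). The main delicacy is the bookkeeping around straddling blocks: one must check that for a straddling $W$ both of $W_a, W_b$ are nonempty unions of blocks of $\pi_1$, respectively $\pi_2$ (after the shift by $n_1$), so that the restriction really splits as a concatenation of two partitions of strictly positive size, which is precisely the hypothesis needed to invoke (\ref{eqn:62b}). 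Once this is in hand, the rest is a routine expansion.
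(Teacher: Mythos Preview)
Your proof is correct and follows essentially the same approach as the paper: both expand the convolution as a sum over $\rho \in \Int(n)$ with $\rho \geq \pi$, kill the terms indexed by $\rho$ with a block straddling the boundary (you invoke (\ref{eqn:62b}) directly on the offending factor, the paper packages this as Lemma~\ref{lemma:71}), and then factor the remaining sum via semi-multiplicativity. Your argument for part $2^o$ is identical to the paper's.
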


\begin{proof}  
We start by recording the general fact that
\begin{equation}   \label{eqn:73b}
g * \Fbcm \, ( \pi, 1_n) 
= \sum_{\sigma \in \Int (n), 
\, \sigma \geq \pi} \ g( \pi , \sigma ), 
\ \ \forall \, n \geq 1 \mbox{ and } \pi \in NC(n).
\end{equation}
This is obtained directly from the definition of the 
convolution operation, when we take into account the 
specifics of what is $\Fbcm$.

\vspace{6pt}

\noindent
{\em Proof of $1^o$.}  We take a partition 
$\pi = \pi_1 \diamond \pi_2 \in NC(n)$ where $n = n_1 + n_2$
with $n_1, n_2 \geq 1$ and where $\pi_1 \in NC(n_1)$,
$\pi_2 \in NC(n_2)$.  Our goal here is to verify that
\begin{equation}   \label{eqn:73c}
g * \Fbcm \, ( \pi, 1_n ) 
= \bigl( g * \Fbcm \, ( \pi_1, 1_{n_1} ) \bigr) \cdot
\bigl( g * \Fbcm \, ( \pi_2, 1_{n_2} ) \bigr) . 
\end{equation}

Observe that we clearly have
\begin{equation}   \label{eqn:73d}
\{ \sigma \in \Int (n) \mid \sigma \geq \pi \}
\supseteq \Bigl\{ \sigma_1 \diamond \sigma_2 
\begin{array}{ll}
\vline &  \sigma_1 \in \Int (n_1), \, \sigma_1 \geq \pi_1, \\ 
\vline &  \sigma_2 \in \Int (n_2), \, \sigma_2 \geq \pi_2
\end{array} \Bigr\} .
\end{equation}
By starting from (\ref{eqn:73b}), we can thus write
\begin{equation}    \label{eqn:73e}
g * \Fbcm \, ( \pi, 1_n) 
= \Bigl( \sum_{ \begin{array}{c}
{\scriptstyle \sigma_1 \in \Int (n_1), \, \sigma_1 \geq \pi_1}  \\
{\scriptstyle \sigma_2 \in \Int (n_2), \, \sigma_2 \geq \pi_2} 
\end{array} } \ g( \pi , \sigma_1 \diamond \sigma_2 ) \Bigr) \ + 
\ \sum_{\sigma \in \cJ} g( \pi, \sigma ),
\end{equation}
where $\cJ$ denotes the difference of the two sets indicated in 
(\ref{eqn:73d}).  But note that $\cJ$ can be described as
\[
\cJ = \Bigl\{ \sigma \in \Int (n) 
\begin{array}{ll}
\vline &  \sigma \geq \pi 
          \mbox{ and there exists a block $W_o$ of $\sigma$} \\
\vline & \mbox{such that $\min (W_o) \leq n_1, \, \max (W_o) > n_1$}
\end{array}  \Bigr\} ;
\]
a direct application of Lemma \ref{lemma:71} then gives that
$g( \pi , \sigma ) = 0$ for every $\sigma \in \cJ$.  So the 
second sum on the right-hand side of Equation (\ref{eqn:73e}) 
is actually equal to $0$.  Concerning the first sum appearing 
there, we observe that its general term can be written as 
\[
g( \pi , \sigma_1 \diamond \sigma_2 )
= g( \pi_1 \diamond \pi_2 , \sigma_1 \diamond \sigma_2 )
= g( \pi_1 , \sigma_1 ) \cdot g( \pi_2 , \sigma_2 ),
\]
with the factorization at the second equality sign coming from the 
semi-multiplicativity of $g$.  When we put these observations 
together, we find that (\ref{eqn:73e}) leads to the factorization
\begin{equation}   \label{eqn:73f}
g * \Fbcm \, ( \pi, 1_n) 
= \Bigl( \sum_{\sigma_1 \in \Int (n_1), \, \sigma_1 \geq \pi_1}
g( \pi_1 , \sigma_1 )  \Bigr) \cdot 
\Bigl( \sum_{\sigma_2 \in \Int (n_2), \, \sigma_2 \geq \pi_2}
g( \pi_2 , \sigma_2 ) \Bigr) .
\end{equation}
Finally, upon specializing the general Equation (\ref{eqn:73b}) 
to the case of the partitions $\pi_1$ and $\pi_2$, we identify the 
right-hand side of (\ref{eqn:73f}) as being the product that
had been announced on the right-hand side of (\ref{eqn:73c}).
This completes the verification that had to be done in this part
of the proof.

\vspace{6pt}

\noindent
{\em Proof of $2^o$.}  Let $n \geq 1$ and let $\pi \in NC(n)$ be
irreducible.  It is immediate that, since $1$ and $n$
belong to the same block of $\pi$, the only $\sigma \in \Int (n)$ 
such that $\sigma \geq \pi$ is $\sigma = 1_n$.  Hence, in this 
special case, the sum on the right-hand side of (\ref{eqn:73b}) 
has only $1$ term, which is equal to $g( \pi , 1_n )$.  It follows 
that $g * \Fbcm \, ( \pi , 1_n ) = g( \pi , 1_n )$, as stated 
in (\ref{eqn:73a}).
\end{proof}

\vspace{6pt}

\begin{proposition}   \label{prop:74}
$\cGtildctom = \{ g * \Fbcm \mid g \in \cGtildctoc \}$.
\end{proposition}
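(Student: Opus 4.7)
The inclusion $\{g * \Fbcm \mid g \in \cGtildctoc\} \subseteq \cGtildctom$ is immediate from part $1^o$ of Lemma \ref{lemma:73}, so the task is to establish the opposite inclusion. I fix an arbitrary $h \in \cGtildctom$ and will exhibit a $g \in \cGtildctoc$ with $g * \Fbcm = h$.

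The natural candidate for $g$ is produced via Proposition \ref{prop:27}: let $g \in \cGtild$ be the unique semi-multiplicative function determined by the prescription
\[
g( \pi, 1_n ) = \begin{cases} h( \pi, 1_n ), & \text{if } \pi \in NC(n) \text{ is irreducible}, \\ 0, & \text{otherwise}. \end{cases}
\]
(The value at $1_n$, which is itself irreducible, is automatically $1$, in agreement with $h(1_n, 1_n) = 1$.) By construction $g(\pi, 1_n) = 0$ whenever $\pi$ is reducible, so $g \in \cGtildctoc$.

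The verification that $g * \Fbcm = h$ proceeds as follows. By part $1^o$ of Lemma \ref{lemma:73} we have $g * \Fbcm \in \cGtildctom \subseteq \cGtild$, and by hypothesis $h \in \cGtild$ as well; Proposition \ref{prop:27} therefore reduces the task to checking the equality $g * \Fbcm \, (\pi, 1_n) = h(\pi, 1_n)$ for all $n \geq 1$ and $\pi \in NC(n)$. If $\pi$ is irreducible, part $2^o$ of Lemma \ref{lemma:73} yields $g * \Fbcm \, (\pi, 1_n) = g(\pi, 1_n) = h(\pi, 1_n)$, which settles this case. If $\pi$ is reducible, I write $\pi = \pi_1 \diamond \cdots \diamond \pi_k$ as its decomposition into irreducibles (cf.~Notation \ref{def:13}$3^o$); since both $g * \Fbcm$ and $h$ lie in $\cGtildctom$, an iterated application of the defining factorization (\ref{eqn:62a}), recorded as Equation (\ref{eqn:64a}) of Remark \ref{rem:63}, gives
\[
g * \Fbcm \, (\pi, 1_n) = \prod_{j=1}^k g * \Fbcm \, (\pi_j, 1_{n_j}) \quad \text{and} \quad h(\pi, 1_n) = \prod_{j=1}^k h(\pi_j, 1_{n_j}),
\]
and the corresponding factors match by the irreducible case just handled.

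There is no serious obstacle in this approach; the conceptual content is that specifying a function in $\cGtildctom$ by its values on irreducibles (via the cumulant-to-moment factorization) corresponds bijectively to specifying a function in $\cGtildctoc$ by its values on irreducibles (via zero-extension to reducibles), and Lemma \ref{lemma:73}$2^o$ is precisely the mechanism that converts one description into the other through right-convolution by $\Fbcm$.
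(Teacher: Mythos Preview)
Your proof is correct and follows essentially the same approach as the paper: both define $g$ via Proposition~\ref{prop:27} by restricting $h$ to irreducibles and zero-extending, invoke Lemma~\ref{lemma:73} to place $g*\Fbcm$ in $\cGtildctom$ and to match it with $h$ on irreducibles, and then conclude via the fact (Remark~\ref{rem:63}.2) that functions in $\cGtildctom$ are determined by their values on irreducibles. The only cosmetic difference is that you spell out the reducible case explicitly via the factorization~(\ref{eqn:64a}), whereas the paper simply cites Remark~\ref{rem:63}.2.
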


\begin{proof} ``$\supseteq$'': This inclusion is provided by 
Lemma \ref{lemma:73}.1.

\vspace{6pt}
``$\subseteq$'':  Let a function $h \in \cGtildctom$ be given. 
We have to prove that $h$ can be written in the form 
$h = g * \Fbcm$, with $g \in \cGtildctoc$.

Proposition \ref{prop:27} assures us that there exists 
$g \in \cGtild$, uniquely determined, such that for every 
$n \geq 1$ and $\pi \in NC(n)$ we have
\begin{equation}   \label{eqn:74a}
g( \pi, 1_n) = \left\{  \begin{array}{ll}
h( \pi, 1_n), & \mbox{if $\pi$ is irreducible,}   \\
0,            & \mbox{otherwise.}
\end{array}  \right.
\end{equation}
Since Equation (\ref{eqn:74a}) includes the fact that 
$g( \pi , 1_n ) = 0$ whenever $\pi$ is not irreducible, we 
know that $g \in \cGtildctoc$.  
Let $\widetilde{h} := g * \Fbcm$.  Then 
$\widetilde{h} \in \cGtildctom$, by Lemma \ref{lemma:73}.1.
Moreover, for every $n \geq 1$ and irreducible partition 
$\pi \in NC(n)$ we have
\begin{align*}
\widetilde{h} ( \pi , 1_n )
& = g( \pi , 1_n) \ \ \mbox{ (by Lemma \ref{lemma:73}.2)} \\
& = h( \pi , 1_n) \ \ \mbox{ (by (\ref{eqn:74a})). }
\end{align*}
We thus get to have two functions $h$ and $\widetilde{h}$ in
$\cGtildctom$, such that 
$h( \pi , 1_n) = \widetilde{h} ( \pi , 1_n)$ whenever $\pi \in NC(n)$
is irreducible.  As observed in Remark \ref{rem:63}.2, 
this implies $h = \widetilde{h}$.  In particular we have obtained
$h = g * \Fbcm$ with $g \in \cGtildctoc$, and this concludes the 
proof.
\end{proof}

\vspace{6pt}

Proposition \ref{prop:74} has established the required claim 
that $\cGtildctom$ is a right coset of the subgroup 
$\cGtildctoc \subseteq \cGtild$.  It is useful to also record 
the following fact, which gives a converse to 
Lemma \ref{lemma:73}.2, and was implicitly included in our 
method of deriving Proposition \ref{prop:74}.

\vspace{6pt}

\begin{corollary}   \label{cor:75}
Suppose that $g \in \cGtildctoc$, $h \in \cGtildctom$ and it
holds true that 
\begin{equation}   \label{eqn:75a}
g( \pi , 1_n ) = h( \pi , 1_n ), \ \ \forall \, n \geq 1
\mbox{ and irreducible $\pi \in NC(n)$.}
\end{equation}
Then it follows that $h = g * \Fbcm$.
\end{corollary}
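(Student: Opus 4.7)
The plan is to simply assemble the corollary as a direct consequence of Lemma \ref{lemma:73} together with the uniqueness observation from Remark \ref{rem:63}.2. Given $g \in \cGtildctoc$, Lemma \ref{lemma:73}.1 tells us that $g * \Fbcm$ lies in $\cGtildctom$, so we have two functions $h$ and $g * \Fbcm$, both elements of $\cGtildctom$, and our task reduces to showing they agree.

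Next I would invoke Lemma \ref{lemma:73}.2, which gives
\[
g * \Fbcm \, ( \pi , 1_n ) = g( \pi , 1_n )
\quad \text{for every } n \geq 1 \text{ and irreducible } \pi \in NC(n).
\]
Combined with the hypothesis (\ref{eqn:75a}), this yields
\[
g * \Fbcm \, ( \pi , 1_n ) = h( \pi , 1_n )
\quad \text{for every } n \geq 1 \text{ and irreducible } \pi \in NC(n).
\]

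Finally, I would appeal to Remark \ref{rem:63}.2, which established that any element of $\cGtildctom$ is completely determined by its values $(\pi, 1_n)$ with $\pi$ irreducible (because for a general $\pi \in NC(n)$, the factorization along the concatenation decomposition of $\pi$ into irreducible pieces recovers $(\pi,1_n)$-values from the irreducible ones via (\ref{eqn:64a}), and Proposition \ref{prop:27} then determines the full function). Applying this uniqueness to $h$ and $g * \Fbcm$ gives $h = g * \Fbcm$, as required.

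There is no real obstacle here: the corollary is extracted from the argument already used in the ``$\subseteq$'' direction of the proof of Proposition \ref{prop:74}, and is obtained by reading that argument in reverse. The only conceptual point worth stressing is the asymmetry between the two subsets $\cGtildctoc$ and $\cGtildctom$: elements of the former are pinned down by their values on $(\pi,1_n)$ with $\pi$ irreducible because the non-irreducible values vanish, while elements of the latter are pinned down on the same pairs because of the multiplicative rule (\ref{eqn:64a}) under concatenation. The corollary is exactly the statement that these two distinct ``coordinate systems'' for the irreducible values glue together via convolution with $\Fbcm$.
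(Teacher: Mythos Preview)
Your proof is correct and follows essentially the same approach as the paper. The only cosmetic difference is that the paper cites Proposition \ref{prop:74} for the fact that $g * \Fbcm \in \cGtildctom$, whereas you cite Lemma \ref{lemma:73}.1 directly; since the ``$\supseteq$'' direction of Proposition \ref{prop:74} is exactly Lemma \ref{lemma:73}.1, this is the same argument.
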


\vspace{6pt}

\noindent
{\em Proof.}  We have $g * \Fbcm \in \cGtildctom$, by 
Proposition \ref{prop:74}.  We are thus required to prove an 
equality between two functions in $\cGtildctom$.  To this end,
we know (cf.~Remark \ref{rem:63}.2) it is sufficient to verify 
that the two functions in question, $h$ and $g * \Fbcm$, agree 
on every couple $( \pi , 1_n)$ where $n \geq 1$ and 
$\pi \in NC(n)$ is irreducible.  
And indeed, for such $( \pi, 1_n)$ we have
\begin{align*}
g * \Fbcm \, ( \pi , 1_n )
& = g (\pi, 1_n) \mbox{ (by Lemma \ref{lemma:73}.2) }  \\
& = h( \pi, 1_n) \mbox{ (by hypothesis). \hspace{5cm} $\square$}
\end{align*}

$\ $

\subsection{An application: why Boolean cumulants are 
the easiest to connect to.}

$\ $

\noindent
In this subsection we show how the method of proof used 
in Proposition \ref{prop:74} can be invoked 
to retrieve a known ``rule of thumb'', which says that 
when given a cumulant-to-moment summation formula, 
it is usually immediate to write down the corresponding
cumulant-to-(Boolean cumulant) summations: one uses
{\em the very same coefficients} as in the description 
of moments, only that the summations are now restricted to 
non-crossing partitions that are irreducible.  The precise 
statement of this fact goes as follows.

\vspace{6pt}

\begin{proposition}   \label{prop:76}
Let $( \cA , \varphi )$ be a non-commutative probability space.
Suppose we are given a sequence of multilinear functionals 
$\uLambda = ( \lambda_n )_{n=1}^{\infty} \in \fM_{ { }_{\cA} }$
and a family of complex coefficients 
$( c( \pi ) )_{\pi \in \sqcup_{n=1}^{\infty} NC(n)}$ 
such that for every $n \geq 1$ and 
$x_1, \ldots , x_n \in \cA$ we have
\begin{equation}   \label{eqn:76a}
\varphi (x_1 \cdots x_n) = 
\sum_{\pi \in NC(n)}  c( \pi  )
\, \prod_{V \in \pi}
\lambda_{|V|} \bigl( \, (x_1, \ldots , x_n) \mid V \, \bigr).
\end{equation}
Suppose moreover that, in relation to the operation ``$\diamond$''
of concatenating non-crossing partitions, the coefficients 
$c( \pi )$ have the property that 
\begin{equation}   \label{eqn:76b}
c( \pi_1 \diamond \pi_2 ) = c( \pi_1 ) \cdot c( \pi_2 ),
\ \ \forall \, \pi_1, \pi_2 \in \sqcup_{n=1}^{\infty} NC(n).
\end{equation}
Then: denoting by 
$\uBeta = ( \beta_n )_{n=1}^{\infty}$ the Boolean 
cumulant functionals of $( \cA , \varphi )$, we have
\begin{equation}   \label{eqn:76c}
\beta_n (x_1, \ldots , x_n) = 
\sum_{ \begin{array}{c}
{\scriptstyle \pi \in NC(n),} \\
{\scriptstyle \mathrm{irreducible}} 
\end{array} } \ c ( \pi )
\, \prod_{V \in \pi}
\lambda_{|V|} \bigl( \, (x_1, \ldots , x_n) \mid V \, \bigr),
\end{equation}
holding for every $n \geq 1$ and every $x_1, \ldots , x_n \in \cA$.
\end{proposition}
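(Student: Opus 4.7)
The plan is to translate the hypothesis into the language of the $\cGtild$-action on $\fMcA$ developed in Section 6, and then to exploit the coset decomposition $\cGtildctom = \cGtildctoc * \Fbcm$ from Proposition \ref{prop:74}.

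First, I would use Proposition \ref{prop:27} to produce the function $g \in \cGtild$ satisfying $g(\pi, 1_n) = c(\pi)$ for every $\pi \in \sqcup_{n=1}^{\infty} NC(n)$. The multiplicativity hypothesis (\ref{eqn:76b}) on the coefficients $c(\pi)$ translates directly into the factorization condition (\ref{eqn:62a}), so in fact $g \in \cGtildctom$. Moreover, the hypothesis (\ref{eqn:76a}) can be read (via Notation \ref{rem:52}.2 and Notation \ref{def:53}) as the identity $\uPhi = \uLambda \cdot g$ in $\fMcA$, where $\uPhi$ is the family of moment functionals of $(\cA, \varphi)$.

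Next, I would combine this with the moment-Boolean formula $\uPhi = \uBeta \cdot \Fbcm$ from (\ref{eqn:67c}) to obtain $\uLambda \cdot g = \uBeta \cdot \Fbcm$, which after right-acting by $\Fbcm^{-1}$ yields $\uBeta = \uLambda \cdot g'$ with $g' := g * \Fbcm^{-1}$. Since $g \in \cGtildctom$, Proposition \ref{prop:74} guarantees that $g'$ lies in $\cGtildctoc$; in particular, $g'(\pi, 1_n) = 0$ whenever $\pi \in NC(n)$ is not irreducible. To identify the remaining values, I would invoke Lemma \ref{lemma:73}.2 applied to $g' \in \cGtildctoc$: since $g = g' * \Fbcm$, for every irreducible $\pi \in NC(n)$ we have $g'(\pi, 1_n) = g' * \Fbcm \, (\pi, 1_n) = g(\pi, 1_n) = c(\pi)$.

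Finally, I would unpack the identity $\uBeta = \uLambda \cdot g'$ using the defining formula (\ref{eqn:53a}) from Notation \ref{def:53}: for every $n \geq 1$ and $x_1, \ldots, x_n \in \cA$,
\[
\beta_n(x_1, \ldots, x_n) = \sum_{\pi \in NC(n)} g'(\pi, 1_n) \, \lambda_\pi (x_1, \ldots , x_n).
\]
Since $g'(\pi, 1_n)$ vanishes off the irreducible partitions and equals $c(\pi)$ on them, this sum collapses to exactly (\ref{eqn:76c}), as required. The main conceptual step is the passage from $\cGtildctom$ to $\cGtildctoc$ via right-multiplication by $\Fbcm^{-1}$; once Proposition \ref{prop:74} and Lemma \ref{lemma:73}.2 are available, no real obstacle remains, and the computation is essentially formal.
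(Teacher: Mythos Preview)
Your proposal is correct and follows essentially the same route as the paper's proof: define the semi-multiplicative function from the coefficients $c(\pi)$, recognize it as an element of $\cGtildctom$, use the action of $\cGtild$ to write $\uBeta = \uLambda \cdot (g * \Fbcm^{-1})$, then invoke Proposition~\ref{prop:74} and Lemma~\ref{lemma:73}.2 to identify the values of $g * \Fbcm^{-1}$ on $(\pi,1_n)$. The only difference is cosmetic (your $g,g'$ are the paper's $h,g$).
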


\begin{proof} Let $h$ be the function in $\cGtild$ which 
is defined via the requirement that $h( \pi, 1_n ) = c( \pi )$,
for all $n \geq 1$ and $\pi \in NC(n)$.
The factorization hypothesis (\ref{eqn:76b}) satisfied by
the coefficients $c( \pi )$ tells us that $h$ is a function 
of cumulant-to-moment type.  On the other hand: Equation
(\ref{eqn:76a}) can be re-written concisely in terms of 
$h$, in the form of the relation $\uPhi = \uLambda \cdot h$, 
where $\uPhi$ is the family of moment functionals of the 
space $( \cA , \varphi )$.  We can therefore write that 
\begin{align*}
\uBeta
& = \uPhi \cdot \Fbcm^{-1} 
    \mbox{ (Boolean cumulants expressed in terms of moments)}  \\
& = ( \uLambda \cdot h ) \cdot \Fbcm^{-1}  
  = \uLambda \cdot ( h * \Fbcm^{-1} ) = \uLambda * g,  
\end{align*}
where we denoted $g := h * \Fbcm^{-1}$.  

Since $h \in \cGtildctom$, Proposition \ref{prop:74} 
implies that $g \in \cGtildctoc$.  Moreover, since $g$ 
and $h$ are related by the convolution $h = g * \Fbcm$, 
Lemma \ref{lemma:73}.2 tells us that we have
\[
g( \pi , 1_n ) = h( \pi , 1_n ), \ \ \forall 
\, n \geq 1 \mbox{ and irreducible $\pi \in NC(n)$.}
\]
By taking into account that $h( \pi, 1_n ) = c( \pi )$, 
we thus come to the conclusion that for every $n \geq 1$ 
and $\pi \in NC(n)$ we have
\begin{equation}   \label{eqn:76d}
g( \pi , 1_n ) = \left\{  \begin{array}{ll}
c( \pi ), & \mbox{ if $\pi$ is irreducible,}  \\
0,        & \mbox{ otherwise.}
\end{array} \right.
\end{equation}

Finally, for a fixed $n \geq 1$ we obtain that
\[
\beta_n
= \sum_{\pi \in NC(n)} g( \pi, 1_n) \,  \lambda_{\pi}  \\
= \sum_{\begin{array}{c}
{\scriptstyle \pi \in NC(n),} \\
{\scriptstyle \mathrm{irreducible}} 
\end{array}} c( \pi ) \, \lambda_{\pi},
\]
where the first equality is just spelling out the meaning of 
``$\uBeta = \uLambda \cdot g$'', and the second equality makes 
use of (\ref{eqn:76d}).  The formula for $\beta_n$ obtained in
this way is precisely the one stated in Equation (\ref{eqn:76c}).
\end{proof}

\vspace{6pt}

Here is how the preceding proposition applies to some
examples discussed in Section 7.  We mention that a rather
general result of this kind, going in a framework of cumulant
constructions related to trees, appears as Lemma 7.6 of 
\cite{JeLi2019}.

\vspace{6pt}

\begin{example}  \label{example:77}
{\em (Boolean cumulants in terms of $t$-Boolean cumulants.)}

\noindent
Let $t \in \bR$ be a parameter, and in Proposition \ref{prop:76}
let us make $\uLambda$ be the family of $t$-Boolean cumulant 
functionals of $( \cA , \varphi )$:
$\uLambda = \uBetat = ( \betat_n )_{n=1}^{\infty}$,
when the coefficients of interest are 
$c( \pi ) = t^{\innblocks ( \pi )}$ and
Equation (\ref{eqn:76a}) becomes the moment-cumulant formula
recorded in Definition \ref{def:69}.
The factorization condition from (\ref{eqn:76b}) is
holding; this corresponds precisely to the fact that the
function $\Fbcm^{(t)}$ introduced in Notation \ref{def:611} 
is of cumulant-to-moment type.  Thus Proposition \ref{prop:76} 
applies, and yields a formula expressing Boolean cumulants 
in terms of $t$-Boolean cumulants:
\begin{equation}   \label{eqn:77a}
\beta_n = \sum_{\begin{array}{c}
{\scriptstyle \pi \in NC(n),} \\
{\scriptstyle \mathrm{irreducible}} 
\end{array}}
\ t^{| \pi | - 1} \betat_{\pi}, \ \ n \geq 1,
\end{equation}
where on the right-hand side we took into account that 
an irreducible partition $\pi$ has 
$\innblocks ( \pi ) = | \pi | - 1$, and therefore 
has $c( \pi ) = t^{ | \pi | - 1 }$.  A generalization of this 
formula appears in Corollary \ref{cor:85} of the next section.
\end{example}

\vspace{6pt}

\begin{example}  \label{example:78}
{\em (Boolean cumulants in terms of monotone cumulants.)}

\noindent
In Proposition \ref{prop:76} let us make $\uLambda$ be the 
family of monotone cumulant functionals of $( \cA , \varphi )$:
$\uLambda = \uGamma 
= ( \gamma_n : \cA^n \to \bC )_{n=1}^{\infty}$,
when the coefficients of interest are
\[
c( \pi ) =
\frac{\mbox{\# of monotone orderings of $\pi$}}{ | \pi | !},
\ \ \pi \in \sqcup_{n=1}^{\infty} NC(n),
\]
and Equation (\ref{eqn:76a}) becomes the moment-cumulant 
formula recorded in Definition \ref{def:613}.
The factorization condition from (\ref{eqn:76b}) is
holding; this corresponds precisely to the fact that the
function $\Fmcm$ introduced in Notation \ref{def:613} 
is of cumulant-to-moment type.  Thus Proposition 
\ref{prop:76} applies, and yields a formula expressing 
Boolean cumulants in terms of monotone cumulants:
\begin{equation}   \label{eqn:78a}
\beta_n = \sum_{ \begin{array}{c}
{\scriptstyle \pi \in NC(n),} \\
{\scriptstyle \mathrm{irreducible}} 
\end{array}}
\ \frac{\mbox{\# of monotone orderings of $\pi$}}{ | \pi | !}
\cdot \gamma_{\pi}, \ \ n \geq 1.
\end{equation}
This retrieves Equation (1.6) of \cite{ArHaLeVa2015}, 
which is the beginning of the analysis done in that paper on
how to relate monotone cumulants to other brands of cumulants.

Equation (\ref{eqn:78a}) is equivalent to a formula 
that gives an explicit description of the function 
$\Fmcbc := \Fmcm * \Fbcm^{-1} \in \cGtildctoc$, encoding 
the transition from monotone cumulants to Boolean cumulants.
We mention that it is an interesting and non-trivial issue, 
addressed in \cite{ArHaLeVa2015} and in the recent paper 
\cite{CeEFPaPe2020}, to describe explicitly the inverse
\[
\Fmcbc^{-1} = \bigl( \, \Fmcm * \Fbcm^{-1} \, \bigr)^{-1}
= \Fbcm * \Fmcm^{-1} \in \cGtildctoc, 
\]
which encodes the reverse transition from Boolean to
monotone cumulants.
\end{example}

$\ $

\section{The 1-parameter subgroup $\{ u_q \mid q \in \bR \}$
of $\cGtild$, and its action on $\Fbcm^{(t)}$'s}

\noindent
The method used for studying the right coset $\cGtildctom$ in 
Section 8.2 draws attention to a 1-parameter family of functions
in $\cGtildctoc$, defined as follows.

\vspace{6pt}

\begin{notation}   \label{def:81}
For every $q \in \bR$, we denote by $u_q$ the function in 
$\cGtild$ which is determined via the requirement that for all 
$n \geq 1$ and $\pi \in NC(n)$ we have
\begin{equation}   \label{eqn:81a}
u_q ( \pi , 1_n ) = \left\{ \begin{array}{ll} 
q^{| \pi | -1},  & \mbox{ if $\pi$ is irreducible}  \\
0, & \mbox{ otherwise.}
\end{array}  \right.
\end{equation}
Clearly, this is a function of cumulant-to-cumulant type.
\end{notation}

\vspace{6pt}

\begin{remark}   \label{rem:82}
$1^o$ In the case $q=0$, the usual conventions 
apply to yield that $u_0 ( 1_n, 1_n) =1$ and 
$u_0 ( \pi, 1_n ) = 0$ for every $\pi \neq 1_n$ in $NC(n)$.
This implies that $u_0 = e$, the unit of $\cGtild$. 

\vspace{6pt}

$2^o$ The formula for the values taken by $u_q$ on 
general couples in $\NCint$ is determined from 
(\ref{eqn:81a}) by using the semi-multiplicativity property;
we leave it as an easy exercise to the reader to check that 
for every $n \geq 1$ and $\pi \leq \sigma$ in $NC(n)$ one gets:
\begin{equation}   \label{eqn:82a}
u_q ( \pi , \sigma ) =
\left\{   \begin{array}{ll}
q^{| \pi | - | \sigma |}, & \mbox{ if $\pi \ll \sigma$,} \\
0, & \mbox{ otherwise,}
\end{array}   \right.
\end{equation}
where $\ll$ is one of the partial order relations
reviewed in Section 2.2.
\end{remark}

\vspace{6pt}

\begin{proposition}   \label{prop:83}
The $u_q$'s form a 1-parameter subgroup of 
$\cGtild$:
\begin{equation}    \label{eqn:83a}
u_{q_1} * u_{q_2} = u_{q_1 + q_2},
\ \mbox{ for all $q_1, q_2 \in \bR$.}
\end{equation}
\end{proposition}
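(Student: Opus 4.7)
The plan is to reduce the identity to a check on irreducible partitions and then match the sums using the binomial theorem. First, observe that both sides live in $\cGtildctoc$: the right-hand side $u_{q_1+q_2}$ is in $\cGtildctoc$ directly from its defining formula~(\ref{eqn:81a}), and the left-hand side is in $\cGtildctoc$ because each $u_{q_i}$ is, combined with the fact (Proposition~\ref{prop:72}) that $\cGtildctoc$ is closed under convolution. By Remark~\ref{rem:63}, a function in $\cGtildctoc$ is determined by its values $g(\pi, 1_n)$ at pairs where $\pi$ is irreducible, so it suffices to verify
\[
(u_{q_1} * u_{q_2})(\pi, 1_n) = (q_1+q_2)^{|\pi|-1}
\]
for every $n \geq 1$ and every irreducible $\pi \in NC(n)$.

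Fix such a $\pi$ and expand the convolution:
\[
(u_{q_1} * u_{q_2})(\pi, 1_n)
= \sum_{\substack{\sigma \in NC(n) \\ \sigma \geq \pi}}
u_{q_1}(\pi, \sigma) \cdot u_{q_2}(\sigma, 1_n).
\]
By the explicit formula in Remark~\ref{rem:82}, the factor $u_{q_1}(\pi,\sigma)$ vanishes unless $\pi \ll \sigma$, and the factor $u_{q_2}(\sigma, 1_n)$ vanishes unless $\sigma$ is irreducible. The latter condition is automatic here: since $\pi$ is irreducible, the numbers $1$ and $n$ lie in a common block of $\pi$, and since $\sigma \geq \pi$ every block of $\sigma$ is a union of blocks of $\pi$, forcing $1$ and $n$ into the same block of $\sigma$. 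Hence the surviving terms are exactly those with $\sigma \gg \pi$, and the sum rewrites as
\[
(u_{q_1} * u_{q_2})(\pi, 1_n)
= \sum_{\sigma \gg \pi} q_1^{|\pi|-|\sigma|}\, q_2^{|\sigma|-1}.
\]

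Grouping the terms according to the number of blocks $k = |\sigma|$ and invoking Equation~(\ref{eqn:17b}) from Remark~\ref{rem:17}, which counts $\{\sigma \in NC(n) \mid \sigma \gg \pi,\ |\sigma| = k\}$ as $\binom{|\pi|-1}{k-1}$, we obtain
\[
(u_{q_1} * u_{q_2})(\pi, 1_n)
= \sum_{k=1}^{|\pi|} \binom{|\pi|-1}{k-1} q_1^{|\pi|-k}\, q_2^{k-1}
= (q_1+q_2)^{|\pi|-1}
\]
by the binomial theorem (with index shift $j = k-1$), which equals $u_{q_1+q_2}(\pi, 1_n)$ as required. There is no real obstacle in this plan: the two ingredients that make it work are the automatic preservation of irreducibility under $\geq$ and the combinatorial identity~(\ref{eqn:17b}), both of which are already available. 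The only point requiring a moment's care is the bookkeeping that lets the two exponents $|\pi|-|\sigma|$ and $|\sigma|-1$ add up to $|\pi|-1$, so that the binomial identity applies cleanly.
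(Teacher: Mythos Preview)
Your proof is correct and follows essentially the same approach as the paper's own proof: reduce to irreducible $\pi$ via membership in $\cGtildctoc$, expand the convolution, observe that $\sigma \geq \pi$ forces $\sigma$ irreducible, group by $|\sigma|$ using the count in (\ref{eqn:17b}), and finish with the binomial theorem. The only cosmetic difference is that the paper disposes of the trivial cases $q_1 = 0$ or $q_2 = 0$ separately at the outset, whereas you let the binomial identity absorb them.
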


\begin{proof} We fix $q_1, q_2 \in \bR$ for which we 
will prove that (\ref{eqn:83a}) holds. 
The case when $q_1 =0$ or $q_2 = 0$ is clear, so 
we will assume that $q_1 \neq 0 \neq q_2$.

Since both $u_{q_1} * u_{q_2}$ and $u_{q_1 + q_2}$ 
are in $\cGtildctoc$, in order to prove their equality 
it will suffice (cf. Remark \ref{rem:63}) to check that
\begin{equation}    \label{eqn:83b}
\left\{    \begin{array}{l}
u_{q_1} * u_{q_2} \, ( \pi , 1_n )
= u_{q_1 + q_2} ( \pi, 1_n) \\
\mbox{ for every $n \geq 1$ and every irreducible 
$\pi \in NC(n)$.}
\end{array}   \right.
\end{equation}
For the rest of the proof, we fix an $n \geq 1$ and an 
irreducible $\pi \in NC(n)$ for which we will verify 
that Equation (\ref{eqn:83b}) holds.   

The right-hand side of 
(\ref{eqn:83b}) is, directly from the definitions, equal to 
$(q_1 + q_2)^{ | \pi | - 1 }$.  So our job is to 
verify that the left-hand side of (\ref{eqn:83b}) is 
equal to that same quantity.

We compute:
\[
u_{q_1} * u_{q_2} \, ( \pi , 1_n )
= \sum_{ \substack{\sigma \in NC(n), \\ \sigma \geq \pi} } 
\ u_{q_1} ( \pi , \sigma ) \cdot 
  u_{q_2} ( \sigma , 1_n )           
= \sum_{ \substack{\sigma \in NC(n), \\ \sigma \gg \pi} }
    \ q_1^{| \pi | - | \sigma |} 
    \cdot q_2^{ | \sigma | -1},
\]
where at the second equality sign we used (\ref{eqn:82a})
and also the fact that every $\sigma \geq \pi$ in $NC(n)$ 
is irreducible, thus has 
$u_{q_2} ( \sigma, 1_n) = q_2^{| \sigma | - 1}$.
In the latter summation over $\sigma$ we sort out the 
terms according to what is $| \sigma |$.  As reviewed
in Remark \ref{rem:17}.2, one has
\[
\vline \  \{ \sigma \in NC(n) \mid \sigma \gg \pi, \, 
| \sigma | = k \} \ \vline \ =  \left(  
\begin{array}{c} | \pi | - 1 \\ k-1 \end{array} \right) ,
\ \ \forall \, 
k \in \{ 1, \ldots , | \pi | \}.
\]
Hence our evaluation of the left-hand side of 
Equation (\ref{eqn:83b}) continues as follows:
\[
u_{q_1} * u_{q_2} \, ( \pi , 1_n )
= \sum_{k=1}^{| \pi |}  \binom{| \pi | - 1 }{k-1}
q_1^{ | \pi | - k}  q_2^{k-1} 
= \sum_{\ell = 0}^{| \pi | -1}  \binom{ | \pi | -1 }{\ell}
q_1^{ ( | \pi | -1 ) - \ell } q_2^{\ell} 
= (q_1+q_2)^{| \pi | -1},
\]
which is precisely the value we wanted to obtain. 
\end{proof}

\vspace{6pt}

We now consider the functions of cumulant-to-moment type
$\Fbcm^{(t)}$ introduced in Section 7.2, which encode 
moment-cumulant formulas for $t$-Boolean cumulants, and
we look at how our 1-parameter subgroup of $u_q$'s  
acts on $\Fbcm^{(t)}$'s, by left translations.

\vspace{6pt}

\begin{proposition}  \label{prop:84}
One has
\begin{equation}    \label{eqn:84a}
u_q * g^{(t)}_{\mathrm{bc-m}} 
= g^{(q+t)}_{\mathrm{bc-m}},
\ \mbox{ for all $q, t \in \bR$.}
\end{equation}
\end{proposition}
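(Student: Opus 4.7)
My plan is to verify the identity by reducing to irreducible partitions and computing both sides explicitly, using the binomial counting of Remark \ref{rem:17}.$2^o$.

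First, I want to place both sides of \eqref{eqn:84a} in the coset $\cGtildctom$. By Notation \ref{def:611}, $\Fbcm^{(t)}$ and $\Fbcm^{(q+t)}$ lie in $\cGtildctom$. On the other hand, $u_q \in \cGtildctoc$, and since $\cGtildctoc$ is a subgroup of $\cGtild$ (Proposition \ref{prop:72}) while $\cGtildctom$ is a right coset of this subgroup (Proposition \ref{prop:74}), it follows that $u_q * \Fbcm^{(t)} \in \cGtildctom$ as well. By Remark \ref{rem:63}.$2^o$, a function in $\cGtildctom$ is completely determined by its values $g(\pi, 1_n)$ for irreducible $\pi \in NC(n)$, so it suffices to verify that $u_q * \Fbcm^{(t)}$ and $\Fbcm^{(q+t)}$ agree on all such couples.

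So fix $n \geq 1$ and $\pi \in NC(n)$ irreducible. Since $\pi$ is irreducible, $\innblocks(\pi) = |\pi| - 1$, hence $\Fbcm^{(q+t)}(\pi, 1_n) = (q+t)^{|\pi|-1}$. For the other side I compute
\[
u_q * \Fbcm^{(t)}(\pi, 1_n)
= \sum_{\sigma \geq \pi} u_q(\pi, \sigma) \cdot \Fbcm^{(t)}(\sigma, 1_n)
= \sum_{\sigma \gg \pi} q^{|\pi| - |\sigma|} \cdot t^{\innblocks(\sigma)},
\]
where in the second equality I use formula \eqref{eqn:82a} for $u_q(\pi,\sigma)$ to restrict the sum to those $\sigma$ with $\sigma \gg \pi$. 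The key observation now is that if $\sigma \geq \pi$ and $\pi$ is irreducible, then $1$ and $n$ lie in the same block of $\sigma$, so $\sigma$ is irreducible, hence $\innblocks(\sigma) = |\sigma| - 1$.

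It remains to organise the sum by $k := |\sigma|$. By Remark \ref{rem:17}.$2^o$, the number of $\sigma \in NC(n)$ with $\sigma \gg \pi$ and $|\sigma| = k$ is $\binom{|\pi|-1}{k-1}$. Thus
\[
u_q * \Fbcm^{(t)}(\pi, 1_n)
= \sum_{k=1}^{|\pi|} \binom{|\pi|-1}{k-1} q^{|\pi|-k} t^{k-1}
= \sum_{\ell=0}^{|\pi|-1} \binom{|\pi|-1}{\ell} q^{(|\pi|-1)-\ell} t^{\ell}
= (q+t)^{|\pi|-1},
\]
by the binomial theorem, matching $\Fbcm^{(q+t)}(\pi,1_n)$. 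I do not foresee a real obstacle — the main point is simply to recognise that the sum is over irreducible refinements, so that the computation reduces cleanly to the same binomial sum already used in the proof of Proposition \ref{prop:83}.
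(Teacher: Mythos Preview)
Your proof is correct, but it takes a different route from the paper's. The paper first establishes the special case $t=0$, i.e.\ $u_q * \Fbcm = \Fbcm^{(q)}$, by observing that $u_q(\pi,1_n) = q^{|\pi|-1} = \Fbcm^{(q)}(\pi,1_n)$ for irreducible $\pi$ and invoking Corollary~\ref{cor:75}; it then bootstraps to general $t$ via the already-proved group law $u_q * u_t = u_{q+t}$ (Proposition~\ref{prop:83}), writing $u_q * \Fbcm^{(t)} = u_q * (u_t * \Fbcm) = u_{q+t} * \Fbcm = \Fbcm^{(q+t)}$. Your approach instead performs a single direct binomial computation valid for all $t$ at once, essentially re-running the argument of Proposition~\ref{prop:83} with $t$ in place of $q_2$. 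Your route is more self-contained and does not rely on Proposition~\ref{prop:83} or Corollary~\ref{cor:75} as intermediate results; the paper's route is more modular, reusing those established facts so that no computation is needed beyond what was already done.
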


\begin{proof} We first verify the special case of 
(\ref{eqn:84a}) when $t=0$.  Since $\Fbcm^{(0)}$ is 
just the function $\Fbcm$ from Definition \ref{def:67},
this case amounts to checking that for 
every $q \in \bR$ we have
\begin{equation}   \label{eqn:84b}
u_q * \Fbcm =  \Fbcm^{(q)}.
\end{equation}
And indeed, let us notice that: $u_q$ 
is in $\cGtildctoc$, $\Fbcm^{(t)}$ is in 
$\cGtildctom$, and they are such that 
\[
\Fbcm^{(q)} ( \pi , 1_n) = q^{\innblocks ( \pi )}
= q^{ | \pi | -1 } = u_q ( \pi , 1_n ), \ \ \forall
\, \pi \in NC(n), \mbox{ irreducible.}
\]
Thus Equation (\ref{eqn:84b}) does hold, 
as a special case of Corollary \ref{cor:75}.

Going to general $q, t \in \bR$ we can then write:
\begin{align*}
u_q * \Fbcm^{(t)}
& = u_q * ( u_t * \Fbcm ) \mbox{ (by (\ref{eqn:84b}))}        \\
& = ( u_q * u_t ) * \Fbcm                                     \\
& = u_{q+t} * \Fbcm  \mbox{ (by Proposition \ref{prop:83})}  \\
& = \Fbcm^{(q+t)} \mbox{ (by (\ref{eqn:84b}))}, 
\end{align*}
yielding the required Equation (\ref{eqn:84a}).
\end{proof}

\vspace{6pt}

The preceding proposition yields, in particular, the 
explicit transition formula from $s$-Boolean cumulants 
to $t$-Boolean cumulants which was anticipated in 
Remark \ref{rem:612} of Section 7.2.  Recall that, in the 
said Remark \ref{rem:612}, the point that remained to be 
justified was the validity of Equation (\ref{eqn:612b});
this is now very easy to fill in.

\vspace{6pt}

\begin{corollary}   \label{cor:85}
{\em (A repeat of Equation (\ref{eqn:612b}).)}

\noindent
Let $s$ and $t$ be real parameters, and consider the functions
$\Fbcm^{(s)}, \Fbcm^{(t)} \in \cGtildctoc$.  For every $n \geq 1$
and $\pi \in NC(n)$ one has:
\[
\Fbcm^{(s)} * ( \Fbcm^{(t)} )^{-1} \,
( \pi , 1_n ) =
\left\{   \begin{array}{ll}
(s-t)^{| \pi | - 1}, & \mbox{ if $\pi$ is irreducible,} \\
0, & \mbox{ otherwise.}
\end{array}   \right.
\]
\end{corollary}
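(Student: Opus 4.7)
The plan is to read the claimed identity as a one-line consequence of Proposition~\ref{prop:84} together with the explicit definition of $u_q$ in Notation~\ref{def:81}. Concretely, I would specialize Proposition~\ref{prop:84} to the parameter choice $q = s-t$, which yields
\begin{equation*}
u_{s-t} * \Fbcm^{(t)} = \Fbcm^{(s-t)+t} = \Fbcm^{(s)}.
\end{equation*}
Since $\Fbcm^{(t)}$ lives in the group $(\cGtild,*)$ (Theorem~\ref{thm:33}), right-convolving both sides by $(\Fbcm^{(t)})^{-1}$ gives the identity of functions
\begin{equation*}
\Fbcm^{(s)} * (\Fbcm^{(t)})^{-1} = u_{s-t}.
\end{equation*}

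The second and final step would be to read off the values of $u_{s-t}$ at couples of the form $(\pi,1_n)$ directly from (\ref{eqn:81a}) in Notation~\ref{def:81}: for $n \geq 1$ and $\pi \in NC(n)$,
\begin{equation*}
u_{s-t}(\pi,1_n) = \begin{cases} (s-t)^{|\pi|-1}, & \text{if $\pi$ is irreducible,} \\ 0, & \text{otherwise,} \end{cases}
\end{equation*}
which is exactly the formula to be proved.

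There is essentially no hard step here; the entire content of the corollary has been loaded into Proposition~\ref{prop:84}, so the ``obstacle'' was really the earlier verification that $u_q * \Fbcm = \Fbcm^{(q)}$ (via Corollary~\ref{cor:75}) and that $\{u_q\}_{q \in \bR}$ is a one-parameter subgroup (Proposition~\ref{prop:83}). Given those, the present corollary is simply a bookkeeping consequence. I would therefore present the proof in two short displays as above, without any further computation.
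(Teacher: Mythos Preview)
Your proposal is correct and follows essentially the same approach as the paper: specialize Proposition~\ref{prop:84} with $q=s-t$ to obtain $\Fbcm^{(s)} * (\Fbcm^{(t)})^{-1} = u_{s-t}$, then read off the values at $(\pi,1_n)$ from Equation~(\ref{eqn:81a}).
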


\begin{proof} 
Proposition \ref{prop:84} says that
$\Fbcm^{(s)} = u_{s-t} * \Fbcm^{(t)}$, which 
implies that
\begin{equation}   \label{eqn:85d}
\Fbcm^{(s)} * ( \Fbcm^{(t)} )^{-1} = u_{s-t}.
\end{equation}

\noindent
We evaluate both sides of (\ref{eqn:85d}) at 
$( \pi, 1_n )$, then we refer to the formula for 
$u_{s-t} ( \pi , 1_n )$ which comes from 
Equation (\ref{eqn:81a}), and the corollary follows.
\end{proof}

$\ $

\section{The action of $\{ u_q \mid q \in \bR \}$, 
by conjugation, on multiplicative functions}

\noindent 
The goal of the present section is to prove the following
result.

\vspace{6pt}

\begin{theorem}    \label{thm:91}
Let $\{ u_q \mid q \in \bR \}$ be as in the preceding section
and let $\cG$ be the subgroup of $\cGtild$ which consists of
multiplicative functions, as reviewed in Section 5.  One has
that:
\begin{equation}  \label{eqn:91a}
\Bigl( q \in \bR \mbox{ and } f \in \cG \Bigr)
\ \Rightarrow \ u_q^{-1} * f * u_q \in \cG.
\end{equation}
\end{theorem}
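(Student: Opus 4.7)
Since $u_q^{-1} = u_{-q}$ by Proposition \ref{prop:83}, my goal is to show $g := u_{-q} * f * u_q$ lies in $\cG$. By Remark \ref{rem:42}, this amounts to proving $g(\pi, 1_n) = \prod_{W \in \Kr(\pi)} \mu_{|W|}$ for every $\pi \in NC(n)$, where $\mu_n := g(0_n, 1_n)$. Write $(\lambda_n)_{n \geq 1}$ with $\lambda_1 = 1$ for the parameter sequence of $f$, so that $f(\tau, \sigma) = \prod_{W \in \Kr_\sigma(\tau)} \lambda_{|W|}$ for all $\tau \leq \sigma$, and note the useful identity $f(\tau, \sigma) = f(0_n, \Kr_\sigma(\tau))$ obtained by comparing $\Kr_\sigma(\tau)$ with $\Kr_\sigma(0_n) = \sigma$.

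The first step is to compute $\mu_n$ explicitly. Writing $g(0_n, 1_n) = \sum_{\tau, \sigma} u_{-q}(0_n, \tau) f(\tau, \sigma) u_q(\sigma, 1_n)$, the crucial point is that $u_{-q}(0_n, \tau) \ne 0$ requires $0_n \ll \tau$; since every block of $0_n$ is a singleton, the condition that extremes of every block of $\tau$ lie in a common block of $0_n$ forces every block of $\tau$ to be a singleton too, so $\tau = 0_n$. Combined with $u_q(\sigma, 1_n) = q^{|\sigma|-1} \mathbf{1}[\sigma \text{ irreducible}]$ and $f(0_n, \sigma) = \prod_{V \in \sigma} \lambda_{|V|}$, this produces
\[
\mu_n = \sum_{\substack{\sigma \in NC(n) \\ \sigma \text{ irreducible}}} q^{|\sigma|-1} \prod_{V \in \sigma} \lambda_{|V|}.
\]

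Next I would expand both sides of the target identity. Repackaging, for each block $W \in \Kr(\pi)$, the choice of irreducible partition on $|W|$ points into a single $\widetilde{\sigma} \in NC(n)$ satisfying $\widetilde{\sigma} \ll \Kr(\pi)$ (using the equivalence in Remark \ref{rem:16}.2) gives
\[
\prod_{W \in \Kr(\pi)} \mu_{|W|} = \sum_{\widetilde{\sigma} \ll \Kr(\pi)} q^{|\widetilde{\sigma}| - |\Kr(\pi)|}\, f(0_n, \widetilde{\sigma}),
\]
while unfolding both convolutions and applying $f(\tau, \sigma) = f(0_n, \Kr_\sigma(\tau))$ yields
\[
g(\pi, 1_n) = \sum_{\substack{\sigma \text{ irreducible} \\ \pi \ll \tau \leq \sigma}} (-q)^{|\pi|-|\tau|} q^{|\sigma|-1}\, f(0_n, \Kr_\sigma(\tau)).
\]

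The heart of the proof is the combinatorial identity equating these two expressions. My plan is to first treat the case when $\pi$ is irreducible: there every $\tau$ and $\sigma$ on the left is automatically irreducible (since $\pi \leq \tau \leq \sigma$ with $\pi$ irreducible keeps $1$ and $n$ in a common block at each stage), so the $\ll$--$\sqsubseteq$ anti-isomorphism of Remark \ref{rem:17}.1 applies. I would then substitute $\rho := \Kr_\sigma(\tau)$, an order-reversing involution of $[0_n, \sigma]$, reindex the sum by $\rho$, and evaluate the resulting signed inner sum over $\sigma$ using the binomial identity (\ref{eqn:17b}) --- the same identity that powered Proposition \ref{prop:83}. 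For arbitrary $\pi$, one cannot simply reduce via the interval-cover decomposition $\pi = \pi_1 \diamond \cdots \diamond \pi_k$, because $\Kr$ does not distribute over $\diamond$ and $\cG \not\subseteq \cGtildctom$; instead the analogous substitution and cancellation must be carried out directly, exploiting that the constraint ``$\sigma$ irreducible'' on the left-hand side forces a controlled interaction between the irreducible components of $\pi$. The main obstacle is this final matching step: setting up the correct Kreweras-based bijection between the two index sets while tracking how the signs $(-q)^{|\pi|-|\tau|}$ collapse against the factors $q^{|\sigma|-1}$ is delicate, and handling non-irreducible $\pi$ requires careful bookkeeping around how $\Kr(\pi)$ fails to factor through the concatenation of $\pi$.
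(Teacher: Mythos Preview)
Your setup is correct and matches the paper's: the computation of $\mu_n = g(0_n,1_n)$ (the paper writes $\theta_n$) is right, and your double-sum expansion of $g(\pi,1_n)$ is exactly the paper's Step~1 (its equation (\ref{eqn:99c})). The target formula $\prod_{W\in\Kr(\pi)}\mu_{|W|}$ is also correctly unpacked. But from there your proposal stops being a proof and becomes an outline with an explicitly acknowledged gap: you say the ``main obstacle'' is the final matching step, and you do not carry it out for either the irreducible or the general case.

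The paper does \emph{not} treat irreducible $\pi$ first and then try to bootstrap. Instead it reorganises the double sum by summing over the \emph{inner} variable (your $\sigma$, the paper's $\tau$) first, for each fixed outer variable (your $\tau$, the paper's $\sigma$). The key new ingredient---which your proposal lacks---is the \emph{irreducible cover} $\overline{\sigma}^{\mathrm{irr}}$ (Notation~\ref{def:93}): the constraint ``$\sigma$ irreducible and $\sigma\ge\tau$'' is rewritten as ``$\sigma\ge\overline{\tau}^{\mathrm{irr}}$''. With this, the inner sum becomes a sum over an interval $[\overline{\tau}^{\mathrm{irr}},1_n]$, to which a first factorisation lemma (Lemma~\ref{lemma:95}) applies after the Kreweras change of variable you mention; this produces a product over blocks of $\Kr(\tau)$ with a distinguished factor for the block containing $n$. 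Only \emph{then} does the outer sum over $\tau\gg\pi$ get processed, via a second factorisation lemma (Lemma~\ref{lemma:98}, proved first for irreducible $\pi$ in Lemma~\ref{lemma:97} using the $\ll$--$\sqsubseteq$ anti-isomorphism you cite, then extended to general $\pi$ by concatenation). A final identification step (the paper's Step~4) matches the resulting product with $\prod_{W\in\Kr(\pi)}\theta_{|W|}$ via an auxiliary identity (equation (\ref{eqn:94b})).

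So the gap is concrete: you are missing the irreducible-cover device that makes the inner sum tractable, and the two-stage factorisation (Lemmas~\ref{lemma:95} and~\ref{lemma:98}) that replaces the direct bijection you hoped to find. Your instinct to use $\Kr_\sigma(\tau)$ and the binomial count (\ref{eqn:17b}) is sound---both appear in the paper's argument---but they enter at different stages of a more structured decomposition, not as a single matching of index sets.
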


\vspace{6pt}

\begin{remark}   \label{rem:92}
Recall from Remark \ref{rem:42} and Proposition \ref{prop:43} 
that a function $f \in \cG$ is completely determined
by the sequence of complex numbers $\lambdans$, where 
$\lambda_n := f( 0_n, 1_n)$, $n \geq 1$.  If we accept
Theorem \ref{thm:91}, then it follows that $u_q^{-1} * f * u_q$ 
must be determined in a similar way by the sequence of 
$\theta_n$'s where $\theta_n := u_q^{-1} * f * u_q \, (0_n, 1_n)$
for $n \geq 1$.  It is easy to write down the explicit 
formula which gives $\theta_n$ in terms of 
$\lambda_1, \ldots , \lambda_n$ and $q$, this is:
\begin{equation}  \label{eqn:92a}
\theta_n = \sum_{\begin{array}{c}  
{\scriptstyle \pi \in NC(n)}  \\
{\scriptstyle \mathrm{irreducible}}
\end{array} } \ q^{| \pi | - 1} \prod_{V \in \pi} \lambda_{|V|}.
\end{equation}
This gives for instance: 
\[
\theta_1 = \lambda_1 = 1, \ \theta_2 = \lambda_2, 
\ \theta_3 = \lambda_3 + q \lambda_2,
\ \theta_4 = \lambda_4 + 2q \lambda_3 + q \lambda_2^2 + q^2 \lambda_2.
\]
Verification of (\ref{eqn:92a}): use the definition
of the convolution operation ``$*$'' to find that
\begin{equation}   \label{eqn:92b}
\theta_n  = u_q^{-1} * f * u_q \, (0_n, 1_n)
= \sum_{  \begin{array}{c} 
{\scriptstyle \sigma, \tau \in \ NC(n),}  \\
{\scriptstyle \sigma \leq \tau}
\end{array} } \ 
u_q^{-1} ( 0_n, \sigma) \, f( \sigma , \tau )
\, u_q ( \tau , 1_n ),
\end{equation}
then notice that $u_q^{-1} ( 0_n, \sigma ) = 0$ for every 
$\sigma \neq 0_n$ in $NC(n)$ (since $u_q^{-1} = u_{-q}$ and 
we can invoke the formula (\ref{eqn:82a})).  Thus $\sigma$ in 
(\ref{eqn:92b}) is forced to be $0_n$, and we continue with
\[
= \sum_{\tau \in NC(n)}  f(0_n , \tau ) u_q ( \tau , 1_n),
\]
which yields (\ref{eqn:92a}) upon replacing 
$f( 0_n , \tau )$ by $\prod_{V \in \tau} \lambda_{|V|}$ and 
$u_q ( \tau , 1_n )$ from (\ref{eqn:81a}).

Deriving the formula (\ref{eqn:92a}) for $\theta_n$ does not, 
however, substitute for a proof of Theorem \ref{thm:91}.
We still need to evaluate $u_q^{-1} * f * u_q \, ( \pi , 1_n)$
for general $\pi \in NC(n)$, and to suitably express the
resulting value as a product of $\theta_m$'s.  In order to achieve 
this we will prove two factorization formulas, presented in 
Lemmas \ref{lemma:95} and \ref{lemma:98} below.   

For Lemma \ref{lemma:95} we will need the following notation.
\end{remark}

\vspace{6pt}

\begin{notation-and-remark}   \label{def:93}
{\em (Irreducible cover of a non-crossing partition.)}

\vspace{6pt}

\noindent
Let $n \geq 1$ and let $\pi$ be in $NC(n)$.  

\vspace{6pt}

\noindent
$1^o$ It is easy to see that there exists a partition
$\pirrc \in NC(n)$, uniquely determined, with the following
properties:
\begin{equation}   \label{eqn:93a}
\left\{   \begin{array}{cl}
\mbox{(i)}  & \mbox{$\pirrc$ is irreducible and 
                     $\pirrc \geq \pi$;}                  \\
\mbox{(ii)}  & \mbox{If $\sigma \in NC(n)$ is irreducible and 
$\sigma \geq \pi$, then } \sigma \geq \pirrc .
\end{array}  \right.
\end{equation}
We will refer to $\pirrc$ as the {\em irreducible cover} 
of $\pi$.  For its explicit description we distinguish two 
cases.

\vspace{6pt}

{\em Case 1. $\pi$ is irreducible.}
Then, clearly, $\pirrc = \pi$.

\vspace{6pt}

{\em Case 2. $\pi$ is not irreducible.}  Then the blocks 
$\Vleft, \Vright \in \pi$ which contain the numbers 
$1$ respectively $n$ are such that $\Vleft \neq \Vright$. 
In this case, $\pirrc$ is obtained out of $\pi$ by merging together
$\Vleft$ and $\Vright$.  (It is easy to see that the said merger 
is sure to give a partition which is still non-crossing
and has all the properties required in (\ref{eqn:93a}).)

\vspace{6pt}

$2^o$ In what follows, we will need at some point to deal with the 
relative Kreweras complement of $\pi$ in $\pirrc$. In the case
when $\pirrc = \pi$, the complement $\Kr_{\pirrc} ( \pi )$ is
just $0_n$.  In the case when $\pirrc \neq \pi$ (i.e.~in the 
Case 2 indicated above), the complement 
$\Kr_{\pirrc} ( \pi )$ has $1$ block with $2$ elements and $n-2$ 
blocks with $1$ element.  Upon drawing a picture which features
the outer blocks of $\pi$, the reader should have no difficulty 
to check that, in Case 2, the unique $2$-element block of 
$\Kr_{\pirrc} ( \pi )$ is of the form $\{ m,n \}$, with $m$ 
described as follows:
\begin{equation}   \label{eqn:93b}
\left\{  \begin{array}{c}
\mbox{$m = \max ( V_{\mathrm{left}} ) 
         = \min ( W_{\mathrm{right}} )$, where }  \\
\mbox{$V_{\mathrm{left}}$ is the block
of $\pi$ which contains the number $1$, and }      \\   
\mbox{$W_{\mathrm{right}}$ is the block
of $\Kr ( \pi )$ which contains the number $n$.}
\end{array}   \right.
\end{equation}

$3^o$ The drawing of the outer blocks of $\pi$ that was recommended
above also reveals that the block 
$W_{\mathrm{right}} \in \Kr ( \pi )$ can be explicitly written in 
the form
\begin{equation}   \label{eqn:93c}
W_{\mathrm{right}} = \{ \max (U_1), \ldots , \max (U_k) \},
\end{equation}
where $U_1, \ldots, U_k$ are the outer blocks of $\pi$ 
(in particular, $U_1 = \Vleft$).
A consequence of (\ref{eqn:93c}) which will be needed in the sequel 
is this: if $\sigma \in NC(n)$ is such that $\sigma \gg \pi$ and if 
$W_{\mathrm{right}} '$ denotes the block of $\Kr ( \sigma )$ which
contains the number $n$, then it follows that
$W_{\mathrm{right}} ' = W_{\mathrm{right}}$.  This is because
the relation $\gg$ forces $\sigma$ to have the
same maximal elements of outer blocks as $\pi$ does, and thus the right-hand 
side of (\ref{eqn:93c}) also serves as an explicit description for what is
$W_{\mathrm{right}} '$. 
\end{notation-and-remark}

\vspace{6pt}

\begin{notation-and-remark}   \label{def:94}
In Lemma \ref{lemma:95} we will use three sequences of numbers
$\alphans$, $( \widehat{\alpha}_n )_{n=1}^{\infty}$
and $( \widetilde{\alpha}_n )_{n=1}^{\infty}$, where the 
$\widehat{\alpha}_n$'s and $\widetilde{\alpha}_n$'s are obtained
out of the $\alpha_n$'s via summation formulas, as follows:
\begin{equation}   \label{eqn:94a}
\widehat{\alpha}_n = \sum_{\pi \in NC(n)} 
\prod_{V \in \pi} \alpha_{|V|} 
\ \mbox{ $\ $ and $\ $ }
\ \widetilde{\alpha}_n = 
\sum_{  \begin{array}{c}
{\scriptstyle \pi \in NC(n),} \\
{\scriptstyle \mathrm{irreducible}}
\end{array} } 
\ \prod_{V \in \pi} \alpha_{|V|} , \ \ n \geq 1.
\end{equation}

One can also write summation formulas which 
give a direct relation between the two derived sequences 
$( \widehat{\alpha}_n )_{n=1}^{\infty}$ and 
$( \widetilde{\alpha}_n )_{n=1}^{\infty}$.  For future 
reference, we record here one such formula (which is not
hard to verify via direct calculation) saying that
\begin{equation}   \label{eqn:94b}
\widetilde{\alpha}_n = \sum_{\rho \in \Int (n)}
(-1)^{ | \rho | + 1 } \prod_{J \in \rho} \widehat{\alpha}_{|J|},
\ \ \forall \,  n \geq 1.
\end{equation}
\end{notation-and-remark}

\vspace{6pt}

\begin{lemma}   \label{lemma:95}
{\em (A factorization formula.)}
Consider sequences of numbers as in Notation \ref{def:94},
and on the other hand let us pick an $n \geq 1$ and a 
partition $\pi \in NC(n)$.  We consider 
the Kreweras complement $\Kr ( \pi )$ and, same as in 
Remark \ref{def:93}, we denote by $W_{\mathrm{right}}$ 
the block of $\Kr ( \pi )$ which contains the number $n$. 
Then:
\begin{equation}   \label{eqn:95a}
\sum_{  \begin{array}{c} 
{\scriptstyle \sigma \in \ NC(n),}  \\
{\scriptstyle \sigma \geq \pirrc}
\end{array} } 
\, \Bigl( \, \prod_{U \in \Kr_{\sigma} ( \pi )} \alpha_{|U|}
\, \Bigr) \ = \ \widetilde{\alpha}_{| W_{\mathrm{right}} | }
\cdot \prod_{ \begin{array}{c}
{\scriptstyle W \in \Kr ( \pi ),}  \\
{\scriptstyle W \not\ni n}
\end{array} } \
\widehat{\alpha}_{|W|}.
\end{equation}
\end{lemma}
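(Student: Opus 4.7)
The plan is to reduce the LHS of (\ref{eqn:95a}) to a product indexed by the blocks of $\Kr(\pi)$ by parameterizing $\sigma$ via the second-kind isomorphism of Remark~\ref{rem:12}.

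As a preliminary I would let $f \in \cG$ be the multiplicative function determined by $f(0_m, 1_m) = \alpha_m$, so that by (\ref{eqn:42c}) the summand on the LHS is exactly $f(\pi, \sigma)$. The Kreweras map is an anti-isomorphism $[\pi, 1_n] \to [0_n, \Kr(\pi)]$, and combining this with the first-kind factorization gives a bijection $\sigma \leftrightarrow (\tau_V)_{V \in \Kr(\pi)}$, where $\tau_V \in NC(|V|)$ is the relabeled restriction of $\Kr(\sigma)$ to $V$.

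Next I would verify two facts under this parameterization. Using that $\Kr(\pirrc)$ agrees with $\Kr(\pi)$ except that $W_{\mathrm{right}}$ gets split into $\{n\}$ and $W_{\mathrm{right}} \setminus \{n\}$, the condition $\sigma \geq \pirrc$ translates to ``$\{|W_{\mathrm{right}}|\}$ is a block of $\tau_{W_{\mathrm{right}}}$'' while the remaining $\tau_V$'s stay unconstrained. Second, I would establish the block-by-block identity
\[
(\Kr_\sigma(\pi))_V = \Kr(\tau_V) \quad \text{in } NC(|V|), \qquad V \in \Kr(\pi),
\]
which yields the factorization $f(\pi, \sigma) = \prod_{V \in \Kr(\pi)} \prod_{U \in \Kr(\tau_V)} \alpha_{|U|}$.

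With both in hand, the sum splits as a product over $V \in \Kr(\pi)$. Performing the change of variables $\tau \mapsto \Kr(\tau)$ in each factor (a bijection on $NC(|V|)$), the $W_{\mathrm{right}}$-factor becomes a sum over irreducible partitions in $NC(|W_{\mathrm{right}}|)$ and equals $\widetilde{\alpha}_{|W_{\mathrm{right}}|}$ by (\ref{eqn:94a}); each other factor becomes a sum over all of $NC(|V|)$, yielding $\widehat{\alpha}_{|V|}$. The main obstacle is the identity $(\Kr_\sigma(\pi))_V = \Kr(\tau_V)$, a duality between the ``internal'' Kreweras computed inside each block of $\sigma$ via the formula $\Kr_\sigma(\pi) = \bigsqcup_{W \in \sigma} \Kr(\pi_W)$ and the ``external'' Kreweras obtained by restricting $\Kr(\sigma)$ to the blocks of $\Kr(\pi)$; I would attempt it by a careful block-by-block comparison in the arc-diagram picture, or by leveraging the semi-multiplicativity of both sides to reduce to a small number of base cases.
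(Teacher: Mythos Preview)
Your overall strategy is sound, but the ``main obstacle'' identity $(\Kr_\sigma(\pi))_V = \Kr(\tau_V)$ is false as an equality of partitions. Take $n=3$, $\pi = 0_3$, $\sigma = \{\{1,3\},\{2\}\}$: then $\Kr(\pi)=1_3$, $V=\{1,2,3\}$, $\Kr_\sigma(0_3)=\sigma=\{\{1,3\},\{2\}\}$, while $\tau_V=\Kr(\sigma)=\{\{1,2\},\{3\}\}$ and $\Kr(\tau_V)=\{\{1\},\{2,3\}\}\neq\{\{1,3\},\{2\}\}$. The correct relation (which follows from the standard fact $\Kr(\sigma)=\Kr_{\Kr(\pi)}(\Kr_\sigma(\pi))$, cf.\ the proof of Theorem~\ref{thm:135}) is $\tau_V = \Kr\bigl((\Kr_\sigma(\pi))_V\bigr)$, i.e.\ your identity holds with $\Kr^{-1}$ in place of $\Kr$. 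Since $\Kr$ and $\Kr^{-1}$ differ only by a cyclic rotation, the block-size multisets agree and your factorization $\prod_{U\in\Kr_\sigma(\pi)}\alpha_{|U|}=\prod_V\prod_{U'}\alpha_{|U'|}$ survives; the subsequent change of variables also works once the correct direction of $\Kr$ is used. So the argument is salvageable, but the attempted ``arc-diagram'' verification of the identity as stated would fail.

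The paper's proof avoids this detour entirely. Instead of parameterizing by $\Kr(\sigma)$, it substitutes $\tau:=\Kr_\sigma(\pi)$ directly, which by Lemma~18.9 of \cite{NiSp2006} ranges over $[\Kr_{\pirrc}(\pi),\Kr(\pi)]$; the summand is then simply $\prod_{U\in\tau}\alpha_{|U|}$ with no auxiliary identity needed. The lower bound $\tau\geq\Kr_{\pirrc}(\pi)$ (using Remark~\ref{def:93}.2) says exactly that $\tau$ connects $m:=\min(W_{\mathrm{right}})$ with $n$, which after decomposing $\tau\leftrightarrow(\tau_W)_{W\in\Kr(\pi)}$ becomes ``$\tau_{W_{\mathrm{right}}}$ is irreducible.'' The product then factors immediately into $\widetilde{\alpha}_{|W_{\mathrm{right}}|}\cdot\prod_{W\neq W_{\mathrm{right}}}\widehat{\alpha}_{|W|}$. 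This is the same endgame as yours but reached in one step rather than two, and it sidesteps the $\Kr$-versus-$\Kr^{-1}$ bookkeeping.
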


\begin{proof}  On the left-hand side of (\ref{eqn:95a}) 
we perform the change of variable 
``$\tau = \Kr_{\sigma} ( \pi )$''.  When $\sigma$ runs 
in the interval $[ \pirrc , 1_n ] \subseteq NC(n)$, 
the relative Kreweras complement $\tau$ runs in the interval 
$[ \Kr_{\pirrc} ( \pi ), \Kr_{1_n} ( \pi )]$, where
$\Kr_{1_n} ( \pi )$ is just $\Kr ( \pi )$.  For a discussion 
of this nice behaviour of the partition 
$\Kr_{\sigma} ( \pi )$ viewed as a function of $\sigma$ (and 
with $\pi$ fixed) see \cite[Lemma 18.9]{NiSp2006}.

Let us also recall, from Remark \ref{def:93}.2, that the inequality 
$\tau \geq \Kr_{\pirrc} ( \pi )$ amounts to requesting that $\tau$
connects $m$ with $n$, where $m = \min ( W_{\mathrm{right}} )$.
Our processing of the left-hand side of Equation (\ref{eqn:95a})
has thus taken us to:
\begin{equation}   \label{eqn:95b}
\sum_{  \begin{array}{c} 
{\scriptstyle \tau \in \ NC(n), \, \tau \leq \Kr ( \pi )}  \\
{\scriptstyle \mathrm{and} \ \tau \ \mathrm{connects} 
              \ m \ \mathrm{with} \ n}
\end{array} } 
\ \prod_{U \in \tau} \alpha_{|U|}.
\end{equation}

Now let us write explicitly 
$\Kr ( \pi ) = \{ W_1, \ldots , W_p \}$,
with the blocks listed such that $W_p = W_{\mathrm{right}}$.  
A standard decomposition argument shows that a partition 
$\tau \in NC(n)$ such that $\tau \leq \Kr ( \pi )$ is 
bijectively identified to the tuple
\[
( \tau_{ { }_{W_1} }, \ldots , \tau_{ { }_{W_p} } )
\in NC( |W_1| ) \times \cdots \times NC( |W_p| )
\]
which records the relabeled-restrictions of $\tau$ to the
blocks $W_1, \ldots , W_p$.  At the level of the tuple 
$( \tau_{ { }_{W_1} }, \ldots , \tau_{ { }_{W_p} } )$,
the requirement that ``$\tau$ connects $m$ with $n$'' 
(where $m$ and $n$ are the minimal and maximal elements of 
the block $W_{\mathrm{right}} = W_p$) is transformed into 
the requirement that $\tau_{ { }_{W_p} } \in NC( |W_p| )$ 
is irreducible.  We leave it as a straightforward exercise
to the reader to check that, upon performing the change of 
variable $\tau \leftrightarrow
( \tau_{ { }_{W_1} }, \ldots , \tau_{ { }_{W_p} } )$
in the summation from (\ref{eqn:95b}), one gets 
precisely the product of $p$ separate summations which is 
indicated on the right-hand side of (\ref{eqn:95a}).
\end{proof}

\vspace{6pt}

\begin{notation-and-remark}   \label{def:96}
The second factorization formula that we want to use is 
presented in Lemma \ref{lemma:98}.  We find it convenient 
to first prove this lemma in a special case, stated separately
as Lemma \ref{lemma:97}.  In these lemmas we use two sequences
of complex numbers, $( \gamma_n )_{n=1}^{\infty}$ and 
$( \widehat{\gamma}_n )_{n=1}^{\infty}$, with 
$\gamma_1 = \widehat{\gamma}_1 \neq 0$, and where the 
$\widehat{\gamma}_n$'s are expressed in terms of $\gamma_n$'s
by summations over interval partitions, as follows:
\begin{equation}   \label{eqn:96a}
\widehat{\gamma}_n = \sum_{\rho \in \Int (n)}
\, \Bigl( \, \prod_{J \in \rho} 
\gamma_{{ }_{|J|}} \, \Bigr), \ \ \forall \, n \geq 1.
\end{equation}
\end{notation-and-remark}

\vspace{6pt}

\begin{lemma}  \label{lemma:97}
Consider the framework of Notation \ref{def:96}, 
and on the other hand consider an $n \geq 1$ and 
an irreducible partition $\pi \in NC(n)$.   Then:
\begin{equation}   \label{eqn:97a}
\sum_{ \begin{array}{c}
{\scriptstyle \sigma \in NC(n),}  \\
{\scriptstyle \sigma \gg \pi}
\end{array} } \ \Bigl(  
\, \prod_{\begin{array}{c}
{\scriptstyle U \in \Kr ( \sigma ),}  \\
{\scriptstyle U \not\ni n}
\end{array}  } \gamma_{{ }_{|U|}} \, \Bigr)
= \prod_{\begin{array}{c}
{\scriptstyle W \in \Kr ( \pi ),}  \\
{\scriptstyle W \not\ni n}
\end{array}  }  \widehat{\gamma}_{{ }_{|W|}}.
\end{equation}
\end{lemma}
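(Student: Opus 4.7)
The plan is to convert the LHS into a factorized form by applying the Kreweras anti-isomorphism from Remark \ref{rem:17}.1. Two preliminary observations set the stage. First, since $\pi$ is irreducible and $\sigma \geq \pi$, the partition $\sigma$ is itself irreducible: the block of $\sigma$ containing $1$ absorbs the block of $\pi$ that contains both $1$ and $n$. Second, Remark \ref{rem:17}.1 says that $\Kr$ sends irreducible partitions to partitions having $\{n\}$ as a block; in particular, $\Kr(\pi)$ has $\{n\}$ as one of its blocks.

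With these in hand, I would perform the change of variable $\tau := \Kr(\sigma)$ in the outer summation. The anti-isomorphism sends the indexing set $\{\sigma \in NC(n) \mid \sigma \gg \pi\}$ bijectively onto $\{\tau \in NC(n) \mid \tau \sqsubseteq \Kr(\pi)\}$, and each such $\tau$ automatically has $\{n\}$ as a block (because $\Kr(\pi)$ does, and the relation $\sqsubseteq$ refines each block of $\Kr(\pi)$ only by an interval partition; cf.~Remark \ref{rem:16}.2). Thus the LHS becomes
\[
\sum_{\tau \sqsubseteq \Kr(\pi)} \ \prod_{U \in \tau, \, U \not\ni n} \gamma_{|U|}.
\]

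Next I would invoke the block-by-block description of $\sqsubseteq$ from Remark \ref{rem:16}.2: a partition $\tau \sqsubseteq \Kr(\pi)$ is the same data as a tuple $(\rho_W)_{W \in \Kr(\pi)}$ where each $\rho_W \in \Int(|W|)$. The blocks of $\tau$ are then the union, over $W \in \Kr(\pi)$, of the blocks coming from $\rho_W$. The condition $U \not\ni n$ singles out exactly the blocks coming from $\rho_W$'s with $W \not\ni n$, since the singleton block $\{n\}$ of $\Kr(\pi)$ admits only the trivial interval partition, contributing an empty product equal to $1$. Interchanging the resulting sum and product, the expression becomes
\[
\prod_{W \in \Kr(\pi), \, W \not\ni n} \ \sum_{\rho \in \Int(|W|)} \prod_{J \in \rho} \gamma_{|J|},
\]
which by the definition (\ref{eqn:96a}) of $\widehat{\gamma}$ equals the RHS of (\ref{eqn:97a}).

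The only conceptually non-routine step is the clean identification of the index set after Kreweras conjugation; once the relation $\sigma \gg \pi$ (on irreducible partitions) has been translated into $\tau \sqsubseteq \Kr(\pi)$ with $\{n\}$ a block, the rest is a routine block-by-block factorization in the spirit of Lemma \ref{lemma:95}.
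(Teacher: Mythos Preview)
Your proof is correct and follows essentially the same route as the paper: the Kreweras change of variable $\tau = \Kr(\sigma)$ using the anti-isomorphism of Remark~\ref{rem:17}.1, followed by the block-by-block factorization of $\tau \sqsubseteq \Kr(\pi)$ into interval partitions. The only cosmetic difference is that the paper first multiplies both sides by $\gamma_1 = \widehat{\gamma}_1 \neq 0$ so that the products range over \emph{all} blocks of $\Kr(\sigma)$ and $\Kr(\pi)$, whereas you keep the restriction $U \not\ni n$ throughout and dispose of the singleton block $\{n\}$ directly.
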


\begin{proof} Due to the hypothesis that $\pi$ is irreducible,
the Kreweras complement $\Kr ( \pi )$ has a singleton block $\{ n \}$.
The same is true for any Kreweras complement $\Kr ( \sigma )$ with
$\sigma \geq \pi$ (since $\sigma$ will have to be irreducible as well).
Hence when we multiply the left-hand side of (\ref{eqn:97a}) by 
$\gamma_1$ and the right-hand side of (\ref{eqn:97a}) by 
$\widehat{\gamma}_1$, where $\gamma_1 = \widehat{\gamma}_1 \neq 0$,
we find that (\ref{eqn:97a}) is equivalent to
\begin{equation}   \label{eqn:97b}
\sum_{ \begin{array}{c}
{\scriptstyle \sigma \in NC(n),}  \\
{\scriptstyle \sigma \gg \pi}
\end{array} } \ \Bigl(  
\, \prod_{U \in \Kr ( \sigma )}
\gamma_{{ }_{|U|}} \, \Bigr)
= \prod_{W \in \Kr ( \pi )}  \widehat{\gamma}_{{ }_{|W|}}.
\end{equation}
It is thus all right if we prove (\ref{eqn:97b})
instead of (\ref{eqn:97a}).

We now recall the poset anti-isomorphism (\ref{eqn:17a})
given by Kreweras complementation between $\ll$ and 
$\sqsubseteq$, where $\ll$ is considered on irreducible 
partitions in $NC(n)$ while $\sqsubseteq$ is considered 
on non-crossing partitions which have $\{ n \}$ as a 
1-element block.  Since the set 
$\{ \sigma \in NC(n) \mid \sigma \gg \pi \}$ only 
contains irreducible partitions, we can use 
(\ref{eqn:17a}) as a change of variable in the 
summation on the left-hand side of (\ref{eqn:97b}),
which is thus transformed into
\begin{equation}   \label{eqn:97c}
\sum_{ \begin{array}{c}
{\scriptstyle \tau \in NC(n),}  \\
{\scriptstyle \tau \sqsubseteq \Kr ( \pi )}
\end{array} } \ \Bigl(  
\, \prod_{U \in \tau} \gamma_{{ }_{|U|}} \, \Bigr).
\end{equation}
From here on we proceed with a variation of the argument that
finalized the proof of Lemma \ref{lemma:95}: we list explicitly
the blocks of $\Kr ( \pi )$ as  $W_1, \ldots , W_p$, and we use
the fact that a $\tau \in NC(n)$ with $\tau \leq \Kr ( \pi )$ is 
bijectively identified to the tuple
\[
( \tau_{ { }_{W_1} }, \ldots , \tau_{ { }_{W_p} } )
\in NC( |W_1| ) \times \cdots \times NC( |W_p| ).
\]
At the level of the latter tuple, the requirement 
``$\tau \sqsubseteq \Kr ( \pi )$'' amounts to asking that 
$\tau_{ { }_{W_1} }, \ldots , \tau_{ { }_{W_p} }$ are 
interval partitions.  Performing the change of 
variable $\tau \leftrightarrow
( \tau_{ { }_{W_1} }, \ldots , \tau_{ { }_{W_p} } )$
in the summation from (\ref{eqn:97c}) thus takes us to a 
summation over 
$\Int ( |W_1| ) \times \cdots \times \Int ( |W_p| )$.
We leave it as a straightforward exercise to the reader to 
check that the latter summation factors as the product of 
$p$ separate summations over 
$\Int ( |W_1| ), \ldots , \Int ( |W_p| )$, and that one 
obtains in this way the product 
indicated on the right-hand side of (\ref{eqn:97b}).
\end{proof}

\vspace{6pt}

\begin{lemma}  \label{lemma:98}
The factorization formula stated in Equation (\ref{eqn:97a})
of Lemma \ref{lemma:97} holds even if we do not assume
the partition $\pi$ to be irreducible.
\end{lemma}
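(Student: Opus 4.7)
The plan is to reduce Lemma \ref{lemma:98} to the irreducible case already covered in Lemma \ref{lemma:97}, via the concatenation decomposition of $\pi$. Write $\pi = \pi_1 \diamond \cdots \diamond \pi_k$ with each $\pi_i \in NC(n_i)$ irreducible and $n_1 + \cdots + n_k = n$, as in Notation \ref{def:13}.3, and let $J_i \subseteq \{1, \ldots, n\}$ be the corresponding interval. If $k=1$ then $\pi$ is irreducible and the conclusion is Lemma \ref{lemma:97}; so I assume $k \geq 2$.

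The first step is to establish a bijection
\[
\{\sigma \in NC(n) : \sigma \gg \pi\} \;\longleftrightarrow\; \prod_{i=1}^k \{\sigma_i \in NC(n_i) : \sigma_i \gg \pi_i\},
\]
given by $\sigma \leftrightarrow (\sigma_1, \ldots, \sigma_k)$ where $\sigma_i$ is the relabeled-restriction of $\sigma$ to $J_i$. The point is that the definition of $\ll$ forces every block $W$ of such a $\sigma$ to have $\min(W)$ and $\max(W)$ inside a common block of $\pi$, hence inside a single $J_i$; so $\sigma$ genuinely splits as a concatenation $\sigma_1 \diamond \cdots \diamond \sigma_k$. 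Each $\sigma_i$ is then automatically irreducible, because $\sigma_i \geq \pi_i$ and $\pi_i$ is irreducible.

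The second step is to track $\Kr$ under this decomposition. The partition $\Kr(\pi)$ is obtained from $\Kr(\pi_1), \ldots, \Kr(\pi_k)$ by translating each to its interval $J_i$ and then merging the singleton blocks $\{n_i\}$ (which are singletons of $\Kr(\pi_i)$ precisely because each $\pi_i$ is irreducible) into the single block $W_{\mathrm{right}}(\pi) = \{\max(U_1), \ldots, \max(U_k)\}$ of $\Kr(\pi)$, where $U_i$ is the outer block hosting $\pi_i$. This is a direct consequence of the graphical recipe for the Kreweras complement. The immediate upshot is a size-preserving identification
\[
\{W \in \Kr(\pi) : W \not\ni n\} \;=\; \bigsqcup_{i=1}^k \{W \in \Kr(\pi_i) : W \not\ni n_i\},
\]
and the same identity holds with any $\sigma \gg \pi$ in place of $\pi$ (since each $\sigma_i$ is also irreducible), which in turn gives $W_{\mathrm{right}}(\sigma) = W_{\mathrm{right}}(\pi)$.

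With these two ingredients in hand, the left-hand side of \eqref{eqn:97a} factorizes as
\[
\sum_{\sigma \gg \pi} \ \prod_{\substack{U \in \Kr(\sigma) \\ U \not\ni n}} \gamma_{|U|} \;=\; \prod_{i=1}^k \Bigl( \sum_{\sigma_i \gg \pi_i} \ \prod_{\substack{U \in \Kr(\sigma_i) \\ U \not\ni n_i}} \gamma_{|U|} \Bigr),
\]
and Lemma \ref{lemma:97} applied to each irreducible $\pi_i$ converts each inner factor into $\prod_{W \in \Kr(\pi_i),\, W \not\ni n_i} \widehat{\gamma}_{|W|}$. Regrouping by means of the disjoint-union identity above then yields the desired right-hand side $\prod_{W \in \Kr(\pi),\, W \not\ni n} \widehat{\gamma}_{|W|}$. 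The only delicate point is the second step — namely, the behaviour of $\Kr$ under concatenation — but this reduces to an easy inspection of the standard construction of the Kreweras complement.
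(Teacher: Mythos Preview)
Your proof is correct and follows essentially the same approach as the paper: decompose $\pi$ into its irreducible concatenation components, use the fact that $\sigma \gg \pi$ forces $\sigma$ to split as $\sigma_1 \diamond \cdots \diamond \sigma_k$ with each $\sigma_i \gg \pi_i$ irreducible, factor the product over $\Kr(\sigma)$ accordingly, apply Lemma~\ref{lemma:97} componentwise, and regroup. Your treatment of how $\Kr$ behaves under concatenation (the merging of the singletons $\{n_i\}$ into $W_{\mathrm{right}}(\pi)$) is slightly more explicit than the paper's, which simply states the factorization \eqref{eqn:98b} and its counterpart for $\pi$ as coming from ``an examination of the relevant Kreweras complements.''
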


\begin{proof} Consider the canonical decomposition
$\pi = \pi_1 \diamond \cdots \diamond \pi_k$ 
with $\pi_1 \in NC(n_1), \ldots, \pi_k \in NC(n_k)$ 
irreducible, as reviewed in Remark \ref{def:13}.3.
The specifics of the partial order 
$\ll$ force that we have
\begin{equation}   \label{eqn:98a}
\{ \sigma \in NC(n) \mid \sigma \gg \pi \} =
\Bigl\{ \sigma_1 \diamond \cdots \diamond \sigma_k 
\begin{array}{lr}
\vline  &
\sigma_1 \gg \pi_1 \mbox { in } NC(n_1), \ldots ,  \\
\vline  &
\sigma_k \gg \pi_k \mbox { in } NC(n_k) 
\end{array}   \Bigr\} .
\end{equation}
For a partition 
$\sigma = \sigma_1 \diamond \cdots \diamond \sigma_k$
as in (\ref{eqn:98a}) we note that 
$\sigma_1, \ldots , \sigma_k$
are all irreducible, and an examination of the relevant
Kreweras complements leads to the formula
\begin{equation}  \label{eqn:98b}
\prod_{ \begin{array}{c}
{\scriptstyle U \in \Kr ( \sigma ), }  \\
{\scriptstyle U \not\ni n}
\end{array} } \gamma_{|U|} \ = 
\ \prod_{j=1}^k \Bigl(
\, \prod_{ \begin{array}{c}
{\scriptstyle U \in \Kr ( \sigma_j ), }  \\
{\scriptstyle U \not\ni n_j}
\end{array} }  \gamma_{|U|} \, \Bigr).
\end{equation}
The observations from (\ref{eqn:98a}), (\ref{eqn:98b})
and a straightforward conversion of sum into product 
then imply that we have:
\begin{equation}    \label{eqn:98c}
\sum_{ \begin{array}{c}
{\scriptstyle \sigma \in NC(n),}  \\
{\scriptstyle \sigma \gg \pi}
\end{array} }  
\, \prod_{\begin{array}{c}
{\scriptstyle U \in \Kr ( \sigma ),}  \\
{\scriptstyle U \not\ni n}
\end{array} }   \gamma_{{ }_{|U|}}
\ = \ \prod_{j=1}^k \Bigl( 
\sum_{ \begin{array}{c}
{\scriptstyle \sigma_j \in NC(n_j),}  \\
{\scriptstyle \sigma_j \gg \pi_j}
\end{array} }  
\, \prod_{\begin{array}{c}
{\scriptstyle U \in \Kr ( \sigma_j ),}  \\
{\scriptstyle U \not\ni n_j}
\end{array}  } \gamma_{{ }_{|U|}}
\, \Bigr).
\end{equation}
But now, Lemma \ref{lemma:97} can be applied to each of
$\pi_1, \ldots , \pi_k$.  When we do this, we find that
Equation (\ref{eqn:98c}) can be continued with
\[
= \ \prod_{j=1}^k \Bigl(
\, \prod_{\begin{array}{c}
{\scriptstyle W \in \Kr ( \pi_j ),}  \\
{\scriptstyle W \not\ni n_j}
\end{array}  }  \widehat{\gamma}_{{ }_{|W|}} \, \Bigr)
\ = \ \prod_{\begin{array}{c}
{\scriptstyle W \in \Kr ( \pi ),}  \\
{\scriptstyle W \not\ni n}
\end{array}  }  \widehat{\gamma}_{{ }_{|W|}},
\]
where at the second equality sign we used the counterpart of
(\ref{eqn:98b}) in connection to the numbers 
$\widehat{\gamma}_i$, and for the decomposition
$\pi = \pi_1 \diamond \cdots \diamond \pi_k$.
\end{proof}

$\ $

\begin{ad-hoc}   \label{proof:99}
{\bf Proof of Theorem \ref{thm:91}.}
We fix a $q \in \bR$ and an $f \in \cG$ for which we will prove
that $u_q^{-1} * f * u_q \in \cG$.  The case when $q=0$ is clear,
since $u_0^{-1} * f * u_0 = f$, so we assume $q \neq 0$.

Let us denote $\lambda_n := f(0_n, 1_n)$, $n \geq 1$, and let 
$\thetans$ be the sequence of complex numbers obtained out of the 
$\lambda_n$'s by using the formula (\ref{eqn:92a}) from Remark 
\ref{rem:92}. As anticipated in that remark, we will obtain the 
desired conclusion about $u_q^{-1} * f * u_q$ by proving that
\begin{equation}   \label{eqn:99a}
u_q^{-1} * f * u_q \, ( \pi , 1_n )
= \prod_{W \in \Kr ( \pi )} \theta_{|W|}, \ \ \forall 
\, n \geq 1 \mbox{ and } \pi \in NC(n).
\end{equation}

From now on and until the end of the proof we fix an 
$n \geq 1$ and a $\pi \in NC(n)$ for which we will verify that
(\ref{eqn:99a}) holds.  We divide the argument into several steps.

\vspace{6pt}

\noindent
{\em Step 1. Write explicitly what is 
$u_q^{-1} * f * u_q \, ( \pi , 1_n )$, as
a double sum ``over $\sigma$ and $\tau$''.}

\noindent
Similarly to the derivation of Equation (\ref{eqn:92b}), we 
start from
\begin{equation}   \label{eqn:99b}
u_q^{-1} * f * u_q \, (\pi , 1_n)
= \sum_{  \begin{array}{c} 
{\scriptstyle \sigma, \tau \in \ NC(n)}  \\
{\scriptstyle \mathrm{ such \ that} \ \pi \leq \sigma \leq \tau}
\end{array} } \ 
u_q^{-1} ( \pi, \sigma) \, f( \sigma , \tau )
\, u_q ( \tau , 1_n ).
\end{equation}
We have
\[
u_q^{-1} ( \pi, \sigma ) = u_{-q} (\pi, \sigma )
= \left\{ \begin{array}{ll} 
(-q)^{ | \pi | - | \sigma | }, & \mbox{ if $\pi \ll \sigma$,} \\
0, & \mbox{ otherwise.}
\end{array} \right.
\]
We plug this into the right-hand side of (\ref{eqn:99b}), and
also replace the values of $f( \sigma , \tau )$ and of 
$u_q ( \tau , 1_n )$ by using
(\ref{eqn:42c}) and (\ref{eqn:81a}), respectively.  In this 
way we arrive to the formula:
\begin{equation}   \label{eqn:99c}
u_q^{-1} * f * u_q \, (\pi , 1_n)
= \sum_{  \begin{array}{c} 
{\scriptstyle \sigma \in \, NC(n),}  \\
{\scriptstyle  \sigma \gg \pi}
\end{array} }  (-q)^{ | \pi | - | \sigma |} \Bigl( 
\sum_{  \begin{array}{c} 
{\scriptstyle \tau \in \ NC(n), \, \tau \geq \sigma}  \\
{\scriptstyle  \mathrm{and} \, \tau \, \mathrm{irreducible}}
\end{array} } 
\, \bigl( \prod_{W \in \Kr_{\tau} ( \sigma )} \lambda_{|V|} \bigr)
\cdot q^{ | \tau | - 1 }  \Bigr).
\end{equation}

In the summation over $\tau$ performed in 
(\ref{eqn:99c}), the conditions 
``$\tau \geq \sigma$'' and ``$\tau$ irreducible'' are 
consolidated in the requirement that 
$\tau \geq \sigmairrc$. Let us also re-arrange the factor 
$q^{| \tau | - 1}$ appearing in that summation: we have 
(cf.~\cite[Exercise 18.23]{NiSp2006})  
$| \sigma | + | \Kr_{\tau} ( \sigma ) | = | \tau | + n$,
which implies that
$q^{| \tau |-1} = q^{| \Kr_{\tau} ( \sigma ) |} \cdot
q^{| \sigma | - (n+1)}$.
With these changes, we thus arrive to 
\begin{equation}   \label{eqn:99d}
u_q^{-1} * f * u_q \, (\pi , 1_n)
= \sum_{  \begin{array}{c} 
{\scriptstyle \sigma \in \ NC(n),}  \\
{\scriptstyle  \sigma \gg \pi}
\end{array} } 
\ \frac{(-q)^{ | \pi | - | \sigma |}}{q^{ (n+1) - | \sigma |}} 
\cdot \Bigl( \, \sum_{  \begin{array}{c} 
{\scriptstyle \tau \in \ NC(n),}  \\
{\scriptstyle \tau \geq \sigmairrc}
\end{array} } 
\ \prod_{W \in \Kr_{\tau} ( \sigma )} 
                     (q \lambda_{|V|}) \, \Bigr).
\end{equation}

\vspace{6pt}

\noindent
{\em Step 2. Use the factorization formula from 
Lemma \ref{lemma:95}.}

\vspace{6pt} 

\noindent
Here we must first clarify what are the input sequences 
``$\alpha_k$, $\widehat{\alpha}_k$, $\widetilde{\alpha}_k$'' 
that we plan to use in Lemma \ref{lemma:95}.  We go as 
follows: start from the sequence $( \lambda_k )_{k=1}^{\infty}$ 
which was fixed from the beginning of the proof and put 
$\alpha_k := q \lambda_k$ for every $k \geq 1$; after that, 
define sequences $( \widehat{\alpha}_k )_{k=1}^{\infty}$ 
and $( \widetilde{\alpha}_k )_{k=1}^{\infty}$ via the 
formulas (\ref{eqn:94a}) given in Notation \ref{def:94}.

In view of what are our $\alpha_k$'s, we re-write
(\ref{eqn:99d}) in the form
\[
u_q^{-1} * f * u_q \, ( \pi , 1_n)
= \sum_{  \begin{array}{c} 
{\scriptstyle \sigma \in \ NC(n),}  \\
{\scriptstyle  \sigma \gg \pi}
\end{array} } 
\ \frac{ (-1)^{ | \pi | - | \sigma | } 
         \, q^{ | \pi | - | \sigma | } }{ q^{(n+1)-| \sigma |} } 
\cdot \Bigl( \, \sum_{  \begin{array}{c} 
{\scriptstyle \tau \in \ NC(n),}  \\
{\scriptstyle \tau \geq \sigmairrc}
\end{array} } 
\ \prod_{W \in \Kr_{\tau} ( \sigma )} 
                     \alpha_{|W|}) \, \Bigr),
\]
and we invoke Lemma \ref{lemma:95} in order to 
continue with
\begin{equation}   \label{eqn:99e}
= \sum_{  \begin{array}{c} 
{\scriptstyle \sigma \in \ NC(n),}  \\
{\scriptstyle  \sigma \gg \pi}
\end{array} } 
(-1)^{ | \pi | } \, 
\ (-1)^{ | \sigma | } \, q^{ | \pi | - (n+1) } 
\cdot \Bigl( \, 
\widetilde{\alpha}_{| W_{\mathrm{right}} | } \cdot
\prod_{ \begin{array}{c}
{\scriptstyle W \in \Kr ( \sigma ),}  \\
{\scriptstyle W \not\ni n}
\end{array} } \
\widehat{\alpha}_{|W|} \, \Bigr).
\end{equation}

In the expression we arrived to, note that we can pull 
to the front of the sum the factors $(-1)^{ | \pi | }$, 
$q^{ | \pi | - (n+1) }$ and 
$\widetilde{\alpha}_{| W_{\mathrm{right}} | }$.  The 
justification for pulling out the latter factor comes from 
Remark \ref{def:93}.3 -- the block $W_{\mathrm{right}}$ is
the same for all the partitions $\sigma$ with $\sigma \gg \pi$.
Thus from (\ref{eqn:99e}) we go on with
\begin{equation}   \label{eqn:99f}
= (-1)^{ | \pi | } \, q^{ | \pi | - (n+1) } 
\, \widetilde{\alpha}_{| W_{\mathrm{right}} | }
\cdot \sum_{  \begin{array}{c} 
{\scriptstyle \sigma \in \ NC(n),}  \\
{\scriptstyle  \sigma \gg \pi}
\end{array} } \ (-1)^{ | \sigma | }
\cdot \Bigl( \, \prod_{ \begin{array}{c}
{\scriptstyle W \in \Kr ( \sigma ),}  \\
{\scriptstyle W \not\ni n}
\end{array} } \ \widehat{\alpha}_{|W|} \, \Bigr) .
\end{equation}

\vspace{6pt}

\noindent
{\em Step 3. Use the factorization formula from 
Lemma \ref{lemma:98}.}

\vspace{6pt} 

\noindent
Here we must clarify what are the input sequences 
``$\gamma_k$ and $\widehat{\gamma}_k$'' 
that we plan to use in Lemma \ref{lemma:98}.  We go as 
follows: put $\gamma_k = - \widehat{\alpha}_k$ for all
$k \geq 1$; after that, define the sequence 
$( \widehat{\gamma}_k )_{k=1}^{\infty}$ via the 
formula (\ref{eqn:96a}) indicated in Notation \ref{def:96}.
Observe that the common value of $\gamma_1$ and 
$\widehat{\gamma}_1$ is equal to $-q$ (this is found by 
backtracking in the definitions: $\gamma_1 = 
- \widehat{\alpha}_1 = - \alpha_1 = -q \lambda_1 = -q$).
Since it is assumed that $q \neq 0$, we are thus in a situation
where the hypotheses of Lemma \ref{lemma:98} are satisfied.

Next observation: in (\ref{eqn:99f}), the factor 
$(-1)^{ | \sigma | }$ can be written as
\[
(-1)^n \cdot (-1)^{ n - | \sigma | }
= (-1)^n \cdot (-1)^{ | \Kr ( \sigma ) | - 1 },
\]
where at the second equality we use the fact that one
always has $| \sigma | + | \Kr ( \sigma ) | = n+1$.
The $(-1)^{ | \Kr ( \sigma ) | -1 }$ can be absorbed
into the product of $\widehat{\alpha}_{ |W| }$'s (which 
has $| \Kr ( \sigma ) | -1$
factors), and therefore (\ref{eqn:99f}) continues with
\[
= (-1)^{ | \pi | } \, q^{ | \pi | - (n+1) } 
\, \widetilde{\alpha}_{| W_{\mathrm{right}} | }
\cdot \sum_{  \begin{array}{c} 
{\scriptstyle \sigma \in \ NC(n),}  \\
{\scriptstyle  \sigma \gg \pi}
\end{array} } \ (-1)^n
\cdot \Bigl( \, \prod_{ \begin{array}{c}
{\scriptstyle W \in \Kr ( \sigma ),}  \\
{\scriptstyle W \not\ni n}
\end{array} } \ (- \widehat{\alpha}_{|W|}) \, \Bigr) 
\]
\begin{equation}   \label{eqn:99g}
= (-1)^{ n - | \pi | } \, q^{ | \pi | - (n+1) } 
\, \widetilde{\alpha}_{| W_{\mathrm{right}} | }
\cdot \sum_{  \begin{array}{c} 
{\scriptstyle \sigma \in \ NC(n),}  \\
{\scriptstyle  \sigma \gg \pi}
\end{array} } 
\Bigl( \, \prod_{ \begin{array}{c}
{\scriptstyle W \in \Kr ( \sigma ),}  \\
{\scriptstyle W \not\ni n}
\end{array} } \ \gamma_{|W|}) \, \Bigr) . 
\end{equation}

The sum over $\sigma \gg \pi$ in (\ref{eqn:99g}) is 
precisely the one to which Lemma \ref{lemma:98} applies,
and in this way we arrive to the conclusion of Step 3,
which is that we have 
\begin{equation}   \label{eqn:99h}
u_q^{-1} * f * u_q \, ( \pi , 1_n )
= (-1)^{ n - | \pi | } \, q^{ | \pi | - (n+1) } 
\, \widetilde{\alpha}_{| W_{\mathrm{right}} | }
\cdot 
\prod_{ \begin{array}{c}
{\scriptstyle U \in \Kr ( \pi ),}  \\
{\scriptstyle U \not\ni n}
\end{array} } \ \widehat{\gamma}_{|U|}. 
\end{equation}

\vspace{6pt}

\noindent
{\em Step 4. Identify the factors in the 
product found in (\ref{eqn:99h}).}

\noindent
It is convenient to re-write the right-hand side of 
(\ref{eqn:99h}) in the form
\begin{equation}   \label{eqn:99i}
\bigl( \frac{1}{q} 
\widetilde{\alpha}_{| W_{\mathrm{right}} | } \bigr)
\cdot 
\prod_{ \begin{array}{c}
{\scriptstyle U \in \Kr ( \pi ),}  \\
{\scriptstyle U \not\ni n}
\end{array} }  \bigl( - \frac{1}{q} \widehat{\gamma}_{|U|} \bigr), 
\end{equation}
with the pre-factor
$(-1)^{ n - | \pi | } \, q^{ | \pi | - (n+1) }$ distributed
among the $(n+1) - | \pi |$ blocks of $\Kr ( \pi )$.

We are then left to chase through the formulas used in 
Steps 2 and 3, and verify that the product over blocks of 
$\Kr ( \pi )$ that appears in (\ref{eqn:99i}) is the same as the 
one on the right-hand side of our target Equation (\ref{eqn:99a})
indicated at the beginning of the proof.  It is visible that 
everything would be in place if we had that:
\begin{equation}   \label{eqn:99j}
\widetilde{\alpha}_k = q \theta_k \ \mbox{ $\ $ and $\ $ }
\ \widehat{\gamma}_k = -q \theta_k, \ \ \forall \, k \geq 1.
\end{equation}
We will argue that the desirable relations stated in (\ref{eqn:99j})
are indeed holding.

The first relation (\ref{eqn:99j}) comes out by direct 
comparison of the formulas defining $\widetilde{\alpha}_k$ 
and $\theta_k$.  Indeed, upon replacing 
$\alpha_{|V|} = q \lambda_{|V|}$ in the formula (\ref{eqn:94a})
which defines $\widetilde{\alpha}_k$, we find that
\[
\widetilde{\alpha}_k 
= \sum_{  \begin{array}{c}
{\scriptstyle \rho \in NC(k),} \\
{\scriptstyle \mathrm{irreducible}}
\end{array} } 
\prod_{V \in \rho} (q \lambda_{|V|})
= \sum_{  \begin{array}{c}
{\scriptstyle \rho \in NC(k),} \\
{\scriptstyle \mathrm{irreducible}}
\end{array} } 
q^{| \rho |} \prod_{V \in \rho} \lambda_{|V|}
= q \, \theta_k,
\]
where at the third equality sign we refer to the formula 
(\ref{eqn:92a}) for $\theta_k$. 

For the second relation (\ref{eqn:99j}) it suffices to check 
that $\widehat{\gamma}_k = - \widetilde{\alpha}_k$.
We have
\begin{align*}
\widehat{\gamma}_k 
& = \sum_{\rho \in \Int (k)} \prod_{J \in \rho} \gamma_{|J|}     
    \mbox{ $\ $ (by the definition of $\widehat{\gamma}_k$, 
          in Equation (\ref{eqn:96a}))}                           \\
& = \sum_{\rho \in \Int (k)} \prod_{J \in \rho} 
    ( - \widehat{\alpha}_{|J|} )                                  
    \mbox{ $\ $ (by the definition of $\gamma_{|J|}$ in Step 3)}   \\
& = \sum_{\rho \in \Int (k)} (-1)^{ | \rho | }
          \prod_{J \in \rho} \widehat{\alpha}_{|J|} = 
          - \widetilde{\alpha}_k,
\end{align*}
where the latter equality follows from Equation (\ref{eqn:94b})
of Remark \ref{def:94}.
\hfill $\square$
\end{ad-hoc}

$\ $

\section{An application: multiplication of free random variables,
in terms of $t$-Boolean cumulants}

\noindent
As explained in Section 1.5 of the Introduction, the 
multiplication of free random variables has a nice 
description in terms of $t$-Boolean cumulants, by a formula
which is actually the same for all values of $t$. 
In the present section we show how this fact can be neatly derived 
by using the 1-parameter subgroup $\{ u_q \mid q \in \bR \}$  
of $\cGtildctoc$.  

\vspace{6pt}

\begin{notation-and-remark}   \label{def:101}
{\em (Framework, and discussion of what we will prove.)}

\noindent
We fix for the whole section a non-commutative probability 
space $( \cA , \varphi )$ and two unital subalgebras 
$\cM , \cN \subseteq \cA$ which are freely independent 
with respect to $\varphi$.  For every $t \in \bR$ we 
consider the family
$\uBetat = ( \betat_n : \cA^n \to \bC )_{n=1}^{\infty}$
of $t$-Boolean cumulants of $( \cA , \varphi )$; we also 
consider the standard enlargement of $\uBetat$ to
$( \betat_{\pi} : \cA^n \to \bC )_{n \geq 1, \, \pi \in NC(n)}$,
as discussed in Notation \ref{rem:52}.2.  It will be convenient 
to aim for a formula slightly more general
than what was announced in Equation (\ref{eqn:intro6}) of the 
Introduction, and which is stated as follows:
 
\begin{equation}   \label{eqn:101a}
\left\{  \begin{array}{c}
\mbox{One has } \betat_n (x_1 y, \ldots, x_n y)
= \sum_{\pi \in NC(n)} \betat_{\pi} (x_1, \ldots , x_n)
\cdot \betat_{\Kr ( \pi)} (y, \ldots , y),                \\
\mbox{ $\ $ holding for every $n \in \bN$, 
$x_1, \ldots , x_n \in \cM$, $y \in \cN$ and 
$t \in \bR$.}
\end{array}   \right.
\end{equation}

Our approach to (\ref{eqn:101a}) is this: we note 
that for fixed $y$ and $t$, the family of equalities 
stated in (\ref{eqn:101a}) is equivalent to one equation 
concerning the action of the group $\cGtild$ on the space 
$\fMcM$ of sequences of multilinear functionals on $\cM$. 
The latter equation can then be treated by using results 
from Sections 9 and 10, particularly Theorem \ref{thm:91}.

In order for the trick of fixing a $y$ to play smoothly
into the setting from Sections 9 and 10, it is good to arrange
that $\varphi (y) = 1$.  We start by pointing out that, 
without loss of generality, we can make this assumption.
\end{notation-and-remark}

\begin{lemma}   \label{lemma:102}
Assume it is true that (\ref{eqn:101a}) holds whenever 
$y \in \cN$ has $\varphi (y) = 1$.  Then (\ref{eqn:101a}) 
is sure to hold with $y \in \cN$ arbitrary.
\end{lemma}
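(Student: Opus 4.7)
The plan is to handle the general $y\in\cN$ in two stages: first reduce to the case $\varphi(y)\neq 0$ by a rescaling, and then reduce the case $\varphi(y)=0$ to the case $\varphi(y)\neq 0$ by a scalar-shift/polynomial argument. Both steps will exploit only the multilinearity of the functionals $\betat_{n}$.

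\textbf{Step 1 (rescaling).} Suppose $y\in\cN$ has $\varphi(y)=c\neq 0$, and set $y':=c^{-1}y\in\cN$, so $\varphi(y')=1$. By hypothesis, (\ref{eqn:101a}) is known for $y'$ in place of $y$. On the left-hand side, the $n$-linearity of $\betat_{n}$ gives
\begin{equation*}
\betat_{n}(x_{1}y,\ldots,x_{n}y)
=\betat_{n}(cx_{1}y',\ldots,cx_{n}y')
=c^{n}\,\betat_{n}(x_{1}y',\ldots,x_{n}y').
\end{equation*}
On the right-hand side, each factor $\betat_{\Kr(\pi)}(y,\ldots,y)$ is a product of multilinear functionals totaling $n$ entries of $y$, so it scales exactly by $c^{n}$ as well. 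Dividing both identities by $c^{n}$ transfers the assumed equation for $y'$ into the equation for $y$.

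\textbf{Step 2 (polynomial argument).} Now let $y\in\cN$ be arbitrary, and for $\lambda\in\bC$ put $y_{\lambda}:=y+\lambda\cdot 1_{\cA}$. Because $\cN$ is a unital subalgebra, $y_{\lambda}\in\cN$ for every $\lambda$, and $\varphi(y_{\lambda})=\varphi(y)+\lambda$. Thus for all $\lambda$ outside the single value $\lambda_{0}:=-\varphi(y)$ we have $\varphi(y_{\lambda})\neq 0$, and Step 1 gives
\begin{equation}\label{eqn:yl}
\betat_{n}(x_{1}y_{\lambda},\ldots,x_{n}y_{\lambda})
=\sum_{\pi\in NC(n)}\betat_{\pi}(x_{1},\ldots,x_{n})\cdot\betat_{\Kr(\pi)}(y_{\lambda},\ldots,y_{\lambda}).
\end{equation}
Both sides of (\ref{eqn:yl}) are polynomials in $\lambda$ of degree at most $n$: the left-hand side, by expanding $x_{i}y_{\lambda}=x_{i}y+\lambda x_{i}$ and invoking the $n$-linearity of $\betat_{n}$; the right-hand side, because each $\betat_{\Kr(\pi)}(y_{\lambda},\ldots,y_{\lambda})$ is a product of the multilinear functionals $\betat_{|U|}(y_{\lambda},\ldots,y_{\lambda})$, with total of $n$ arguments. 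Two polynomials agreeing on the cofinite set $\bC\setminus\{\lambda_{0}\}$ coincide identically, so (\ref{eqn:yl}) holds for every $\lambda\in\bC$; specializing to $\lambda=0$ yields (\ref{eqn:101a}) for the original $y$.

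The only real subtlety is the verification that the right-hand side of (\ref{eqn:yl}) is indeed a polynomial in $\lambda$ of controlled degree, but this is immediate from the definition of $\betat_{\Kr(\pi)}$ as a product of multilinear functionals, one per block of $\Kr(\pi)$, whose block sizes sum to $n$.
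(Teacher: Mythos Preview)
Your proof is correct and follows essentially the same approach as the paper: a rescaling reduces to $\varphi(y)\neq 0$, and then a scalar shift $y\mapsto y+\lambda\,1_{\cA}$ combined with the observation that both sides are polynomial in $\lambda$ handles the remaining case. The only cosmetic differences are that the paper rescales the $x_i$'s rather than $y$ (writing $x_iy=(\lambda x_i)(\lambda^{-1}y)$) and phrases Step~2 as a limit $\delta\to 0$ rather than an identity of polynomials, but these are equivalent.
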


\begin{proof}  We first extend the validity of 
(\ref{eqn:101a}) to the case when $\varphi (y) \neq 0$.  
If $\varphi (y) = \lambda \neq 0$
then for every $t \in \bR$, $n \in \bN$ and 
$x_1, \ldots , x_n \in \cM$ we have
\begin{align*}
\betat_n (x_1 y, \ldots , x_n y)
& = \betat_n \bigl( \, ( \lambda x_1) \cdot (\lambda^{-1}y), 
  \ldots , ( \lambda x_n ) \cdot ( \lambda^{-1} y ) \, \bigr)    \\
& = \sum_{\pi \in NC(n)}
    \betat_{\pi} (\lambda x_1, \ldots , \lambda x_n) \cdot
    \betat_{\Kr ( \pi )} ( \lambda^{-1} y, \ldots , 
                           \lambda^{-1} y )          \\
& \mbox{ $\ $ (by hypothesis, since 
               $\varphi ( \lambda^{-1} y ) = 1$) }   \\
& = \sum_{\pi \in NC(n)}
    \lambda^n \betat_{\pi} (x_1, \ldots , x_n) \cdot
    \lambda^{-n} \betat_{\Kr ( \pi )} (y, \ldots , y) \\
& = \sum_{\pi \in NC(n)}
    \betat_{\pi} (x_1, \ldots , x_n) \cdot
    \betat_{\Kr ( \pi )} (y, \ldots , y), \ \mbox{ as required.}
\end{align*}

Now consider a $y \in \cN$ with $\varphi (y) = 0$.  From the fact proved in the preceding paragraph, it follows that: for 
every $t \in \bR$, $n \in \bN$ and $x_1, \ldots , x_n \in \cM$, 
one has
\begin{equation}   \label{eqn:102a}
\betat_n \bigl( x_1 (y + \delta \oneA), \ldots , 
                x_n (y+ \delta \oneA) \bigr)  
\end{equation}
\[
= \sum_{\pi \in NC(n)}
    \betat_{\pi} (x_1, \ldots , x_n) \cdot
    \betat_{\Kr ( \pi )} (y+ \delta \oneA, \ldots , y+ \delta \oneA),
\ \ \forall \, \delta \neq 0 \mbox{ in } \bC.
\]
It is easy to check that the two sides of (\ref{eqn:102a})
depend continuously (in fact polynomially) on $\delta$.  We 
can thus make $\delta \to 0$ in (\ref{eqn:102a}), to conclude
that (\ref{eqn:101a}) holds for this $y$ as well.
\end{proof}

\vspace{6pt}

\begin{notation}   \label{def:103}
$1^o$ For the remaining part of this section we fix 
an element $y \in \cN$ with $\varphi (y) = 1$, in 
connection to which we will prove that (\ref{eqn:101a}) 
is holding.  

\vspace{6pt}

\noindent
$2^o$ It is convenient that, by using the $y$ which was 
fixed, we introduce some sequences of multilinear 
functionals on $\cM$, as follows: for every 
$t \in \bR$ and $n \in \bN$, let 
$\gamma^{(t, \cM )}_n : \cM^n \to \bC$ be defined by 
\begin{equation}   \label{eqn:103a}
\gamma^{(t, \cM )}_n (x_1, \ldots , x_n) 
= \betat_n (x_1 y, \ldots , x_n y), \ \ \forall
\, x_1, \ldots , x_n \in \cM.
\end{equation}
Clearly, we have that
$\uGamma^{(t, \cM )} := 
( \gamma^{(t, \cM)}_n )_{n=1}^{\infty} \in \fMcM$, where 
$\fMcM$ is defined exactly as in Notation \ref{def:51}, 
but by using $\cM$ instead of $\cA$.

In the same vein, it is convenient that for every $t \in \bR$
and $n \geq 1$ we use the notation 
$\beta^{(t, \cM )}_n : \cM^n \to \bC$
for the restriction of the multilinear functional 
$\betat_n : \cA^n \to \bC$ to the subspace $\cM^n$.  Then 
$\uBeta^{(t, \cM )} := ( \beta^{(t, \cM)}_n )_{n=1}^{\infty}
\in \fMcM$, and (as immediately verified) it is the family of 
$t$-Boolean cumulants of the 
non-commutative probability space $( \cM , \varphi | \cM )$.

\vspace{6pt}

\noindent
$3^o$ Recall from Section 5 that every sequence $\alphans$ of 
complex numbers, with $\alpha_1 = 1$, defines a multiplicative 
function $f \in \cG$ via the requirement that 
$f(0_n, 1_n) = \alpha_n$ for all $n \geq 1$.  For every 
$t \in \bR$ we can therefore consider a multiplicative 
function $f_t \in \cG$ defined via the requirement that
\begin{equation}   \label{eqn:103b}
f_t (0_n, 1_n) = \betat_n (y, \ldots , y), \ \ \forall 
\, n \geq 1,
\end{equation}
where $y$ is the element of $\cN$ fixed in part $1^o$ of
this notation.  Note that when defining $f_t$ we use the
fact that $\varphi (y) = 1$, which ensures that the sequence 
of numbers proposed on the right-hand side of (\ref{eqn:103b})
does indeed start with $\betat_1 (y) = \varphi (y) = 1$.

For every $t \in \bR$ and for general $\pi \leq \sigma$ 
in some $NC(n)$, an explicit formula giving $f_t ( \pi , \sigma )$ 
is then obtained out of (\ref{eqn:103b}), in the way reviewed 
in Remark \ref{rem:42}.  Recall, in particular, that for every 
$n \geq 1$ and $\pi \in NC(n)$ we have
\begin{equation}   \label{eqn:103c}
f_t ( \pi, 1_n) 
= \prod_{W \in \Kr ( \pi )} f_t (0_{|W|}, 1_{|W|} ) 
= \prod_{W \in \Kr ( \pi )} \betat_{|W|} (y, \ldots , y) 
= \betat_{\Kr ( \pi )} (y, \ldots , y).
\end{equation}
\end{notation}

\vspace{6pt}

In terms of the notation just introduced, we can give 
an equivalent form of (\ref{eqn:101a}), which is stated 
as follows.

\vspace{6pt}

\begin{lemma}   \label{lemma:104}
For every $t \in \bR$, one has:
\begin{equation}   \label{eqn:104a}
\left(  \begin{array}{c}
\mbox{Formula (\ref{eqn:101a}) holds for our} \\
\mbox{fixed $y$ and this particular value of $t$}
\end{array}  \right) \ \Leftrightarrow
\ \left(   \begin{array}{c}
\uGamma^{(t, \cM)} = \uBeta^{(t, \cM)} \cdot f_t  \\  
\mbox{ (an equality in $\fMcM$)}
\end{array}   \right)  .
\end{equation}
\end{lemma}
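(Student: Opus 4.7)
The plan is to show that both sides of the claimed equivalence express the very same family of identities once the relevant definitions are unpacked; no substantial argument beyond careful bookkeeping should be needed.

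First I would expand the right-hand side of the equivalence using Notation \ref{def:53}: the action of $f_t \in \cG \subseteq \cGtild$ on $\uBeta^{(t,\cM)} \in \fMcM$ is, by definition, the element of $\fMcM$ whose $n$-th entry is
\[
\sum_{\pi \in NC(n)} f_t(\pi, 1_n) \cdot \beta^{(t,\cM)}_\pi,
\]
where $\beta^{(t,\cM)}_\pi$ is built from $\uBeta^{(t,\cM)}$ via the convention of Notation \ref{rem:52}.2. Then I would substitute two pieces of information. The first is that, by construction in Notation \ref{def:103}.2, the multilinear functional $\beta^{(t,\cM)}_k$ is the restriction of $\betat_k$ to $\cM^k$, so for $x_1, \ldots, x_n \in \cM$ the enlarged functional $\beta^{(t,\cM)}_\pi(x_1, \ldots, x_n)$ coincides with $\betat_\pi(x_1, \ldots, x_n)$. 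The second is Equation (\ref{eqn:103c}), which records $f_t(\pi, 1_n) = \betat_{\Kr(\pi)}(y, \ldots, y)$.

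After performing these two substitutions, the $n$-th coordinate of $\uBeta^{(t,\cM)} \cdot f_t$ evaluated at $(x_1, \ldots, x_n) \in \cM^n$ reads
\[
\sum_{\pi \in NC(n)} \betat_\pi(x_1, \ldots, x_n) \cdot \betat_{\Kr(\pi)}(y, \ldots, y),
\]
which is the right-hand side of (\ref{eqn:101a}). On the other hand, by the very definition of $\uGamma^{(t,\cM)}$ in (\ref{eqn:103a}), the $n$-th coordinate of $\uGamma^{(t,\cM)}$ evaluated at $(x_1, \ldots, x_n)$ is $\betat_n(x_1 y, \ldots, x_n y)$, i.e.~the left-hand side of (\ref{eqn:101a}).

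Hence the equality $\uGamma^{(t,\cM)} = \uBeta^{(t,\cM)} \cdot f_t$ in $\fMcM$ is, coordinate by coordinate, precisely the family of identities (\ref{eqn:101a}) indexed by $n \geq 1$ and $x_1, \ldots, x_n \in \cM$, and the claimed equivalence follows. There is no genuine obstacle in this lemma: the whole work has already been done in the setup of Notation \ref{def:103}, where $f_t \in \cG$ was manufactured precisely so that its Kreweras-complement formula (\ref{eqn:103c}) produces the factor $\betat_{\Kr(\pi)}(y, \ldots, y)$ seen in (\ref{eqn:101a}); the content of the lemma is then just to observe that, after this choice, the definition of the action $\cdot$ from Section 6 reassembles the right-hand side of (\ref{eqn:101a}) exactly.
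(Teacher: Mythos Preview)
Your proof is correct and follows essentially the same approach as the paper: both amount to unpacking the definition of the action from Notation~\ref{def:53}, substituting the formula~(\ref{eqn:103c}) for $f_t(\pi,1_n)$ and the identification of $\beta^{(t,\cM)}_\pi$ with $\betat_\pi$ restricted to $\cM^n$, and observing that the resulting family of identities is exactly~(\ref{eqn:101a}). The paper's version is more terse (it leaves the substitutions as an exercise), but there is no substantive difference.
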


\begin{proof}  The equality stated on the right-hand side 
of the equivalence is spelled out as follows: 
\begin{equation}   \label{eqn:104b}
\left\{  \begin{array}{r}
\gamma^{(t, \cM)}_n (x_1, \ldots , x_n)
= \sum_{\pi \in NC(n)}  f_t ( \pi, 1_n) 
\, \beta^{(t, \cM)}_{\pi} (x_1, \ldots , x_n), \\
                                               \\
\mbox{ holding for every $n \geq 1$ and
$x_1, \ldots , x_n \in \cM$.}
\end{array}   \right.
\end{equation}
We leave it as an immediate exercise to the reader to replace
the various quantities mentioned in (\ref{eqn:104b}) by their 
definition from Notation \ref{def:103}, and to verify that what
comes out is indeed equivalent to the instance of (\ref{eqn:101a}) 
referring to our fixed $y$ and $t$.
\end{proof}

\vspace{6pt}

We next examine how one can connect two instances of the 
equation appearing on the right-hand side of the equivalence
(\ref{eqn:104a}), considered for two different values 
$s,t \in \bR$.  This is done by using the 1-parameter subgroup 
$\{ u_q \mid q \in \bR \}$ from the preceding sections, both in
reference to $\beta^{(t, \cM )}, \gamma^{(t, \cM )}$ (in 
Lemma \ref{lemma:105}) and in reference to $f_t$ (in 
Lemma \ref{lemma:106}). 

\vspace{6pt} 

\begin{lemma}   \label{lemma:105}
For every $s,t \in \bR$ we have
\begin{equation}   \label{eqn:105a}
\uBeta^{(t, \cM)} = \uBeta^{(s, \cM)} \cdot u_{s-t} 
\mbox{ and } 
\uGamma^{(t, \cM)} = \uGamma^{(s, \cM)} \cdot u_{s-t}.
\end{equation}
\end{lemma}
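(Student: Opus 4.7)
The plan is to reduce both equalities to a single observation: both $\uBeta^{(t,\cM)} \cdot \Fbcm^{(t)}$ and $\uGamma^{(t,\cM)} \cdot \Fbcm^{(t)}$ are elements of $\fMcM$ that do not depend on $t$. Granting this, one simply combines the identity $\Fbcm^{(s)} * (\Fbcm^{(t)})^{-1} = u_{s-t}$ (which is Equation \eqref{eqn:85d} extracted from the proof of Corollary \ref{cor:85}) with associativity of the action (Proposition \ref{prop:55}) to obtain
\[
\uBeta^{(s,\cM)} \cdot u_{s-t}
= \bigl( \uBeta^{(s,\cM)} \cdot \Fbcm^{(s)} \bigr) \cdot (\Fbcm^{(t)})^{-1}
= \bigl( \uBeta^{(t,\cM)} \cdot \Fbcm^{(t)} \bigr) \cdot (\Fbcm^{(t)})^{-1}
= \uBeta^{(t,\cM)} ,
\]
with the verbatim same computation for $\uGamma^{(t,\cM)}$.

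For $\uBeta^{(t,\cM)}$, the required $t$-independence is essentially built in: by Notation \ref{def:103}, $\uBeta^{(t,\cM)}$ is by construction the family of $t$-Boolean cumulants of the non-commutative probability space $( \cM , \varphi |_{\cM} )$, so the $t$-Boolean moment-cumulant relation \eqref{eqn:611b}, read inside $\fMcM$, reads $\uBeta^{(t,\cM)} \cdot \Fbcm^{(t)} = \uPhi^{(\cM)}$, where $\uPhi^{(\cM)} \in \fMcM$ is the sequence of moment functionals of $( \cM , \varphi |_{\cM} )$, manifestly independent of $t$.

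The main content, and the only step where anything non-trivial must be verified, is the $t$-independence for $\uGamma^{(t,\cM)}$. The heart of the argument is the computation
\begin{align*}
\bigl( \uGamma^{(t,\cM)} \cdot \Fbcm^{(t)} \bigr)_n ( x_1, \ldots , x_n )
& = \sum_{\pi \in NC(n)} t^{\innblocks ( \pi )} \, \gamma^{(t,\cM)}_{\pi} ( x_1, \ldots , x_n )  \\
& = \sum_{\pi \in NC(n)} t^{\innblocks ( \pi )} \, \betat_{\pi} ( x_1 y, \ldots , x_n y )  \\
& = \varphi ( x_1 y \, x_2 y \cdots x_n y ).
\end{align*}
The first line unfolds the action together with the value $\Fbcm^{(t)} ( \pi , 1_n ) = t^{\innblocks ( \pi )}$. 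The crucial second equality requires recognizing that $\gamma^{(t,\cM)}_{\pi} ( x_1, \ldots , x_n )$ equals $\betat_{\pi} ( x_1 y, \ldots , x_n y )$ block by block; this follows from the defining identity \eqref{eqn:103a}, applied with $V$ in place of the full index set, combined with how Notation \ref{rem:52} assembles extensions $\psi_{\pi}$ out of the restrictions $( x_1, \ldots , x_n ) | V$. The third line is then the $t$-Boolean moment-cumulant formula \eqref{eqn:69a} applied to the $n$-tuple $( x_1 y, \ldots , x_n y ) \in \cA^n$. The final expression does not involve $t$ at all, closing the argument.
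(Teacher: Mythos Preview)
Your proof is correct and follows essentially the same route as the paper. Both arguments hinge on the same two ingredients: the block-by-block identity $\gamma^{(t,\cM)}_\pi(x_1,\ldots,x_n) = \betat_\pi(x_1 y,\ldots,x_n y)$, and the relation $\Fbcm^{(s)} * (\Fbcm^{(t)})^{-1} = u_{s-t}$ from Corollary~\ref{cor:85}. The paper applies the transition formula \eqref{eqn:612c} directly to the tuple $(x_1 y,\ldots,x_n y)$ and then converts to $\gamma$'s, whereas you route through the $t$-independent quantity $\varphi(x_1 y \cdots x_n y)$; this is a minor repackaging of the same computation rather than a different idea.
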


\begin{proof}  The first formula (\ref{eqn:105a}) is 
a direct consequence of Corollary \ref{cor:85}, written
in connection to the non-commutative probability space
$( \cM , \varphi \mid \cM )$.

The second formula (\ref{eqn:105a}) also follows from 
Corollary \ref{cor:85}.  Indeed, for every $n \geq 1$ and 
$x_1, \ldots , x_n \in \cM$ we can write
\[
\gamma^{(t, \cM)}_n (x_1, \ldots , x_n)
\ = \ \betat_n (x_1 y, \ldots , x_n y) 
\ = \ \sum_{  \begin{array}{c}
{\scriptstyle \pi \in NC(n),} \\
{\scriptstyle \mathrm{irreducible}}  \end{array}  } 
\ (s-t)^{| \pi | -1} \cdot 
\beta^{(s)}_{\pi} (x_1 y, \ldots , x_n y),
\]
where at the second equality sign we use 
Equation (\ref{eqn:612c}) from Remark \ref{rem:612}.
An inspection of the definition of the functionals
$\beta^{(s)}_{\pi}$ and $\gamma^{(s)}_{\pi}$ shows that
in the latter expression we can replace
$\beta^{(s)}_{\pi} (x_1 y, \ldots , x_n y)$ with
$\gamma^{(s, \cM)}_{\pi} (x_1, \ldots , x_n)$; hence 
what we got is
\[
\gamma^{(t, \cM)}_n (x_1, \ldots , x_n)
\ = \ \sum_{  \begin{array}{c}
{\scriptstyle \pi \in NC(n),} \\
{\scriptstyle \mathrm{irreducible}}  \end{array}  } 
\ (s-t)^{| \pi | -1} \cdot 
\gamma^{(s)}_{\pi} (x_1, \ldots , x_n),
\]
where the right-hand side is indeed the value at 
$(x_1, \ldots , x_n)$ of the $n$-th functional in the 
family $\uGamma^{(s , \cM)} \cdot u_{s-t} \in \fMcM$.
\end{proof}

\vspace{6pt}

\begin{lemma}   \label{lemma:106}
Let $t,q$ be in $\bR$.  One has that
$u_q^{-1} * f_t * u_q = f_{t-q}$.
\end{lemma}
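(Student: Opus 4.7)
The plan is to reduce the lemma to a direct matching of values on the generating couples $(0_n, 1_n)$, after which the actual identification reduces to the already-established transition formula between $t$-Boolean cumulants.

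By construction $f_{t-q} \in \cG$ (cf.\ Notation \ref{def:103}.3), while Theorem \ref{thm:91} applied to $f_t \in \cG$ guarantees that $u_q^{-1} * f_t * u_q \in \cG$ as well. Since a function in $\cG$ is completely determined by its values $f(0_n, 1_n)$ for $n \geq 1$ (Remark \ref{rem:42}, Proposition \ref{prop:43}), it is enough to verify
\[
u_q^{-1} * f_t * u_q \, (0_n, 1_n) \ = \ f_{t-q} (0_n, 1_n), \quad \forall \, n \geq 1.
\]

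By (\ref{eqn:103b}), the right-hand side equals $\beta^{(t-q)}_n (y, \ldots, y)$. For the left-hand side, I would invoke the explicit formula (\ref{eqn:92a}) derived in Remark \ref{rem:92}, with the input sequence $\lambda_k := f_t (0_k, 1_k) = \beta^{(t)}_k (y, \ldots, y)$; this yields
\[
u_q^{-1} * f_t * u_q \, (0_n, 1_n) \ = \sum_{\substack{\pi \in NC(n), \\ \mathrm{irreducible}}} q^{|\pi|-1} \prod_{V \in \pi} \beta^{(t)}_{|V|} (y, \ldots, y).
\]

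The remaining step, where the actual content lives, is to identify this sum with $\beta^{(t-q)}_n (y, \ldots, y)$. This is precisely the transition formula between $s$-Boolean and $t'$-Boolean cumulants from Equation (\ref{eqn:612c}) of Remark \ref{rem:612} (whose validity rests on Corollary \ref{cor:85}), applied to $(\cA, \varphi)$ with parameters $s := t$ and $t' := t-q$, and specialized to $x_1 = \cdots = x_n = y$: the coefficient becomes $(s - t')^{|\pi|-1} = q^{|\pi|-1}$, and the product over $V \in \pi$ recovers $\beta^{(t)}_\pi (y, \ldots, y)$ in the sense of Notation \ref{rem:52}.2. I do not anticipate a serious obstacle, since the heavy lifting has already been done in Theorem \ref{thm:91} and in Corollary \ref{cor:85}; the present lemma is essentially a bookkeeping match between the explicit conjugation formula provided by the former and the inter-cumulant transition formula provided by the latter.
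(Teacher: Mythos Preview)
Your proposal is correct and follows essentially the same approach as the paper: reduce to checking values at $(0_n,1_n)$ via Theorem \ref{thm:91}, compute the left-hand side using formula (\ref{eqn:92a}), and identify the result with $\beta^{(t-q)}_n(y,\ldots,y)$ via the transition formula (\ref{eqn:612c}).
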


\begin{proof}
We have that $f_{t-q}$ is multiplicative (by definition,
cf.~Notation \ref{def:103}.3) and $u_q^{-1} * f_t * u_q$
is multiplicative as well (due to Theorem \ref{thm:91}); so
in order to prove their equality, it suffices to verify that
\begin{equation}   \label{eqn:106a}
u_q^{-1} * f_t * u_q \, (0_n, 1_n) 
= f_{t-q} (0_n, 1_n), \ \ \forall \, n \geq 1.
\end{equation}

The right-hand side of (\ref{eqn:106a}) is, by definition,
equal to $\beta^{(t-q)}_n (y, \ldots , y)$.  For the 
left-hand side of the same equation we resort to 
Equation (\ref{eqn:92a}) of Remark \ref{rem:92}, which 
says that
\begin{equation}  \label{eqn:106b}
u_q^{-1} * f_t * u_q \, (0_n, 1_n) =
\sum_{ \begin{array}{c}
{\scriptstyle \pi \in NC(n),}  \\
{\scriptstyle \mathrm{irreducible}}
\end{array}  } \ q^{| \pi | - 1} \prod_{V \in \pi} 
f_t ( 0_{|V|}, 1_{|V|} ).
\end{equation}
Upon replacing $f_t ( 0_{|V|} , 1_{|V|} )$ from its
definition, the right-hand side of (\ref{eqn:106b}) becomes
\[
\sum_{ \substack{\pi \in NC(n),  \\ \mathrm{irreducible}} } 
\ q^{| \pi | - 1}  \prod_{V \in \pi}
\beta^{(t)}_{|V|} (y, \ldots , y),
\]
and this is indeed equal to 
$\beta^{(t-q)}_n (y, \ldots , y)$, thanks to 
Equation (\ref{eqn:612c}) of Remark \ref{rem:612}.
\end{proof}

\vspace{6pt}

\begin{lemma}       \label{lemma:107}
Suppose there exists a value 
$t_o \in \bR$ for which it is true that
$\uGamma^{(t_o, \cM)} = \uBeta^{(t_o, \cM)} \cdot f_{t_o}$.
Then it follows that
$\uGamma^{(t, \cM)} = \uBeta^{(t, \cM)} \cdot f_t$
for all $t \in \bR$.
\end{lemma}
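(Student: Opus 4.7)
The plan is to chain together the three transformation results just proved: Lemma \ref{lemma:105} (how $\uBeta^{(\cdot, \cM)}$ and $\uGamma^{(\cdot, \cM)}$ change as the parameter $t$ varies), Lemma \ref{lemma:106} (how $f_t$ transforms under conjugation by $u_q$), and Proposition \ref{prop:83} (the 1-parameter subgroup law $u_{q_1} * u_{q_2} = u_{q_1+q_2}$, which gives in particular $u_q^{-1} = u_{-q}$). The idea is that both sides of the desired equation ``$\uGamma^{(t, \cM)} = \uBeta^{(t,\cM)} \cdot f_t$'' can be rewritten in terms of the known data at $t_o$ using a single shift by $q := t_o - t$, after which the hypothesis plugs in directly.

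Concretely, I would fix $t \in \bR$, set $q := t_o - t$, and compute as follows. First apply the second identity in Lemma \ref{lemma:105} with $s = t_o$ to get $\uGamma^{(t,\cM)} = \uGamma^{(t_o,\cM)} \cdot u_q$; then invoke the hypothesis to obtain
\[
\uGamma^{(t,\cM)} = \bigl( \uBeta^{(t_o,\cM)} \cdot f_{t_o} \bigr) \cdot u_q
= \uBeta^{(t_o,\cM)} \cdot \bigl( f_{t_o} * u_q \bigr),
\]
where the second equality uses that $\cGtild$ acts on the right on $\fMcM$ (Proposition \ref{prop:55}, transported from $\fMcA$ to $\fMcM$). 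Next apply the first identity of Lemma \ref{lemma:105}, again with $s = t_o$, to rewrite $\uBeta^{(t_o,\cM)} = \uBeta^{(t,\cM)} \cdot u_{-q}$, which yields
\[
\uGamma^{(t,\cM)} = \uBeta^{(t,\cM)} \cdot \bigl( u_{-q} * f_{t_o} * u_q \bigr).
\]
At this point the problem has been converted into a pure group-theoretic identity inside $\cGtild$: it suffices to check that $u_{-q} * f_{t_o} * u_q = f_t$. But Proposition \ref{prop:83} gives $u_q^{-1} = u_{-q}$, so Lemma \ref{lemma:106} applied with this $q$ gives exactly $u_q^{-1} * f_{t_o} * u_q = f_{t_o - q} = f_t$, which closes the argument.

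There is really no hard step here, because Theorem \ref{thm:91} (the normalization of $\cG$ by the $u_q$'s) has already been done and all of the nontrivial content has been packaged into Lemmas \ref{lemma:105} and \ref{lemma:106}. The only point that requires a moment of care is the bookkeeping of signs in the shifts: both $\uBeta^{(\cdot,\cM)}$ and $\uGamma^{(\cdot,\cM)}$ pass from parameter $t_o$ to parameter $t$ by acting on the right with the same element $u_{t_o - t}$, whereas the ``coset representative'' $f_{t_o}$ is moved to $f_t$ by conjugation by $u_{t_o - t}$ (Lemma \ref{lemma:106}); the two operations fit together precisely because conjugation of $f_{t_o}$ is what emerges when one absorbs the right-shift of $\uGamma^{(\cdot,\cM)}$ and then undoes the right-shift of $\uBeta^{(\cdot,\cM)}$. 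Once the shift $q = t_o - t$ is chosen, the whole computation is a one-line application of associativity of the action together with Lemma \ref{lemma:106}.
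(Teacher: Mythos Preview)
Your proof is correct and follows essentially the same approach as the paper: the paper also fixes $t$, sets $q = t_o - t$, and combines Lemmas \ref{lemma:105} and \ref{lemma:106} (together with $u_q^{-1} = u_{-q}$) to pass from the hypothesis at $t_o$ to the conclusion at $t$. The only cosmetic difference is that the paper starts from $\uBeta^{(t,\cM)} \cdot f_t$ and works toward $\uGamma^{(t,\cM)}$, whereas you start from $\uGamma^{(t,\cM)}$ and work toward $\uBeta^{(t,\cM)} \cdot f_t$; the substitutions and lemma invocations are otherwise identical.
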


\begin{proof} 
Fix a $t \in \bR$.  We use Lemmas \ref{lemma:105} and 
\ref{lemma:106}, with $q := t_o -t$, to replace 
$\uBeta^{(t, \cM)} = \uBeta^{(t_o, \cM)} \cdot u_{t_o -t}$ 
and $f_t = u_{t_o -t}^{-1} * f_{t_o} * u_{t_o -t}$, and 
thus get:
\[
\uBeta^{(t, \cM)} \cdot f_t
= ( \uBeta^{(t_o, \cM)} \cdot u_{t_o - t} ) 
\cdot ( u_{t_o -t}^{-1} * f_{t_o} * u_{t_o -t})         
= ( \uBeta^{(t_o, \cM)} \cdot f_{t_o} ) \cdot  u_{t_o -t}.
\]
In the latter expression we can replace
$\uBeta^{(t_o, \cM)} \cdot f_{t_o}$ with $\uGamma^{(t_o, \cM)}$
(by hypothesis), then we can invoke Lemma \ref{lemma:105} to 
conclude that 
$\uGamma^{(t_o, \cM)} \cdot  u_{t_o -t} = \uGamma^{(t, \cM)}$.
In this way we obtain that
$\uBeta^{(t, \cM)} \cdot f_t = \uGamma^{(t, \cM)}$, as required.
\end{proof}

\vspace{6pt}

\begin{ad-hoc}
{\bf Proof of the statement (\ref{eqn:101a}).}  
In view of Lemma \ref{lemma:102}, it suffices to prove 
(\ref{eqn:101a}) in connection to the element $y \in \cN$ 
with $\varphi (y) = 1$ which was fixed since Notation 
\ref{def:103}.

The special case $t_o =1$ of (\ref{eqn:101a}) concerns the 
description of multiplication of free elements in terms 
of free cumulants.  This is a basic result in the 
combinatorics of free probability, which is not hard to 
obtain via a suitable grouping of terms in the moment-cumulant
formula for free cumulants, followed by an application of 
M\"obius inversion.  For the details of how this goes, see 
for instance \cite[Theorem 14.4]{NiSp2006}.

We therefore accept the case $t_o =1$ in (\ref{eqn:101a}).
The equivalence noticed in Lemma \ref{lemma:104} then tells 
us that that the equality 
$\uBeta^{(t_o, \cM)} \cdot f_{t_o} = \uGamma^{(t_o, \cM)}$
holds for $t_o = 1$.  This puts us in the position to invoke 
Lemma \ref{lemma:107}, in order to conclude that the equality
$\uBeta^{(t, \cM)} \cdot f_t = \uGamma^{(t, \cM)}$ holds for 
every $t \in \bR$.  Finally, the equivalence noticed in 
Lemma \ref{lemma:104} is used again (this time in the 
direction from right to left) to conclude that (\ref{eqn:101a})
holds for all values $t \in \bR$, as required.
\hfill $\square$
\end{ad-hoc}

\vspace{6pt}

\begin{remark}   \label{rem:10x}
In the formula (\ref{eqn:intro6}) of the Introduction, the roles
played by the elements $x,y \in \cA$ were similar to each other.
This symmetry was broken when we moved to the more general statement in
(\ref{eqn:101a}), where we continue to work with $(y, \ldots , y) \in \cA^n$
but we use a tuple $(x_1, \ldots , x_n)$ instead of just $(x, \ldots , x)$. 
In connection to that, we mention that (\ref{eqn:101a}) can be further
extended to the following statement:
\begin{equation}   \label{eqn:10xa}
\left\{  \begin{array}{c}
\mbox{ Let $\cM, \cN \subseteq \cA$ be as in (\ref{eqn:101a}). 
    One has that }  \\
\beta^{(t)}_n (x_1 y_1, \ldots, x_n y_n)
= \sum_{\pi \in NC(n)} \beta^{(t)}_{\pi} (x_1, \ldots , x_n)
\cdot \beta^{(t)}_{\Kr ( \pi)} (y_1, \ldots , y_n),                    \\
\mbox{ $\ $ holding for every $n \in \bN$, 
$x_1, \ldots , x_n \in \cM$, $y_1, \ldots , y_n \in \cN$ and 
$t \in \bR$.}
\end{array}   \right.
\end{equation}

The proof shown above for (\ref{eqn:101a}) does not cover the more
general statement (\ref{eqn:10xa}), because our handling of the
multiplicative function $f_t$ makes effective use (e.g.~when
considering the functionals $\beta^{(t)}_{|W|}$ in (\ref{eqn:103c}))
of the fact that $y_1 = \cdots = y_n =y$.  
For a reader who is interested to pursue this, we outline
below a possible approach to (\ref{eqn:10xa}), which is however
straying a bit outside the main body of ideas of the paper, and 
requires some work around a certain 
``$t$-Boolean Bercovici-Pata bijection'' that was introduced
in \cite{BeNi2009}.

Let us quickly review some notation from 
\cite[Lectures 16 and 17]{NiSp2006}.  We consider a sheer algebraic 
setting, with a ``space of distributions'' defined as
\[
\Dalg (n) := \{ \mu : \bC \langle X_1, \ldots , X_n \rangle \to \bC
\mid \mu \mbox{ linear, } \mu (1) =1 \}.
\]
Every $\mu \in \Dalg (n)$ has an {\em $R$-transform} $R_{\mu}$
which belongs to the space 
$\bC_o \langle \langle z_1, \ldots , z_n \rangle \rangle$ 
of formal power series without constant coefficient in the
non-commuting indeterminates $z_1, \ldots , z_n$. The series
$R_{\mu}$ is put together by using the free cumulants of $\mu$ 
as coefficients (cf.~\cite[Definition 16.3]{NiSp2006}).  The 
multiplication of freely independent $n$-tuples of elements
in a non-commutative probability space is encoded by a binary
operation $\boxtimes$ on $\Dalg (n)$. Then, upon taking
$R$-transforms, $\boxtimes$ is turned into 
a certain binary operation $\freestar$ on power series:
\begin{equation}   \label{eqn:10xb}
R_{\mu \boxtimes \nu} = R_{\mu} \, \freestar \, R_{\nu}, 
\ \ \forall \, \mu, \nu \in \Dalg (n).
\end{equation}
Moreover: for $f,g \in  
\bC_o \langle \langle z_1, \ldots , z_n \rangle \rangle$,
the coefficients of $f \freestar g$ can be explicitly described 
in terms of the coefficients of $f$ and of $g$ via a formula which
is reminiscent of (\ref{eqn:10xa}) -- 
cf.~\cite[Definition 17.1 and Proposition 17.2]{NiSp2006}.

For our discussion here it is relevant that for every 
$\mu \in \Dalg (n)$ and $t \in \bR$
we can define an {\em $\eta^{(t)}$-transform},
\[
\eta_{\mu}^{(t)} \in
\bC_o \langle \langle z_1, \ldots , z_n \rangle \rangle;
\]
the series $\eta_{\mu}^{(t)}$ is put together
by using the $t$-Boolean cumulants of $\mu$ as coefficients.
The $R$-transform is retrieved at $t=1$, $\eta^{(1)}_{\mu} = R_{\mu}$.
The point of relevance for the proof of (\ref{eqn:10xa}) is
that one can extend Equation (\ref{eqn:10xb}) from the case $t=1$ to
the case of a general $t \in \bR$:
\begin{equation}   \label{eqn:10xe}
\eta^{(t)}_{\mu \boxtimes \nu}
= \eta^{(t)}_{\mu}  \freestar \eta^{(t)}_{\nu}
\ \ \forall \, t \in \bR \mbox{ and }
\mu, \nu \in \Dalg (n).
\end{equation}

Verification that (\ref{eqn:10xa}) follows from (\ref{eqn:10xe}):
consider the setting from (\ref{eqn:10xa}), and let 
$\mu, \nu \in \Dalg (n)$ be the joint distributions of the tuples
$(x_1, \ldots , x_n)$ and $(y_1, \ldots , y_n)$, respectively. 
The definition of $\boxtimes$ ensures that the joint distribution 
of $(x_1 y_1, \ldots , x_n y_n)$ is $\mu \boxtimes \nu$. 
Thus $\beta_n^{(t)} (x_1 y_1, \ldots , x_n y_n)$ is retrieved as the
coefficient of $z_1 \cdots z_n$ in  $\eta^{(t)}_{\mu \boxtimes \nu}$,
and in view of (\ref{eqn:10xe}) we get that
\begin{equation}   \label{eqn:10xg}
\beta_n^{(t)} (x_1 y_1, \ldots , x_n y_n)
= \mbox{ [Coefficient of $z_1 \cdots z_n$ in  
$\eta^{(t)}_{\mu} \freestar \eta^{(t)}_{\nu}$] }. 
\end{equation}
From (\ref{eqn:10xg}), the explicit description of how $\freestar$ 
works takes us precisely to the right-hand side of the formula
indicated in (\ref{eqn:10xa}).

Now, the reason for reducing (\ref{eqn:10xa}) 
to (\ref{eqn:10xe}) is that the latter formula can be studied in 
connection to a family of bijective maps 
$\bigl( \, \bB_t : \Dalg (n) \to \Dalg (n) \, \bigr)_{t \in [ 0, \infty )}$
introduced in \cite{BeNi2009}.  These maps form a semigroup 
($\bB_s \circ \bB_t = \bB_{s+t}$ for all $s, t \in [0, \infty )$), 
and have the property that
\begin{equation}   \label{eqn:10xc}
\bB_t ( \mu \boxtimes \nu ) = \bB_t ( \mu ) \boxtimes \bB_t ( \nu),
\ \ \forall \, t \in [ 0, \infty) \mbox{ and } \mu, \nu \in \Dalg (n).
\end{equation}
When $t=1$, the map $\bB_1$ is known as 
``Boolean Bercovici-Pata bijection'', and has the following description 
(the idea of which can be tracked back all the way to \cite{BePa1999}):
\begin{equation}   \label{eqn:10xd1}
\left\{   \begin{array}{c}
\mbox{ For every $\mu \in \Dalg (n)$, we have that $\bB_1 (\mu )$ is }  \\
\mbox{ the unique distribution $\nu \in \Dalg (n)$ such that
       $R_{\nu} = \eta^{(0)}_{\mu}$.}
\end{array}   \right.
\end{equation}
A reader interested in pursuing this line of thought should be able to
develop the relevant $NC(n)$-combinatorics presented in \cite{BeNi2009}
and obtain the following generalization of (\ref{eqn:10xd1}): 
\begin{equation}   \label{eqn:10xdt}
\left\{   \begin{array}{c}
\mbox{ For every $t \in [0, \infty )$ and $\mu \in \Dalg (n)$, 
                                we have that $\bB_t (\mu )$ is }  \\
\mbox{ the unique distribution $\nu \in \Dalg (n)$ such that
       $\eta^{(t)}_{\nu} = \eta^{(0)}_{\mu}$.}
\end{array}   \right.
\end{equation}

By using (\ref{eqn:10xdt}) and the machinery of the $\bB_t$'s, it is then
rather straightforward to upgrade from (\ref{eqn:10xb}) to the case of 
(\ref{eqn:10xe}) with $t \in [0, 1]$.  Indeed, if 
$\mu, \nu \in \Dalg (n)$ and $t \in [0,1]$ are given, one puts
$\mu ' := \bB_{1-t} ( \mu )$ and $\nu ' := \bB_{1-t} ( \nu )$, and 
processes the equality 
$R_{\mu ' \boxtimes \nu '} = R_{\mu '} \freestar R_{\nu'}$
into becoming 
$\eta^{(t)}_{\mu \boxtimes \nu} 
= \eta^{(t)}_{\mu} \freestar \eta^{(t)}_{\nu}$.
Finally, it is also straightforward to observe that (\ref{eqn:10xe}) can
be expressed in the guise of a family of polynomial identities in $t$; and
if such an identity holds for all $t \in [0,1]$, then it must actually hold
for all $t \in \bR$.
\end{remark}

\vspace{6pt}

\begin{remark}   \label{rem:109}
Upon seeing how things came up in (\ref{eqn:101a}),
one is prompted to ask the analogous question in connection
to the other important brand of cumulants mentioned in 
Section 7, the monotone cumulants.  More precisely: let 
$( \cA , \varphi )$, the freely independent unital subalgebras
$\cM , \cN \subseteq \cA$ and the element $y \in \cN$ be the
same as above, and consider the sequence of monotone 
cumulant functionals 
$\uRho = ( \rho_n : \cA^n \to \bC)_{n=1}^{\infty}$.  Is there
a nice formula which expresses a monotone cumulant 
\[
\rho_n (x_1 y, \ldots , x_n y), \ \mbox{ with $n \geq 1$ and }
x_1, \ldots , x_n \in \cM,
\]
in terms of the monotone cumulants of the $x_i$'s (on the 
one hand) and the monotone cumulants of $y$ (on the other 
hand)?  It may seem intriguing that low order calculations 
show an analogy with (\ref{eqn:101a}): one has
\begin{equation}   \label{eqn:109a}
\left\{   \begin{array}{l}
\rho_n( x_1 y, \ldots , x_n y) = \sum_{\pi \in NC(n)} 
\rho_{\pi} ( x_1, \ldots , x_n ) \cdot 
\rho_{ { }_{\Kr (\pi)} } (y, \ldots , y),  \\
\mbox{ for all $n \leq 4$ and $x_1, \ldots , x_n \in \cM$.}
\end{array}   \right.
\end{equation}

\noindent
This turns out to be an accident which no longer holds
for $n \geq 5$.  In an appendix at the end of the paper 
we show the output of some computer calculations which 
check the difference of the two sides of (\ref{eqn:109a})
for $5 \leq n \leq 8$ and in the case when 
$x_1 = \cdots = x_n =:x \in \cM$.  It is probably another
low-dimensional accident that the ``irregular'' terms
appearing in this difference don't seem to be that numerous.  
In any case, it would be interesting to have a theorem 
establishing an analogue of (\ref{eqn:101a}) for monotone 
cumulants; this theorem would then also explain the 
structure of the irregular terms shown in the appendix.
\end{remark}

$\ $

\section{Identifying $\cGtild$ as character 
group of a Hopf algebra}

\noindent
The machinery of incidence algebras on posets can be 
re-cast in a way which uses Hopf algebra considerations.  
More precisely: the main object studied in the present 
paper, the group $\cGtild$, will now be identified in a 
natural way as the group of characters a Hopf algebra 
$\cT$.  The construction of $\cT$ is quite direct, when 
one pursues the following guidelines:

-- As an algebra: $\cT$ should have a universality property
which makes it that the characters of $\cT$ are parametrized 
by functions from $\cGtild$.

-- As a coalgebra: the comultiplication of $\cT$ has to play
into the formula (\ref{eqn:26a}) which governs the group
operation on $\cGtild$.

The construction of $\cT$ works seamlessly due to certain 
underlying properties of the lattices $NC(n)$.  This falls within
the framework of ``hereditary family of posets'' in the sense
developed by Schmitt \cite{Schm1994}, and consequently $\cT$ 
is an incidence Hopf algebra in the sense of that paper.
A version of $\cT$ has also been recently studied in the paper 
\cite{EFFoKoPa2019}. 

$\ $

\subsection{Description of \boldmath{$\cT$}.}

$\ $

\noindent
This subsection describes the Hopf algebra $\cT$ we are 
interested in, with detailed explicit formulas for the algebra 
and coalgebra operations.  We assume the reader to be familiar 
with basic notions and facts concerning Hopf algebras with 
combinatorial flavour, as presented for instance in
\cite[Section 1B and Chapter 14]{GBVaFi2001} or in 
\cite[Chapters I and II]{Ma2003}.

\vspace{6pt}

\begin{notation-and-remark}  \label{def:111}
% {\em (Description of $\cT$.)}
$1^o$ We let $\cT$ be the commutative algebra of 
polynomials over $\bC$ which uses a countable 
collection of indeterminates indexed by non-crossing
partitions with at least two blocks:
\begin{equation}   \label{eqn:111a}
\cT := \bC \Bigl[ \, X_{\pi} \mid 
\pi \in \sqcup_{n=1}^{\infty} 
\bigl( NC(n) \setminus \{ 1_n \} \bigr) \, \Bigr].
\end{equation}
We also make the convention to denote
\begin{equation}  \label{eqn:111b}
X_{1_n} :=  1_{ { }_{\cT} }, \ \ 
\forall \, n \geq 1,
\end{equation}
and thus get to have elements $X_{\pi} \in \cT$ 
defined for all the partitions in 
$\sqcup_{n=1}^{\infty} NC(n)$.

\vspace{6pt}

\noindent
$2^{o}$ As an immediate consequence of how notation 
is set in $1^{o}$, $\cT$ has 
a universality property described as follows: 
\begin{equation}   \label{eqn:111c}
\left\{   \begin{array}{l}
\mbox{If $\cA$ is a unital commutative algebra 
over $\bC$ and we are given}   \\
\mbox{ $\ $ elements 
$\bigl\{ a_{\pi} \mid \pi \in \sqcup_{n=1}^{\infty} NC(n) 
\bigr\}$ in $\cA$, with 
$a_{ { }_{1_n} } = 1_{ { }_{\cA} }$ for all $n \geq 1$,}  \\
\mbox{then there exists a unital algebra homomomorphism 
$\Phi : \cT \to \cA$, uniquely}                         \\
\mbox{ $\ $ determined, such that 
$\Phi (X_{\pi}) = a_{\pi}$ for all 
$\pi \in \sqcup_{n=1}^{\infty} NC(n)$.}
\end{array}   \right.
\end{equation}

\vspace{6pt}

\noindent
$3^o$ Consider the unital algebra $\cT \otimes \cT$.
The universality property of $\cT$ observed in $2^o$ 
assures us that there exists a unital algebra
homomorphism $\Delta : \cT \to \cT \otimes \cT$, 
uniquely determined, such that for every $n \geq 1$
and $\pi \in NC(n)$ we have 
\begin{equation}   \label{eqn:111d}
\Delta (X_{\pi}) =  \sum_{\sigma \in NC(n), \, \sigma \geq \pi} 
\ \Bigl( \prod_{W \in \sigma} X_{ \pi_{{ }_W} } \Bigr) 
\otimes X_{\sigma} ,
\end{equation}
with the relabeled-restrictions $\pi_{{ }_W} \in NC( |W|)$
considered in the sense of Notation \ref{def:12}.  
We will refer to the homomorphism $\Delta$ by 
calling it ``the comultiplication of $\cT$''.

\vspace{6pt}

\noindent
$4^o$ The universality property observed in $2^o$ 
also assures us that there exists a unital algebra
homomorphism $\eeps : \cT \to \bC$, 
uniquely determined, such that 
\begin{equation}   \label{eqn:111e}
\eeps ( X_{\pi} ) = 0, \ \ \forall \, 
\pi \in \sqcup_{n=1}^{\infty} 
( NC(n) \setminus \{ 1_n \} ).
\end{equation}
We will refer to $\eeps$ by calling it 
``the counit of $\cT$''.

\vspace{6pt}

\noindent
$5^o$ We denote $\cT_0 := 
\{ \lambda \cdot 1_{{ }_{\cT}} \mid \lambda \in \bC \}$,
and for every $m \geq 1$ we denote
\begin{equation}   \label{eqn:111f}
\cT_m = \mbox{span} \left( \;\,\bigcup_{k=1}^m  \Bigl\{ 
X_{\pi_1} \cdots X_{\pi_k} 
\begin{array}{ll}
\vline & \pi_1, \ldots , \pi_k \in 
         \sqcup_{n=1}^{\infty} ( NC(n) \setminus \{1 _n \}) \\
\vline & \mbox{with } | \pi_1 | + \cdots + | \pi_k | = m+k
\end{array}  \Bigr\} \right) .
\end{equation}
In other words, $\cT_m$ is the linear span of all monomials 
``of degree $m$'', where we declare that every indeterminate 
$X_{\pi}$ has degree $| \pi | - 1$.  This gives a direct sum 
decomposition
$\cT = \bigoplus_{m=0}^{\infty} \cT_m$,
which we will refer to as ``the grading of $\cT$''.
\end{notation-and-remark}

\vspace{6pt}

\begin{notation-and-remark}   \label{def:112}
For every $n \geq 1$ and 
$\pi \leq \sigma$ in $NC(n)$ let us denote
\begin{equation}   \label{eqn:112x}
M_{\pi, \sigma} :=
\prod_{W \in \sigma} X_{ \pi_{{ }_W} } \in \cT.
\end{equation}
Note that for $\sigma = 1_n$ the monomial 
$M_{\pi , \sigma}$ consists
of only one factor, so we get
\[
M_{\pi, 1_n} = X_{\pi}, \ \ \forall \, n \geq 1
\mbox{ and } \pi \in NC(n).
\]
At the other extreme, setting $\sigma = \pi$ makes 
$M_{\pi, \sigma}$ consist of factors $X_{1_{|V|}}$ with
$V$ running among the blocks of $\pi$, and we thus get
\[
M_{\pi,\pi} = \oneT, \ \ \forall \, n \geq 1
\mbox{ and } \pi \in NC(n).
\]

In terms of the monomials $M_{\pi, \sigma}$, the formula 
(\ref{eqn:111d}) defining the comultiplication of $\cT$ 
takes the more appealing form
\begin{equation}   \label{eqn:112y}
\Delta (X_{\pi}) =  \sum_{\sigma \in NC(n), \, \sigma \geq \pi} 
\ M_{\pi , \sigma} \otimes X_{\sigma},
\ \mbox{ for all $n \geq 1$ and $\pi \in NC(n)$.}
\end{equation}
It is easy to further extend this, in the way indicated in
the next lemma.
\end{notation-and-remark}

\vspace{6pt}

\begin{lemma}   \label{lemma:113}
Let $n \geq 1$ and let $\pi, \tau \in NC(n)$ be such that
$\pi \leq \tau$. Then
\begin{equation}   \label{eqn:113a}
\Delta ( M_{\pi, \tau} ) \ =
\ \sum_{\sigma \in NC(n), \, \pi \leq \sigma \leq \tau} 
\ M_{\pi, \sigma} \otimes M_{\sigma , \tau}.
\end{equation}
\end{lemma}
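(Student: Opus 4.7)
The plan is to derive the identity by applying $\Delta$ as an algebra homomorphism to the product defining $M_{\pi,\tau}$, then re-grouping the resulting sum via the standard bijection between partitions $\sigma$ with $\pi \leq \sigma \leq \tau$ and tuples of ``local'' partitions indexed by the blocks of $\tau$.

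First I would recall that, by the very definition (\ref{eqn:112x}), $M_{\pi,\tau} = \prod_{W \in \tau} X_{\pi_{{ }_W}}$, and since $\Delta$ is a unital algebra homomorphism into $\cT \otimes \cT$, we obtain
\[
\Delta(M_{\pi,\tau}) = \prod_{W \in \tau} \Delta(X_{\pi_{{ }_W}}).
\]
Next I would apply Equation (\ref{eqn:112y}) (the definition of $\Delta$ on generators) to each factor: for each block $W$ of $\tau$, with $n_W := |W|$ and $\pi_{{ }_W} \in NC(n_W)$, we have
\[
\Delta(X_{\pi_{{ }_W}}) = \sum_{\substack{\sigma_W \in NC(n_W) \\ \sigma_W \geq \pi_{{ }_W}}} M_{\pi_{{ }_W}, \sigma_W} \otimes X_{\sigma_W}.
\]
Expanding the resulting product then yields a sum indexed over families $( \sigma_W )_{W \in \tau}$ with $\sigma_W \geq \pi_{{ }_W}$ in $NC(n_W)$.

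The main step is to recognize that these families are exactly in bijective correspondence with the partitions $\sigma \in NC(n)$ satisfying $\pi \leq \sigma \leq \tau$: the bijection sends such a $\sigma$ to the tuple of relabeled-restrictions $(\sigma_{{ }_W})_{W \in \tau}$, and this is precisely the bijection already used in the proof of Theorem \ref{thm:33} (cf.~(\ref{eqn:33d})). Under this bijection $\sigma_W = \sigma_{{ }_W}$, and so $\prod_{W \in \tau} X_{\sigma_W} = \prod_{W \in \tau} X_{\sigma_{{ }_W}} = M_{\sigma,\tau}$ on the right-hand tensor slot, matching the desired form.

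The only remaining bookkeeping is to verify, on the left-hand tensor slot, the identification
\[
\prod_{W \in \tau} M_{\pi_{{ }_W}, \sigma_{{ }_W}} = M_{\pi,\sigma}.
\]
For this I would invoke the compatibility of iterated relabeled-restrictions recorded in Equation (\ref{eqn:31b}): for each block $W \in \tau$ and each block $T$ of $\sigma_{{ }_W}$ (which corresponds to a unique block $V$ of $\sigma$ with $V \subseteq W$), one has $(\pi_{{ }_W})_{{ }_T} = \pi_{{ }_V}$. Expanding each $M_{\pi_{{ }_W}, \sigma_{{ }_W}}$ via (\ref{eqn:112x}) and then regrouping the double product over pairs $(W,V)$ with $V \in \sigma, V \subseteq W$ as a single product over $V \in \sigma$ (using $\sigma \leq \tau$) yields $\prod_{V \in \sigma} X_{\pi_{{ }_V}} = M_{\pi,\sigma}$, which closes the argument. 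This last identification is the only subtle point; it is essentially the same bit of combinatorial bookkeeping that underpins Lemmas \ref{lemma:31} and \ref{lemma:32}, so no new ideas are required beyond what has already been developed.
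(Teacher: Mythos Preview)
Your proof is correct and follows essentially the same approach as the paper: apply $\Delta$ multiplicatively to the factors $X_{\pi_{{ }_W}}$, expand, and re-index via the bijection $\sigma \mapsto (\sigma_{{ }_W})_{W\in\tau}$. In fact you supply more detail than the paper, which leaves the final identification of $\prod_{W}M_{\pi_{{ }_W},\sigma_{{ }_W}}$ with $M_{\pi,\sigma}$ (your use of (\ref{eqn:31b})) as an exercise to the reader.
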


\begin{proof}  Let us write explicitly
$\tau = \{ U_1, \ldots , U_k \}$ and let us denote 
$\pi^{(j)} := \pi_{{ }_{U_j}} \in NC( |U_j| )$ for 
$1 \leq j \leq k$. The left-hand 
side of Equation (\ref{eqn:113a}) then becomes
\[
\Delta ( \prod_{j=1}^k X_{{ }_{\pi^{(j)}}} )
= \prod_{j=1}^k \Delta ( X_{{ }_{\pi^{(j)}}} )
= \prod_{j=1}^k \Bigl( \sum_{ 
          \substack{\sigma^{(j)} \in NC(|U_j|),    \\ 
                   \sigma^{(j)} \geq \pi^{(j)} }  }
                   M_{\pi^{(j)}, \sigma^{(j)}} 
\otimes X_{\sigma^{(j)}} \, \Bigr).
\]
Expanding the product over $j$ in the latter expression takes us to
\begin{equation}   \label{eqn:113b}
\sum_{ \substack{
  \sigma^{(1)} \in NC(|U_1|), \ldots, \sigma^{(k)} \in NC(|U_k|),   \\
  \sigma^{(1)} \geq \pi^{(1)}, \ldots, \sigma^{(k)} \geq \pi^{(k)} }  }
\ \bigl( \prod_{j=1}^k
\prod_{V \in \sigma^{(j)}} X_{ \pi^{(j)}_V } \bigr) \otimes
\bigl( \prod_{j=1}^k X_{\sigma^{(j)}} \bigr).
\end{equation}

The index set for the sum in (\ref{eqn:113b}) can be identified
as the bijective image of the set
$\{ \sigma \in NC(n) \mid \pi \leq \sigma \leq \tau \}$, via the map 
\begin{equation}   \label{eqn:113c}
\sigma \mapsto ( \sigma_{{ }_{U_1}}, \ldots \sigma_{{ }_{U_k}} ).
\end{equation}
We leave it as an exercise to the patient reader to check that, 
when the bijection (\ref{eqn:113c}) is used as a change of variable 
in the summation from (\ref{eqn:113b}), what comes out is indeed the 
right-hand side of the formula (\ref{eqn:113a}) claimed by the lemma.
\end{proof}

\vspace{6pt}

\begin{proposition}  \label{prop:112}
When endowed with the structure introduced in 
Notation \ref{def:111}, $\cT$ becomes a graded bialgebra.
\end{proposition}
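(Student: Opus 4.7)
The algebra structure of $\cT$ is built in by construction as a polynomial algebra, and the comultiplication $\Delta$ and counit $\eeps$ were defined in Notation \ref{def:111} as unital algebra homomorphisms via the universal property. What remains is to verify: (i) coassociativity of $\Delta$; (ii) the counit axiom; and (iii) compatibility of $\Delta$ and $\eeps$ with the grading $\cT = \bigoplus_{m \geq 0} \cT_m$ (compatibility of the product with the grading is immediate from the definition).

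The plan is to check all four axioms on the generators $X_\pi$ and then extend by multiplicativity. For coassociativity, I would compute both compositions applied to $X_\pi$ with $\pi \in NC(n)$. On one hand,
\[
( \id \otimes \Delta ) \Delta (X_\pi)
= \sum_{\tau \geq \pi} M_{\pi, \tau} \otimes \Delta (X_\tau)
= \sum_{\pi \leq \tau \leq \rho} M_{\pi, \tau} \otimes M_{\tau, \rho} \otimes X_{\rho},
\]
using (\ref{eqn:112y}) twice. On the other hand, by (\ref{eqn:112y}) and then Lemma \ref{lemma:113},
\[
( \Delta \otimes \id ) \Delta (X_\pi)
= \sum_{\tau \geq \pi} \Delta ( M_{\pi, \tau} ) \otimes X_\tau
= \sum_{\pi \leq \sigma \leq \tau} M_{\pi, \sigma} \otimes M_{\sigma, \tau} \otimes X_\tau .
\]
After relabelling the summation indices, both expressions equal
$\sum_{\pi \leq \sigma \leq \tau} M_{\pi,\sigma} \otimes M_{\sigma, \tau} \otimes X_\tau$. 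Since $( \id \otimes \Delta ) \Delta$ and $( \Delta \otimes \id ) \Delta$ are both algebra homomorphisms $\cT \to \cT \otimes \cT \otimes \cT$ that agree on the set of generators $\{ X_\pi \}$, they coincide on all of $\cT$.

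For the counit axiom, I would apply $\id \otimes \eeps$ to $\Delta (X_\pi) = \sum_{\tau \geq \pi} M_{\pi,\tau} \otimes X_\tau$ and note that $\eeps (X_\tau) = 0$ unless $\tau = 1_n$, in which case $M_{\pi, 1_n} = X_\pi$; hence $(\id \otimes \eeps) \Delta (X_\pi) = X_\pi$. For the other side, $\eeps ( M_{\pi, \tau} ) = \prod_{W \in \tau} \eeps ( X_{\pi_{{ }_W}} )$ is nonzero precisely when every restriction $\pi_{{ }_W}$ equals $1_{|W|}$, i.e.~when $\tau = \pi$, in which case $M_{\pi,\pi} = \oneT$; hence $( \eeps \otimes \id ) \Delta (X_\pi) = X_\pi$. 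Again, extension by multiplicativity finishes the counit axiom on all of $\cT$.

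For the grading compatibility of $\Delta$, the key observation is that in the decomposition $\Delta (X_\pi) = \sum_{\tau \geq \pi} M_{\pi,\tau} \otimes X_\tau$ the degree of the left tensor factor is
\[
\deg ( M_{\pi, \tau} ) = \sum_{W \in \tau} ( | \pi_{{ }_W} | - 1 ) = | \pi | - | \tau |,
\]
while the right factor $X_\tau$ has degree $| \tau | - 1$, so each summand sits in the component of total degree $| \pi | - 1 = \deg (X_\pi)$, as required. Because $\Delta$ is an algebra homomorphism and the multiplication respects the grading, this extends to show $\Delta ( \cT_m ) \subseteq \bigoplus_{i+j=m} \cT_i \otimes \cT_j$ for all $m$; the counit $\eeps$ manifestly annihilates $\cT_m$ for $m \geq 1$, completing the verification. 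There is no serious obstacle here, only careful bookkeeping: the one place requiring attention is the coassociativity calculation, where Lemma \ref{lemma:113} is precisely the tool needed to rewrite $\Delta ( M_{\pi, \tau} )$ in the form that matches the other iterated coproduct.
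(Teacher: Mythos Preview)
Your proposal is correct and follows essentially the same approach as the paper's proof: both verify coassociativity, the counit axiom, and grading compatibility on the generators $X_\pi$ and extend by multiplicativity, with Lemma \ref{lemma:113} supplying the key identity $\Delta(M_{\pi,\tau}) = \sum_{\pi \leq \sigma \leq \tau} M_{\pi,\sigma} \otimes M_{\sigma,\tau}$ needed for the $(\Delta \otimes \mathrm{Id})$ side of coassociativity. The bookkeeping (degree count $\deg M_{\pi,\tau} = |\pi| - |\tau|$, and the identification of the surviving terms in the counit verification) matches the paper's exactly.
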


\begin{proof}
The proof of consists of three 
verifications, pertaining to comultiplication, counit 
and grading, respectively.  

\vspace{6pt}

\noindent
{\em (i) Verification that $\Delta$ is coassociative}.

\noindent
Here we have to check that $(\Id\otimes \Delta)\circ \Delta 
= (\Delta\otimes \Id)\circ \Delta$. 
Since both sides of this equality are unital algebra
homomorphisms from $\cT$ to $\cT \otimes \cT \otimes \cT$, it 
suffices to check that they agree on every generator $X_{\pi}$
of $\cT$.  We thus pick an $n \geq 1$ and a $\pi \neq 1_n$ in 
$NC(n)$, and we will verify that both
$\Id \otimes \Delta \, \bigl(  \Delta ( X_{\pi} ) \bigr)$ and
$\Delta \otimes \Id \, \bigl(  \Delta ( X_{\pi} ) \bigr)$ are
equal to 
\begin{equation}   \label{eqn:112a}
\sum_{ \substack{\sigma, \tau \in NC(n) \\ \tau \geq \sigma \geq \pi} }
M_{\pi , \sigma} \otimes M_{\sigma , \tau} 
\otimes X_{\tau} \mbox{ (element of $\cT \otimes \cT \otimes \cT$).}
\end{equation}
Indeed, if in the double sum of (\ref{eqn:112a}) 
we first sum over $\tau$, then we get
\[
\sum_{ \substack{\sigma \in NC(n) \\ \sigma \geq \pi} }
M_{\pi , \sigma} \otimes \Bigl(
\sum_{ \substack{\tau \in NC(n) \\ \tau \geq \sigma} }
M_{\sigma , \tau} \otimes X_{\tau} \Bigr)
= \sum_{ \substack{\sigma \in NC(n) \\ \sigma \geq \pi} }
M_{\pi , \sigma} \otimes \Delta ( X_{\sigma} )
= \Id \otimes \Delta \, \bigl(  \Delta ( X_{\pi} ) \bigr).
\]
While if in (\ref{eqn:112a}) we first sum over $\sigma$,
then we get
\[
\sum_{ \substack{\tau \in NC(n), \\ \tau \geq \pi} }
\Bigl( \, \sum_{ \substack{\sigma \in NC(n), \\ \pi \leq \sigma \leq \tau} }
\ M_{\pi, \sigma} \otimes M_{\sigma, \tau}
\, \Bigr) \otimes X_{\tau}
= \sum_{ \substack{\tau \in NC(n), \\ \tau \geq \pi} }
\Delta ( M_{\pi, \tau} ) \otimes X_{\tau}
\ \mbox{ (by Lemma \ref{lemma:113})},
\]
and the latter quantity is precisely equal to
$\Delta \otimes \Id \, \bigl(  \Delta ( X_{\pi} ) \bigr)$.

\vspace{6pt}

\noindent
{\em (ii) Verification that $\epsilon$ satisfies 
the counit property},
i.e.~that $(\Id\otimes \epsilon)\circ \Delta = \Id 
= (\epsilon\otimes \Id)\circ \Delta.$

\noindent
Here again it suffices to focus on a generator $X_{\pi}$.
Upon chasing through the definitions, we see that what needs 
to be verified is this: given $n \geq 2$ and $\pi \neq 1_n$ in 
$NC(n)$, check that
\begin{equation}   \label{eqn:112b}
\sum_{\sigma\geq \pi} \epsilon (X_{\sigma}) \cdot
\prod_{W \in \sigma} X_{\pi_{{ }_W} }
= X_{\pi} =
\sum_{\sigma\geq \pi}   \prod_{W \in \sigma}
\epsilon \bigl( X_{ \pi_{{ }_W} } \bigr) \cdot  X_\sigma.
\end{equation}
And indeed: the first of the two equalities (\ref{eqn:112b}) 
holds because the only non-zero 
contribution to the sum occurs for $\sigma = 1_n$, when
$\prod_{W \in 1_n} X_{ \pi_{{ }_W} } = X_{\pi}$. 
The second equality (\ref{eqn:112b}) also holds, with the only 
non-zero contribution now coming from the term indexed 
by $\pi$:
\[
\Bigl( 0\neq \prod_{W \in \sigma} 
\epsilon( X_{ \pi_{{ }_W} } ) \Bigr) 
\ \Leftrightarrow \ \Bigl( \pi_{{ }_W} = 1_{|W|}, 
                  \ \forall \, W \in \sigma \Bigr) 
\ \Leftrightarrow \ ( \sigma = \pi ). 
\]

\vspace{6pt}

\noindent
{\em (iii) Verifications related to the grading.}

\noindent
We leave it to the reader to go over the list of conditions
that have to be verified here, and confirm that the only 
non-obvious item on the list is this: given $\pi \in NC(n)$ with 
$| \pi | = m$ (hence with $X_{\pi} \in \cT_{m-1}$) for some 
$2 \leq m \leq n$, one has that 
$\Delta (X_{\pi}) \in \bigoplus_{i=0}^{m-1} \cT_i \otimes \cT_{m-1-i}.$
In order to verify this fact one checks that, in the sum defining 
$\Delta (X_{\pi})$ in (\ref{eqn:111d}) one has:
\begin{equation}   \label{eqn:112c}
\Bigl( \prod_{W \in \sigma} X_{\pi_{{ }_W}} \Bigr) 
\otimes X_{\sigma} \in \bigoplus_{i=0}^{m-1} \cT_i \otimes \cT_{m-1-i}, 
\ \ \forall \, \sigma \in NC(n) \mbox{ such that } 
\sigma \geq \pi.
\end{equation}
Indeed, in the tensor indicated in (\ref{eqn:112c}), the product
of generators that appears to the 
left of the tensor sign has degree equal to
\[
\sum_{W \in \sigma} ( | \pi_{ { }_W } | - 1 ) 
= \sum_{W \in \sigma} | \pi_{ { }_W } | - \sum_{W \in \sigma} 1
= | \pi | - | \sigma | = m - | \sigma |.
\]
Since $X_{\sigma} \in \cT_{| \sigma | - 1}$, the tensor indicated
in (\ref{eqn:112c}) thus belongs to 
$\cT_{m- | \sigma |} \otimes \cT_{ | \sigma | - 1}$,
with $(m - | \sigma | ) + ( | \sigma | -1 ) = m-1$, as required.
\end{proof}

\vspace{6pt}

\begin{remark}   \label{rem:114}
Recall that the space $\cT_0 \subseteq \cT$ of homogeneous 
elements of degree $0$ consists precisely of the scalar 
multiples of the unit $\oneT$.  One refers to this property 
of $\cT$ by saying that it is {\em connected}.  As a 
consequence of being a graded connected bialgebra, $\cT$ is 
sure to be a {\em Hopf algebra}; that is, we are guaranteed 
(cf.~\cite[Section II.3]{Ma2003}) to have a unital algebra 
homomorphism $S: \cT \to \cT$, called {\em antipode of $\cT$},
which is in a certain sense the convolution inverse to the 
identity map $\mathrm{Id} : \cT \to \cT$.  A discussion of the 
antipode of $\cT$ is made in the next section of the paper.  
Right now we only record the fact that, due to these general 
Hopf algebra considerations, Proposition \ref{prop:112} can be 
restated in the following stronger form.
\end{remark}

\vspace{6pt}

\begin{theorem}   \label{thm:115}
When endowed with the structure introduced in Notation 
\ref{def:111}, $\cT$ becomes a graded connected Hopf algebra.
\hfill $\square$
\end{theorem}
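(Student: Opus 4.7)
The plan is to leverage Proposition 12.2 (which has already established that $\cT$ is a graded bialgebra) together with the general Hopf-theoretic fact invoked in Remark 12.4: every graded connected bialgebra carries a unique antipode. So the theorem reduces to two items, namely confirming connectedness and then invoking the existence-of-antipode result.

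First I would point out that by the very definition in Notation 12.1.5 we have $\cT_0 = \{ \lambda \cdot \oneT \mid \lambda \in \bC \}$, since the degree of each generator $X_{\pi}$ is $|\pi| - 1$ and our generators indexed by partitions $\pi \neq 1_n$ all have $|\pi| \geq 2$, thus strictly positive degree. Hence the grading of $\cT$ is connected in the sense that $\cT_0$ is one-dimensional and coincides with the image of the unit map. Combined with Proposition 12.2, this makes $( \cT, \cdot , \oneT , \Delta, \eeps )$ a graded connected bialgebra.

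Next I would invoke the standard result (see e.g.~\cite[Section II.3]{Ma2003}) that on any graded connected bialgebra $H = \bigoplus_{m \geq 0} H_m$ there exists a unique unital algebra anti-homomorphism $S : H \to H$ which is a two-sided convolution inverse to the identity $\Id_H$ in the convolution algebra $\Hom_{\bC} (H,H)$. The argument (which I would sketch briefly for the reader's convenience) is to define $S$ recursively on homogeneous elements: set $S(\oneT) = \oneT$, and for $x \in \cT_m$ with $m \geq 1$, use the connectedness to write $\Delta (x) = x \otimes \oneT + \oneT \otimes x + \Delta ' (x)$ with $\Delta ' (x) \in \bigoplus_{i=1}^{m-1} \cT_i \otimes \cT_{m-i}$, and then define $S(x)$ by solving the equation $x + S(x) + (S \otimes \Id)( \Delta ' (x)) = 0$, which makes sense by induction on $m$.

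The main (and essentially only) obstacle in this argument is the verification of connectedness, and that is trivial here since it has been built into Notation 12.1 by fiat. No real work remains beyond citing the general Hopf-algebra principle. Consequently Theorem 12.5 follows, and the antipode $S$ whose detailed study is carried out in Section 13 is the one produced by this recursion.
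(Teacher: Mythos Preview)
Your proposal is correct and follows essentially the same approach as the paper: the paper's own ``proof'' of Theorem \ref{thm:115} is just the content of Remark \ref{rem:114}, which observes connectedness (that $\cT_0 = \bC \cdot \oneT$) and then cites the general fact from \cite[Section II.3]{Ma2003} that a graded connected bialgebra automatically has an antipode. Your additional sketch of the recursive construction of $S$ is a nice bonus that the paper defers to Section 13 (the Bogoliubov formulas in Proposition \ref{prop:123}).
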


\vspace{6pt}
 
\begin{remark}     \label{rem:116}
As mentioned at the beginning of the subsection, the Hopf 
algebra $\cT$ can be treated as an {\em incidence Hopf algebra} 
in the sense of Schmitt \cite{Schm1994}.  The present remark 
gives a brief outline of how this happens.

For every $n \geq 1$ and $\pi \leq \sigma$ in $NC(n)$ 
let us denote
\begin{equation}    \label{eqn:116a}
[ \pi , \sigma ] = \{ \rho \in NC(n) \mid 
\pi \leq \rho \leq \sigma \}
\ \mbox{ (a sub-poset of $( NC(n), \leq )$)}
\end{equation}
and let $\cP$ denote the collection of all the posets 
$[ \pi , \sigma]$ considered in (\ref{eqn:116a}).  On 
$\cP$ we have a natural operation of multiplication 
defined by
\begin{equation}   \label{eqn:116b}
[ \pi_1 , \sigma_1 ] \times [ \pi_2 , \sigma_2 ] =:
[ \pi_1 \diamond \pi_2, \sigma_1 \diamond \sigma_2 ],
\end{equation}
where ``$\diamond$'' denotes concatenation (as in 
Notation \ref{def:13}).  It turns out that on $\cP$ one 
can introduce an equivalence relation ``$\sim$'' which 
is compatible with the multiplication (\ref{eqn:116b})
and produces a commutative quotient monoid $\cP / \sim$ 
generated by
\[
\Bigl\{ \widehat{ [ \pi, 1_n] } \mid n \geq 1 \mbox{ and }
\pi \in NC(n) \setminus \{ 1_n \}  \Bigr\} ,
\]
where we use the notation ``$\widehat{ [ \pi , \sigma ] }$''
for the image of $[ \pi , \sigma ]$ under the quotient
map $\cP \rightarrow \cP / \sim$.  Moreover, the 
equivalence relation $\sim$ is set in such a way that one gets
factorizations
\begin{equation}    \label{eqn:116c}
\widehat{[ \pi , \sigma ]} = \prod_{W \in \sigma} 
\widehat{ [ \pi_{{ }_W}, 1_{{ }_{|W|}} ] }, \ \mbox{ for every }
[ \pi , \sigma ] \in \cP.
\end{equation}
When plugged into the general machinery described in 
\cite[Sections 2-4]{Schm1994}, the monoid algebra 
$\bC [ \cP / \sim ]$ becomes a Hopf algebra, which turns out to
be naturally isomorphic (as Hopf algebras!) to $\cT$ from 
Theorem \ref{thm:115}, via the unital algebra homomorphism 
defined by requiring that
\[
\cT \ni X_{\pi} \mapsto \widehat{ [ \pi , 1_n ] } \in
\bC [ \cP / \sim ], \ \ \forall \, n \geq 1 \mbox{ and } 
\pi \in NC(n) \setminus \{ 1_n \}.
\]
\end{remark}

$\ $

\subsection{The isomorphism 
\boldmath{$\cGtild \approx \bX ( \cT )$}.}

\begin{remark}   \label{rem:117}
{\em (Review of the group $( \bX ( \cT ) , * )$).}
A unital algebra homomorphism from $\cT$ to $\bC$ is also known under
the name of {\em character} of $\cT$, and it is customary to denote
\begin{equation}   \label{eqn:117a}
\bX ( \cT ) := \{ \chi : \cT \to \bC \mid \chi 
\mbox{ is a character} \} .
\end{equation}
The definition of $\bX ( \cT )$ only uses the algebra
structure on $\cT$.  But the coalgebra structure is 
important too, because it allows us to define a 
convolution operation for characters, via the formula
$\chi_1 * \chi_2 = ( \chi_1 \otimes \chi_2 ) \circ \Delta$.
That is: given $\chi_1 , \chi_2 \in \bX ( \cT )$ and 
$P \in \cT$, one considers some concrete writing 
$\Delta (P) = \sum_{i=1}^n P_i' \otimes P_i''$, and defines 
\begin{equation}   \label{eqn:117b}
\chi_1 * \chi_2  \, (P) := 
\sum_{i=1}^n \chi_1 (P_i') \chi_2 (P_i'').
\end{equation}
It is easily checked that the definition of $\chi_1 * \chi_2$
makes sense, and that in this way one gets an associative 
operation ``$*$'', called convolution, on $\bX ( \cT )$. 

It is clear that the counit $\eeps$ introduced in 
Notation \ref{def:111}.4 belongs to $\bX ( \cT )$. Then the 
counit verification from (ii) in the proof of Proposition 
\ref{prop:112} shows precisely that $\eeps$ is the (necessarily
unique) unit element of $( \bX ( \cT ), * )$.
Finally, for every $\chi \in \bX ( \cT )$ one can consider 
the new character $\chi \circ S \in \bX ( \cT )$, where 
$S : \cT \to \cT$ is the antipode map, and one can verify 
(see e.g.~\cite[Proposition II.4.1]{Ma2003})
that $\chi \circ S$ is inverse to $\chi$ with respect to
convolution.  Hence the overall conclusion is that 
$( \bX ( \cT ), * )$ is a group.
\end{remark}

\vspace{6pt}

\begin{theorem}   \label{thm:118}
$1^o$ For every $g \in \cGtild$ there exists a character
$\chi_{ { }_g } \in \bX ( \cT )$, uniquely determined, 
such that
\begin{equation}   \label{eqn:118a}
\chi_{ { }_g } ( X_{\pi} ) = g( \pi, 1_n ), \ \ 
\mbox{ for all } n \geq 1 \mbox{ and } \pi \in NC(n).
\end{equation}

$2^o$ The map 
$\cGtild \ni g \mapsto \chi_{ { }_g } \in \bX ( \cT )$
is a group isomorphism, i.e.~it is bijective and has
\begin{equation}  \label{eqn:118b}
\chi_{{ }_{g_1 * g_2}} = \chi_{{ }_{g_1}} * \chi_{{ }_{g_2}} ,
\ \ \forall \, g_1, g_2 \in \cGtild .
\end{equation}
\end{theorem}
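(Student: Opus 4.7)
For part $1^o$, my plan is to read off existence and uniqueness straight from the universality property of $\cT$ recorded in Notation \ref{def:111}.$2^o$. Given $g \in \cGtild$, I will take $\cA = \bC$ and the elements $a_\pi := g(\pi, 1_n)$ for $\pi \in NC(n)$; the requirement $a_{1_n} = 1_\bC$ is met since $g \in \cGtild$ forces $g(1_n, 1_n) = 1$. The universality property then hands me the character $\chi_g$ satisfying (\ref{eqn:118a}). Uniqueness is automatic because $\cT$ is generated as a unital algebra by the $X_\pi$, so any two unital algebra homomorphisms that agree on these generators coincide.

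For part $2^o$, I will first dispose of bijectivity and then turn to the homomorphism property. Bijectivity is just a rephrasing of Proposition \ref{prop:27}: the map $g \mapsto \chi_g \mapsto \bigl( \pi \mapsto \chi_g(X_\pi) \bigr)$ is a composition of (\ref{eqn:118a}) with the bijection $\cGtild \leftrightarrow \cZ$, and the inverse bijection produces, from any $\chi \in \bX(\cT)$, a unique $g \in \cGtild$ whose character is $\chi$ (the required condition $\chi(X_{1_n}) = 1$ follows from $\chi$ being unital).

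The core of the argument is (\ref{eqn:118b}). Since both $\chi_{g_1 * g_2}$ and $\chi_{g_1} * \chi_{g_2}$ are unital algebra homomorphisms $\cT \to \bC$, it suffices to check equality on the generators $X_\pi$. Fix $n \geq 1$ and $\pi \in NC(n)$. The left-hand side evaluates, by (\ref{eqn:118a}) and the definition of convolution on $\cGtild$, to
\[
\chi_{g_1 * g_2}(X_\pi) = (g_1 * g_2)(\pi, 1_n)
= \sum_{\substack{\sigma \in NC(n) \\ \sigma \geq \pi}} g_1(\pi, \sigma) \, g_2(\sigma, 1_n).
\]
For the right-hand side, I will apply the convolution rule (\ref{eqn:117b}) together with the explicit formula (\ref{eqn:112y}) for $\Delta(X_\pi)$:
\[
(\chi_{g_1} * \chi_{g_2})(X_\pi)
= \sum_{\substack{\sigma \in NC(n) \\ \sigma \geq \pi}}
\chi_{g_1}\bigl(M_{\pi,\sigma}\bigr) \, \chi_{g_2}(X_\sigma).
\]
Since $\chi_{g_1}$ is an algebra homomorphism, $\chi_{g_1}(M_{\pi,\sigma}) = \prod_{W \in \sigma} g_1(\pi_{{}_W}, 1_{|W|})$, and the semi-multiplicativity of $g_1$ (Equation (\ref{eqn:26a})) collapses this product to $g_1(\pi, \sigma)$. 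A final substitution of $\chi_{g_2}(X_\sigma) = g_2(\sigma, 1_n)$ matches the right-hand side to the left-hand side term by term.

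The proof is essentially bookkeeping; the only step with any content is the use of semi-multiplicativity to identify $\chi_{g_1}(M_{\pi,\sigma})$ with $g_1(\pi,\sigma)$, which is precisely why the comultiplication of $\cT$ was designed as in (\ref{eqn:111d}). I do not anticipate a serious obstacle.
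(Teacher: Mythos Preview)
Your proposal is correct and follows essentially the same approach as the paper: both invoke the universality property (\ref{eqn:111c}) together with Proposition \ref{prop:27} for part $1^o$ and bijectivity, and both verify (\ref{eqn:118b}) on generators $X_\pi$ by expanding $\Delta(X_\pi)$ and using semi-multiplicativity of $g_1$ to collapse $\prod_{W \in \sigma} g_1(\pi_{{}_W}, 1_{|W|})$ into $g_1(\pi,\sigma)$. The only cosmetic difference is the order in which the two sides of (\ref{eqn:118b}) are evaluated.
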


\begin{proof}
The universality property noted in (\ref{eqn:111c}) 
implies that the characters of $\cT$ are in bijective
correspondence with families of complex numbers of the form  
$\bigl\{ z( \pi ) \mid \pi \in \sqcup_{n=1}^{\infty} 
( NC(n) \setminus \{ 1_n \} \bigr\}$, 
where the family of numbers corresponding to 
$\chi \in \bX ( \cT )$ is simply obtained by putting 
$z( \pi ) = \chi ( X_{\pi} )$ for all $n \geq 1$ and 
$\pi \in NC(n) \setminus \{ 1_n \}$.  When considered in 
conjunction with the Proposition \ref{prop:27} about 
functions in $\cGtild$, this immediately implies the 
statement $1^o$ of the theorem, and 
also the fact that the map
$\cGtild \ni g \mapsto \chi_{ { }_g } \in \bX ( \cT )$ 
is a bijection.

We are left to check that (\ref{eqn:118b}) holds.  In order 
to establish the equality of the characters 
$\chi_{g_1} * \chi_{g_2}$ and $\chi_{g_1 * g_2}$ it suffices
to verify that they agree on every generator $X_{\pi}$ of $\cT$.
We thus fix an $n \geq 1$ and a 
$\pi \in NC(n) \setminus \{ 1_n \}$, and we compute:
\begin{align*}
\chi_{g_1} * \chi_{g_2} \, (X_\pi) 
& = \sum_{\sigma \geq \pi \ \mathrm{in} \ NC(n)} 
\ \chi_{g_1} \bigl( \prod_{W \in \sigma} X_{\pi_{{ }_W}} \bigr)
  \cdot \chi_{g_2} ( X_{\sigma} )                           \\
& \mbox{ $\ $ (by (\ref{eqn:117b}), where we use the explicit 
         formula for $\Delta ( X_{\pi} )$)  }            \\
& = \sum_{\sigma \geq \pi \ \mathrm{in} \ NC(n)} 
\ \bigl( \prod_{W \in \sigma} g_1 ( \pi_{W}, 1_{{ }_{|W|}} ) \bigr)
\cdot g_2 ( \sigma, 1_n )                           \\
& \mbox{ $\ $ (by formulas defining $\chi_{g_1}, \chi_{g_2}$
          in terms of $g_1, g_2$)  }                          \\ 
& = \sum_{\sigma \geq \pi \ \mathrm{in} \ NC(n)} 
\  g_1 ( \pi , \sigma ) \cdot g_2 ( \sigma, 1_n )      
\ \mbox{ (by Eqn.(\ref{eqn:26a}) in Definition \ref{def:26})}   \\
& = g_1 * g_2 ( \pi, 1_n )
\ \mbox{ (by the definition of convolution in $\cGtild$)}     \\
& = \chi_{g_1 * g_2} ( X_{\pi} )
\mbox{ $\ $ (by the formula defining $\chi_{g_1 * g_2}$).}  
\end{align*}
\end{proof}        
        
$\ $

\subsection{A discussion of the primitive elements of \boldmath{$\cT$}.}

\begin{remark}   \label{rem:119x}
We now consider the set of {\em primitive} elements of $\cT$,
\[
\mbox{Prim} ( \cT) := 
\{ P \in \cT \mid \Delta (P) = P \otimes 1_\cT + 1_\cT \otimes P \}.
\]
The study of primitive elements is of great importance for 
co-commutative Hopf algebras, due to a fundamental theorem of 
Milnor-Moore which holds in that framework 
(see e.g.~\cite[Section 14.3]{GBVaFi2001}).
The Hopf algebra $\cT$ studied here is not co-commutative 
(corresponding to the fact that the group $\cGtild$ is not 
commutative), thus the role played by $\Prim ( \cT )$ in the study
of $\cT$ is less significant.  But for the sake of completeness, 
we give below a precise description of how $\Prim ( \cT )$ looks like.

We start by observing that: if $\pi \in NC(n)$ has $| \pi | = 2$, 
then $\{ \sigma \in NC(n) \mid \sigma \geq \pi \} = \{ \pi, 1_n \}$,
hence the sum which defined the comultiplication 
$\Delta (X_{\pi})$ in Equation (\ref{eqn:111d}) only has two 
terms.  It is moreover immediate that the terms indexed by $\pi$ 
and by $1_n$ in the said sum are $\oneT \otimes X_{\pi}$ and 
respectively $X_{\pi} \otimes \oneT$.  It thus follows that 
$X_{\pi} \in \Prim ( \cT )$ for every 
$\pi \in \sqcup_{n=1}^{\infty} NC(n)$ with $| \pi | = 2$.
Since the space $\cT_1 \subseteq \cT$ of homogeneous elements 
of degree 1 (as defined in Notation \ref{def:111}.5) is just
\[
\cT_1 = \mbox{span} \bigl\{ X_{\pi} \mid 
\pi \in \sqcup_{n=1}^{\infty} NC(n)
\mbox{ with } | \pi | = 2 \bigr\},
\]
we conclude that $\cT_1 \subseteq \Prim ( \cT )$.  The goal of 
the present subsection is to point out that the opposite inclusion
holds as well, and we therefore have:
\begin{equation}   \label{eqn:119b}
\mathrm{Prim} ( \cT ) = \cT_1.
\end{equation}
We will prove this equality in Proposition \ref{prop:1113} below.
Towards that goal, we first introduce some notation 
and prove a couple of lemmas.
\end{remark}

\vspace{6pt}

\begin{notation}    \label{def:1110}
$1^o$ The algebra $\cT$ has a linear basis $\cM$ consisting of 
monomials.  It consists of elements of the form
\begin{equation}   \label{eqn:1110b}
M := X_{\pi_1}^{q_1} \cdots X_{\pi_k}^{q_k},
\end{equation}
where $\{ \pi_1, \ldots , \pi_k \}$ is a finite subset of
$\sqcup_{n=1}^{\infty} \bigl( NC(n) \setminus \{ 1_n \} \bigr)$
and $(q_1, \ldots , q_k) \in \bN^k$ is a tuple of multiplicities.
We will use the notation $\# (M)$ for the total number of $X_{\pi}$'s
that are multiplied together to give an $M \in \cM$;
thus the monomial shown in (\ref{eqn:1110b}) has 
$\# (M) = q_1 + \cdots + q_k$.  We make the convention that if 
the set $\{ \pi_1, \ldots , \pi_k \}$ considered in
(\ref{eqn:1110b}) is empty, i.e.~if $k=0$, then the 
corresponding monomial is $M := \oneT$ with $\# (M) = 0$.

\vspace{6pt}

\noindent
$2^o$ For every $M \in \cM$ we let
$\xi_M : \cT \to \bC$ be the linear functional which acts
on $\cM$ by the prescription that
$\xi_M (M) = 1 \mbox{ and } \xi_M (N) = 0 
\mbox{ for any } N \in \cM \setminus \{ M \}$.

\vspace{6pt}

\noindent
$3^o$ Moving to $\cT \otimes \cT$: we have a linear basis
$\cM \otimes \cM := \{ M_1 \otimes M_2 \mid M_1, M_2 \in \cM \}$.
For every $M_1, M_2 \in \cM$ we let 
$\xi^{(2)}_{M_1,M_2} : \cT \otimes \cT \to \bC$ be the linear
functional which acts on $\cM \otimes \cM$ by:
\[
\xi^{(2)}_{M_1,M_2} (N_1 \otimes N_2) 
= \xi_{M_1} (N_1) \cdot \xi_{M_2} (N_2)
= \left\{   \begin{array}{ll}
1, &  \mbox{if $N_1 = M_1$ and $N_2 = M_2$,}  \\
0, &  \mbox{otherwise}
\end{array} \right\}, \mbox{for $N_1, N_2 \in \cM$.}
\]
\end{notation}

\vspace{6pt}

\begin{lemma}   \label{lemma:1111}
$1^o$ Let $M \in \cM$ be such that $\# (M) \geq 2$.
There exist $M_1, M_2 \in \cM$ with
$\# (M_i) \geq 1$ for $i=1,2$ and a $q \in \bN$ such that
\begin{equation}   \label{eqn:1111a}
\xi^{(2)}_{M_1, M_2} \circ \Delta = q \, \xi_M.
\end{equation}

\vspace{6pt}

\noindent
$2^o$ Let $n \in \bN$ and let $\pi$ be a partition in $NC(n)$
such that $| \pi | \geq 3$. There exist $M_1, M_2, M_3 \in \cM$
with $\# (M_1), \# (M_2) \geq 1$ and $\# (M_3) \geq 2$ such that
\begin{equation}   \label{eqn:1111b}
\xi^{(2)}_{M_1, M_2} \circ \Delta = \xi_{X_{\pi}} + \xi_{M_3}.
\end{equation}
\end{lemma}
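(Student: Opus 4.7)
The plan is to prove both parts by the same strategy: choose $M_1, M_2 \in \cM$ so that the tensor $M_1 \otimes M_2$ appears as a summand of $\Delta(M)$ (respectively of $\Delta(X_\pi)$), and then analyze explicitly, via the multiplicativity of $\Delta$ and the formula (\ref{eqn:111d}), which other monomials $N = X_{\rho_1} \cdots X_{\rho_s}$ have $\Delta(N)$ producing that same tensor. In each case the restrictive constraint will be that the left factor $\prod_j M_{\rho_j, \tau_j}$ equal a designated single generator, which severely limits the possible configurations of $(\tau_j)_j$.

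For part $1^{o}$, I would write $M = X_{\pi_1}^{q_1} \cdots X_{\pi_k}^{q_k}$ with distinct $\pi_i \in NC(n_i) \setminus \{1_{n_i}\}$, and index the factors so that $n_1 = \max_i n_i$. I then set $M_1 = X_{\pi_1}$ and $M_2 = M / X_{\pi_1}$; the hypothesis $\#(M) \geq 2$ guarantees $\#(M_2) \geq 1$. Expanding $\Delta(N) = \prod_j \sum_{\tau_j \geq \rho_j} M_{\rho_j, \tau_j} \otimes X_{\tau_j}$ and requiring the tensor $X_{\pi_1} \otimes M_2$ to arise, the single-generator constraint $\prod_j M_{\rho_j,\tau_j} = X_{\pi_1}$ forces all but one factor to equal $\oneT$, leaving two scenarios for the exceptional index $j^*$: \textbf{(Case A)} $\tau_{j^*} = 1_{n_{j^*}}$ and $\rho_{j^*} = \pi_1$, with $\tau_j = \rho_j$ for every other $j$; or \textbf{(Case B)} $|\tau_{j^*}| \geq 2$ and one block $W_0 \in \tau_{j^*}$ of size $n_1$ satisfies $(\rho_{j^*})_{W_0} = \pi_1$. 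In Case A the equation $\prod_j X_{\tau_j} = M_2$ forces $N = M$, and counting the $q_1$ admissible indices $j^*$ (namely those $j$ with $\rho_j = \pi_1$) gives the multiplicity $q_1$. In Case B, the non-trivial $X_{\tau_{j^*}}$ must coincide with some $X_{\pi_\ell}$ appearing in $M_2$, so $\pi_\ell \in NC(n_\ell)$ would need to carry a block of size $n_1$; but since $\pi_\ell \neq 1_{n_\ell}$, all blocks of $\pi_\ell$ have size at most $n_\ell - 1 \leq n_1 - 1 < n_1$, contradicting the requirement. Thus Case B is vacuous, and $\xi^{(2)}_{M_1, M_2} \circ \Delta = q_1 \, \xi_M$ with $q_1 \in \bN$.

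For part $2^{o}$, given $\pi \in NC(n)$ with $|\pi| \geq 3$, I would pick a block $V_1 \in \pi$ of maximum cardinality and a block $V_2 \in \pi$ mergeable with $V_1$ in $NC(n)$; the forest of blocks of $\pi$ guarantees such $V_2$ when $|\pi| \geq 2$, since every block has at least one of a direct child, a direct parent, or an adjacent sibling, and each of these options produces a cover relation in $(NC(n), \leq)$. Let $\sigma$ be the resulting partition, so $|\sigma| = |\pi|-1 \geq 2$ and $\pi < \sigma < 1_n$, and let $W^* = V_1 \cup V_2$. By maximality of $|V_1|$, $|W^*| \geq |V_1|+1 > |V|$ for every other $V \in \pi \setminus \{V_1, V_2\}$, so $W^*$ is the \emph{unique} block of $\sigma$ of its size; moreover $\pi_{W^*}$ is the $2$-block partition $\{V_1,V_2\}$ (relabeled), and a direct computation gives $M_{\pi,\sigma} = X_{\pi_{W^*}}$. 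Setting $M_1 = X_{\pi_{W^*}}$ and $M_2 = X_\sigma$, I would classify the contributions to $\xi^{(2)}_{M_1,M_2} \circ \Delta(N)$ by the number $s$ of generator factors of $N$: for $s = 1$, $N = X_{\pi'}$, the conditions $\tau = \sigma$ and $M_{\pi', \sigma} = X_{\pi_{W^*}}$ combined with the uniqueness of $W^*$ as the block of its size force $\pi' = \pi$ with coefficient $1$; for $s = 2$, $\prod_j X_{\tau_j} = X_\sigma$ forces one trivial and one $X_\sigma$ factor, and then $M_{\rho_1,\sigma}\cdot X_{\rho_2} = X_{\pi_{W^*}}$ uniquely yields $N = X_\sigma X_{\pi_{W^*}}$ with coefficient $1$; and for $s \geq 3$, the left-hand product inevitably carries at least two non-trivial generator factors, incompatible with the single generator $M_1$. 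Therefore $\xi^{(2)}_{M_1, M_2} \circ \Delta = \xi_{X_\pi} + \xi_{M_3}$ with $M_3 := X_\sigma X_{\pi_{W^*}}$ and $\#(M_3) = 2$.

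The principal obstacle I anticipate is the combinatorial bookkeeping required to verify that the constraint ``$\prod_j M_{\rho_j, \tau_j}$ equals a designated single generator'' truly admits only the cases listed above, and to confirm that the maximality choices ($n_1$ in Part $1^{o}$, $|V_1|$ in Part $2^{o}$) eliminate all parasitic contributions. A secondary point is to give the explicit justification, via the nesting forest of $\pi$, that a maximum-cardinality block $V_1$ always admits a mergeable neighbor in $NC(n)$, so that the partition $\sigma$ used in Part $2^{o}$ can always be constructed.
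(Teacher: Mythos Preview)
Your argument is correct. In Part~$1^o$ your choice is the mirror image of the paper's: you take $M_1$ to be the single generator $X_{\pi_1}$ with $n_1$ maximal and $M_2 = M/X_{\pi_1}$, whereas the paper takes $M_2 = X_{\pi_k}$ with $n_k$ minimal and $M_1 = M/X_{\pi_k}$; the two analyses are symmetric and equally short.

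In Part~$2^o$ the constructions genuinely differ. The paper picks $\sigma_2 = \{U, W\} \in NC(n)$ with $|\sigma_2| = 2$, where $U$ is a block of $\pi$ chosen so that $|U| \neq |W|$ (arranged by taking $U$ to be an interval block of $\pi$ of size $\neq n/2$, or---if no such block exists---the unique outer block), and then sets $M_1 = X_{\pi_{{}_W}}$, $M_2 = X_{\sigma_2}$. Your construction instead produces a $\sigma$ that \emph{covers} $\pi$ (by merging a maximum-size block $V_1$ with a neighbour $V_2$), so that $M_1 = X_{\pi_{{}_{W^*}}}$ has exactly two blocks while $M_2 = X_\sigma$ carries $|\pi|-1$ blocks. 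Both approaches succeed for the same underlying reason: the partition appearing as $M_2$ has a distinguished block whose size is not repeated among its other blocks, and this pins down the unique way a single-generator left factor can arise; both yield $M_3 = M_1 M_2$. Your route is arguably cleaner in that the required size-uniqueness (``$|W^*| > |V|$ for every other block $V$ of $\sigma$'') is immediate from the maximality of $|V_1|$, whereas the paper has to handle a residual case in which every interval block of $\pi$ has size exactly $n/2$. The one point you flag as needing care---the existence of a block $V_2$ mergeable with $V_1$---is indeed routine: taking $V_2$ to be the block containing $\max(V_1)+1$ (or $\min(V_1)-1$, or a direct child if $1,n \in V_1$) always works.
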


\begin{proof} $1^o$ The monomial $M$ has an explicit writing
$M = X_{\pi_1}^{q_1} \cdots X_{\pi_k}^{q_k}$
with $\pi_1 \in NC(n_1), \ldots$, 
$\pi_k \in NC(n_k)$ and $q_1, \ldots , q_k \in \bN$, and where 
$\pi_1, \ldots , \pi_k$ are arranged such that 
$n_1 \geq n_2 \geq \cdots \geq n_k$.  For the role of the required 
$M_1, M_2$ we pick $M_2 := X_{\pi_k}$ and we put
\[
M_1 := X_{\pi_1}^{q_1} \cdots X_{\pi_{k-1}}^{q_{k-1}} 
\cdot X_{\pi_k}^{q_k - 1};
\]
that is, $M_1$ is chosen in such a way that $M_1 X_{\pi_k} = M$. 
Note that this is always possible 
due to the hypothesis that $\# (M) \geq 2$.  We leave it as an 
exercise to the reader to verify that
\begin{equation}   \label{eqn:1111c}
\xi^{(2)}_{M_1, M_2} \bigl( \, \Delta (M) \, \bigr) = q_k
\ \mbox{ and that }
\ \xi^{(2)}_{M_1, M_2} \bigl( \, \Delta (N) \, \bigr) = 0
\ \mbox{ for every $N \neq M$ in $\cM$.}
\end{equation}
As a hint towards the verification of the latter equality, we mention 
that it can be obtained by writing $N$ as a product 
$X_{\rho_1} \cdots X_{\rho_{\ell}}$ and then by applying the formula
(\ref{eqn:111d}) to every $\Delta ( X_{\rho_j} )$ in the factorization
$\Delta (N) = \Delta ( X_{\rho_1} ) \cdots \Delta ( X_{\rho_{\ell}} )$.
 
As a consequence of (\ref{eqn:1111c}), it is immediate that the required
formula (\ref{eqn:1111a}) is holding in connection to the $M_1, M_2$
indicated above and where we take $q := q_k$.
 
\vspace{6pt}
 
\noindent
$2^o$ The requirements of this part of the lemma can be 
fulfilled by putting 
\[
M_1 = X_{\sigma_1}, \ M_2 = X_{\sigma_2} 
\ \mbox{ and } M_3 = X_{\sigma_1} X_{\sigma_2}
\]
for suitably chosen non-crossing partitions $\sigma_1, \sigma_2$.
A concrete recipe for finding such $\sigma_1, \sigma_2$ is as 
follows: we let $\sigma_2$ be of the form 
$\sigma_2 = \{ U,W \} \in NC(n)$ where $U$ is a special block 
of $\pi$ (to be picked below) and $W = \{ 1, \ldots , n \} \setminus U$;
then we put $\sigma_1 := \pi_{{ }_W} \in NC( |W| )$, the
relabeled-restriction of $\pi$ to $W$.  

For an $M_1, M_2$ defined by a recipe as above, we leave it as
an exercise to the reader to examine what are the conditions on 
a monomial $N \in \cM$ which would allow 
$\xi^{(2)}_{M_1, M_2} \bigl( \, \Delta (N) \, \bigr)$ to be 
non-zero.  The result of the examination is that one has
\[
\xi^{(2)}_{M_1, M_2} \bigl( \, \Delta (X_{\pi}) \, \bigr)
= \xi^{(2)}_{M_1, M_2} \bigl( \, \Delta (M_3) \, \bigr) = 1
\]
and that $\xi^{(2)}_{M_1, M_2} \bigl( \, \Delta (N) \, \bigr) = 0$
for all other $N$, with the exception of a stray $N$ that can only
exist when $|U| = |W|$. The conclusion we draw is this: if in the 
construction of $\sigma_1, \sigma_2$ we can also arrange to have
$|U| \neq |W|$, then the desired Equation (\ref{eqn:1111b}) will hold.

It remains to make certain that we can always pick a block 
$U \in \pi$ such that, with $W := \{ 1, \ldots , n \} \setminus U$,
we have that $|W| \neq |U|$ and that $\sigma_2 := \{ U, W \}$
is non-crossing.  The non-crossing property of $\{ U,W \}$ is sure 
to hold when $U$ is an interval block of $\pi$; thus, if $\pi$ has 
an interval block $J$ with $|J| \neq n/2$, then we take $U =J$ and 
we are done.  But what if $\pi$ has a unique interval block $J$, 
with $|J| = n/2$?  In that case, a quick examination of the nesting
structure for the blocks of $\pi$ will show that $\pi$ also has a
unique outer block $H$, and that picking $U = H$ will give all the 
properties that $\sigma_2$ needs to have.
\end{proof}

\vspace{6pt}

\begin{lemma}   \label{lemma:1112}
Let $A$ be an element in $\Prim ( \cT )$.

\vspace{6pt}

\noindent
$1^o$ One has $\xi_M (A) = 0$ for every $M \in \cM$ with
$\# (M) \geq 2$.

\vspace{6pt}

\noindent
$2^o$ One has $\xi_{X_{\pi}} (A) = 0$ for every 
$\pi \in \sqcup_{n=1}^{\infty} NC(n)$ with $| \pi | \geq 3$.
\end{lemma}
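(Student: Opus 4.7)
The plan is to exploit the primitivity hypothesis $\Delta(A) = A \otimes \oneT + \oneT \otimes A$ by pairing it with the linear functionals $\xi^{(2)}_{M_1, M_2}$ supplied by Lemma \ref{lemma:1111}. The key observation is that for any $M_1, M_2 \in \cM$ with $\#(M_1), \#(M_2) \geq 1$, both $M_1$ and $M_2$ are different from the unit monomial $\oneT$ (which has $\# (\oneT) = 0$), so $\xi_{M_i}(\oneT) = 0$ for $i=1,2$. Applying $\xi^{(2)}_{M_1, M_2}$ to $\Delta(A)$ therefore yields
\begin{equation*}
\xi^{(2)}_{M_1, M_2}\bigl(\Delta(A)\bigr) = \xi_{M_1}(A)\,\xi_{M_2}(\oneT) + \xi_{M_1}(\oneT)\,\xi_{M_2}(A) = 0.
\end{equation*}
This vanishing is what I will combine with the two parts of Lemma \ref{lemma:1111}.

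For part $1^o$, I would fix $M \in \cM$ with $\#(M) \geq 2$ and invoke Lemma \ref{lemma:1111}.$1^o$ to obtain $M_1, M_2 \in \cM$ (with $\#(M_1), \#(M_2) \geq 1$) and a positive integer $q \in \bN$ satisfying $\xi^{(2)}_{M_1, M_2} \circ \Delta = q\,\xi_M$. Evaluating at $A$ and using the vanishing noted above gives $q \cdot \xi_M(A) = 0$, and since $q \geq 1$, the conclusion $\xi_M(A) = 0$ follows immediately.

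For part $2^o$, I would fix a partition $\pi$ with $|\pi| \geq 3$ and invoke Lemma \ref{lemma:1111}.$2^o$ to obtain $M_1, M_2, M_3 \in \cM$ with $\#(M_1), \#(M_2) \geq 1$ and $\#(M_3) \geq 2$ such that $\xi^{(2)}_{M_1, M_2} \circ \Delta = \xi_{X_\pi} + \xi_{M_3}$. Evaluating at $A$ yields $0 = \xi_{X_\pi}(A) + \xi_{M_3}(A)$. But $\xi_{M_3}(A) = 0$ by the already-established part $1^o$ (since $\#(M_3) \geq 2$), so $\xi_{X_\pi}(A) = 0$, as required.

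There is essentially no obstacle here beyond correctly lining up the two lemmas in the right order: part $1^o$ must be proved first, because its conclusion is used to discard the stray term $\xi_{M_3}(A)$ that appears in the identity supplied for part $2^o$. The argument is a clean ``test functional'' technique, and all the real combinatorial work has already been carried out in Lemma \ref{lemma:1111}.
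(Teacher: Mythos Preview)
Your proposal is correct and follows essentially the same approach as the paper's own proof: both apply $\xi^{(2)}_{M_1,M_2}$ to the primitivity relation, use $\xi_{M_i}(\oneT)=0$ to kill the cross terms, and then invoke the two parts of Lemma~\ref{lemma:1111} in the same order (part $1^o$ first, so that its conclusion can be used to dispose of $\xi_{M_3}(A)$ in part $2^o$).
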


\begin{proof} $1^o$ Pick an $M \in \cM$ with $\# (M) \geq 2$,
and let $M_1, M_2 \in \cM$ and $q \in \bN$ be 
such that (\ref{eqn:1111a}) holds.
Since $\Delta (A) - A \otimes \oneT - \oneT \otimes A = 0 \in \cT$,
we can write:
\begin{align*}
0 
& = \xi^{(2)}_{M_1,M_2} \bigl( \, \Delta (A) 
- A \otimes \oneT - \oneT \otimes A \, \bigr)  \\
& = \bigl( \xi^{(2)}_{M_1,M_2} \circ \Delta \bigr) (A)
    - \xi_{M_1} (A) \cdot \xi_{M_2} ( \oneT ) 
    - \xi_{M_1} ( \oneT ) \cdot \xi_{M_2} (A)  \\
& = q \, \xi_M (A) - 0 - 0,
\end{align*}
where at the latter equality sign we took into account 
(\ref{eqn:1111a}) and the fact that
$\xi_{M_1} ( \oneT ) = \xi_{M_2} ( \oneT ) = 0$.  We thus found that
$q \, \xi_M (A) = 0$, and the conclusion follows.

\vspace{6pt}

$2^o$ Pick a $\pi \in \sqcup_{n=1}^{\infty} NC(n)$ with $| \pi | \geq 3$,
and let $M_1, M_2, M_3 \in \cM$ be such that (\ref{eqn:1111b}) holds.
By using the same trick as in the proof of part $1^o$ we find that
\[
0 = \xi^{(2)}_{M_1,M_2} \bigl( \, \Delta (A) 
- A \otimes \oneT - \oneT \otimes A \, \bigr) 
= \bigl( \xi^{(2)}_{M_1,M_2} \circ \Delta \bigr) (A) \, \bigr) - 0 - 0.
\]
Since this time our handle on 
$\xi^{(2)}_{M_1,M_2} \circ \Delta$ comes from (\ref{eqn:1111b}), we now get:
\[
0 = \bigl( \, \xi_{X_{\pi}} + \xi_{M_3} \, \bigr) (A) 
= \xi_{X_{\pi}} (A) + \xi_{M_3} (A).
\]
But $\xi_{M_3} (A) = 0$, as proved in $1^o$ above.  We thus conclude that
$\xi_{X_{\pi}} (A) = 0$, as required.
\end{proof}

\vspace{6pt}

\begin{proposition}    \label{prop:1113}
$\mbox{Prim} ( \cT ) = \cT_1$.
\end{proposition}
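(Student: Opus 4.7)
The inclusion $\cT_1 \subseteq \Prim(\cT)$ has already been established in Remark \ref{rem:119x}, so the plan is to prove the opposite inclusion $\Prim(\cT) \subseteq \cT_1$ by leveraging the two coefficient-vanishing statements from Lemma \ref{lemma:1112}. The idea is to fix an arbitrary $A \in \Prim(\cT)$, expand it in the monomial basis as $A = \sum_{M \in \cM} \xi_M(A)\,M$, and then use Lemma \ref{lemma:1112} to show that most coefficients are forced to vanish.

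Concretely, I would first apply Lemma \ref{lemma:1112}.1 to conclude that $\xi_M(A) = 0$ for every $M \in \cM$ with $\#(M) \geq 2$, so that only contributions from $\#(M) \leq 1$ can survive. Recalling the convention in Notation \ref{def:1110} that the monomials in $\cM$ use only indeterminates $X_\pi$ with $\pi \neq 1_n$, the remaining monomials are either $\oneT$ (the $\#(M)=0$ case) or of the form $X_\pi$ with $|\pi| \geq 2$ (the $\#(M)=1$ case). Then Lemma \ref{lemma:1112}.2 kills the coefficients $\xi_{X_\pi}(A)$ whenever $|\pi| \geq 3$. Putting these two reductions together shows that $A$ has an expansion of the form
\[
A \ = \ c_0 \oneT + \sum_{\pi \, : \, |\pi|=2} c_\pi X_\pi,
\]
with the second summand already lying in $\cT_1$.

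The remaining step is to show $c_0 = 0$; this is where one uses primitivity once more, but now against the counit rather than against further $\xi^{(2)}_{M_1,M_2}$'s. Applying $\eeps \otimes \Id$ to the primitivity relation $\Delta(A) = A \otimes \oneT + \oneT \otimes A$ and using the counit axiom $(\eeps \otimes \Id) \circ \Delta = \Id$ gives $A = \eeps(A)\oneT + A$, hence $\eeps(A) = 0$. Since $\eeps$ is a unital algebra homomorphism with $\eeps(X_\pi) = 0$ for every $\pi \neq 1_n$, evaluating $\eeps$ on the above expansion yields precisely $\eeps(A) = c_0$, so $c_0 = 0$ and $A \in \cT_1$ as required.

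I do not anticipate a genuine obstacle here: the heavy lifting was done in Lemmas \ref{lemma:1111} and \ref{lemma:1112}, and the argument above is essentially bookkeeping plus the standard counit trick. The only minor point requiring care is the careful listing of which monomials survive after the two vanishing statements — in particular remembering that $X_{1_n} = \oneT$ is excluded from $\cM$ by convention, so the surviving single-variable monomials are exactly the $X_\pi$ with $|\pi| = 2$, matching the description of $\cT_1$ given in Notation \ref{def:111}.5.
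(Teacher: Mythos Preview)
Your proposal is correct and follows essentially the same approach as the paper: both reduce to Lemma \ref{lemma:1112} to kill all monomial coefficients except those in $\cT_0 \oplus \cT_1$, and then dispose of the $\cT_0$-part. The only cosmetic differences are that the paper organizes the reduction via the graded decomposition $A = A_0 + \cdots + A_m$ rather than the monomial expansion directly, and eliminates the constant term by comparing the two expressions for $\Delta(A)$ (yielding $\lambda = 2\lambda$) instead of your counit argument $\eeps(A)=0$; both are standard and equivalent.
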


\begin{proof}  
In view of Remark \ref{rem:119x}, we only have to verify the 
inclusion ``$\subseteq$''.  We thus fix for the whole proof 
an $A \in \Prim ( \cT )$, for which we want to prove that 
$A \in \cT_1$.  

We know that $A$ (same as any other element of $\cT$) can be 
decomposed as a sum of homogeneous elements.  That is: for
$m \in \bN$ large enough we can write 
\begin{equation}   \label{eqn:1113a}
A = A_0 + A_1 + \cdots + A_m 
\mbox{ with 
$A_0 \in \cT_0, \ldots , A_m \in \cT_m$,}
\end{equation}
where the homogeneous spaces $\cT_0, \ldots , \cT_m \subseteq \cT$ are
as defined in Equation (\ref{eqn:111f}) above.

By using Lemma \ref{lemma:1112} it is however easy to see that we must 
have $A_j = 0$ for every $2 \leq j \leq m$.  Indeed, it is immediate that 
if we had $A_j \neq 0$ then we would be able to find a monomial 
$M \in \cT_j$ such that $\xi_M (A_j) \neq 0$. The fact that $M \in \cT_j$ 
implies that $\cT_i \subseteq \mathrm{Ker} ( \xi_M )$ for all $i \neq j$
in $\bN \cup \{ 0 \}$, which implies in turn that
\[
\xi_M (A) = \xi_M (A_0) + \cdots + \xi_M (A_m) = \xi_M (A_j) \neq 0.
\]
But on the other hand, the fact that $M \in \cT_j$ with $j \geq 2$ also 
implies that either $\# (M) \geq 2$ or that $M$ is of the form $X_{\pi}$
for a partition $\pi$ with $| \pi | \geq 3$; thus Lemma \ref{lemma:1112}
asserts that $\xi_M (A) = 0$ -- contradiction!

Hence the decomposition (\ref{eqn:1113a}) has $A_j = 0$ for every 
$2 \leq j \leq m$, and since $A_0 \in \cT_0 = \bC \, \oneT,$
we thus get an equality of the form
\begin{equation}   \label{eqn:1113b}
A = \lambda \, \oneT + A_1,
\mbox{ with $\lambda \in \bC$ and $A_1 \in \cT_1$.}
\end{equation}
The comultiplication of the right-hand side of (\ref{eqn:1113b}) is
computed to be
\[
\Delta \bigl( \, \lambda \, \oneT + A_1 \, \bigr)
= \lambda \, \oneT \otimes \oneT + \Bigl( \, A_1 \otimes \oneT 
+ \oneT \otimes A_1 \Bigr)
\]
(where we took into account that
$A_1 \in \cT_1 \subseteq \Prim ( \cT )$).  By comparing this against
\[
\Delta (A) = A \otimes \oneT + \oneT \otimes A
= 2 \lambda \, \oneT \otimes \oneT + \Bigl( \, A_1 \otimes \oneT 
+ \oneT \otimes A_1 \Bigr),
\]
we see that $\lambda = 0$.  Hence $A = A_1 \in \cT_1$, as
we had to prove.
\end{proof}

$\ $

\section{A discussion of the antipode of $\cT$}

\noindent
The antipode of the Hopf algebra $\cT$ deserves special 
attention due to its potential use as a tool for inversion 
in formulas that relate moments to cumulants, or relate 
different brands of cumulants living in the $NC(n)$ framework.
The issue of performing such inversions is constantly present
in the literature on cumulants.  Indeed, it is typical that 
cumulants (of one brand or another) are introduced via some 
simple formulas which are deemed to express moments in terms 
of the desired cumulants; these simple formulas then need to be 
inverted, if one wants to see explicit formulas describing 
cumulants in terms of moments.  In such a situation, the tool 
that is typically used for inversion is the M\"obius function 
of some underlying poset which luckily turns out to be related 
to the cumulants in question.

The considerations on the Hopf algebra $\cT$ suggest an alternate
method which can provide a unified way of treating the inversions of 
various cumulant-to-moment formulas, and also for doing inversions 
in cumulant-to-cumulant formulas.  For a concrete illustration:
consider the framework of Example \ref{example:78}, and the question
of computing the inverse in $\cGtild$ for the semi-multiplicative 
function $\Fmcbc$ which encodes the transition from monotone cumulants
to Boolean cumulants.  The antipode strategy for this job amounts to 
looking at the character $\chi_{\mathrm{mc-bc}} \in \bX ( \cT )$ 
which corresponds to $\Fmcbc$, and then by performing the required 
inversion via the formula (cf.~\cite[Proposition II.4.1]{Ma2003})
\[
\chi_{\mathrm{mc-bc}}^{-1} = 
\chi_{\mathrm{mc-bc}} \circ S,
\ \mbox{ where $S : \cT \to \cT$ is the antipode map.}
\]

In this section we make a start towards the study of the antipode 
of $\cT$, with the hope that applications of the kind described 
above will be obtained in future work.

\vspace{6pt}

\begin{remark}  \label{rem:121}
{\em (Review of antipode basics.)}
Consider the graded bialgebra $\cT$, as discussed in Section 12.1.
The space of linear operators
$L ( \cT ) = \{ F : \cT \to \cT \mid F \mbox{ is linear} \}$
carries an associative operation of convolution defined as follows:
one puts
\begin{equation}   \label{eqn:121a}
F' * F'' = m \circ ( F' \otimes F'' ) \circ \Delta, 
\ \mbox{ for $F', F'' \in L( \cT )$,}
\end{equation}
where the map ``$m$'' indicated on the right-hand side is the 
multiplication, $m : \cT \otimes \cT \to \cT$ acting by 
$m( P \otimes Q ) = PQ$ for $P, Q \in \cT$.  What (\ref{eqn:121a})
says is that in order to evaluate $F' * F''$ on an element 
$P \in \cT$ we should pick a writing
$\Delta (P) = \sum_{i=1}^n P_i ' \otimes P_i ''$ for the 
comultiplication of $P$, which yields that
\begin{equation}   \label{eqn:121b}
F' * F'' \, (P) = 
\sum_{i=1}^n F' (P_i ') \, F'' (P_i '') \in \cT.
\end{equation}
It is easy to see that the convolution operation ``$*$'' on 
$L( \cT )$ is well-defined and is associative.  Moreover, if we 
consider the map $\widehat{\epsilon} \in L( \cT )$ defined by
\begin{equation}   \label{eqn:121c}
\widehat{\epsilon} \, (P) = \epsilon (P) \, \oneT, 
\ \ \forall \, P \in \cT,
\ \mbox{ where $\epsilon : \cT \to \bC$ is the counit of $\cT$,}
\end{equation}
then it is easily verified that $\widehat{\epsilon}$ is the 
(necessarily unique) unit for the semigroup $( L( \cT ) , * )$.

Now comes the point anticipated in Remark \ref{rem:114} of the 
preceding section, that the identity map $\Id \in L( \cT )$ 
is sure to be an invertible element of $( L( \cT ), * )$.  This
follows from general considerations on graded connected bialgebras 
-- see for instance \cite[Corollary II.3.2]{Ma2003}.  The inverse 
of $\Id$ in $( L( \cT ), * )$ is called the {\em antipode} of 
$\cT$, denoted by $S$, and the existence of $S$ makes $\cT$ be 
a Hopf algebra, as anticipated in Theorem \ref{thm:115}.

General bialgebra considerations, which also take into account
that $\cT$ is commutative, yield  the fact that $S : \cT \to \cT$ 
is a unital algebra homomorphism (cf.~\cite[Proposition I.7.1]{Ma2003}). 
This implies in particular that $S$ is completely determined
by how it acts on the generators $X_{\pi}$ of $\cT$.  In 
Proposition \ref{prop:123} below we state some formulas which
allow recursive calculations of values $S( X_{\pi} )$, and go under
the name of {\em Bogoliubov formulas}.
\end{remark}

\begin{notation}   \label{def:122}
For $\pi, \sigma \in NC(n)$, we will write 
``$\pi < \sigma$'' to mean that $\pi \leq \sigma$ (in the 
sense or reverse refinement) and that $\pi \neq \sigma$.
\end{notation}

\begin{proposition}   \label{prop:123}
{\em (Bogoliubov formulas.)}  For
$n \geq 1$ and $\pi \in NC(n) \setminus \{ 1_n \}$ one has:
\begin{equation}    \label{eqn:123a}
S( X_{\pi} ) = - X_{\pi} - 
\sum_{ \substack{ \sigma \in NC(n), \\ \pi < \sigma < 1_n} }
M_{\pi , \sigma} \, S( X_{\sigma}),
\end{equation}
and also that

\begin{equation}    \label{eqn:123b}
S( X_{\pi} ) = - X_{\pi} - 
\sum_{ \substack{ \sigma \in NC(n), \\ \pi < \sigma < 1_n} }
S( M_{\pi , \sigma} ) \, X_{\sigma},
\end{equation}
where the monomials $M_{\pi , \sigma}$ are as introduced 
in Notation \ref{def:112}.
\end{proposition}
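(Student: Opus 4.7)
The plan is to derive both Bogoliubov formulas directly from the defining property of the antipode, namely that $S$ is the convolution inverse of $\mathrm{Id}$ in $(L(\cT), *)$. Concretely, this means that for any $P \in \cT$ we have
\[
m \circ (\mathrm{Id} \otimes S) \circ \Delta \, (P) = \epsilon(P) \cdot \oneT = m \circ (S \otimes \mathrm{Id}) \circ \Delta \, (P),
\]
where $m$ denotes multiplication in $\cT$. I would apply these two equalities to $P = X_\pi$ and use the explicit formula $\Delta(X_\pi) = \sum_{\sigma \geq \pi} M_{\pi,\sigma} \otimes X_\sigma$ from Equation (\ref{eqn:112y}).

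For the first Bogoliubov formula (\ref{eqn:123a}), the plan is to apply $m \circ (\mathrm{Id} \otimes S)$ to the comultiplication of $X_\pi$. This yields
\[
\sum_{\sigma \in NC(n), \, \sigma \geq \pi} M_{\pi,\sigma} \cdot S(X_\sigma) \; = \; \epsilon(X_\pi) \cdot \oneT \; = \; 0,
\]
where the vanishing on the right comes from $\pi \neq 1_n$ (so $\epsilon(X_\pi) = 0$, by Equation (\ref{eqn:111e})). I would then split off the two extreme terms of the sum: the $\sigma = \pi$ contribution reduces to $M_{\pi,\pi} \cdot S(X_\pi) = \oneT \cdot S(X_\pi) = S(X_\pi)$ (using $M_{\pi,\pi} = \oneT$ from Notation \ref{def:112}), while the $\sigma = 1_n$ contribution is $M_{\pi,1_n} \cdot S(X_{1_n}) = X_\pi \cdot S(\oneT) = X_\pi$ (using $M_{\pi,1_n} = X_\pi$ and the fact that $S$, being a unital algebra homomorphism, sends $\oneT$ to $\oneT$). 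Rearranging gives exactly (\ref{eqn:123a}).

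For the second Bogoliubov formula (\ref{eqn:123b}), I would run the symmetric argument using $m \circ (S \otimes \mathrm{Id}) \circ \Delta \, (X_\pi) = 0$. Here the $\sigma = \pi$ term contributes $S(\oneT) \cdot X_\pi = X_\pi$ and the $\sigma = 1_n$ term contributes $S(X_\pi) \cdot X_{1_n} = S(X_\pi)$, with the remaining sum over $\pi < \sigma < 1_n$ giving $\sum S(M_{\pi,\sigma}) \cdot X_\sigma$; rearranging yields (\ref{eqn:123b}).

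There is no real obstacle in this argument — it is a bookkeeping exercise once one has the convolution-inverse identity and the explicit comultiplication. The only mild subtleties are recognizing $M_{\pi,\pi} = \oneT$ and $M_{\pi,1_n} = X_\pi$ so as to correctly peel off the two extreme terms in $\Delta(X_\pi)$, and observing that $S$ being a unital algebra homomorphism (a general fact for the antipode of a commutative Hopf algebra, as recorded in Remark \ref{rem:121}) gives $S(\oneT) = \oneT$. Note also that the recursion is well-founded: for $\sigma$ with $\pi < \sigma < 1_n$ in $NC(n)$, one has $|\sigma| < |\pi|$, so the grading ensures the right-hand sides of both formulas involve only values $S(X_\sigma)$ (respectively $S(M_{\pi,\sigma})$) on elements of strictly smaller degree than $X_\pi$, and hence the formulas genuinely serve as a recursive definition.
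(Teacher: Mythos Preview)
Your proof is correct and follows essentially the same approach as the paper: both start from the convolution-inverse identities $\Id * S = \widehat{\epsilon}$ and $S * \Id = \widehat{\epsilon}$ applied to $X_\pi$, expand via $\Delta(X_\pi) = \sum_{\sigma \geq \pi} M_{\pi,\sigma} \otimes X_\sigma$, and isolate the extreme terms $\sigma = \pi$ and $\sigma = 1_n$ using $M_{\pi,\pi} = \oneT$, $M_{\pi,1_n} = X_\pi$, and $S(\oneT) = \oneT$. Your closing remark on well-foundedness of the recursion is a helpful addition not made explicit in the paper's proof.
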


\begin{proof} The relation $\Id * S = \widehat{\epsilon}$
implies in particular that
$\Id * S  \, (X_{\pi}) = \epsilon (X_{\pi}) \oneT = 0$.
But on the other hand, the explicit description (\ref{eqn:121b})
used for $\Id * S$ says that:
\[
\Id * S \, (X_{\pi}) = \sum_{\sigma \geq \pi}
M_{\pi, \sigma} \, S( X_{\sigma} ) 
= M_{\pi, \pi} \, S( X_{\pi} ) 
 + M_{\pi , 1_n} \, S( X_{1_n} )
 + \sum_{\pi < \sigma < 1_n}  M_{\pi, \sigma} \, S( X_{\sigma} ).
\]
Upon recalling (cf.~Remark \ref{def:112}) that
$M_{\pi, \pi} = \oneT$
and $M_{\pi, 1_n} = X_{\pi}$, we thus find that

\begin{equation}   \label{eqn:123c}
0 = S( X_{\pi} ) +  X_{\pi} 
+ \sum_{\pi < \sigma < 1_n}  M_{\pi, \sigma} \, S( X_{\sigma} ),
\end{equation}
where separating the term $S( X_{\pi} )$ on the right-hand side
leads to the formula (\ref{eqn:123a}).

The derivation of (\ref{eqn:123b}) is analogous, where we now
start from the fact that $S * \Id = \widehat{\epsilon}$.
\end{proof}

\begin{remark}  \label{rem:124}
$1^o$ In the statement of Proposition \ref{prop:123} we excluded 
the case when $\pi = 1_n$.  In that case we 
have $X_{\pi} = \oneT$ and taking the antipode just gives 
$S( X_{1_n} ) = S( \oneT ) = \oneT$.

Note also that, in the case when $| \pi | = 2$, the sum over 
$\{ \sigma \in NC(n) \mid \pi < \sigma < 1_n \}$ is an 
empty sum.  In that case, either (\ref{eqn:123a}) or 
(\ref{eqn:123b}) gives that $S( X_{\pi} ) = - X_{\pi}$; this is in 
agreement with the fact, observed in Section 12.3, that 
$X_{\pi}$ is a primitive element of $\cT$. 

\vspace{6pt}

$2^o$ Both (\ref{eqn:123a}) and (\ref{eqn:123b}) can be used
for a recursive computation of values $S( X_{\pi} )$,
but the setting of the recursion is different in the two situations.
Formula (\ref{eqn:123a}) works when we fix an $n \in \bN$, taken 
in isolation, and compute $S( X_{\pi} )$ for $\pi \in NC(n)$, by 
induction on $| \pi |$.
Formula (\ref{eqn:123b}) works when we already know
how $S$ works on some partitions from $NC(m)$'s with $m<n$ -- 
for instance, this works neatly when we fix an $\ell \geq 1$ and we 
are interested in $S( X_{\pi} )$ for all 
$\pi \in \sqcup_{n=1}^{\infty} NC(n)$ such that every block 
$V$ of $\pi$ has $|V| \leq \ell$.

The next example (continued in the subsequent Examples 
\ref{example:128} and \ref{example:1213})
illustrates how these two recursive methods work towards 
computing $S( X_{0_n} )$ for some small values of $n$.
\end{remark}

\begin{example}  \label{example:125}
Recall that $0_n \in NC(n)$ is the partition with $n$ 
singleton blocks.  From Remark \ref{rem:124}.1 we infer 
that $S( X_{0_1} ) = \oneT$ (because $X_{0_1} = \oneT$)
and that $S( X_{0_2} ) = - X_{0_2}$ (because $X_{0_2}$
is primitive).

For the computation of $S( X_{0_3} )$, let us record that
the set of intermediate partitions
$\{ \sigma \in NC(3) \mid 0_3 < \sigma < 1_3 \}$ consists 
of $\sigma_1, \sigma_2, \sigma_3$, where:
\[
\sigma_1 = \bigl\{ \, \{ 1 \}, \, \{ 2,3 \} \, \bigr\},
\ \sigma_2 = \bigl\{ \, \{ 1,3 \}, \, \{ 2 \} \, \bigr\},
\ \sigma_3 = \bigl\{ \, \{ 1,2 \}, \, \{ 3 \} \, \bigr\}.
\]
For every $1 \leq i \leq 3$ we have that 
$S( X_{\sigma_i} ) = - X_{\sigma_i}$, because $| \sigma_i | = 2$,
and (directly from the definition of the monomials $M_{\pi , \sigma}$)
we see that $M_{0_3, \sigma_i} = X_{0_2}$.  We leave to the reader  
the immediate verification that, based on this information, either 
of the two Bogoliubov formulas shown in Proposition \ref{prop:123}
leads to:
\begin{equation}   \label{eqn:125a}
S( X_{0_3} ) = - X_{0_3} 
+ X_{0_2} \bigl( X_{\sigma_1} + X_{\sigma_2} + X_{\sigma_3} \bigr).
\end{equation}

The sum of 4 terms that appeared on the right-hand side of Equation 
(\ref{eqn:125a}) can be viewed as a sum indexed by all possible chains
that go from $0_3$ to $1_3$ in the poset $NC(3)$.  This is clarified
in Proposition \ref{prop:127} below, which is a special case of a 
result of Schmitt \cite{Schm1987} holding in the 
general framework of an incidence Hopf algebra.  For the proof of 
Proposition \ref{prop:127} (which is, essentially, an induction on 
$| \pi |$ based on the recursion formula (\ref{eqn:123a})) we refer 
to \cite[Theorem 6.1]{Schm1987} or \cite[Theorem 4.1]{Schm1994}.
\end{example}

\vspace{6pt}

\begin{definition}   \label{def:126}
Let $n$ be a positive integer and let $\pi, \sigma \in NC(n)$ 
be such that $\pi < \sigma$.  A {\em chain} from $\pi$ to $\sigma$ 
is a tuple
\begin{equation}   \label{eqn:126a}
c = ( \pi_0, \pi_1, \ldots , \pi_k ), \mbox{ where } 
\pi = \pi_0 < \pi_1 < \cdots < \pi_k = \sigma.
\end{equation}
The number $k$ appearing in (\ref{eqn:126a}) is called 
the {\em length} of $c$. 

For a chain $c$ as in (\ref{eqn:126a}) it will be convenient to 
use the shorthand notation
\begin{equation}   \label{eqn:126b}
M_c := M_{\pi_0, \pi_1} M_{\pi_1, \pi_2} 
\cdots M_{\pi_{k-1}, \pi_k} \in \cT.
\end{equation}
\end{definition}

\vspace{6pt}

\begin{proposition}  \label{prop:127}
For $n \geq 1$ and $\pi \in NC(n) \setminus \{ 1_n \}$ one has:
\begin{equation}    \label{eqn:127a}
\mbox{$\ $} \hspace{2cm}
S ( X_{\pi} ) = \sum_{ \substack{
         c = ( \pi_0, \pi_1, \ldots , \pi_k ),  \\
         \mathrm{chain \ from} \ \pi \ \mathrm{to} \ 1_n} }
(-1)^k M_c.
% \ (-1)^k M_{\pi_0, \pi_1} M_{\pi_1, \pi_2} \cdots M_{\pi_{k-1}, \pi_k}.
\hspace{5.4cm}  \square
\end{equation}
\end{proposition}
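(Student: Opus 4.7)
The plan is to establish (\ref{eqn:127a}) by induction on $| \pi |$, viewing $n$ as part of the data but letting $| \pi |$ range over $\{ 2, 3, \ldots , n \}$. The base case $| \pi | = 2$ was already treated in Remark \ref{rem:124}.1: here $X_{\pi}$ is primitive and the Bogoliubov formula (\ref{eqn:123a}) has an empty sum, giving $S( X_{\pi} ) = - X_{\pi}$.  This matches the right-hand side of (\ref{eqn:127a}): when $| \pi | = 2$, the only chain from $\pi$ to $1_n$ is the length-$1$ chain $c = ( \pi , 1_n )$, for which $M_c = M_{\pi , 1_n} = X_{\pi}$, so the indicated sum is $(-1)^1 X_{\pi} = - X_{\pi}$, as required.

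For the inductive step I would fix an $n \geq 2$ and a $\pi \in NC(n)$ with $| \pi | \geq 3$, and assume that (\ref{eqn:127a}) has been verified for every $\pi ' \in \sqcup_{m=1}^{\infty} NC(m)$ with $2 \leq | \pi ' | < | \pi |$. If $\sigma \in NC(n)$ satisfies $\pi < \sigma < 1_n$ then $2 \leq | \sigma | < | \pi |$, so the inductive hypothesis applies to $\sigma$ and gives $S( X_{\sigma} ) = \sum_{c'} (-1)^{k'} M_{c'}$, where $c'$ runs over the chains from $\sigma$ to $1_n$ and $k'$ is the length of $c'$.  Substituting this into the Bogoliubov formula (\ref{eqn:123a}) leads to
\begin{equation*}
S ( X_{\pi} ) \ = \ - X_{\pi} \ - \ \sum_{ \pi < \sigma < 1_n }
\ \sum_{ c ' \ \mathrm{from} \ \sigma \ \mathrm{to} \ 1_n }
\ (-1)^{k'} \, M_{\pi , \sigma} \, M_{c'} .
\end{equation*}

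The combinatorial heart of the argument is then a natural bijection between couples $( \sigma , c')$ with $\pi < \sigma < 1_n$ and $c'$ a chain from $\sigma$ to $1_n$, on the one hand, and chains $c$ from $\pi$ to $1_n$ of length $\geq 2$, on the other hand: the couple $( \sigma , c')$ is sent to the chain obtained by prepending $\pi$ in front of $c'$, and conversely every chain of length $\geq 2$ from $\pi$ to $1_n$ has a unique ``second entry'' $\sigma$ and a unique suffix $c'$ which starts at $\sigma$.  Under this correspondence, the lengths satisfy $k = k' + 1$ (so the signs combine correctly as $(-1) \cdot (-1)^{k'} = (-1)^k$), and the monomials satisfy $M_c = M_{\pi , \sigma} \cdot M_{c'}$ directly from Definition \ref{def:126}.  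The remaining term $-X_{\pi}$ then accounts precisely for the unique chain of length $1$, namely $( \pi , 1_n )$, which contributes $(-1)^1 M_{\pi , 1_n} = - X_{\pi}$.  Pulling everything together recovers the right-hand side of (\ref{eqn:127a}).  I do not anticipate any serious obstacle: the real work was packed into deriving the Bogoliubov recursion in Proposition \ref{prop:123}, and what is left is the chain-bookkeeping just described.
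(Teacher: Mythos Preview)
Your proposal is correct and follows precisely the approach the paper itself indicates: the paper does not write out a proof but cites Schmitt and remarks that the argument is ``essentially, an induction on $| \pi |$ based on the recursion formula (\ref{eqn:123a})'', which is exactly what you have carried out. The chain-prepending bijection you describe is the natural one, and the sign and monomial bookkeeping are handled correctly.
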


\vspace{6pt}

\begin{example}     \label{example:128}
In continuation of Example \ref{example:125}, let us now compute
what is $S( X_{0_4} )$.  Proposition \ref{prop:127} gives an 
explicit formula for this antipode, as a sum indexed by chains in 
$NC(4)$ which go from $0_4$ to $1_4$. There are 29 such chains:

-- 1 chain of length $1$, the chain $c = (0_4, 1_4)$;

-- 12 chains of length 2, of the form 
$c = ( 0_4, \sigma , 1_4)$ with
$\sigma \in NC(4) \setminus \{ 0_4, 1_4 \}$;

-- 16 chains of length $3$, of the form 
$c = ( 0_4, \sigma, \sigma ' , 1_4)$ where 
$\sigma , \sigma ' \in NC(4)$ are 

\hspace{0.3cm} such that 
$| \sigma | = 3$, $| \sigma ' | = 2$ and $\sigma < \sigma '$.

\noindent
Hence Proposition \ref{prop:127} gives $S( X_{0_4})$ written 
as a sum of 29 terms. 

Now, recall that Proposition \ref{prop:127} is based on the 
Bogoliubov formula (\ref{eqn:123a}), which ``has $S$-factors on 
the right''.  The computation of $S( X_{0_4} )$ can also be done 
by using the formula (\ref{eqn:123b}), which has $S$-factors on 
the left:
\begin{equation}   \label{eqn:128a}
S( X_{0_4} ) = - X_{0_4} - 
        \sum_{ \substack{\sigma \in NC(4), \\ 0_4 < \sigma < 1_4} }
S( M_{0_4, \sigma} ) \, X_{\sigma}.
\end{equation}
It is immediate that, for every $\sigma \in NC(4)$ with 
$0_4 < \sigma < 1_4$, the monomial $M_{0_4, \sigma}$ is a product of
factors $X_{0_2}$ and $X_{0_3}$; so, consequently,
$S( M_{0_4, \sigma} )$ can be computed explicitly by using the formulas
for $S( X_{0_2} ), S( X_{0_3} )$ found in Example \ref{example:125}.
In this way, the right-hand side of (\ref{eqn:128a}) is turned into 
an explicit formula for $S( X_{0_4} )$.  The reader who has the patience
to really write the latter formula will discover the interesting detail
that it only has 25 terms (instead of 29, as we got from applying 
Proposition \ref{prop:127}).
This happens because the formula (\ref{eqn:127a})
isn't generally {\em cancellation-free}.  In the case at hand, of 
$\pi = 0_4$, we can pin down precisely where it is that the 
cancellations in (\ref{eqn:127a}) take place.  There are two terms 
disappearing because the chains of length 3
\begin{equation}   \label{eqn:128b}
\left\{   \begin{array}{l}
c' = \bigl( 0_4 , \, \{\{1,2\},\{3\},\{ 4\}\} , 
\,  \{\{1,2\},\{3,4\}\} , \, 1_4 \bigr)
\mbox{ and }                                      \\
c'' = \bigl( 0_4 , \, \{\{1\},\{2\},\{3,4\}\} , 
\, \{\{1,2\},\{3,4\}\} , \, 1_4 \bigr)
\end{array}   \right.
\end{equation}
have the same contribution (but with opposite sign) 
as the shorter chain $( 0_4, \{\{1,2\},\{3,4\}\}$, 
$1_4 )$.  Then there are two other 
terms that disappear, in a similar way, in connection 
to the chain $( 0_4, \{\{1,4\},\{2,3\}\} , 1_4 )$.

The method based on (\ref{eqn:123b}) can be shown to give 
a cancellation-free formula for $S( X_{0_n} )$, for every 
$n \geq 1$. The number $t_n$ of terms which appears in the
cancellation-free formula satisfies a recursion presented in 
Example \ref{example:1213} below.  According to the calculations 
we showed so far, the sequence $(t_n)_{n=1}^{\infty}$ starts with 
$1,1,4,25$; but this promising start turns out to not continue 
towards some known integer sequence.
\end{example}

\vspace{6pt}

\begin{remark}   \label{rem:129}
We will next show how one can re-structure the summation 
over chains from (\ref{eqn:127a}) in order to obtain a 
cancellation-free summation formula.  This will be done by 
pruning the index set used in (\ref{eqn:127a}) to a smaller 
collection of chains in $NC(n)$, which we call ``efficient 
chains'' -- cf.~Definition \ref{def:129}, Theorem \ref{thm:1210}.

We mention that our identifying of the notion of efficient chain 
retrieves a special case of a notion identified in the thesis 
\cite{Ein2010}, in the general framework of incidence Hopf 
algebras, where the terms of the cancellation-free summations 
arrive to be described by objects called ``forests of lattices''
(cf.~\cite[Chapter 5]{Ein2010}).
While it would be possible to review the fairly substantial
background and terminology developed in \cite{Ein2010} and then
invoke the result from there, we find it easier to write down a
direct inductive argument which covers the special case needed 
in Theorem \ref{thm:1210}.

We would also like to signal that another path towards obtaining
a cancellation-free summation formula for the antipode of $\cT$
is offered by the work in \cite{MePa2018}.  This would exploit the
fact that $\cT$ is an example of so-called
\textit{left-handed polynomial Hopf algebra}, a term which refers to
the fact that the formula (\ref{eqn:111d}) defining comultiplication 
merely has an ``$X_{\sigma}$'' (rather than a product of 
$X_{\sigma}$'s) on the right side of the tensor product.
\end{remark} 

\vspace{6pt}

\begin{definition}   \label{def:129}
Let $n$ be a positive integer and let $\pi, \sigma \in NC(n)$ 
be such that $\pi < \sigma$.

\vspace{6pt}

\noindent
$1^o$ To every chain $c = ( \pi_0, \pi_1, \ldots , \pi_k )$ 
from $\pi$ to $\sigma$ we associate two collections of 
subsets of $\{ 1, \ldots , n \}$, as follows:
\[
\Blocks (c) := \{ V \subseteq \{ 1, \ldots , n \} 
\mid \exists \, 0 \leq j \leq k 
\mbox{ such that $V$ is a block of } \pi_j \},
\mbox{ and}
\]

\[
\Blocksplus (c) := \{ V \in \Blocks (c) \mid
V \mbox{ is not a block of } \pi_0 \}.
\]

\vspace{6pt}

\noindent
$2^o$ A chain $c = ( \pi_0, \pi_1, \ldots , \pi_k )$ from
$\pi$ to $\sigma$ will be said to be {\em efficient} when 
it satisfies:
\[
\left\{  \begin{array}{l}
\mbox{For every set $V \in \Blocksplus (c)$ there exists }  \\
\mbox{a {\em unique} $j \in \{ 1, \ldots , k \}$ such that
      $V$ is a block of $\pi_j$.}
\end{array}  \right.
\]
      
\vspace{6pt}

\noindent
$3^o$  We denote by $\EC (\pi,\sigma)$ the set of all 
efficient chains from $\pi$ to $\sigma$.
\end{definition}

\vspace{6pt}

\begin{remark}  \label{rem:1211}
$1^o$  In order to explain the term ``efficient'' used in the
preceding definition, let $\pi < \sigma$ be as above and let
$c = ( \pi_0, \pi_1, \ldots , \pi_k)$ be a chain from $\pi$ 
to $\sigma$.  Pick an $m \in \{ 1, \ldots , n \}$ and 
for every $0 \leq j \leq k$ let us denote by $V^{(j)}$ the block 
of $\pi_j$ which contains the number $m$.  Then we have 
\begin{equation}   \label{eqn:1211a}
V^{(0)} \subseteq V^{(1)} \subseteq \cdots 
\subseteq V^{(k)} 
\ \mbox{ $\ $ (subsets of $\{ 1, \ldots , n \}$),}
\end{equation}
where some of the inclusions in (\ref{eqn:1211a}) may 
actually be equalities.  The property of $c$ described in
Definition \ref{def:129}.2 amounts to the fact that 
once we run into an inclusion $V^{(i-1)} \subseteq V^{(i)}$
which is strict, all the subsequent inclusions
$V^{(j-1)} \subseteq V^{(j)}$ with $j \geq i$ have to be 
strict as well -- in a certain sense, one 
``moves efficiently'' towards the last set $V^{(k)}$ 
indicated in that list.

\vspace{6pt}

\noindent
$2^o$ Given $\pi < \sigma$ in $NC(n)$ and a chain 
$c = ( \pi_0, \pi_1, \ldots , \pi_k ) \in \EC ( \pi, \sigma )$
we will be interested in the quantity
\begin{equation}   \label{eqn:1211b}
(-1)^{ | \Blocksplus (c) | } M_c 
= (-1)^{ | \Blocksplus (c) | }  
M_{\pi_0, \pi_1} M_{\pi_1, \pi_2} \cdots M_{\pi_{k-1}, \pi_k},
\end{equation}
which will be featured in Theorem \ref{thm:1210} below.
For illustration, let us look at how this quantity plays out 
in connection to the cancellations we spotted in 
Example \ref{example:128}.  The chains $c', c''$ of length 3 
shown in (\ref{eqn:128b}) are not efficient: for instance for
the first of them we find that the set 
$V = \{ 1,2 \} \in \Blocksplus (c')$ belongs
to both partitions $\pi_1$ and $\pi_2$ of $c'$, where
$\pi_1 = \{ \{1,2\},\{3\},\{ 4\} \}$ and 
$\pi_2 = \{ \{1,2\}, \{3,4\} \}$.  On the other hand:
\[
c := ( 0_4, \, \{\{1,2\},\{3,4\}\}, \, 1_4 )
\mbox{ is efficient, with }
\Blocksplus (c) = \bigl\{ 
\, \{ 1,2 \}, \, \{ 3,4 \}, \, \{ 1,2,3,4 \} \, \bigr\}.
\]

The issue observed in Example \ref{example:128} was
this: when plugged into the summation on the 
right-hand side of (\ref{eqn:127a}), both $c'$ and $c''$ 
have contributions of $- X_{0_2} X_{\sigma}$, for 
$\sigma = \{ \{ 1,2 \}, \, \{ 3,4 \} \}$, while $c$ has a 
contribution of $+ X_{0_2} X_{\sigma}$. (Cancellation!)
In the formula featured in Theorem \ref{thm:1210}, 
the chains $c'$ and $c''$ will no longer appear, while 
$c$ will appear with a contribution 
of $- X_{0_2} X_{\sigma}$; we note the different sign in 
the contribution of $c$ (coming from the fact that
$| \Blocksplus (c) |$ is of different parity than the 
length of $c$), and accounting for the cancellations
``$(-1) + (-1) + 1 = -1$'' that we had encountered before.

\vspace{6pt}

$3^o$ Let $\pi < \sigma$ be in $NC(n)$ and consider a chain 
$c = ( \pi_0, \pi_1, \ldots , \pi_k ) \in \EC ( \pi, \sigma )$.
Upon tallying what indeterminates ``$X_{\rho}$'' are taken 
into the monomials 
$M_{\pi_0, \pi_1}, M_{\pi_1, \pi_2}, \ldots, M_{\pi_{k-1}, \pi_k}$
multiplied in (\ref{eqn:1211b}), one finds the following 
interpretation for the cardinality of the set $\Blocksplus (c)$: 
it counts the total number of factors $X_{\rho}$ when the product
$M_{\pi_0, \pi_1} M_{\pi_1, \pi_2} \cdots$

\noindent
$\cdots M_{\pi_{k-1}, \pi_k}$
is simply treated as a product of $X_{\rho}$'s, and we eliminate the 
units ``$X_{1_m}$'' which may have appeared in the description
of the monomials $M_{\pi_{j-1}, \pi_j}$.

The observation made in the preceding paragraph ensures that the
summation formula stated in Theorem \ref{thm:1210} is
cancellation-free!  Indeed, if two chains appearing on the 
right-hand side of that summation formula turn out to have the 
same ``$M_c$'' contribution, then they will also have the same 
sign in the ``$(-1)^{ | \Blocksplus (c) | }$'' part
of the formula; hence the terms indexed by the two chains
in question will not cancel, but will rather add up.

For a concrete example, suppose we make $n=7$ and we consider the 
chains
\[
c' := \Bigl( 0_7, \, \{\{1,2\}, \, \{3\}, \, \{4,5,6\}, \, \{7\}\} , 
                 \, \{\{1,2,3\},\{4,5,6,7\}\} , \, 1_7 \Bigr)
\mbox{ and }
\]
\[
\begin{array}{lcr}
c'' & := &  \Bigl( 0_7, \{ \{1\},\{2\},\{3\},\{4,5\},\{6\},\{7\} \},
            \{ \{1\},\{2\},\{3\},\{4,5,6\},\{7\} \}, \ \mbox{$\ $}    \\ 
    &    &   \{ \{1,2,3\},\{4,5,6,7\} \}, \, 1_7 \Bigr),
\end{array}
\]
which are efficient chains going from $0_7$ to $1_7$.  In the 
summation formula (\ref{eqn:1210a}) of Theorem \ref{thm:1210}, 
$c'$ and $c''$ have identical contributions, of 
$(-1)^5 X_{\rho_1} \cdots X_{\rho_5}$ where 
\[
\rho_1 = \bigl\{ \{1,2,3\},\{4,5,6,7\} \bigr\}, 
\, \rho_2 = \bigl\{ \{1,2,3 \},\{4\} \bigr\}, 
\, \rho_3 = \bigl\{ \{1,2\},\{3\} \bigr\}, 
\, \rho_4 = 0_3, \, \rho_5 = 0_2.
\]
The point to note is that the contributions of $c'$ and $c''$ 
to the right-hand side of (\ref{eqn:1210a}) do not cancel each other, 
but rather get to be added together, as mentioned above.

The proof of Theorem \ref{thm:1210} is based on the following lemma.
\end{remark}

\begin{lemma}
\label{lem.bijection.antipode}
Let $\pi, \sigma$ be in $NC(n)$ for some 
$n \geq 1$, such that $\pi < \sigma < 1_n$, and where we 
write explicitly $\sigma = \{ V_1, \ldots , V_r \}$.
Consider the set of 
\footnote{Note that in the chain $c$ indicated in (\ref{eqn:12x})
the partition $\pi$ appears as $\pi^{(k)}$.  We chose this way 
of denoting $c$ because it simplifies the write-up of the proof 
of the lemma.}
efficient chains
\begin{equation}   \label{eqn:12x}
\widetilde{\EC} := 
\{ c \in \EC ( \pi, 1_n ) \mid
c=(\pi^{(k)},\pi^{(k-1)},\ldots,\pi^{(1)},1_n)
\mbox{ with $k \geq 2$ and } \pi^{(1)} = \sigma \} .
\end{equation}
One has a bijection
\begin{equation}   \label{eqn:12y}
\widetilde{\EC} \ni c \mapsto  (c_1, \ldots , c_r) 
\in \EC ( \pi_{ { }_{V_1} }, 1_{|V_1|} )
\times \cdots \times
\EC ( \pi_{ { }_{V_r} }, 1_{|V_r|} )
\end{equation}
where, for 
$c = (\pi^{(k)},\pi^{(k-1)},\ldots,\pi^{(1)},1_n)
\in \widetilde{\EC}$ and $1 \leq s \leq r$, we put
$c_s:= (\pi^{(k)}_{ { }_{V_s} },\ldots,\pi^{(1)}_{ { }_{V_s} })$. 

\noindent
(If it happens that we have
$\pi_{ { }_{V_2} }=\pi^{(k)}_{ { }_{V_s} }=
\pi^{(k-1)}_{ { }_{V_s}}=\dots=\pi^{(j)}_{ { }_{V_s}}$ for some 
$1\leq j\leq k-1$, then
$(\pi^{(k)}_{ { }_V },\ldots,\pi^{(2)}_{ { }_V },\pi^{(1)}_{ { }_V })$ 
is not properly a chain, so we rather take 
$c_s=(\pi_{ { }_V },\pi^{(j-1)}_{ { }_V },\ldots,\pi^{(1)}_{ { }_V })$.)

\vspace{6pt}

\noindent
Furthermore, for $c \mapsto (c_1, \ldots , c_r)$ as in 
(\ref{eqn:12y}), one has
\begin{equation}   \label{eq.equality.M}
(-1)^{ | \Blocksplus (c) | } M_c=  
-X_{\pi^{(1)}} \prod_{s=1}^r 
\left( (-1)^{ | \Blocksplus (c_s) | } M_{c_s} \right).
\end{equation}
\end{lemma}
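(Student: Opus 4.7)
The plan is to construct the inverse map explicitly, verify that both directions preserve efficiency, and then derive (\ref{eq.equality.M}) by direct computation from the resulting alignment. Given $(c_1,\ldots, c_r)$ with each $c_s = (\mu^{(s)}_{k_s}, \mu^{(s)}_{k_s - 1}, \ldots, \mu^{(s)}_0)$ satisfying $\mu^{(s)}_{k_s} = \pi_{V_s}$ and $\mu^{(s)}_0 = 1_{|V_s|}$, I pad by declaring $\mu^{(s)}_i := \pi_{V_s}$ for $i > k_s$, put $k := 1 + \max_s k_s$, and for $j \in \{1, \ldots, k\}$ define $\pi^{(j)} \in NC(n)$ to be the unique partition whose restriction to each block $V_s$ of $\sigma$ equals $\mu^{(s)}_{j-1}$. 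This is well-defined in $NC(n)$ because $\sigma$ is non-crossing and each $\mu^{(s)}_{j-1}$ lies in $NC(|V_s|)$. The candidate pre-image is $c := (\pi^{(k)}, \pi^{(k-1)}, \ldots, \pi^{(1)}, 1_n)$, which by construction has $\pi^{(k)} = \pi$ (via the padding) and $\pi^{(1)} = \sigma$ (via $\mu^{(s)}_0 = 1_{|V_s|}$).

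The required verifications proceed in order. (i) The chain $c$ is strict: for $1 \leq j \leq k-1$, the choice $k = 1 + \max_s k_s$ ensures some $s$ has $k_s \geq j$, hence $\mu^{(s)}_j < \mu^{(s)}_{j-1}$ and $\pi^{(j+1)} < \pi^{(j)}$. (ii) The chain $c$ is efficient: each $W \in \Blocksplus(c)$ is either $\{1,\ldots,n\}$ (appearing uniquely at $1_n$) or a subset of a unique $V_s$, in which case the relabeled $\widetilde W$ lies in $\Blocksplus(c_s)$ and appears at a unique index $i^*$ by efficiency of $c_s$, so $W$ appears only at $\pi^{(i^*+1)}$. (iii) The forward restriction is well-defined on efficient chains: for efficient $c$, any equality $\pi^{(j-1)}_{V_s} = \pi^{(j)}_{V_s}$ forces the common value to be $\pi_{V_s}$ (otherwise some non-$\pi$ block in $V_s$ would appear at two consecutive levels, violating efficiency), so duplicates occur only at the ``start'' end of the restricted sequence, matching the paper's collapse convention. (iv) Mutual invertibility follows from the alignment $\pi^{(j)}_{V_s} = \mu^{(s)}_{j-1}$ together with (iii).

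The chief obstacle is uniqueness of the inverse, i.e., showing that an efficient chain $c$ with prescribed restrictions $(c_1, \ldots, c_r)$ is forced to have the aligned structure above. The core observation is that if $V_s$ is not a block of $\pi$ then $V_s \in \Blocksplus(c)$, so $V_s$ can appear as a block at a single level of $c$, necessarily $\pi^{(1)} = \sigma$; this forces $\pi^{(j)}_{V_s} \neq 1_{|V_s|}$ for every $j \geq 2$. Applying the same reasoning to proper sub-blocks of $V_s$ lying in $\Blocksplus(c)$ progressively pins down $\pi^{(j)}_{V_s}$ for every $j$ and $s$, yielding the alignment and hence uniqueness.

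Finally, the identity (\ref{eq.equality.M}) is a direct computation from the alignment. The factor $M_{\pi^{(1)}, 1_n}$ equals $X_{\pi^{(1)}}$ since $1_n$ has a single block, and for $1 \leq j \leq k-1$ the factor $M_{\pi^{(j+1)}, \pi^{(j)}}$ splits over the blocks of $\sigma$ (as $\pi^{(j)} \leq \sigma$), with the piece on $V_s$ equal to the relabeled $M_{\mu^{(s)}_j, \mu^{(s)}_{j-1}}$. This piece is a genuine factor of $M_{c_s}$ when $\mu^{(s)}_j \neq \mu^{(s)}_{j-1}$ and equals $\oneT$ otherwise, so the trivial pieces contribute nothing and one obtains $M_c = X_{\pi^{(1)}} \prod_s M_{c_s}$. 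On the sign side, the bijection from (ii) yields $|\Blocksplus(c)| = 1 + \sum_s |\Blocksplus(c_s)|$ (the $+1$ for the top block $\{1,\ldots,n\}$), so $(-1)^{|\Blocksplus(c)|} = - \prod_s (-1)^{|\Blocksplus(c_s)|}$; combining with the product formula for $M_c$ gives exactly (\ref{eq.equality.M}).
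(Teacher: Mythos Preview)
Your proof is correct and follows essentially the same approach as the paper's: build the inverse map by padding each $c_s$ with copies of $\pi_{V_s}$ out to common length $k=1+\max_s k_s$, reassemble inside $\sigma$, and then verify the $\Blocksplus$ and $M_c$ identities block by block. The paper's proof is in fact terser than yours---it sketches the inverse construction and leaves the checks that $c$ is strict, efficient, and uniquely determined as an exercise---so your points (i)--(iii) supply detail the paper omits. One small loose end: your (iii) shows the collapsed restriction is a \emph{strict} chain, but you do not explicitly verify it is \emph{efficient}; this follows immediately (a block in $\Blocksplus(c_s)$ appearing twice would lift to a block in $\Blocksplus(c)$ appearing twice), and the paper does spell out that short contradiction. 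Also, your ``chief obstacle'' paragraph on uniqueness is vaguer than needed: injectivity of the forward map already follows cleanly from (iii) together with strictness of $c$ (which forces $k=1+\max_s k_s$), so the inductive ``progressively pins down'' sketch is not required.
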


\vspace{6pt}

\begin{proof}
We first prove that $c_1, \ldots , c_r$ from (\ref{eqn:12y}) 
are efficient chains.  Pick an $s \in \{ 1, \ldots , r \}$ and,
for the sake of contradiction, assume that $c_s$ is not efficient. 
This implies that there exist a block $W\in \Blocksplus(c_s)$
and indices $1\leq i<j\leq k$ such that
$W\in \pi^{(i)}_{ { }_V }$ and $W\in \pi^{(j)}_{ { }_V }$. Since 
$\pi^{(j)}< \pi^{(i)} \leq \pi^{(1)}$, this implies that 
$W\in \Blocksplus(c)$ with $W\in \pi^{(i)}$ and $W\in \pi^{(j)}$,
contradicting the fact that $c$ is efficient. Therefore,
$c'\in \EC(\pi_{ { }_{V} },1_{|V|})$ for all $c\in\EC(\pi,1_n)$ and
$V\in \pi^{(1)}$.

In order to prove that the map indicated in (\ref{eqn:12y}) is bijective,
we will describe how its inverse works.
For this, suppose we have an $r$-tuple of chains, $c_s=
(\pi_s^{(j_s)}, \dots, \pi_s^{(1)})\in \EC(\pi_{ { }_{V_s} },1_{|V_s|})$. 
To reconstruct the chain $c \in \widetilde{\EC}$ which corresponds to
$(c_1, \dots, c_r)$,  we first consider the size of the largest chain 
$j:=\max_{1\leq s\leq r} j_s$. Then, we enlarge the other chains so 
that all have the largest size, by denoting $\pi_s^{(i)}:=\pi_{ { }_{V_s}}$ 
for every $s=1,\dots,r$ and $j_s<i\leq j$. Then, for every
$i=1,\dots,j$, we construct the partition $\pi^{(i)}\in NC(n)$ uniquely 
determined by the fact that $\pi^{(i)}\leq \sigma$ and
$\pi^{(i)}_{ { }_{V_s}}=\pi_s^{(i)}$ for $s=1,\dots,r$. Finally, we define
$c := (\pi^{(k)},\pi^{(k-1)},\ldots,\pi^{(1)},1_n)$.
It is not hard to show (left as exercise to the reader) that this
$c$ is in $\widetilde{\EC}$, is mapped by (\ref{eqn:12y}) into the 
$(c_1, \ldots , c_r)$ we started from, and is uniquely determined by 
this property.

Finally, Equation \eqref{eq.equality.M} follows easily from the bijection
(\ref{eqn:12y}).
Indeed, for the equality of signs we break $c$ by taking apart 
$(\pi^{(1)},1_n)$, and then regroup the remaining chain in terms of the 
blocks of $\pi ^{(1)}$. Since $\Blocksplus(\pi^{(1)},1_n)=1$ we get that

\[
\Blocksplus(c) 
= \Blocksplus(\pi^{(k)},\pi^{(k-1)},\ldots,\pi^{(1)})+1 
= 1+ \sum_{s=1}^r \Blocksplus(c_r).
\]

For the term $M_c$, the idea is the same, although the computation is 
a bit more involved:
\begin{align*}
M_c 
& = M_{\pi^{(1)}, 1_n} \prod_{i=1}^{k-1} M_{\pi^{(i+1)}, \pi^{(i)}}
  = X_{\pi^{(1)}} \prod_{i=1}^{k-1} 
    \prod_{s=1}^r M_{\pi^{(i+1)}_{ { }_{V_s}}, \pi^{(i)}_{ { }_{V_s}}}
  = X_{\pi^{(1)}} \prod_{s=1}^r \prod_{i=1}^{k-1} 
      M_{\pi^{(i+1)}_{ { }_{V_s}}, \pi^{(i)}_{ { }_{V_s}}}         \\
& = X_{\pi^{(1)}}\prod_{s=1}^r \prod_{i=1}^{j_s} 
      M_{\pi^{(i+1)}_s, \pi^{(i)}_s}
  = X_{\pi^{(1)}} \prod_{s=1}^r M_{c_s}.
\end{align*}

Putting together the sign and the computation for $M_c$ yields 
Equation \eqref{eq.equality.M}.
\end{proof}

\vspace{6pt}

\begin{theorem}  \label{thm:1210}
For every 
$\pi \in \sqcup_{n=1}^{\infty} ( NC(n) \setminus \{ 1_n \} )$
one has:
\begin{equation}   \label{eqn:1210a}
S(X_\pi)= \sum_{c \in \EC (\pi,1_n)}
\, (-1)^{ | \Blocksplus (c) | }  M_c.
\end{equation}
\end{theorem}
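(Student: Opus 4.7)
I would prove Theorem \ref{thm:1210} by strong induction on $n$, combining the Bogoliubov formula \eqref{eqn:123b} from Proposition \ref{prop:123} with the bijection and sign/monomial identity supplied by Lemma \ref{lem.bijection.antipode}. The base case is $n=2$: the only relevant partition is $\pi = 0_2$, which is primitive, so Remark \ref{rem:124} gives $S(X_{0_2}) = -X_{0_2}$; on the other hand $\EC(0_2, 1_2)$ consists solely of the chain $c = (0_2, 1_2)$, for which $\Blocksplus(c) = \{\{1,2\}\}$ and $M_c = X_{0_2}$, so the right-hand side of \eqref{eqn:1210a} also equals $-X_{0_2}$.

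For the inductive step, fix $n \geq 3$ and $\pi \in NC(n) \setminus \{1_n\}$, and rewrite \eqref{eqn:123b} as $S(X_\pi) = -X_\pi - \sum_{\pi < \sigma < 1_n} S(M_{\pi, \sigma}) X_\sigma$. The first term $-X_\pi$ is precisely the contribution of the single length-$1$ chain $(\pi, 1_n) \in \EC(\pi, 1_n)$, which has $\Blocksplus = \{\{1, \ldots, n\}\}$ and monomial $X_\pi$. For each remaining $\sigma = \{V_1, \ldots, V_r\}$ with $\pi < \sigma < 1_n$, every $|V_s| < n$, so the inductive hypothesis applies to each factor of $S(M_{\pi, \sigma}) = \prod_s S(X_{\pi_{V_s}})$ (with the convention $S(X_{1_m}) = \oneT$ covering those $s$ for which $\pi_{V_s} = 1_{|V_s|}$). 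Substituting $S(X_{\pi_{V_s}}) = \sum_{c_s \in \EC(\pi_{V_s}, 1_{|V_s|})} (-1)^{|\Blocksplus(c_s)|} M_{c_s}$, multiplying by $X_\sigma$, and invoking \eqref{eq.equality.M} shows that each tuple $(c_1, \ldots, c_r)$ contributes $X_\sigma \prod_s (-1)^{|\Blocksplus(c_s)|} M_{c_s} = -(-1)^{|\Blocksplus(c)|} M_c$, where $c$ is the efficient chain in $\EC(\pi, 1_n)$ of length $\geq 2$ with penultimate partition $\sigma$ associated to $(c_1, \ldots, c_r)$ by the bijection \eqref{eqn:12y}. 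Hence $-S(M_{\pi, \sigma}) X_\sigma = \sum_{c} (-1)^{|\Blocksplus(c)|} M_c$, summed over efficient chains $c$ of length $\geq 2$ whose penultimate partition is $\sigma$; summing over all admissible $\sigma$ stratifies $\EC(\pi, 1_n) \setminus \{(\pi, 1_n)\}$ by penultimate partition, and combined with the length-$1$ contribution from $-X_\pi$ this yields \eqref{eqn:1210a}.

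The main obstacle is really Lemma \ref{lem.bijection.antipode} itself, which does the combinatorial heavy lifting: one has to verify that tuples of efficient chains on the block-wise restrictions assemble into efficient chains on all of $\{1, \ldots, n\}$, and that the sign/monomial identity \eqref{eq.equality.M} accounts cleanly for the single extra "$-1$" that comes from the final step $\sigma \to 1_n$ (which creates the lone new outer block $\{1, \ldots, n\}$ in $\Blocksplus$). A secondary bookkeeping concern — flagged explicitly inside the lemma — is the edge case where consecutive partitions $\pi^{(i)}_{V_s}$ coincide, so that the chain $c_s$ reconstructed from $c$ is properly shorter than the naive recipe would suggest; this case must be tracked carefully to ensure that each $c \in \EC(\pi, 1_n)$ is produced exactly once by the bijection. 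Once the lemma is in hand, however, the theorem follows by the essentially mechanical matching just described.
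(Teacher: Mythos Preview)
Your proposal is correct and follows essentially the same approach as the paper's own proof: both use the Bogoliubov recursion \eqref{eqn:123b}, apply the inductive hypothesis to each factor $S(X_{\pi_{V_s}})$, and invoke Lemma \ref{lem.bijection.antipode} to reassemble tuples of efficient chains into efficient chains in $\EC(\pi,1_n)$ with $\pi^{(1)}=\sigma$. The one cosmetic difference is that you induct on $n$ (so that $|V_s|<n$ supplies the hypothesis), whereas the paper inducts on $|\pi|$ (so that $|\pi_{V_s}|<|\pi|$ supplies it); both inductions are valid and the argument is otherwise identical.
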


\begin{proof}
The proof is by induction on $| \pi |$. 
For the base case: consider a $\pi$ in some $NC(n)$, such that 
$|\pi|=2$.  In this case we know that $S(X_\pi) = -X_\pi$. On the 
other hand, the set $\EC ( \pi , 1_n )$ consists of only
one chain, namely $c=(\pi, 1_n)$, which has 
$| \Blocksplus (c) | = 1$ and $M_c = M_{\pi, 1_n} = X_{\pi}$;
hence the right-hand side of Equation (\ref{eqn:1210a}) also comes 
out as $-X_{\pi}$, as required.

For the inductive step we fix a $j \geq 3$, we assume that the 
formula (\ref{eqn:1210a}) holds for every 
$\sigma \in \sqcup_{n=1}^{\infty} ( NC(n) \setminus \{ 1_n \} )$ 
with $|\sigma|< j$, and we prove that the same formula also holds 
for a $\pi$ with $|\pi|=j$. 

By the Bogoliubov recursion indicated in Equation \eqref{eqn:123b}, 
we have
\begin{align}
S(X_\pi)
& =-X_{\pi}-\sum_{\substack{\sigma \geq \pi \\ \pi\neq \sigma \neq 1_n}}
     \left( \prod_{V\in \sigma}S( X_{\pi|V})\right) X_\sigma \\
 &=-X_\pi - \sum_{\substack{\sigma=\{V_1,\dots,V_r\} \\ 1_n > \sigma > \pi}} 
     X_\sigma  \prod_{s=1}^r
     \left( \sum_{c_s\in \mathcal{EC}(\pi_{ {}_{V_s}},1_{|V_s|})} 
     (-1)^{| \Blocksplus (c_s) |} M_{c_s} \right), \label{ecu1}
\end{align}
where for the latter equality we used the induction hypothesis on
$S(X_{\pi_{ {}_{V_s}}})$, for each $V_s\in \sigma$. Finally, from the 
bijection in Lemma \ref{lem.bijection.antipode}, equation \eqref{ecu1} can 
be concisely written as 
\begin{equation}
 \label{eqn:1321}
-X_\pi + \sum_{\substack{\sigma=\{V_1,\dots,V_r\} \\ 1_n > \sigma > \pi}} \sum_{\substack{c\in \widetilde{\EC}_\sigma }}X_{\pi^{(1)}} (-1)^{| \Blocksplus (c) |} M_{c},
\end{equation}
where the notation $\widetilde{\EC}_\sigma$ is just to acknowledge that the set
$\widetilde{\EC}$ from Lemma \ref{lem.bijection.antipode} depends on the partition $\sigma$.

The conclusion follows from observing that the sum in \eqref{eqn:1321}
is a sum over all chains in $\EC(\pi,1_n)$ and thus coincides with the 
right hand side of \eqref{eqn:1210a}. Indeed, given a chain $c\in \EC(\pi,1_n)$, 
we either have $c=(\pi,1_n)$, in which case we get the term $-X_\pi$, or else
we have $c=(\pi^{(k)},\pi^{(k-1)},\ldots,\pi^{(1)},1_n)$ with $k \geq 2$ 
and $\pi^{(1)} = \sigma$ for some $1_n > \sigma > \pi$, implying that 
$c\in \widetilde{\EC}_\sigma$.
\end{proof}

\vspace{6pt}

\begin{example}   \label{example:1213}
In continuation of the last paragraph of Example \ref{example:128}, let 
$t_n$ denote the number of terms in the cancellation-free summation giving
$S( X_{0_n} )$, $n \geq 1$.  In view of Theorem \ref{thm:1210}, $t_n$ can 
also be viewed as $| \EC (0_n, 1_n ) |$, the number of efficient chains 
from $0_n$ to $1_n$ in $NC(n)$.  

We will derive a recursion satisfied by the numbers $t_n$.  It is possible
(left as exercise to the reader) to do so by examining the method of proof 
used for Theorem \ref{thm:1210}, and by extracting out of it a recursion 
among the cardinalities of the sets $\EC (0_n, 1_n)$.  Here we will take 
the alternative path of getting the desired recurrence for the $t_n$'s via 
a direct analysis of the Bogoliubov formula (\ref{eqn:123b}), which says that
\[ 
S( X_{0_n} ) = - X_{0_n} - 
        \sum_{ \substack{\sigma \in NC(n), \\ 0_n < \sigma < 1_n} }
S( M_{0_n, \sigma} ) \, X_{\sigma}.
\]
Every monomial $M_{0_n, \sigma}$ is equal, by definition, to
$\prod_{W \in \sigma} X_{0_{{ }_{|W|}}}$.  Since $S$ is multiplicative,
we thus find that
\begin{equation}   \label{eqn:1213a}
S( X_{0_n} ) = - X_{0_n} - 
        \sum_{ \substack{\sigma \in NC(n), \\ 0_n < \sigma < 1_n} }
\Bigl( \prod_{W \in \sigma} S( X_{0_{{ }_{|W|}}} ) \Bigr) \, X_{\sigma}.
\end{equation}

Suppose that on the right-hand side of (\ref{eqn:1213a}) we write
every $S( X_{0_{{ }_{|W|}}})$ as a cancellation-free sum of 
$t_{ { }_{|W|} }$ terms, then cross-multiply these sums.  For every 
$\sigma \in NC(n) \setminus \{ 0_n, 1_n \}$ we thus get
a sum of $\prod_{W \in \sigma} t_{ { }_{|W|} }$ terms, which
(very importantly) get to be also multiplied by an additional factor 
of $X_{\sigma}$.  Now, the latter factor of $X_{\sigma}$ appears only 
once in the whole expression on the right-hand side of (\ref{eqn:1213a}).
Multiplying with it will therefore prevent any cancellations with terms
that appear from the analogous discussion related to some other 
$\sigma ' \in NC(n) \setminus \{ 0_n, 1_n \}$.

Altogether, the discussion in the preceding paragraph shows how on 
the right-hand side of (\ref{eqn:1213a}) we arrive to a 
cancellation-free summation, where we can count the terms, in order 
to arrive to the conclusion that
\begin{equation}   \label{eqn:1213b}
t_{n}= 1 + 
\sum_{ \substack{\sigma \in NC(n), \\ 0_n < \pi < 1_n } } 
\ \prod_{W \in \sigma} t_{ { }_{|W|} }, \ \ n \geq 1
\end{equation}
(the empty sums appearing for $n=1$ and $n=2$ 
correspond to the fact that $t_1 = t_2 = 1$).
Equation (\ref{eqn:1213b}) is the recursion we wanted for the 
numbers $t_n$.  If we read the separate term of $1$ on the 
right-hand side as $t_1^n$, and we add on both side a term of 
$t_n$, we arrive to the nicer form 
\begin{equation}   \label{eqn:1213c}
2 t_{n}=  \sum_{\sigma \in NC(n)} 
\, \prod_{W \in \sigma} t_{ { }_{|W|} }, \ \ n \geq 2.
\end{equation}

Finally, Equation (\ref{eqn:1213c}) strongly suggests using
the functional equation of the $R$-transform from free probability 
(very closely related to free cumulants -- 
cf.~\cite[Lecture 16]{NiSp2006}), in order to find an equation 
satisfied by the generating function 
\begin{equation}   \label{eqn:1213d}
T(z) := \sum_{n=1}^\infty t_n z^n 
= z + z^2 + 4 z^3 + 25 z^4 + \cdots
\end{equation}
More precisely: let $\mu : \bC [X] \to \bC$ be the linear 
functional which has $\mu (1) = 1$ and has its sequence of free 
cumulants equal to $(t_n)_{n=1}^{\infty}$, hence has $R$-transform
$R_{\mu}(z)$ equal to the above $T(z)$.  From (\ref{eqn:1213c}) it follows
that the moment series $M_{\mu} (z) = \sum_{n=1}^{\infty} \mu (X^n) z^n$
is then equal to $2T(z) - z$.  The functional equation of the 
$R$-transform says that 
\[
R_{\mu} \bigl( z( 1 + M_{\mu} (z) \bigr) = M_{\mu} (z)
\ \mbox{ $\ $ (cf.~\cite[Remark 16.18]{NiSp2006}),}
\]
which becomes
\begin{equation}    \label{eqn:1213e}
T \bigl( z( 2T(z) - z + 1 ) \bigr) = 2T(z) -z.
\end{equation}
It is nicer to record this equation in terms of the series
\begin{equation}    \label{eqn:1213f}
U(z) := 2T(z) - z +1  = 1 + z + 2z^2 + 8z^3 + 50 z^4 + \cdots,
\end{equation}
which satisfies:
\begin{equation}    \label{eqn:1213g}
U \bigl( zU(z) \bigr) = (2-z) U(z) - 1.
\end{equation}

The first few $t_n$'s come out as
$1, 1, 4, 25, 206, 2060, 23920, 314065, 4582300, \ldots$
% 73393490, 1278859176, \ldots
This, unfortunately, doesn't seem to match the beginning of 
some sequence previously recorded in the research literature.
\end{example}

$\ $

\section{The Hopf algebra view on the inclusion 
$\cG \subseteq \cGtild$}

\noindent
There exists a result parallel to the above Theorem \ref{thm:115}, 
concerning the smaller group $\cG$ of multiplicative functions which 
was reviewed in Section 5.  This result was established 
in \cite{MaNi2010} and recognizes $\cG$ as the group of characters
$\bX ( \mbox{Sym} )$ of the Hopf algebra Sym of symmetric functions. 
In the present section we put into evidence a natural Hopf algebra 
homomorphism $\Psi : \cT \to \mbox{Sym}$, with $\cT$ as in 
Section 12, such that the dual group homomorphism 

\noindent
$\Psi^{*} : \bX ( \mbox{Sym} ) \to \bX ( \cT )$
corresponds in a canonical way to the inclusion of $\cG$
into $\cGtild$.

$\ $

\subsection{Review of the group isomorphism
\boldmath{$\cG \approx \bX ( \mathrm{Sym} )$}.}

\begin{notation-and-remark}    \label{def:131}
We use the incarnation of the Hopf algebra Sym as
\begin{equation}   \label{eqn:131a}
\mbox{Sym} = 
\bC \bigl[ Y_2, Y_3, \ldots , Y_n, \ldots \, \bigr]
\ \ \mbox{ (commutative algebra of polynomials)}
\end{equation}
where $Y_2, Y_3, \ldots , Y_n, \ldots$ are the so-called
{\em parking-function symmetric functions}.  
In the same spirit as for the considerations on the 
Hopf algebra $\cT$, we will also denote 
\begin{equation}    \label{eqn:131b}
Y_1 := \oneSym  \ \ \mbox{ (the unit of Sym).}
\end{equation}

A description of how the $Y_n$'s relate to other (more commonly
used) families of generators of Sym can e.g.~be found in 
\cite[Proposition 2.2]{St1997b}.  But here the only thing we need 
to know about the $Y_n$'s is how the comultiplication
$\Delta : \Sym \to \Sym \otimes \Sym$ 
operates on them.  The original motivation for featuring the 
$Y_n$'s in \cite{MaNi2010} was that the formula
giving $\Delta (Y_n)$ follows the same pattern as we saw 
in Section 11 in connection to the multiplication of 
free elements: one has
\begin{equation}  \label{eqn:131c}
\Delta (Y_n) = \sum_{\pi \in NC(n)} 
\Bigl( \prod_{V \in \pi} Y_{|V|} \Bigr) \otimes 
\Bigl( \prod_{W \in \Kr ( \pi )} Y_{|W|} \Bigr) ,
\ \ \forall \, n \geq 1.
\end{equation}

\noindent
[For instance $\Delta (Y_3)
= Y_3 \otimes Y_1^3 + 3 Y_1 Y_2 \otimes Y_1 Y_2 
+ Y_1^3 \otimes Y_3 
= Y_3 \otimes \oneSym + 3 Y_2 \otimes Y_2 
+ \oneSym \otimes Y_3$,
a sum of 5 terms, corresponding to the 
5 partitions in $NC(3)$.]

\vspace{6pt}

\noindent
We also mention that, when described in terms of the 
$Y_n$'s:

\noindent
-- The counit of Sym is the character $\ee : \Sym \to \bC$ 
uniquely determined by the requirement that 
$\ee (Y_n) = 0$ for all $n \geq 2$.

\noindent
-- The grading of Sym is determined by the fact that $Y_n$ 
has degree $n-1$, for every $n \geq 1$, with the usual 
follow-up defining the degree of a monomial 
$Y_{n_1} \cdots Y_{n_k}$ to be $n_1 + \cdots + n_k -k$. 
\end{notation-and-remark}

\vspace{6pt}

\begin{notation-and-remark}   \label{def:132}
In the framework of Notation \ref{def:131}, it is convenient
that for every $\pi \in \sqcup_{n=1}^{\infty} NC(n)$ we 
denote
\begin{equation}    \label{eqn:132a}
Y_{\pi} := \prod_{V \in \pi} Y_{|V|} 
\ \mbox{ (a monomial in the algebra Sym).}
\end{equation}
[For example, 
$\pi = \bigl\{ \{ 1,2,6 \}, \{ 3,4 \}, \{ 5 \}, \{ 7,8 \} \, \Bigr\}
\in NC(8)$ has $Y_{\pi} = Y_3 Y_2 Y_1 Y_2 = Y_3 Y_2^2$.]

The formula (\ref{eqn:131c}) describing comultiplication 
can then be written concisely as
\begin{equation}  \label{eqn:132b}
\Delta (Y_n) = \sum_{\pi \in NC(n)} 
Y_{\pi} \otimes Y_{\Kr ( \pi )}, \ \ n \geq 1.
\end{equation} 
By using the fact that $\Delta$ is an algebra 
homomorphism, it is easy (see
\cite[Lemma 3.3]{MaNi2010}) to extend (\ref{eqn:132b}) to 
\begin{equation} \label{eqn:132c}
\Delta (Y_{\sigma})=
\sum_{\substack{\pi \in NC(n) \\ \pi \leq \sigma}} 
Y_{\pi} \otimes Y_{\Kr_{\sigma} ( \pi )}, 
\ \ \forall \, n \geq 1 \mbox{ and } \sigma \in NC(n),
\end{equation}
where $\Kr_{\sigma} ( \pi )$ stands, as usual, for the 
relative Kreweras complement of $\pi$ in $\sigma$.  
\end{notation-and-remark}

\vspace{6pt}

\begin{remark}    \label{rem:133}
Now let us look at the group $\cG$ of multiplicative functions
and at the group $\bX ( \mathrm{Sym} )$ of characters 
of Sym.  For every $f \in \cG$ one can consider a character
$\widehat{\chi}_f \in \bX ( \mathrm{Sym} )$, 
defined by requiring that
\begin{equation}   \label{eqn:133a}
\widehat{\chi}_f (Y_n) = f( 0_n, 1_n ),
\ \ \forall \, n \geq 1.
\end{equation}
It is clear that the map
$\cG \ni f \mapsto \widehat{\chi}_f \in \bX ( \mathrm{Sym} )$
is bijective, and it is easy to check
that
\[
\widehat{\chi}_{f_1 * f_2} =
\widehat{\chi}_{f_1} * \widehat{\chi}_{f_2}, 
\ \ \forall \, f_1, f_2 \in \cG,
\]
where on the left-hand side we invoke the convolution operation 
on $\cG$, while on the right-hand side we use the convolution 
of characters of Sym.  Thus
$f \mapsto \widehat{\chi}_f$ gives a group isomorphism
$\cG \approx \bX ( \mathrm{Sym} )$, analogous to the isomorphism 
$\cGtild \approx \bX ( \cT )$ from Theorem \ref{thm:118}.
\end{remark}

$\ $

\subsection{The surjective homomorphism
\boldmath{$\Psi : \cT \to \mathrm{Sym}$}.}

$\ $

\noindent
Consider now the Hopf algebra $\cT$ from Section 12 and
recall that $\cT$ enjoys a universality property, stated 
in (\ref{eqn:111c}), which makes it very easy to define 
unital algebra homomorphisms having $\cT$ as domain.  
We use that to make the following definition.

\vspace{6pt}

\begin{definition}    \label{def:134} 
We let $\Psi : \cT \to \Sym$ be the unital algebra 
homomorphism defined by using the universality property 
(\ref{eqn:111c}) and the requirement that 
\begin{equation}  \label{eqn:134a}
\Psi (X_{\pi}) = Y_{\Kr ( \pi )}
= \prod_{W \in \Kr ( \pi )} Y_{|W|}, 
\ \ \forall \, \pi \in \sqcup_{n=1}^{\infty} NC(n).
\end{equation}
Note: in order for the universality property of $\cT$
to apply, the right-hand side of (\ref{eqn:134a}) must
be equal to $\oneSym$ whenever $\pi = 1_n$ for some
$n \geq 1$.  This is indeed the case, since 
$\Kr (1_n) = 0_n$ and $Y_{0_n} = Y_1^n = \oneSym$.
\end{definition}

\vspace{6pt}

\begin{remark}   \label{rem:134b}
For every $n \geq 1$, the definition of $\Psi$ gives 
$\Psi ( X_{0_n} ) = Y_{1_n} = Y_n$.
This immediately implies that the homomorphism $\Psi$ is 
surjective.

The point about $\Psi$ is that it also 
respects the coalgebra structure, as we show next.
\end{remark}

\vspace{6pt}

\begin{theorem}    \label{thm:135}
The map $\Psi$ introduced in Definition \ref{def:134} is a 
homomorphism of graded connected bialgebras.
\end{theorem}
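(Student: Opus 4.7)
The plan is to establish three compatibilities for $\Psi$: with counits, with comultiplications, and with gradings. Since $\Psi$ is by construction already a unital algebra homomorphism, in each case it suffices to check the relevant identity on the algebra generators $X_\pi$, $\pi \in \sqcup_{n=1}^{\infty} NC(n)$, of $\cT$.

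For the counit compatibility $\epsilonSym \circ \Psi = \epsilonT$, I would note that both sides are algebra homomorphisms, so the check reduces to $X_{1_n}$ (where both sides give $1$) and to $X_\pi$ with $\pi \neq 1_n$. In the latter case $\Psi(X_\pi) = Y_{\Kr(\pi)}$, and since $\Kr(\pi) \neq 0_n$ there is at least one block $W \in \Kr(\pi)$ with $|W| \geq 2$; thus $Y_{\Kr(\pi)}$ carries a factor $Y_{|W|}$ annihilated by $\epsilonSym$. For the grading compatibility, I would show directly that $\Psi(X_\pi)$ lies in $\Sym_{|\pi|-1}$: using the definition of degrees in $\Sym$ (where $Y_k$ has degree $k-1$) together with the standard identity $|\pi| + |\Kr(\pi)| = n+1$, I compute $\deg \Psi(X_\pi) = \sum_{W \in \Kr(\pi)} (|W|-1) = n - |\Kr(\pi)| = |\pi|-1 = \deg X_\pi$. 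Connectedness of the map is automatic from the grading.

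The real work lies in the comultiplication compatibility $(\Psi \otimes \Psi) \circ \DeltaT = \DeltaSym \circ \Psi$, checked on $X_\pi$. On the $\cT$-side, equation (\ref{eqn:112y}) gives
\[
(\Psi \otimes \Psi)(\DeltaT X_\pi) = \sum_{\sigma \in NC(n), \, \sigma \geq \pi} \Psi(M_{\pi,\sigma}) \otimes \Psi(X_\sigma) = \sum_{\sigma \geq \pi} Y_{\Kr_\sigma(\pi)} \otimes Y_{\Kr(\sigma)},
\]
where I use that $\Psi(M_{\pi,\sigma}) = \prod_{W \in \sigma} Y_{\Kr(\pi_{{ }_W})} = Y_{\Kr_\sigma(\pi)}$ because, by the standard block-by-block description of the relative Kreweras complement (cf.~the reference to \cite[Lecture 18]{NiSp2006} in Remark \ref{rem:42}), $\Kr_\sigma(\pi)$ is obtained by performing Kreweras complementation on each $\pi_{{ }_W}$ separately, so the block-size multisets agree. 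On the $\Sym$-side, equation (\ref{eqn:132c}) gives
\[
\DeltaSym(\Psi(X_\pi)) = \DeltaSym(Y_{\Kr(\pi)}) = \sum_{\rho \leq \Kr(\pi)} Y_\rho \otimes Y_{\Kr_{\Kr(\pi)}(\rho)}.
\]

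To match the two sides I would use the change of variable $\rho = \Kr_\sigma(\pi)$, which is a bijection between $\{\sigma \in NC(n) : \sigma \geq \pi\}$ and $\{\rho \in NC(n) : \rho \leq \Kr(\pi)\}$ (this is the content of \cite[Lemma 18.9]{NiSp2006}, already invoked in Lemma \ref{lemma:95}). The key identity needed is the Kreweras duality
\[
\Kr_{\Kr(\pi)}\!\bigl(\Kr_\sigma(\pi)\bigr) = \Kr(\sigma), \qquad \text{for every } \pi \leq \sigma \text{ in } NC(n),
\]
which is precisely the statement that the composition of the order-isomorphism $\sigma \mapsto \Kr_\sigma(\pi)$ from $[\pi,1_n]$ to $[0_n,\Kr(\pi)]$ with the order anti-automorphism $\Kr_{\Kr(\pi)}$ of $[0_n,\Kr(\pi)]$ yields the anti-isomorphism $\sigma \mapsto \Kr(\sigma)$. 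Given this identity, the $\Sym$-side rewrites as $\sum_{\sigma \geq \pi} Y_{\Kr_\sigma(\pi)} \otimes Y_{\Kr(\sigma)}$, matching the $\cT$-side term by term. The main obstacle is thus the verification of this Kreweras duality identity; I would either derive it from the general discussion of relative Kreweras complementation in \cite[pages 288--291]{NiSp2006}, or prove it directly by noting that $\sigma \mapsto \Kr_\sigma(\pi)$ and $\sigma \mapsto \Kr(\sigma)$ are respectively an isomorphism and an anti-isomorphism from $[\pi,1_n]$ onto $[0_n,\Kr(\pi)]$, so their composition must be the unique order-reversing involution of $[0_n,\Kr(\pi)]$, namely $\Kr_{\Kr(\pi)}$.
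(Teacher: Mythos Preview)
Your proposal is correct and follows essentially the same route as the paper: the same three checks (counit, grading, comultiplication) on generators, the same identification $\Psi(M_{\pi,\sigma}) = Y_{\Kr_\sigma(\pi)}$, and the same change of variable $\rho = \Kr_\sigma(\pi)$ with the identity $\Kr(\sigma) = \Kr_{\Kr(\pi)}(\rho)$ taken from \cite[Lemma 18.9]{NiSp2006}. One caution about your \emph{alternative} justification of that identity: the ``uniqueness of the order-reversing involution'' argument does not work as stated, since $\Kr_{\Kr(\pi)}$ is generally not an involution on $[0_n,\Kr(\pi)]$ and lattice intervals can have several anti-automorphisms; just cite Lemma 18.9 directly, as the paper does.
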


\begin{proof}  We have to check that $\Psi$ respects:
(i) comultiplications; (ii) counits; (iii) gradings
on the Hopf algebras $\cT$ and Sym.  

\vspace{6pt}

\noindent
{\em For (i):} we have to verify the equality
\begin{equation}   \label{eqn:135c}
\DeltaSym \circ \Psi = ( \Psi \otimes \Psi ) \circ \DeltaT,
\end{equation}
where $\DeltaSym$ and $\DeltaT$ are the comultiplications 
of Sym and of $\cT$, respectively.  Since both sides of 
(\ref{eqn:135c}) are unital algebra homomorphisms from $\cT$ 
to $\mathrm{Sym} \otimes \mathrm{Sym}$), it suffices to 
check their agreement on a generator $X_{\pi}$ of $\cT$, with
$\pi \in NC(n) \setminus \{ 1_n \}$ for some $n \geq 1$.

Let us then pick an $X_{\pi}$ as mentioned above, plug it 
into the left-hand side of (\ref{eqn:135c}), and compute:
\begin{align*}
\bigl( \DeltaSym \circ \Psi \bigr) (X_\pi)
& = \DeltaSym ( Y_{\Kr(\pi)} )                       \\
& = \sum_{\rho \leq \Kr(\pi)} 
    Y_{\rho} \otimes Y_{\Kr_{ \Kr(\pi)} (\rho) } 
    \mbox{ $\ $ (by Equation (\ref{eqn:132c})). }
\end{align*}

We next do the same on
the right-hand side of (\ref{eqn:135c}):
\begin{align*}
\bigl( ( \Psi \otimes \Psi ) \circ \DeltaT \bigr) (X_\pi)
& = ( \Psi \otimes \Psi ) \bigl( \DeltaT ( X_{\pi} ) \bigr)   \\
& = ( \Psi \otimes \Psi ) \Bigl(  \sum_{\sigma \geq \pi} 
       \bigl( \prod_{W \in \sigma} X_{\pi_W} \bigr) 
       \otimes X_{\sigma} \Bigr)                             \\
&= \sum_{\sigma \geq \pi} 
         \bigl( \prod_{W \in \sigma} \Psi(X_{\pi_W}) \bigr) 
         \otimes \Psi (X_{\sigma})                            \\
&= \sum_{\sigma \geq \pi} 
         \bigl( \prod_{W \in \sigma} Y_{\Kr(\pi_W)} \bigr) 
         \otimes Y_{\Kr( \sigma )},
\end{align*}
with the relabeled-restrictions $\pi_{{ }_W}$ as in 
Notation \ref{def:12}.  In the latter summation over $\sigma$:
when we put together the Kreweras complements of all the
partitions $\pi_{{ }_W}$ with $W$ running in $\sigma$, what
comes out is the relative Kreweras complement of $\pi$ in 
$\sigma$.  Thus the conclusion for the right-hand side of
(\ref{eqn:135c}) reads:
\begin{equation}   \label{eqn:135d}
\bigl( ( \Psi \otimes \Psi ) \circ \DeltaT \bigr) (X_\pi)
= \sum_{\sigma \geq \pi}         
Y_{\Kr_{\sigma} (\pi)} \otimes Y_{\Kr(\sigma)}.
\end{equation}

In order to reconcile the results of our calculations on the 
two sides of (\ref{eqn:135c}), we perform the change of 
variable $\rho := \Kr_{\sigma} ( \pi )$ on the right-hand 
side of (\ref{eqn:135d}).  It fits perfectly to invoke here 
the considerations on relative Kreweras complements from 
\cite[Lecture 18]{NiSp2006}, and specifically Lemma 18.9 from
that lecture, which tells us that:
\[
\left\{
\begin{array}{l}
\mbox{if $\sigma$ runs in the interval $[ \pi , 1_n ]$ of $NC(n)$,} \\
\mbox{then $\rho = \Kr_{\sigma} ( \pi )$ runs (bijectively) in the 
      interval $[0_n, \Kr ( \pi ) ]$ of $NC(n)$,  }                 \\
\mbox{ and one has the relation 
       $\Kr ( \sigma ) = \Kr_{\Kr ( \pi )} ( \rho )$.}
\end{array}   \right.
\]
The change of variable from $\sigma$ to $\rho$ thus transforms
(\ref{eqn:135d}) into
\[
\bigl( ( \Psi \otimes \Psi ) \circ \DeltaT \bigr) (X_\pi)
= \sum_{\rho \leq \Kr ( \pi )}         
Y_{\rho} \otimes Y_{\Kr_{\Kr ( \pi )} (\rho)};
\]
this brings us to precisely the same expression as we found 
when we processed the left-hand side of (\ref{eqn:135c}).

\vspace{6pt}

\noindent
{\em For (ii):} we must check that
$\epsilonSym \circ \Psi = \epsilonT$, where 
$\epsilonSym$ and $\epsilonT$ are the counits of Sym
and of $\cT$, respectively.  Both 
$\epsilonSym \circ \Psi$ and $\epsilonT$ are unital
algebra homomorphisms from $\cT$ to $\bC$, hence it 
suffices to check that they agree on every $X_{\pi}$ with 
$\pi \in \sqcup_{n=1}^{\infty} NC(n) \setminus \{ 1_n \}$.
But for any such $\pi$ we have that
\begin{equation}   \label{eqn:135a}
( \epsilonSym \circ \Psi ) (X_{\pi}) = 0 
= \epsilonT (X_{\pi}).
\end{equation}
Indeed, the second equality (\ref{eqn:135a}) holds by 
the definition of $\epsilonT$; while for the first equality
(\ref{eqn:135a}) we write, 
for $\pi \in NC(n) \setminus \{ 1_n \}$:
\begin{align*}
\pi \neq 1_n 
& \Rightarrow \ \Kr ( \pi ) \neq 0_n 
  \ \Rightarrow \ \exists \, W_o \in \Kr ( \pi )
  \mbox{ with $|W_o| \geq 2$ and hence with } 
  \epsilonSym ( Y_{|W_o|} ) = 0  \\
& \Rightarrow \ ( \epsilonSym \circ \Psi ) (X_{\pi})
  = \epsilonSym ( Y_{Kr(\pi)} )
  = \prod_{W \in Kr(\pi)} \epsilonSym (Y_{|W|})=0.
\end{align*}

\vspace{6pt}

\noindent
{\em For (iii):} since $\Psi$ is a unital
algebra homomorphism, it will suffice to check that
\begin{equation}   \label{eqn:135b}
\mbox{deg}_{ \mathrm{Sym} } (\Psi ( X_{\pi} ))= 
\mbox{deg}_{\cT} ( X_{\pi} ), \ \ \forall 
\, n \geq 1 \mbox{ and } \pi \neq 1_n \mbox{ in } NC(n),
\end{equation}
where $\mbox{deg}_{ \mathrm{Sym} }$ and $\mbox{deg}_\cT$ 
denote the degree functions for Sym and $\cT$, respectively. 
And indeed, direct computation yields that both sides of 
(\ref{eqn:135b}) are equal to $| \pi | - 1$, where on the 
left-hand side we first write that 
$\mbox{deg}_{ \mathrm{Sym} } ( Y_{\Kr ( \pi )} ))= 
n- |Kr(\pi)|$, and then we invoke the known fact that
$|Kr(\pi)| = n+1 - | \pi |$.
\end{proof}

\vspace{6pt}

\begin{corollary}  \label{cor:136}
Let $\Psi : \cT \to \mathrm{Sym}$ be as above, 
and consider the groups of characters 
$\bX ( \Sym )$ and $\bX ( \cT )$ of the Hopf algebras 
Sym and $\cT$.

\vspace{6pt}

$1^o$ One has an injective group homomorphism 
$\Psi^{*} : \bX ( Sym ) \to \bX ( \cT )$
defined by
\begin{equation}   \label{eqn:136a}
\Psi^{*} ( \chi ) := 
\chi \circ \Psi, \ \ \chi \in \bX ( \Sym ).
\end{equation}

\vspace{6pt}

$2^o$ Consider the identifications 
$\bX ( \mathrm{Sym} ) = 
\{ \widehat{\chi}_f \mid f \in \cG \}$ from 
Remark \ref{rem:133} and
$\bX ( \cT ) = \{ \chi_g \mid g \in \cGtild \}$
from Theorem \ref{thm:118}.  In terms of these 
identifications, the group homomorphism $\Psi^{*}$ 
is just the inclusion of $\cG$ into $\cGtild$; that
is, one has
\begin{equation}   \label{eqn:136b}
\Psi^{*} ( \, \widehat{\chi}_f \, ) = \chi_f,
\ \ \forall \, f \in \cG .
\end{equation}
\end{corollary}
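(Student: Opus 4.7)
The plan is to derive both parts of the corollary directly from Theorem \ref{thm:135} (which establishes that $\Psi$ is a bialgebra homomorphism) and Remark \ref{rem:134b} (which records that $\Psi$ is surjective), combined with the basic formulas that describe how $\widehat{\chi}_f$ and $\chi_g$ act on generators.

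For part $1^o$, I would first observe that for any $\chi \in \bX ( \Sym )$, the composition $\chi \circ \Psi$ is a unital algebra homomorphism from $\cT$ to $\bC$, hence belongs to $\bX ( \cT )$; so $\Psi^{*}$ is a well-defined map between the two character groups. The fact that $\Psi^*$ is a group homomorphism would come out of a short direct calculation using the definition of convolution of characters in terms of comultiplication: for any $\chi_1, \chi_2 \in \bX ( \Sym )$ one writes
\[
( \chi_1 * \chi_2) \circ \Psi
= ( \chi_1 \otimes \chi_2 ) \circ \DeltaSym \circ \Psi
= ( \chi_1 \otimes \chi_2 ) \circ ( \Psi \otimes \Psi ) \circ \DeltaT
= ( \Psi^{*} ( \chi_1) ) * ( \Psi^{*} ( \chi_2 ) ),
\]
where the middle equality is the compatibility with comultiplications established in the proof of Theorem \ref{thm:135}. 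Injectivity of $\Psi^{*}$ would then follow immediately from the surjectivity of $\Psi$: if $\chi_1 \circ \Psi = \chi_2 \circ \Psi$, then $\chi_1$ and $\chi_2$ agree on the image of $\Psi$, which is all of $\mathrm{Sym}$.

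For part $2^o$, fix an $f \in \cG$; I need to show the equality of characters $\Psi^{*} ( \widehat{\chi}_f ) = \chi_f$ in $\bX ( \cT )$. Since both sides are unital algebra homomorphisms from $\cT$ to $\bC$, it is enough to check they agree on the generators $X_{\pi}$, with $n \geq 1$ and $\pi \in NC(n)$. I would compute the left-hand side using the definition of $\Psi$ from Equation (\ref{eqn:134a}) and the multiplicativity of $\widehat{\chi}_f$:
\[
\Psi^{*} ( \widehat{\chi}_f ) ( X_{\pi} )
= \widehat{\chi}_f \bigl( Y_{\Kr ( \pi )} \bigr)
= \prod_{W \in \Kr ( \pi ) } \widehat{\chi}_f ( Y_{|W|} )
= \prod_{W \in \Kr ( \pi ) } f ( 0_{|W|}, 1_{|W|} ),
\]
where at the last equality I use the defining prescription (\ref{eqn:133a}) for $\widehat{\chi}_f$. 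On the other hand, $\chi_f ( X_{\pi} ) = f( \pi , 1_n )$ by definition (cf.~Equation (\ref{eqn:118a})).

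The identification of the two answers is then precisely Equation (\ref{eqn:42b}) of Remark \ref{rem:42}, which states that a multiplicative function $f \in \cG$ satisfies $f( \pi, 1_n ) = \prod_{W \in \Kr ( \pi )} f(0_{|W|}, 1_{|W|})$. The main conceptual point therefore lies not in any delicate new computation, but rather in noticing that the definition of $\Psi$ in (\ref{eqn:134a}) was deliberately set up, via the Kreweras complement, to implement exactly the multiplicativity relation (\ref{eqn:41a}); no step of the argument is hard, and I would expect the write-up to be very short.
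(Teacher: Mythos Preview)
Your proposal is correct and follows essentially the same approach as the paper: both verify the equality $\Psi^{*}(\widehat{\chi}_f) = \chi_f$ by evaluating on generators $X_{\pi}$ and invoking the multiplicativity formula $f(\pi,1_n) = \prod_{W \in \Kr(\pi)} f(0_{|W|},1_{|W|})$. The only minor difference is that the paper deduces injectivity of $\Psi^{*}$ from part $2^o$ (since $f \mapsto \chi_f$ is injective), whereas you deduce it directly from the surjectivity of $\Psi$; both arguments are immediate.
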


\begin{proof}  
The property of $\Psi^{*}$ of being a group homomorphism 
is a general Hopf algebra fact and the injectivity of 
$\Psi^{*}$ is implied, in particular, by (\ref{eqn:136b}).  
We are thus left to fix an $f \in \cG$ and to verify that 
the two characters
$\chi_f, \widehat{\chi}_f \circ \Psi \in \bX ( \cT )$
are equal to each other.  To that end, it suffices to also 
fix an $n \geq 1$ and a $\pi \in NC(n) \setminus \{ 1_n \}$, 
and to check that the two characters in question agree on 
the generator $X_{\pi}$ of $\cT$.  We know that
\[
\chi_f ( X_{\pi} ) = f( \pi , 1_n ) 
= \prod_{W \in \Kr ( \pi )} f( 0_{|W|}, 1_{|W|} ),
\]
where the second equality sign uses the fact that $f$ is 
multiplicative.  On the other hand, we have
\[
\bigl( \widehat{\chi}_f \circ \Psi \bigr) (X_{\pi})
= \widehat{\chi}_f ( Y_{\Kr ( \pi )} )
= \prod_{W \in \Kr ( \pi )} 
\widehat{\chi}_f ( Y_{|W|} )
= \prod_{W \in \Kr ( \pi )} f( 0_{|W|}, 1_{|W|} ),
\]
and this completes the required verification.
\end{proof}

\vspace{6pt}

\begin{remark}   \label{rem:137}
There was another subgroup of $\cGtild$ which played a 
significant role throughout this paper, namely the 
group $\cGtildctoc$ of semi-multiplicative functions of
cumulant-to-cumulant type.  The group $\cGtildctoc$ can
also be identified, in a natural way, as character group 
of a Hopf algebra $\cZ$, where the latter Hopf algebra
is some kind of ``truncation of $\cT$ to irreducible 
non-crossing partitions''.  Without going into details, 
we give here some highlights on what is $\cZ$ and of how 
it comes that $\bX ( \cZ ) \approx \cGtildctoc$.

As an algebra, $\cZ$ is just a commutative algebra of 
polynomials:
\begin{equation}    \label{eqn:137a}
\cZ := \bC \bigl[ \, Z_{\pi} \mid \pi \in \sqcup_{n=1}^{\infty} 
( \NCirr (n) \setminus \{ 1_n \}) \, \bigr],
\end{equation}
where for every $n \in \bN$ we use the shorthand notation
\[
\NCirr (n) := \{ \pi \in NC(n) \mid \pi \ \mbox{is irreducible} \}.
\]
Analogously to how we went when we defined
$\cT$ in Section 12.1, we also put 
\begin{equation}    \label{eqn:137b}
Z_{1_n} := \oneZ, \ \ \forall \, n \geq 1,
\end{equation}
and we record the universality property enjoyed by $\cZ$, 
which says:
\begin{equation}   \label{eqn:137c}
\left\{   \begin{array}{l}
\mbox{If $\cA$ is a unital commutative algebra 
over $\bC$ and we are given}   \\
\mbox{ $\ $ elements 
$\bigl\{ a_{\pi} \mid \pi \in \sqcup_{n=1}^{\infty} \NCirr (n) 
\bigr\}$ in $\cA$, with 
$a_{ { }_{1_n} } = 1_{ { }_{\cA} }$ for all $n \geq 1$,}  \\
\mbox{then there exists a unital algebra homomomorphism 
$\Phi : \cZ \to \cA$, uniquely}                         \\
\mbox{ $\ $ determined, such that 
$\Phi (Z_{\pi}) = a_{\pi}$ for all 
$\pi \in \sqcup_{n=1}^{\infty} \NCirr (n)$.}
\end{array}   \right.
\end{equation}

The universality property (\ref{eqn:137c}) yields in particular 
a recipe for how to construct characters of $\cZ$ 
(i.e.~unital algebra homomomorphisms from $\cZ$ to $\bC$).  
For every function $g \in \cGtildctoc$ let 
$\chicheck_g : \cZ \to \bC$ be the character defined via 
universality and the requirement that
\[
\chicheck_g (Z_{\pi}) = g( \pi, 1_n)
\mbox{ for every $n \geq 1$ and $\pi \in \NCirr (n)$.}
\]
It is easy to verify that in this way we get a bijective map
\begin{equation}   \label{eqn:137d}
\cGtildctoc \ni g \mapsto \chicheck_g \in \bX ( \cZ ),
\end{equation}
where $\bX ( \cZ )$ is the set of all characters of $\cZ$.

Now, in a nutshell, one has that: 
\begin{equation}    \label{eqn:137e}
\left\{ \begin{array}{c}
\mbox{ $\cZ$ also carries a coalgebra structure,} \\
\mbox{ which makes $\bX ( \cZ )$ become a group under convolution,}  \\
\mbox{ and makes the bijection (\ref{eqn:137d}) become a group isomorphism.}
\end{array}  \right.
\end{equation}
The statements made in (\ref{eqn:137e}) require a bunch of verifications 
which are pretty much a repeat of the arguments shown in connection 
to $\cT$ in Sections 12.1 and 12.2 of the paper.  We will leave these
(not difficult) verifications as an exercise to the reader, and only
provide here the definitions for the comultiplication, counit and 
grading on $\cZ$.

\vspace{6pt}

{\em Comultiplication:} this is the unital algebra homomorphism
$\Delta : \cZ \to \cZ \otimes \cZ$ defined via the universality 
property (\ref{eqn:137c}) and the requirement that for every 
$n \geq 1$ and $\pi \in \NCirr (n)$ we have
\begin{equation}   \label{eqn:137f}
\Delta (Z_{\pi}) =  
\sum_{ \substack{ \sigma \in NC(n), \\ \sigma\gg \pi} } 
\, \Bigl( \prod_{W \in \sigma} Z_{\pi_{{ }_W}} \Bigr) 
\otimes Z_\sigma,
\end{equation}
where ``$\gg$'' is in reference to the partial order from 
Notation \ref{def:14}.1.  We take a moment here to emphasize the 
importance of having $\sigma \gg \pi$ (rather than a plain 
``$\sigma \geq \pi$'') on the right-hand side of Equation 
(\ref{eqn:137f}): the condition $\sigma \gg \pi$ amounts 
precisely to the fact that $\pi_{{ }_W} \in \NCirr ( |W| )$ 
for every block $W \in \sigma$, which is crucial in order to 
be able to talk about the element $Z_{ \pi_{{ }_W} } \in \cZ$.

\vspace{6pt}

{\em Counit:} this is the character 
$\epsiloncheck := \chicheck_e \in \bX ( \cZ )$, where $e$ is the 
unit of $\cGtildctoc$.  Spelled explicitly, $\epsiloncheck$ is 
the character defined via the requirement that it has
$\epsiloncheck (Z_{\pi}) = e( \pi, 1_n) = 0$ for every $n \geq 1$
and $\pi \in \NCirr (n) \setminus \{ 1_n \}$.

\vspace{6pt}

{\em Grading:} this is obtained by postulating that every 
$Z_\pi$ has degree $|\pi|-1$, and hence that every monomial 
$Z_{\pi_1} \cdots Z_{\pi_k}$ has degree 
$|\pi_1| + \cdots + |\pi_k| -k$. 

\vspace{6pt}

We conclude the discussion around $\cZ$ by pointing out that 
one has a result analogous to the above Corollary \ref{cor:136},
concerning the inclusion of groups 
$\cGtildctoc \subseteq \cGtild$.  More precisely, let 
$\Phi : \cT \to \cZ$ be the unital algebra homomorphism obtained
by invoking the universality property (\ref{eqn:111c}) of $\cT$ 
in connection to the requirement that for every 
$\pi \in \sqcup_{n=1}^{\infty} NC(n)$ we have: 
\[
\Phi (X_\pi)= 
\begin{cases} Z_{\pi}, & \text{if $\pi$ is irreducible,} \\ 
0, & \text{if $\pi$ is reducible.} 
\end{cases} 
\]
It turns out $\Phi$ also respects the coalgebras structures on
$\cT$ and $\cZ$; this statement is analogous to the statement
of the above Theorem \ref{thm:135}, but has a much simpler (nearly
immediate, in fact) proof.  As a consequence of $\Phi$ being 
a Hopf algebra homomorphism, we get a group homomorphism 
$\Phi^{*} : \bX ( \cZ ) \to \bX ( \cT )$, defined by putting
$\Phi^{*} ( \chicheck ) := \chicheck \circ \Phi$, 
for $\chicheck \in \bX ( \cZ )$.  Directly from definitions 
it follows that for every $g \in \cGtildctoc$ one has
\[
\chicheck_g \circ \Phi = \chi_g,
\ \mbox{ or in other words that
$\Phi^{*} ( \chicheck_g ) = \chi_g$,}
\]
where $\chicheck_g \in \bX ( \cZ )$ is as above, while 
$\chi_g \in \bX ( \cT )$ is picked from Theorem \ref{thm:118}.  
Thus, when the canonical identifications 
$\cGtild \approx \bX ( \cT )$ and $\cGtildctoc \approx \bX ( \cZ )$ 
are considered, $\Phi^{*}$ is just the inclusion of $\cGtildctoc$
into $\cGtild$.
\end{remark}

$\ $

\begin{center}
{\bf Appendix.}
\end{center}

\noindent
Consider the framework and notation used in 
(\ref{eqn:109a}) of Remark \ref{rem:109}, where 
we specialize $x_1 = \cdots = x_n =: x$, with 
$x \in \cM$ picked such that $\varphi (x) = 1$. 
This appendix shows the output of some computer 
calculations which check the difference of the 
quantities on the two sides of (\ref{eqn:109a}),
\[
\rho_n( x y, \ldots , x y) -  \sum_{\pi \in NC(n)} 
\rho_{\pi} ( x, \ldots , x ) \cdot 
\rho_{ { }_{\Kr (\pi)} } (y, \ldots , y) = ?
\]
for $5 \leq n \leq 8$.  As mentioned in 
Remark \ref{rem:109}, the above difference is equal 
to $0$ for $n \leq 4$.

For every $n \geq 1$, we use the shorthand notation
$\rho_n (x) := \rho_n (x, \ldots , x)$ and
$\rho_n (y) := \rho_n (y, \ldots , y)$.

Since $\rho_1 (x) = \varphi (x) =1$ and 
$\rho_1 (y) = \varphi (y) = 1$,
in the formulas listed below we have omitted the 
occurrence of the powers of $\rho_1 (x)$ and $\rho_1 (y)$.
Putting in these powers would make the various products 
appearing there become homogeneous (for instance for $n=5$, 
the product $\rho_2(x) \, \rho_2(y)$ would become
$\rho_1(x)^3\rho_2(x) \, \rho_1(y)^3\rho_2(y)$, homogeneous of
degree $5$ with respect to each of $x$ and $y$).

$\ $

\noindent
{\bf \boldmath{$n=5$.}}
$\rho_5(xy, \ldots , xy) - 
\sum_{\pi \in NC(5)} \rho_\pi (x, \dots ,x) 
                     \rho_{\text{Kr}(\pi)}(y, \ldots , y)
= -\frac{1}{12} \rho_2(x) \, \rho_2(y)$.

$\ $

\noindent
{\bf \boldmath{$n=6$.}}
$\rho_6 (xy, \ldots , xy) - 
\sum_{\pi \in NC(6)} \rho_\pi (x, \dots ,x) 
                     \rho_{\text{Kr}(\pi)}(y, \ldots , y)$
\[                    
=  -\frac{1}{4} \rho_2(x)^2 \, \rho_2(y) 
  - \frac{1}{4} \rho_2(x)  \, \rho_2(y)^2  
  - \frac{1}{3} \rho_2(x) \, \rho_3(y)
  - \frac{1}{3} \rho_3(x)  \, \rho_2(y).
\]

$\ $

\noindent
{\bf \boldmath{$n=7$.}}
$\rho_7 (xy, \ldots , xy) - 
\sum_{\pi \in NC(7)} \rho_\pi (x, \dots ,x) 
                     \rho_{\text{Kr}(\pi)}(y, \ldots , y)$
\begin{align*}
= 
&  - \rho_3(x) \, \rho_3(y) 
   - \frac{4}{3} \rho_2(x)^2 \, \rho_2(y)^2 
   + \frac{7}{180}\rho_2(x) \, \rho_2(y)        \\
& - \frac{19}{12} \rho_2(x)^2 \, \rho_3(y) 
  - \frac{19}{12} \rho_3(x) \, \rho_2(y)^2 
  - \frac{3}{4} \rho_2(x) \, \rho_4(y)  
  - \frac{3}{4} \rho_4(x) \, \rho_2(y)      \\
& - \frac{17}{12} \rho_2(x) \rho_3(x) \, \rho_2(y) 
  - \frac{17}{12} \rho_2(x) \, \rho_2(y)\rho_3(y) 
  - \frac{1}{6} \rho_2(x)^3 \, \rho_2(y) 
  - \frac{1}{6} \rho_2(x) \, \rho_2(y)^3.
\end{align*}

$\ $

\noindent
{\bf \boldmath{$n=8$.}}
$\rho_8 (xy, \ldots , xy) - 
\sum_{\pi \in NC(8)} \rho_\pi (x, \dots ,x) 
                     \rho_{\text{Kr}(\pi)}(y, \ldots , y)$
\begin{align*}
= 
& + \frac{7}{30} \rho_2(x) \, \rho_3(y) 
  + \frac{7}{30} \rho_3(x) \, \rho_2(y) 
  + \frac{43}{180} \rho_2(x)\, \rho_2(y)^2      
  + \frac{43}{180} \rho_2(x)^2 \, \rho_2(y) \\
& -\frac{4}{3} \rho_2(x) \, \rho_5(y) 
  -\frac{4}{3} \rho_5(x) \, \rho_2(y)
  -\frac{4}{3} \rho_2(x) \, \rho_2(y)^2\rho_3(y) 
  -\frac{4}{3} \rho_2(x)^2\rho_3(x) \, \rho_2(y) \\
& -\frac{4}{3} \rho_2(x) \, \rho_3(y)^2 
  -\frac{4}{3} \rho_3(x)^2 \, \rho_2(y)
  -\frac{32}{3} \rho_2(x)^2 \, \rho_2(y)\rho_3(y) 
  -\frac{32}{3} \rho_2(x)\rho_3(x) \, \rho_2(y)^2  \\
& -\frac{8}{3} \rho_3(x) \, \rho_2(y)^3
  -\frac{8}{3} \rho_2(x)^3 \, \rho_3(y) 
  -\frac{9}{2} \rho_2(x)^2 \, \rho_4(y)
  -\frac{9}{2} \rho_4(x) \, \rho_2(y)^2      \\
& -\frac{20}{3} \rho_3(x) \, \rho_2(y)\rho_3(y) 
  -\frac{20}{3} \rho_2(x)\rho_3(x) \, \rho_3(y)
  -2 \rho_2(x)^2 \, \rho_2(y)^3
  -2 \rho_2(x)^3 \, \rho_2(y)^2       \\
& -3 \rho_2(x) \, \rho_2(y)\rho_4(y)
  -3 \rho_2(x)\rho_4(x) \, \rho_2(y)
  -2 \rho_3(x) \, \rho_4(y) 
  -2 \rho_4(x) \, \rho_3(y).
\end{align*}

$\ $

$\ $

\noindent
{\bf Acknowledgement.}  We are grateful to the anonymous 
referee for the careful reading of this long paper and for
several suggestions which led to an improved exposition, 
particularly in Sections 11 and 12.

$\ $


\begin{thebibliography}{99}

\bibitem{ArHaLeVa2015}  
O. Arizmendi, T. Hasebe, F. Lehner, C. Vargas.
Relations between cumulants in non-commutative probability,
{\em Advances in Mathematics} 282 (2015), 56-92.
Also available as arXiv:1408.2977.

\vspace{4pt}

\bibitem{BeNi2008} S.T. Belinschi, A. Nica.
$\eta$-series and a Boolean Bercovici-Pata bijection for bounded
$k$-tuples, {\em Advances in Mathematics} 217 (2008), 1-41.
Also available as arXiv:math/0608622.

\vspace{4pt}

\bibitem{BeNi2009} S.T. Belinschi, A. Nica.
Free Brownian motion and evolution towards $\boxplus$-infinite
divisibility for $k$-tuples. 
{\em International Journal of Mathematics} 20 (2009), 309-338.
Also available as arXiv:0711.3787.

\vspace{4pt}

\bibitem{BeSh2012} S.T. Belinschi, D. Shlyakhtenko.
Free probability of type B: analytic aspects and applications,
{\em American Journal of Mathematics} 134 (2012), 193-234.
Also available as arXiv:math/0206167.

\vspace{4pt}

\bibitem{BePa1999} H. Bercovici, V. Pata. 
Stable laws and domains of attraction in free probability
theory, With an appendix by Philippe Biane, 
{\em Annals of Mathematics} 149 (1999), 1023–1060.

\vspace{4pt}

\bibitem{BiJV2019}  P. Biane, M. Josuat-Verg\'es. 
Noncrossing partitions, Bruhat order and the cluster 
complex, {\em Annales de l'Institut Fourier}
69 (2019), 2241-2289.
Also available as arXiv:1801.06078.

\vspace{4pt}

\bibitem{BoWy2001} M. Bo\.zejko, J. Wysoczanski. 
Remarks on $t$-transformations of measures and convolutions,
{\em Annales de l'Institut Henri Poincar\'e, Probabilit\'es et
Statistique} 37 (2001), 737-761.

\vspace{4pt}

\bibitem{CeEFPe2019} A. Celestino, K. Ebrahimi-Fard, D. Perales.
Relations between infinitesimal non-commutative cumulants,
preprint 2019.  Available as arXiv:1912.04931.

\vspace{4pt}

\bibitem{CeEFPaPe2020} 
A. Celestino, K. Ebrahimi-Fard, F. Patras, D. Perales.
Cumulant-cumulant relations in free probability theory 
from Magnus' expansion, to appear in 
Foundation of computational mathematics. 
Also available as arXiv:2004.10152.

\vspace{4pt}

\bibitem{DoRoSt1972} 
P. Doubilet, G.-C. Rota, R. Stanley.
On the foundations of combinatorial theory. VI. 
The idea of generating function,
Berkeley Symposium on Mathematical Statistics and Probability, 
1972: (1972) 267-318.

\vspace{4pt}

\bibitem{EFFoKoPa2019} 
K. Ebrahimi-Fard, L. Foissy, J. Kock, F. Patras.
Operads of (noncrossing) partitions, interacting bialgebras,
and moment-cumulant relations, 
{\em{Advances in Mathematics}} 369 (2020), 107170. 
Also available as arXiv:1907.01190.

\vspace{4pt}

\bibitem{Ein2010} H. Einziger. 
Incidence Hopf algebras: Antipodes, forest 
formulas, and noncrossing partitions,
{\em PhD Thesis, The George Washington University}, 
2010.

\vspace{4pt}

\bibitem{FeNi2010} M. Fevrier, A. Nica.
Infinitesimal non-crossing cumulants and free probability
of type B, 
{\em Journal of Functional Analysis} 258 (2010), 2983-3023.  
Also available as arXiv:0906.2017.

\vspace{4pt}

\bibitem{FeMaNiSz2019} 
M. Fevrier, M. Mastnak, A. Nica, K. Szpojankowski. 
A construction which relates c-freeness to infinitesimal freeness,
{\em{Advances in Applied Mathematics}} 110 (2019), 299-341. 
Also available as arXiv:1811.12205.

\vspace{4pt}

\bibitem{GBVaFi2001} J.M. Gracia-Bondia, J.C. Varilly, H. Figueroa.
{\em Elements of noncommutative geometry}, 
Birkh\"auser 2001.

\vspace{4pt}

\bibitem{HaSa2011} T. Hasebe, H. Saigo.
The monotone cumulants, {\em Annales Institute 
Henri Poincar\'e, Probabilit\'es et Statistique} 
47 (2011), 1160–1170.

\vspace{4pt}

\bibitem{Ha2011} T. Hasebe.
Differential independence via an associative product of infinitely
many linear functionals, 
{\em Colloquium Mathematicum} 124 (2011), 79-94.

\vspace{4pt}

\bibitem{JeLi2019} D. Jekel, W. Liu.
An operad of non-commutative independences defined by trees,
{\em Dissertationes Mathematicae} 553 (2020), 1-100.
Also available as arXiv:1901.09158.

\vspace{4pt}

\bibitem{JV2015}  M. Josuat-Verg\'es. 
Refined enumeration of noncrossing chains and hook formulas,
{\em Annals of Combinatorics} 19 (2015), 443-460. 
Also available as arXiv:1405.5477.

\vspace{4pt}

\bibitem{Le2002} F. Lehner.
Free cumulants and enumeration of connected partitions, 
{\em European Journal of Combinatorics} 23 (2002), 
1025–1031.

\vspace{4pt}

\bibitem{Ma2003} D. Manchon.
{\em Hopf algebras, from basics to applications to renormalization},
Comptes Rendus des Rencontres Math\'ematiques de Glanon, 2003.
Available as arXiv:math/0408405.

\vspace{4pt}

\bibitem{MaNi2010} M. Mastnak, A. Nica.
Hopf algebras and the logarithm of the S-transform in free
probability,
{\em Transactions of the American Mathematical Society}
362 (2010), 3705-3743.
Also available as arXiv:0807.4169.

\vspace{4pt}

\bibitem{MePa2018} F. Menous, F. Patras. 
Right-handed Hopf algebras and the preLie forest formula. 
{\em{Ann. Inst. Henri Poincar\'e Comb. Phys. Interact.}}~5 (2018), 
103-125. Also available as arXiv:1511.07403.

\vspace{4pt}

\bibitem{NiSp2006} A. Nica, R. Speicher.
{\em Lectures on the combinatorics of free probability},
London Mathematical Society Lecture Notes Series, Vol. 335,
Cambridge University Press, 2006.

\vspace{4pt}

\bibitem{Pe2022} D. Perales.
Brands of cumulants in non-commutative probability, 
and relations between them,
{\em PhD Thesis, University of Waterloo}, 2022.
Available at 
uwspace.uwaterloo.ca/handle/10012/18429.

\vspace{4pt}

\bibitem{Schm1987} W.R. Schmitt. 
Antipodes and incidence coalgebras,
{\em Journal of Combinatorial Theory, Series A} 46 (1987), 264-290.

\vspace{4pt}

\bibitem{Schm1994}  W.R. Schmitt.
Incidence Hopf Algebras,
{\em Journal of Pure and Applied Algebra} 96.3 (1994), 229-330.

\vspace{4pt}

\bibitem{Sh2018} D. Shlyakhtenko.
Free probability of type B and asymptotics of finite-rank 
perturbations of random matrices,
{\em Indiana University Mathematics Journal} 67 (2018), 971-991.
Also available as arXiv:1509.08841.

\vspace{4pt}

\bibitem{Sp1994} R. Speicher.
Multiplicative functions on the lattice of non-crossing 
partitions and free convolution,
{\em Mathematische Annalen} 298 (1994), 611-628.

\vspace{4pt}

\bibitem{St1997} R.P. Stanley.
{\em Enumerative combinatorics}, volume 1,
Cambridge University Press, 1997.

\vspace{4pt}

\bibitem{St1997b}  R.P. Stanley. 
Parking functions and noncrossing partitions,
{\em Electronic Journal of Combinatorics} 4 (1997), 
Issue 2, Research Paper 20.

\vspace{4pt}

\bibitem{Vo1987} D. Voiculescu.
Multiplication of certain non-commuting random variables, 
{\em Journal of Operator Theory} 18  (1987), 223-235.

\end{thebibliography}
\end{document}